\newcommand{\arxiv}[1]{\href{http://arxiv.org/abs/#1}{\tt
    arXiv:\nolinkurl{#1}}}
\def\dot{{\color{white}\bullet}\!\!\!\circ}
\def\heart{{\color{white}\scriptstyle\varheartsuit}\hspace{-1.487mm}\scriptstyle\heartsuit}
\def\diamond{{\color{white}\scriptstyle\vardiamondsuit}\hspace{-1.308mm}\scriptstyle\diamondsuit}
\def\smallclock{\begin{tikzpicture}
\filldraw[white] (0,0) circle (1mm);
\draw[-,thin] (0,-0.1) to[out=180,in=-90] (-.1,0);
\draw[-,thin] (-0.1,0) to[out=90,in=180] (0,0.1);
\draw[->,thin] (0,0.1) to[out=0,in=90] (0.09,-0.04);
\draw[-,thin] (0.09,-0.02) to[out=-96,in=0] (0,-0.1);
\end{tikzpicture}}
\def\clockright{\begin{tikzpicture}[baseline=-.9mm]
\filldraw[white] (0,0) circle (1.72mm);
\draw[-,thin] (0,-0.18) to[out=180,in=-90] (-.18,0);
\draw[-,thin] (-0.18,0) to[out=90,in=180] (0,0.18);
\draw[->,thin] (0,0.18) to[out=0,in=90] (0.18,0);
\draw[-,thin] (0.178,0.02) to[out=-78,in=0] (0,-0.18);
\end{tikzpicture}}
\def\clockplus{\begin{tikzpicture}[baseline=-.9mm]
\filldraw[white] (0,0) circle (1.72mm);
\draw[-,thin] (0,-0.18) to[out=180,in=-90] (-.18,0);
\draw[-,thin] (-0.18,0) to[out=90,in=180] (0,0.18);
\draw[->,thin] (0,0.18) to[out=0,in=90] (0.18,0);
\draw[-,thin] (0.178,0.02) to[out=-78,in=0] (0,-0.18);
\node at (0,0.01) {$+$};
\end{tikzpicture}}
\def\clockminus{\begin{tikzpicture}[baseline=-.9mm]
\filldraw[white] (0,0) circle (1.72mm);
\draw[-,thin] (0,-0.18) to[out=180,in=-90] (-.18,0);
\draw[-,thin] (-0.18,0) to[out=90,in=180] (0,0.18);
\draw[->,thin] (0,0.18) to[out=0,in=90] (0.18,0);
\draw[-,thin] (0.178,0.02) to[out=-78,in=0] (0,-0.18);
\node at (0,0.01) {$-$};
\end{tikzpicture}}
\def\clockplusminus{\begin{tikzpicture}[baseline=-.9mm]
\filldraw[white] (0,0) circle (1.72mm);
\draw[-,thin] (0,-0.18) to[out=180,in=-90] (-.18,0);
\draw[-,thin] (-0.18,0) to[out=90,in=180] (0,0.18);
\draw[->,thin] (0,0.18) to[out=0,in=90] (0.18,0);
\draw[-,thin] (0.178,0.02) to[out=-78,in=0] (0,-0.18);
\node at (0,0.01) {$\pm$};
\end{tikzpicture}}
\def\clocktop{\begin{tikzpicture}[baseline=-.9mm]
\filldraw[white] (0,0) circle (1.72mm);
\draw[-,thin] (0,-0.18) to[out=180,in=-90] (-.18,0);
\draw[->,thin] (-0.18,0) to[out=90,in=180] (0,0.18);
\draw[-,thin] (-0.02,0.178) to[out=12,in=90] (0.18,0);
\draw[-,thin] (0.18,0) to[out=-90,in=0] (0,-0.18);
\end{tikzpicture}}
\def\anticlockright{\begin{tikzpicture}[baseline=-.9mm]
\filldraw[white] (0,0) circle (1.72mm);
\draw[-,thin] (0,-0.18) to[out=180,in=-90] (-.18,0);
\draw[-,thin] (-0.18,0) to[out=90,in=180] (0,0.18);
\draw[-,thin] (0,0.18) to[out=0,in=78] (0.178,-0.02);
\draw[<-,thin] (0.18,0) to[out=-90,in=0] (0,-0.18);
\end{tikzpicture}}
\def\anticlockleft{\begin{tikzpicture}[baseline=-.9mm]
\filldraw[white] (0,0) circle (1.72mm);
\draw[-,thin] (0,-0.18) to[out=180,in=-102] (-.178,0.02);
\draw[<-,thin] (-0.18,0) to[out=90,in=180] (0,0.18);
\draw[-,thin] (0.18,0) to[out=-90,in=0] (0,-0.18);
\draw[-,thin] (0,0.18) to[out=0,in=90] (0.18,0);
\end{tikzpicture}}
\def\anticlockplus{\begin{tikzpicture}[baseline=-.9mm]
\filldraw[white] (0,0) circle (1.72mm);
\draw[-,thin] (0,-0.18) to[out=180,in=-102] (-.178,0.02);
\draw[<-,thin] (-0.18,0) to[out=90,in=180] (0,0.18);
\draw[-,thin] (0.18,0) to[out=-90,in=0] (0,-0.18);
\draw[-,thin] (0,0.18) to[out=0,in=90] (0.18,0);
\node at (0,0.01) {$+$};
\end{tikzpicture}}
\def\anticlockminus{\begin{tikzpicture}[baseline=-.9mm]
\filldraw[white] (0,0) circle (1.72mm);
\draw[-,thin] (0,-0.18) to[out=180,in=-102] (-.178,0.02);
\draw[<-,thin] (-0.18,0) to[out=90,in=180] (0,0.18);
\draw[-,thin] (0.18,0) to[out=-90,in=0] (0,-0.18);
\draw[-,thin] (0,0.18) to[out=0,in=90] (0.18,0);
\node at (0,0.01) {$-$};
\end{tikzpicture}}
\def\anticlockplusminus{\begin{tikzpicture}[baseline=-.9mm]
\filldraw[white] (0,0) circle (1.72mm);
\draw[-,thin] (0,-0.18) to[out=180,in=-102] (-.178,0.02);
\draw[<-,thin] (-0.18,0) to[out=90,in=180] (0,0.18);
\draw[-,thin] (0.18,0) to[out=-90,in=0] (0,-0.18);
\draw[-,thin] (0,0.18) to[out=0,in=90] (0.18,0);
\node at (0,0.01) {$\pm$};
\end{tikzpicture}}
\def\posleft{\mathord{\begin{tikzpicture}[baseline = 0]
	\draw[thin,<-] (-0.18,-.1) to (0.18,.3);
	\draw[-,white,line width=4pt] (0.18,-.1) to (-0.18,.3);
	\draw[->,thin] (0.18,-.1) to (-0.18,.3);
\end{tikzpicture}}}
\def\negleft{\mathord{\begin{tikzpicture}[baseline = 0]
	\draw[->,thin] (0.18,-.1) to (-0.18,.3);
		\draw[-,white,line width=4pt] (-0.18,-.1) to (0.18,.3);
	\draw[thin,<-] (-0.18,-.1) to (0.18,.3);
\end{tikzpicture}}}
\def\rightcup{\:\begin{tikzpicture}[baseline = .5mm]
	\draw[<-,thin] (0.35,0.3) to[out=-90, in=0] (0.1,0);
	\draw[-,thin] (0.1,0) to[out = 180, in = -90] (-0.15,0.3);
\end{tikzpicture}\:}
\def\rightcap{\:\begin{tikzpicture}[baseline = .5mm]
	\draw[<-,thin] (0.35,0) to[out=90, in=0] (0.1,0.3);
	\draw[-,thin] (0.1,0.3) to[out = 180, in = 90] (-0.15,0);
\end{tikzpicture}\:}
\def\leftcup{\:\begin{tikzpicture}[baseline = .5mm]
	\draw[-,thin] (0.35,0.3) to[out=-90, in=0] (0.1,0);
	\draw[->,thin] (0.1,0) to[out = 180, in = -90] (-0.15,0.3);
\end{tikzpicture}\:}
\def\leftcap{\:\begin{tikzpicture}[baseline = .5mm]
	\draw[-,thin] (0.35,0) to[out=90, in=0] (0.1,0.3);
	\draw[->,thin] (0.1,0.3) to[out = 180, in = 90] (-0.15,0);
\end{tikzpicture}\:}
\def\dot{{\color{white}\bullet}\!\!\!\circ}
\definecolor{darkblue}{HTML}{000000}
\def\red#1{{\color{red} #1}}
\def\blue#1{{\color{blue} #1}}
\def\up{\uparrow}
\def\down{\downarrow}
\newtheorem{theorem}{Theorem}[section]
\newtheorem{lemma}[theorem]{Lemma}
\newtheorem{corollary}[theorem]{Corollary} 
\theoremstyle{definition}  
\newtheorem{definition}[theorem]{Definition}
\newtheorem{remark}[theorem]{Remark}
\def\Uq{U_{\!q}}
\def\coind{\operatorname{coind}}
\def\ind{\operatorname{ind}}
\def\res{\operatorname{res}}
\def\e{e}
\def\h{h}
\def\tr{\operatorname{tr}}
\def\Sym{\operatorname{Sym}}
\def\SG{\mathfrak{S}}
\def\HEIS{\mathcal{H}eis}
\def\H{\mathcal{H}}
\def\A{\mathbb{O}}
\def\B{\widetilde{\mathbb{O}}}
\def\AH{\mathcal{AH}}
\def\OS{\mathcal{OS}}
\def\AOS{\mathcal{AOS}}
\newcommand{\End}{\operatorname{End}}
\newcommand{\unit}{\mathds{1}}
\def\pr{\operatorname{pr}}
\newcommand{\Add}{\operatorname{Add}}
\def\Kar{\operatorname{Kar}}
\newcommand{\Hom}{\operatorname{Hom}} 
\newcommand{\Mod}{\operatorname{\!-mod}}
\renewcommand{\mod}{\operatorname{\!-mod}}
\newcommand{\proj}{\operatorname{\!-pmod}}
\newcommand{\id}{\text{id}}
\newcommand{\Z}{\mathbb{Z}}
\newcommand{\N}{\mathbb{N}}
\newcommand{\Q}{\mathbb{Q}}
\newcommand{\K}{\mathbb{K}}
\newcommand{\eps}{\varepsilon}
\newcommand{\Cherednik}{\mathsf{H}}
\newcommand{\KZ}{\mathsf{KZ}}
\newcommand{\cO}{\mathcal{O}}
\renewcommand{\k}{\Bbbk}
\def\eta{\operatorname{flip}}
\begin{document}

\title[Quantum Heisenberg category]{\boldmath On the definition of 
  quantum Heisenberg category}

\author[J.~Brundan]{Jonathan Brundan}
\address{Department of Mathematics\\
University of Oregon\\ Eugene\\ OR 97403\\ USA}
\email{brundan@uoregon.edu}

\author[A.~Savage]{Alistair Savage}
\address{
  Department of Mathematics and Statistics \\
  University of Ottawa\\
  Ottawa, ON\\ Canada
}
\urladdr{\href{http://alistairsavage.ca}{alistairsavage.ca}, \textrm{\textit{ORCiD}:} \href{https://orcid.org/0000-0002-2859-0239}{orcid.org/0000-0002-2859-0239}}
\email{alistair.savage@uottawa.ca}

\author[B.~Webster]{Ben Webster}
\address{Department of Pure Mathematics, University of Waterloo \&
  Perimeter Institute for Theoretical Physics\\
Waterloo, ON\\ Canada}
\email{ben.webster@uwaterloo.ca}

\thanks{2010 {\it Mathematics Subject Classification}: 17B10, 18D10.}
\thanks{
This article is based upon work done
 while the first author was in
residence at the Mathematical Sciences Research Institute in Berkeley,
California during the Spring 2018 semester, which was supported 
by the National Science Foundation 
grant DMS-1440140.
The first author was also supported by DMS-1700905.
The second and third authors were supported by Discovery Grants from
the Natural Sciences and Engineering Research Council of Canada.  The
research of the third author was also supported in part by Perimeter Institute for Theoretical Physics. Research at Perimeter Institute is supported by the Government of Canada through the Department of Innovation, Science and Economic Development Canada and by the Province of Ontario through the Ministry of Research, Innovation and Science.
}

\begin{abstract}
We introduce a diagrammatic
monoidal category $\HEIS_k(z,t)$
which we call the \emph{quantum Heisenberg category};
here, $k \in \mathbb{Z}$ is ``central charge'' and $z$ and $t$ are invertible
parameters.
Special cases were known before: for central charge $k=-1$ and parameters
$z = q-q^{-1}$ and $t =
-z^{-1}$
our quantum Heisenberg category may be obtained from
the deformed version of Khovanov's Heisenberg category introduced by Licata and
Savage by inverting its polynomial generator, while
$\HEIS_0(z,t)$ is the affinization of the
HOMFLY-PT skein category.
We also prove a basis theorem for the morphism spaces in $\HEIS_k(z,t)$.
\end{abstract}

\maketitle  
\section{Introduction}

Fix a commutative ground ring $\k$ and parameters $z,t \in \k^\times$.
This paper introduces a family of pivotal monoidal categories 
$\HEIS_k(z,t)$, one for each {\em central charge} $k \in \Z$.
We refer to these categories as {\em quantum Heisenberg categories}.
The terminology is due to a connection to Khovanov's Heisenberg
category from \cite{K}: our category for central charge $k=-1$ is a
two parameter deformation of the category from {\em loc.\ cit.}, and is closely
related to the one parameter deformation introduced already 
by Licata and the second author in \cite{LS}.
The category $\HEIS_0(z,t)$ has also already appeared in the literature: it is the
{\em affine HOMFLY-PT skein category} from
\cite[$\S$4]{Bskein}.
For more general central charges, our categories are new. They were
discovered by mimicking the approach of \cite{Bheis}, where the definition of the degenerate Heisenberg
categories introduced in \cite{MS} was reformulated.

In fact, we will give three different monoidal presentations of $\HEIS_k(z,t)$. They all start from the affine Hecke
algebra $AH_n$ associated to the symmetric group $\SG_n$. It is convenient to
assemble these algebras for all $n\geq 0$ into a single monoidal category
$\AH(z)$. By definition, this is the strict $\k$-linear monoidal
category generated by one object $\up$ and two morphisms
$x:\up \rightarrow \up$ and
$\tau:\up\otimes \up \rightarrow \up\otimes \up$, subject to the
relations
\begin{align}
 \tau \circ (1_\up \otimes x) \circ \tau &= x \otimes 1_\up,\label{a}\\
 \tau\circ \tau &= z \tau + 1_{\up \otimes \up},\label{r1}\\
 (\tau \otimes 1_\up) \circ (1_\up \otimes \tau) \circ (\tau \otimes 1_\up) &= 
(1_\up \otimes \tau)  \circ (\tau \otimes 1_\up) \circ (1_\up \otimes \tau).\label{r2}
\end{align}
The second relation here implies that $\tau$ is invertible.
We also require that $x$ is invertible, i.e., there is another
generator $x^{-1}$ such that
\begin{equation}\label{dagger}
x \circ x^{-1} = x^{-1} \circ x =1_\up.
\end{equation}
Adopting the usual string calculus for strict monoidal
categories, we represent $\tau, \tau^{-1}, x$, and more generally $x^{\circ a}$ for any $a
\in \Z$,
by the
diagrams
\begin{align}
\tau &= \mathord{
\begin{tikzpicture}[baseline = -.5mm]
	\draw[->,thin,darkblue] (0.28,-.3) to (-0.28,.4);
	\draw[line width=4pt,white,-] (-0.28,-.3) to (0.28,.4);
	\draw[thin,darkblue,->] (-0.28,-.3) to (0.28,.4);
\end{tikzpicture}
}\:, &
\tau^{-1} &= 
\mathord{
\begin{tikzpicture}[baseline = -.5mm]
	\draw[thin,darkblue,->] (-0.28,-.3) to (0.28,.4);
	\draw[-,line width=4pt,white] (0.28,-.3) to (-0.28,.4);
	\draw[->,thin,darkblue] (0.28,-.3) to (-0.28,.4);
\end{tikzpicture}
}\:,
&
x &=
\mathord{
\begin{tikzpicture}[baseline = -1mm]
	\draw[<-,thin,darkblue] (0.08,.4) to (0.08,-.4);
      \node at (0.08,0) {$\dot$};
 \end{tikzpicture}
}\:,
&
x^{\circ a} &=
\mathord{
\begin{tikzpicture}[baseline = -1mm]
	\draw[<-,thin,darkblue] (0.08,.4) to (0.08,-.4);
      \node at (0.08,0) {$\dot$};
      \node at (0.28,0) {$\color{darkblue}\scriptstyle a$};
\end{tikzpicture}
}\:.
\end{align}
Then the relations (\ref{a})--(\ref{r2}) are equivalent to the following diagrammatic
relations:
\begin{align}
\mathord{
\begin{tikzpicture}[baseline = -.5mm]
	\draw[->,thin,darkblue] (0.28,-.3) to (-0.28,.4);
      \node at (0.165,-0.15) {$\dot$};
	\draw[line width=4pt,white,-] (-0.28,-.3) to (0.28,.4);
	\draw[thin,darkblue,->] (-0.28,-.3) to (0.28,.4);
\end{tikzpicture}
}&=\mathord{
\begin{tikzpicture}[baseline = -.5mm]
	\draw[thin,darkblue,->] (-0.28,-.3) to (0.28,.4);
	\draw[-,line width=4pt,white] (0.28,-.3) to (-0.28,.4);
	\draw[->,thin,darkblue] (0.28,-.3) to (-0.28,.4);
      \node at (-0.14,0.23) {$\dot$};
\end{tikzpicture}
}
\:,
&\mathord{
\begin{tikzpicture}[baseline = -.5mm]
	\draw[thin,darkblue,->] (-0.28,-.3) to (0.28,.4);
      \node at (-0.16,-0.15) {$\dot$};
	\draw[-,line width=4pt,white] (0.28,-.3) to (-0.28,.4);
	\draw[->,thin,darkblue] (0.28,-.3) to (-0.28,.4);
\end{tikzpicture}
}&= 
\mathord{
\begin{tikzpicture}[baseline = -.5mm]
	\draw[->,thin,darkblue] (0.28,-.3) to (-0.28,.4);
	\draw[line width=4pt,white,-] (-0.28,-.3) to (0.28,.4);
	\draw[thin,darkblue,->] (-0.28,-.3) to (0.28,.4);
      \node at (0.145,0.23) {$\dot$};
\end{tikzpicture}
}\:,
\label{rr3}\\
\mathord{
\begin{tikzpicture}[baseline = -.5mm]
	\draw[->,thin,darkblue] (0.28,-.3) to (-0.28,.4);
	\draw[line width=4pt,white,-] (-0.28,-.3) to (0.28,.4);
	\draw[thin,darkblue,->] (-0.28,-.3) to (0.28,.4);
\end{tikzpicture}
}&-\mathord{
\begin{tikzpicture}[baseline = -.5mm]
	\draw[thin,darkblue,->] (-0.28,-.3) to (0.28,.4);
	\draw[line width=4pt,white,-] (0.28,-.3) to (-0.28,.4);
	\draw[->,thin,darkblue] (0.28,-.3) to (-0.28,.4);
\end{tikzpicture}
}=
z\:\mathord{
\begin{tikzpicture}[baseline = -.5mm]
	\draw[->,thin,darkblue] (0.18,-.3) to (0.18,.4);
	\draw[->,thin,darkblue] (-0.18,-.3) to (-0.18,.4);
\end{tikzpicture}
}\:,\label{rr2}\\
\mathord{
\begin{tikzpicture}[baseline = -1mm]
	\draw[-,thin,darkblue] (0.28,-.6) to[out=90,in=-90] (-0.28,0);
	\draw[->,thin,darkblue] (-0.28,0) to[out=90,in=-90] (0.28,.6);
	\draw[-,line width=4pt,white] (-0.28,-.6) to[out=90,in=-90] (0.28,0);
	\draw[-,thin,darkblue] (-0.28,-.6) to[out=90,in=-90] (0.28,0);
	\draw[-,line width=4pt,white] (0.28,0) to[out=90,in=-90] (-0.28,.6);
	\draw[->,thin,darkblue] (0.28,0) to[out=90,in=-90] (-0.28,.6);
\end{tikzpicture}
}&=
\mathord{
\begin{tikzpicture}[baseline = -1mm]
	\draw[->,thin,darkblue] (0.18,-.6) to (0.18,.6);
	\draw[->,thin,darkblue] (-0.18,-.6) to (-0.18,.6);
\end{tikzpicture}
}
=
\mathord{
\begin{tikzpicture}[baseline = -1mm]
	\draw[->,thin,darkblue] (0.28,0) to[out=90,in=-90] (-0.28,.6);
	\draw[-,line width=4pt,white] (-0.28,0) to[out=90,in=-90] (0.28,.6);
	\draw[->,thin,darkblue] (-0.28,0) to[out=90,in=-90] (0.28,.6);
	\draw[-,thin,darkblue] (-0.28,-.6) to[out=90,in=-90] (0.28,0);
	\draw[-,line width=4pt,white] (0.28,-.6) to[out=90,in=-90] (-0.28,0);
	\draw[-,thin,darkblue] (0.28,-.6) to[out=90,in=-90] (-0.28,0);
\end{tikzpicture}
}\:,&
\mathord{
\begin{tikzpicture}[baseline = -1mm]
	\draw[->,thin,darkblue] (0.45,-.6) to (-0.45,.6);
        \draw[-,thin,darkblue] (0,-.6) to[out=90,in=-90] (-.45,0);
        \draw[-,line width=4pt,white] (-0.45,0) to[out=90,in=-90] (0,0.6);
        \draw[->,thin,darkblue] (-0.45,0) to[out=90,in=-90] (0,0.6);
	\draw[-,line width=4pt,white] (0.45,.6) to (-0.45,-.6);
	\draw[<-,thin,darkblue] (0.45,.6) to (-0.45,-.6);
\end{tikzpicture}
}
&=
\mathord{
\begin{tikzpicture}[baseline = -1mm]
	\draw[->,thin,darkblue] (0.45,-.6) to (-0.45,.6);
        \draw[-,line width=4pt,white] (0,-.6) to[out=90,in=-90] (.45,0);
        \draw[-,thin,darkblue] (0,-.6) to[out=90,in=-90] (.45,0);
        \draw[->,thin,darkblue] (0.45,0) to[out=90,in=-90] (0,0.6);
	\draw[-,line width=4pt,white] (0.45,.6) to (-0.45,-.6);
	\draw[<-,thin,darkblue] (0.45,.6) to (-0.45,-.6);
\end{tikzpicture}
}
\:.\:\:\:\label{rr0}
\end{align}
The affine Hecke algebra $AH_n$ itself may be identified with 
$\End_{\AH(z)}(\up^{\otimes n})$,
with its standard generators $x_i$ and $\tau_j$
coming from a dot on the $i$th string
and 
the positive crossing
of the $j$th and $(j+1)$th strings, 
respectively; our convention for this numbers strings $1,\dots,n$ from right
to left.  It is often convenient to assume (passing to a quadratic extension if necessary) that $\k$ contains a root $q$ of the quadratic equation $x^2-zx-1=0$, so that $z=q-q^{-1}$.
The quadratic relation in $AH_n$
may then be written as
$(\tau_j - q)(\tau_j+q^{-1}) = 0$. Such a choice of parameter $q$ is not needed in sections \ref{first}--\ref{third}, but is essential for the applications in sections~\ref{qgln}--\ref{sbasis}.

To obtain the quantum Heisenberg category $\HEIS_k(z,t)$ from $\AH(z)$,
we 
adjoin a right dual $\down$ to the object $\up$, i.e., we 
add an additional generating object $\down$ and additional generating
morphisms
\[
c = 
\mathord{
\begin{tikzpicture}[baseline = 1mm]
	\draw[<-,thin,darkblue] (0.4,0.4) to[out=-90, in=0] (0.1,0);
	\draw[-,thin,darkblue] (0.1,0) to[out = 180, in = -90] (-0.2,0.4);
\end{tikzpicture}
}\;:\unit \rightarrow \down \otimes \up
\quad \text{and} \quad
d=\mathord{
\begin{tikzpicture}[baseline = 1mm]
	\draw[<-,thin,darkblue] (0.4,0) to[out=90, in=0] (0.1,0.4);
	\draw[-,thin,darkblue] (0.1,0.4) to[out = 180, in = 90] (-0.2,0);
\end{tikzpicture}
}\;:\;\up\otimes \down \rightarrow \unit
\]
subject to the relations
\begin{align}
\mathord{
\begin{tikzpicture}[baseline = -.5]
  \draw[->,thin,darkblue] (0.3,0) to (0.3,.4);
	\draw[-,thin,darkblue] (0.3,0) to[out=-90, in=0] (0.1,-0.4);
	\draw[-,thin,darkblue] (0.1,-0.4) to[out = 180, in = -90] (-0.1,0);
	\draw[-,thin,darkblue] (-0.1,0) to[out=90, in=0] (-0.3,0.4);
	\draw[-,thin,darkblue] (-0.3,0.4) to[out = 180, in =90] (-0.5,0);
  \draw[-,thin,darkblue] (-0.5,0) to (-0.5,-.4);
\end{tikzpicture}
}
&=
\mathord{\begin{tikzpicture}[baseline=-.5mm]
  \draw[->,thin,darkblue] (0,-0.4) to (0,.4);
\end{tikzpicture}
}\:,
&\mathord{
\begin{tikzpicture}[baseline = -.5]
  \draw[->,thin,darkblue] (0.3,0) to (0.3,-.4);
	\draw[-,thin,darkblue] (0.3,0) to[out=90, in=0] (0.1,0.4);
	\draw[-,thin,darkblue] (0.1,0.4) to[out = 180, in = 90] (-0.1,0);
	\draw[-,thin,darkblue] (-0.1,0) to[out=-90, in=0] (-0.3,-0.4);
	\draw[-,thin,darkblue] (-0.3,-0.4) to[out = 180, in =-90] (-0.5,0);
  \draw[-,thin,darkblue] (-0.5,0) to (-0.5,.4);
\end{tikzpicture}
}
&=
\mathord{\begin{tikzpicture}[baseline=-.5]
  \draw[<-,thin,darkblue] (0,-0.4) to (0,.4);
\end{tikzpicture}
}\:.\label{rightadj}
\end{align}
Then we add several more 
generating morphisms
subject to relations which ensure that the resulting monoidal category is
strictly pivotal, and moreover that there is a distinguished isomorphism
$\up\otimes
\down\:\cong\:\down\otimes\up \:\oplus\: \unit^{\oplus k}$
if $k \geq 0$ or
$ \up\otimes \down \;\oplus\:
\unit^{\oplus(-k)}\cong\: \down\otimes \up $ if $k \leq 0$.
There are various equivalent
ways to accomplish this in practice; see sections~2--4.
In these sections, we establish the
equivalence of the three approaches, and record many
other useful 
relations which follow from the
defining ones, including the property already mentioned that
$\HEIS_k(z,t)$ admits a strictly pivotal structure.
%We also construct an important symmetry
%\begin{equation}
%\Omega_k:\HEIS_k(z,t)\stackrel{\sim}{\rightarrow}
%\HEIS_{-k}(z,t^{-1})^{\operatorname{op}}
%\end{equation}
%which reflects diagrams in a horizontal plane and multiplies by a sign
%$(-1)^{x+y}$, where $x$ is the number of crossings and $y$ is the
%number of leftward cups and caps.

In this paragraph, we explain the approach from section 4 
in the special case
$k=-1$.
According to Definition~\ref{def3} and (\ref{posalter}),
$\HEIS_{-1}(z,t)$ is the strict $\k$-linear monoidal category
generated by objects 
$\up,\down$ and morphisms
\[
    \begin{tikzpicture}[baseline = -.5mm]
	    \draw[->,thin,darkblue] (0.28,-.3) to (-0.28,.4);
	    \draw[line width=4pt,white,-] (-0.28,-.3) to (0.28,.4);
	    \draw[thin,darkblue,->] (-0.28,-.3) to (0.28,.4);
    \end{tikzpicture}
    ,\quad
    \begin{tikzpicture}[baseline = -.5mm]
	    \draw[thin,darkblue,->] (-0.28,-.3) to (0.28,.4);
	    \draw[-,line width=4pt,white] (0.28,-.3) to (-0.28,.4);
	    \draw[->,thin,darkblue] (0.28,-.3) to (-0.28,.4);
    \end{tikzpicture}
    ,\quad
    \begin{tikzpicture}[baseline = 1mm]
	    \draw[<-,thin,darkblue] (0.4,0) to[out=90, in=0] (0.1,0.4);
	    \draw[-,thin,darkblue] (0.1,0.4) to[out = 180, in = 90] (-0.2,0);
    \end{tikzpicture}
    \ ,\quad
    \begin{tikzpicture}[baseline = 1mm]
	    \draw[<-,thin,darkblue] (0.4,0.4) to[out=-90, in=0] (0.1,0);
	    \draw[-,thin,darkblue] (0.1,0) to[out = 180, in = -90] (-0.2,0.4);
    \end{tikzpicture}
    \ ,\quad
    \begin{tikzpicture}[baseline = 1mm]
	    \draw[-,thin,darkblue] (0.4,0) to[out=90, in=0] (0.1,0.4);
	    \draw[->,thin,darkblue] (0.1,0.4) to[out = 180, in = 90] (-0.2,0);
    \end{tikzpicture}
    \quad \text{and} \quad
    \begin{tikzpicture}[baseline = 1mm]
	    \draw[-,thin,darkblue] (0.4,0.4) to[out=-90, in=0] (0.1,0);
	    \draw[->,thin,darkblue] (0.1,0) to[out = 180, in = -90] (-0.2,0.4);
    \end{tikzpicture}
\]
subject to (\ref{rr2})--(\ref{rightadj}),
the relations
\begin{align*}
\qquad\quad
\mathord{
\begin{tikzpicture}[baseline=-.5mm]
\draw[-,thin,darkblue] (.5,.8) to[out=-90,in=90] (.5,.6);
\draw[-,thin,darkblue] (.5,.6) to[out=-90,in=0] (.3,.2);
\draw[-,thin,darkblue] (.3,.2) to[out=180,in=0] (-.3,.6);
\draw[-,thin,darkblue] (-.3,.6) to[out=180,in=90] (-.5,.4);
\draw[-,thin,darkblue] (-.5,.4) to[out=-90,in=90] (-.5,-.4);
\draw[-,thin,darkblue] (-.5,-.4) to[out=-90,in=180] (-.3,-.6);
\draw[-,thin,darkblue] (.3,-.2) to[out=0,in=90] (.5,-.4);
\draw[->,thin,darkblue] (.5,-.4) to[out=-90,in=90] (.5,-.8);
\draw[-,thin,darkblue] (.2,-.8) to[out=90,in=-90] (-.2,0);
\draw[-,line width=4pt,white] (-.2,0) to[out=90,in=-90] (.2,0.8);
\draw[->,thin,darkblue] (-.2,0) to[out=90,in=-90] (.2,0.8);
\draw[-,line width=4pt,white] (-.3,-.6) to[out=0,in=180] (.3,-.2);
\draw[-,thin,darkblue] (-.3,-.6) to[out=0,in=180] (.3,-.2);
\end{tikzpicture}
}
&=\mathord{
\begin{tikzpicture}[baseline = 0]
	\draw[<-,thin,darkblue] (0.08,-.6) to (0.08,.6);
	\draw[->,thin,darkblue] (-0.28,-.6) to (-0.28,.6);
\end{tikzpicture}
}\:,&
\mathord{
\begin{tikzpicture}[baseline=-.5mm]
\draw[-,thin,darkblue] (-.5,.8) to[out=-90,in=90] (-.5,.6);
\draw[-,thin,darkblue] (-.5,.6) to[out=-90,in=180] (-.3,.2);
\draw[-,thin,darkblue] (.3,.6) to[out=0,in=90] (.5,.4);
\draw[-,thin,darkblue] (.5,.4) to[out=-90,in=90] (.5,-.4);
\draw[-,thin,darkblue] (.5,-.4) to[out=-90,in=0] (.3,-.6);
\draw[-,thin,darkblue] (-.3,-.2) to[out=180,in=90] (-.5,-.4);
\draw[->,thin,darkblue] (-.5,-.4) to[out=-90,in=90] (-.5,-.8);
\draw[-,thin,darkblue] (.3,-.6) to[out=180,in=0] (-.3,-.2);
\draw[->,thin,darkblue] (.2,0) to[out=90,in=-90] (-.2,0.8);
\draw[-,line width=4pt,white] (-.2,-.8) to[out=90,in=-90] (.2,0);
\draw[-,thin,darkblue] (-.2,-.8) to[out=90,in=-90] (.2,0);
\draw[-,line width=4pt,white] (-.3,.2) to[out=0,in=180] (.3,.6);
\draw[-,thin,darkblue] (-.3,.2) to[out=0,in=180] (.3,.6);
\end{tikzpicture}
}
&=
\mathord{
\begin{tikzpicture}[baseline = 0]
	\draw[->,thin,darkblue] (0.08,-.6) to (0.08,.6);
	\draw[<-,thin,darkblue] (-0.28,-.6) to (-0.28,.6);
\end{tikzpicture}
}
+tz
\mathord{
\begin{tikzpicture}[baseline=-.5mm]
	\draw[<-,thin,darkblue] (0.3,0.6) to[out=-90, in=0] (0,.1);
	\draw[-,thin,darkblue] (0,.1) to[out = 180, in = -90] (-0.3,0.6);
	\draw[-,thin,darkblue] (0.3,-.6) to[out=90, in=0] (0,-0.1);
	\draw[->,thin,darkblue] (0,-0.1) to[out = 180, in = 90] (-0.3,-.6);
\end{tikzpicture}}\:,
&\mathord{
\begin{tikzpicture}[baseline = -0.5mm]
	\draw[<-,thin,darkblue] (0,0.6) to (0,0.3);
	\draw[-,thin,darkblue] (-0.3,-0.2) to [out=180,in=-90](-.5,0);
	\draw[-,thin,darkblue] (-0.5,0) to [out=90,in=180](-.3,0.2);
	\draw[-,thin,darkblue] (-0.3,.2) to [out=0,in=90](0,-0.3);
	\draw[-,thin,darkblue] (0,-0.3) to (0,-0.6);
	\draw[-,line width=4pt,white] (0,0.3) to [out=-90,in=0] (-.3,-0.2);
	\draw[-,thin,darkblue] (0,0.3) to [out=-90,in=0] (-.3,-0.2);
\end{tikzpicture}
}&=0,&
\mathord{
\begin{tikzpicture}[baseline = 1.25mm]
  \draw[->,thin,darkblue] (0.2,0.2) to[out=90,in=0] (0,.4);
  \draw[-,thin,darkblue] (0,0.4) to[out=180,in=90] (-.2,0.2);
\draw[-,thin,darkblue] (-.2,0.2) to[out=-90,in=180] (0,0);
  \draw[-,thin,darkblue] (0,0) to[out=0,in=-90] (0.2,0.2);
\end{tikzpicture}
}&= 
-t^{-1}z^{-1}1_\unit,
\end{align*}
and one more relation, which is equivalent to \eqref{dagger}.
We have {\em not} included the generating morphism $x$
since,
due to a special feature of the $k=-1$ case,
it can be recovered
from the other generators via the formula
\[
    x = 
    \begin{tikzpicture}[baseline = -.5mm]
	    \draw[->,thin,darkblue] (0.08,-.3) to (0.08,.4);
        \node at (0.08,0.05) {$\dot$};
    \end{tikzpicture}
    :=
    t\
    \begin{tikzpicture}[baseline = -0.5mm]
	    \draw[<-,thin,darkblue] (0,0.4) to (0,0.2);
    	\draw[-,thin,darkblue] (0,0.24) to [out=-90,in=180] (.2,-0.11);
    	\draw[-,thin,darkblue] (0.2,-0.11) to [out=0,in=-90](.35,0.04);
    	\draw[-,thin,darkblue] (0.35,0.04) to [out=90,in=0](.2,0.19);
    	\draw[-,thin,darkblue] (0,-0.16) to (0,-0.26);
    	\draw[-,line width=4pt,white] (0.2,.19) to [out=180,in=90](0,-0.16);
    	\draw[-,thin,darkblue] (0.2,.19) to [out=180,in=90](0,-0.16);
    \end{tikzpicture}
    -t^2\
    \begin{tikzpicture}[baseline = -0.5mm]
    	\draw[<-,thin,darkblue] (0,0.4) to (0,-0.3);
    \end{tikzpicture}
    \ .
\]
The relations in Definition~\ref{def3} which involve $x$ such as (\ref{rr3}) are consequences of the other
relations with one exception:
we must still impose 
that $x$ is
invertible, that is, relation \eqref{dagger}.
%\end{enumerate}

The deformed Heisenberg category ${\mathcal H}(q^2)$ introduced in \cite{LS} is
(the additive envelope of) the strict $\k$-linear monoidal category defined by
the same presentation as in the previous paragraph, with the parameters satisfying $tz=-1$,
but {\em without} the relation \eqref{dagger}. 
This follows easily on comparing our presentation with the one
in {\em loc.\ cit.}, using also the fact that our category is strictly
pivotal.
The generator $x$ denoted by a dot here is not the same as the morphism denoted by a dot in \cite{LS} (that is simply equal to the right curl); instead, our dot is the ``star dot" of \cite{CLLSS} (up to renormalization). The Hecke algebra generator $T=\mathord{
\begin{tikzpicture}[baseline = -.5mm]
	\draw[->,thin,darkblue] (0.2,-.2) to (-0.2,.3);
	\draw[thin,darkblue,->] (-0.2,-.2) to (0.2,.3);
\end{tikzpicture}
}$ from \cite[Definition 2.1]{LS} is related
to our $\tau$ by $T = q \tau$ (so that the quadratic relation becomes
$(T_j-q^2)(T_j+1) = 0$).
Also the generator $X$ appearing just before \cite[Lemma 3.8]{LS} is
our $-x$.
In fact, 
the category ${\mathcal H}(q^2)$
may be identified with the
monoidal subcategory of our category 
$\HEIS_{-1}(z,-z^{-1})$ consisting of all
objects and all
morphisms which do not 
involve negative powers of $x$.

For any $\k$-linear category $\mathcal C$, there is an associated strict $\k$-linear monoidal category $\mathcal{E}nd_\k(\mathcal C)$ 
consisting of $\k$-linear endofunctors and natural transformations.
Then one can consider ``representations'' of $\HEIS_k(z,t)$ by considering $\k$-linear monoidal functors into  $\mathcal{E}nd_\k(\mathcal C)$ for different choices of $\mathcal C$.  The motivation for the definition of $\HEIS_k(z,t)$ comes from the fact that
it acts in this way on other well-known categories appearing in
representation theory.
If $k = 0$ and $t=q^n$ then $\HEIS_k(z,t)$ acts on
representations of  $\Uq(\mathfrak{gl}_n)$, with the generating objects
$\up$ and $\down$ acting by tensoring
with the natural $\Uq(\mathfrak{gl}_n)$-module and its dual,
respectively;
see section~\ref{qgln}.
This action 
is an extension of the monoidal functor from 
the HOMFLY-PT skein category to the category of finite-dimensional
$\Uq(\mathfrak{gl}_n)$-modules constructed originally by Turaev \cite{Turaev1}.
If $k \neq 0$ then $\HEIS_k(z,t)$ acts on representations of the
cyclotomic Hecke algebras of level $|k|$ from \cite{AK}, with 
$\up$ and $\down$ acting by induction and restriction functors if $k <
0$, or vice versa if $k  > 0$;
see section~\ref{qcyclo}. 
When $k=-1$, this specializes to
the action of the deformed Heisenberg category
on modules over the usual (finite) Hecke algebras associated to
the symmetric groups
constructed already in \cite{LS}.
The action of $\HEIS_{-l}(z,t)$ on representations of cyclotomic Hecke algebras extends to an action on category $\mathcal O$ over the rational Cherednik algebras of type $\SG_n\wr \Z/l$ for all $n \geq 0$, with $\up$ and $\down$ acting by certain Bezrukavnikov-Etingof induction and restriction functors from \cite{BE}; see section~\ref{scherednik}.

We also prove a
basis theorem for the morphism spaces in
$\HEIS_k(z,t)$;
see section \ref{sbasis} for the precise statement.
In particular, our basis theorem implies that the {\em center} $\End_{\HEIS_k(z,t)}(\unit)$
of the quantum Heisenberg category
is the tensor product $\Sym \otimes \Sym$  
of {\em two} copies of the algebra of symmetric functions.
In the degenerate case studied in \cite{Bheis}, the basis theorem was proved by
treating the cases $k=0$ and $k \neq 0$ separately, appealing to results
from \cite{BCNR} and \cite{MS}; the proofs in {\em
  loc.\ cit.} ultimately exploited
analogs of the categorical actions mentioned above, on representations of degenerate cyclotomic Hecke algebras and representations of $\mathfrak{gl}_n(\mathbb{C})$, respectively.
In the quantum case, it is still possible
to prove the basis theorem when $k=0$ by such an argument, but for
non-zero $k$ the approach from \cite{MS} seems to be unmanageable due to
the larger center.
Instead, we prove the basis theorem here
by following
the technique developed in the
degenerate case in \cite[Theorem 6.4]{BSW1} (and earlier, 
in the context
of Kac-Moody 2-categories, in
\cite{Wunfurling}). It depends crucially on the
existence of an action of $\HEIS_k(z,t)$ on a ``sufficiently large''
module category, which is obtained
by choosing $l \gg 0$ then
taking the tensor product of actions of
$\HEIS_{-l}(z,t)$ and $\HEIS_{k+l}(z,1)$
on representations of suitably generic
cyclotomic Hecke algebras of levels $l$ and $k+l$, respectively.

The construction of this categorical tensor product involves a
remarkable monoidal functor from
$\HEIS_{k}(z,t)$ to a certain localization of the symmetric product 
\[
    \HEIS_{l}(z,u) \odot \HEIS_{m}(z,v)
\]
for $k=l+m$ and $t=uv$.
This functor is defined in section~\ref{scc} and is the quantum analog of the
categorical comultiplication from
\cite[Theorem 5.4]{BSW1}.
The particular tensor products exploited to prove the basis theorem are generic examples of {\em generalized cyclotomic quotients} of $\HEIS_k(z,t)$; see section~\ref{sgcq} for the general definition.
In fact, these $\k$-linear categories first appeared in 
 \cite[Proposition 5.6]{Wcanonical}, but in a rather different form;
the precise relationship between the categories of
{\em loc.\ cit.} and the ones here will be explained in  
\cite{BSW2}.

We have stopped short of proving any results about the {\em
  decategorification} of $\HEIS_k(z,t)$ here, but let us make some
remarks about this. There are two complementary points of view:
\begin{itemize}
\item
One can consider
the {\em Grothendieck ring} 
$K_0(\Kar(\HEIS_k(z,t)))$
of the additive Karoubi
envelope of $\HEIS_k(z,t)$.
For generic $z$ (i.e., when $q$ is not a root of unity), we expect that this is isomorphic to a
$\Z$-form
for a central reduction of the universal
enveloping algebra of the infinite-dimensional Heisenberg Lie
algebra, just as was established in the degenerate case in \cite[Theorem
1.1]{BSW1}.
However, there is a significant obstruction to proving this result in
the quantum case: we do not
know how to show that the split Grothendieck group $K_0(AH_n)$ of the
affine Hecke algebra is isomorphic to that of the finite Hecke
algebra.
\item
Alternatively, one can pass to the {\em trace} (or zeroth Hochschild homology).
In \cite{CLLSS}, this was computed already for the 
category ${\mathcal H}(q^2)$ of \cite{LS}, revealing an interesting connection to
the elliptic Hall algebra.
Using the basis theorem proved here, we expect it should be possible
to extend the calculations made in {\em loc. cit.} to give a description of the trace of the full category
$\HEIS_k(z,t)$ for all $k \in \Z$.
\end{itemize}

In the main body of the article,
proofs of all lemmas involving purely
diagrammatic manipulations 
have been omitted. However, we have attempted to give enough details
for the reader familiar with the analogous calculations
in the degenerate case from \cite[$\S$2]{Bheis} and \cite[$\S$5]{BSW1}
to be able to reconstruct the proofs. The authors are
currently preparing a sequel \cite{BSW3} in which we incorporate a (symmetric)
Frobenius algebra into the definition of $\HEIS_k(z,t)$, in a similar way
to the Frobenius Heisenberg categories defined in the degenerate case
in \cite{Savage}. We will include full proofs of all of the diagrammatic lemmas
in the more general Frobenius setting in this sequel.

\subsection*{Corrections to published version}  This version of the paper contains corrections of some errors present in the published version:
\begin{itemize}
    \item The second summation in Lemma~\ref{l2} was corrected.
    \item Some instances of ``left-hand" were changed to ``right-hand" in the proof of Theorem~\ref{comult}.
    \item Above \eqref{ind}, $\HEIS_{-l}(z,f_0^{-1})$ was changed to $\HEIS_{-l}(z,t)$.
    \item The phrase ``viewed as a module" was changed to ``viewed as module" above equation \ref{piglet}.
    \item Many occurrences of $\unit$ were changed to $1_\unit$ in the proofs of Lemma~\ref{clever} and Theorem~\ref{chemistry}.
    \item Once instance of the symbol $\otimes$ was changed to $\circ$ in the statement of Lemma~\ref{breakfast}.
\end{itemize}

\section{First approach}\label{first}

Before formulating our first definition of $\HEIS_k(z,t)$, let us make
some general remarks.
We refer to the relation (\ref{rr2}) as the {\em upward skein
  relation}.
Rotating it through $\pm 90^\circ$ or 180$^\circ$, one obtains three more
skein relations; for example, here is the {\em leftward skein
  relation}
\begin{equation}\label{leftwardsskein}
\mathord{
% [inline block 0: 51 envs, 24269 chars -> data_tex | \begin{tikzpicture}[baseline = -.5mm] 	\draw[thin,darkblue,<-] (-0.28,-.3) to (0.28,.4);...]

}
\right]
:\up \otimes \down \oplus 
\unit^{\oplus (-k)}
\rightarrow
 \down \otimes  \up&\text{if $k < 0$,}
\end{array}
\right.\end{align}
in $\Add(\HEIS_k(z,t))$ (where $\Add$ denotes the additive envelope).

\begin{definition}\label{def1}
The {\em quantum Heisenberg category} $\HEIS_k(z,t)$ is the strict
$\k$-linear monoidal category obtained from $\AH(z)$ by 
adjoining a right dual $\down$ to $\up$ as 
explained in the introduction,
together with the matrix entries of the following morphism
which we declare to be a two-sided inverse to 
the morphism (\ref{invrel}):
\begin{align}
\label{invrel2}
\left\{\begin{array}{rl}
\left[\:
\mathord{
\begin{tikzpicture}[baseline = 0]
	\draw[->,thin,darkblue] (0.28,-.3) to (-0.28,.4);
	\draw[-,line width=4pt,white] (-0.28,-.3) to (0.28,.4);
	\draw[<-,thin,darkblue] (-0.28,-.3) to (0.28,.4);
\end{tikzpicture}
}
\:\:\:
\mathord{
\begin{tikzpicture}[baseline = 1mm]
	\draw[-,thin,darkblue] (0.4,0.4) to[out=-90, in=0] (0.1,0);
	\draw[->,thin,darkblue] (0.1,0) to[out = 180, in = -90] (-0.2,0.4);
      \node at (0.12,-0.2) {$\color{darkblue}\scriptstyle{0}$};
      \node at (0.12,0.01) {$\diamond$};
\end{tikzpicture}
}
\:\:\:\cdots\:\:\:
\mathord{
\begin{tikzpicture}[baseline = 1mm]
	\draw[-,thin,darkblue] (0.4,0.4) to[out=-90, in=0] (0.1,0);
	\draw[->,thin,darkblue] (0.1,0) to[out = 180, in = -90] (-0.2,0.4);
      \node at (0.12,-0.2) {$\color{darkblue}\scriptstyle{k-1}$};
      \node at (0.12,0.01) {$\diamond$};
\end{tikzpicture}
}
\right]
:\down\otimes\up \oplus
\unit^{\oplus k}
\rightarrow
 \up \otimes  \down \hspace{4.5mm}
&\text{if $k \geq 0$},\\\\
\left[\!\!\!\!\!
\begin{array}{r}
\mathord{
\:\begin{tikzpicture}[baseline = 1mm]
	\draw[<-,thin,darkblue] (-0.28,-.3) to (0.28,.4);
	\draw[-,line width=4pt,white] (0.28,-.3) to (-0.28,.4);
	\draw[->,thin,darkblue] (0.28,-.3) to (-0.28,.4);
\end{tikzpicture}
}\\
\mathord{
\begin{tikzpicture}[baseline = 1mm]
	\draw[-,thin,darkblue] (0.4,0) to[out=90, in=0] (0.1,0.4);
	\draw[->,thin,darkblue] (0.1,0.4) to[out = 180, in = 90] (-0.2,0);
      \node at (0.12,0.6) {$\color{darkblue}\scriptstyle{0}$};
      \node at (0.12,0.37) {$\heart$};
\end{tikzpicture}
}
\\\vdots\:\:\:\\
\mathord{
\begin{tikzpicture}[baseline = 1mm]
	\draw[-,thin,darkblue] (0.4,0) to[out=90, in=0] (0.1,0.4);
	\draw[->,thin,darkblue] (0.1,0.4) to[out = 180, in = 90] (-0.2,0);
      \node at (0.12,0.6) {$\color{darkblue}\scriptstyle{-k-1}$};
      \node at (0.12,0.37) {$\heart$};
\end{tikzpicture}
}\!\!\!
\end{array}
\right]
:
\down \otimes \up \rightarrow
\up \otimes \down \oplus \unit^{\oplus(-k)}
&\text{if $k <
  0$}.
\end{array}
\right.\end{align}
We impose one more essential relation:
\begin{align}\label{impose}
\mathord{
\begin{tikzpicture}[baseline = .8mm]
  \draw[<-,thin,darkblue] (0.2,0.2) to[out=90,in=0] (0,.4);
  \draw[-,thin,darkblue] (0,0.4) to[out=180,in=90] (-.2,0.2);
\draw[-,thin,darkblue] (-.2,0.2) to[out=-90,in=180] (0,0);
  \draw[-,thin,darkblue] (0,0) to[out=0,in=-90] (0.2,0.2);
\end{tikzpicture}
}&= t z^{-1}1_\unit\:\text{if $k > 0$,}&
\clockright
&= (t z^{-1}-t^{-1}z^{-1}) 1_\unit\:\text{if $k = 0$,}
&\mathord{\begin{tikzpicture}[baseline = .8mm]
  \draw[-,thin,darkblue] (0.2,0.2) to[out=90,in=0] (0,.4);
  \draw[->,thin,darkblue] (0,0.4) to[out=180,in=90] (-.2,0.2);
\draw[-,thin,darkblue] (-.2,0.2) to[out=-90,in=180] (0,0);
  \draw[-,thin,darkblue] (0,0) to[out=0,in=-90] (0.2,0.2);
      \node at (0.2,0.2) {$\dot$};
      \node at (0.49,0.2) {$\color{darkblue}\scriptstyle{-k}$};
 \end{tikzpicture}
}
&= tz^{-1} 1_\unit\:\text{if $k < 0$,}
\end{align}
where the leftward cups and caps are defined by the formulas:
\begin{align}\label{leftwards}
\mathord{
\begin{tikzpicture}[baseline = 1mm]
	\draw[-,thin,darkblue] (0.4,0.4) to[out=-90, in=0] (0.1,0);
	\draw[->,thin,darkblue] (0.1,0) to[out = 180, in = -90] (-0.2,0.4);
 \end{tikzpicture}
}
&:=
\left\{
\begin{array}{cl}
-\displaystyle{t^{-1} z^{-1}}\:\mathord{
\begin{tikzpicture}[baseline = 0.5mm]
	\draw[-,thin,darkblue] (0.4,0.4) to[out=-90, in=0] (0.1,0);
	\draw[->,thin,darkblue] (0.1,0) to[out = 180, in = -90] (-0.2,0.4);
     \node at (-0.45,0.15) {$\color{darkblue}\scriptstyle{-1}$};
      \node at (-0.16,0.15) {$\dot$};
      \node at (0.16,-0.21) {$\color{darkblue}\scriptstyle{k-1}$};
      \node at (0.12,0.01) {$\diamond$};
\end{tikzpicture}
}
\hspace{3.5mm}&\!\!\!\text{if $k > 0$,}\\
t\mathord{
\begin{tikzpicture}[baseline = 0]
	\draw[<-,thin,darkblue] (-0.25,.6) to[out=300,in=90] (0.25,-0);
	\draw[-,thin,darkblue] (0.25,-0) to[out=-90, in=0] (0,-0.25);
	\draw[-,thin,darkblue] (0,-0.25) to[out = 180, in = -90] (-0.25,-0);
	\draw[-,line width=4pt,white] (0.25,.6) to[out=240,in=90] (-0.25,-0);
	\draw[-,thin,darkblue] (0.25,.6) to[out=240,in=90] (-0.25,-0);
\end{tikzpicture}
}
&\!\!\!\text{if $k = 0$,}\\
t^{-1}
\mathord{
\begin{tikzpicture}[baseline = 0]
	\draw[-,thin,darkblue] (0.25,.6) to[out=240,in=90] (-0.25,-0);
	\draw[-,thin,darkblue] (0.25,-0.) to[out=-90, in=0] (0,-0.25);
	\draw[-,thin,darkblue] (0,-0.25) to[out = 180, in = -90] (-0.25,-0);
	\draw[-,line width=4pt,white] (-0.25,.6) to[out=300,in=90] (0.25,-0);
	\draw[<-,thin,darkblue] (-0.25,.6) to[out=300,in=90] (0.25,-0);
      \node at (0.54,-0) {$\color{darkblue}\scriptstyle{-k}$};
      \node at (0.24,-0) {$\dot$};
\end{tikzpicture}
}
&\!\!\!\text{if $k <0$;}
\end{array}\right.
&\mathord{
\begin{tikzpicture}[baseline = 1mm]
	\draw[-,thin,darkblue] (0.4,0) to[out=90, in=0] (0.1,0.4);
	\draw[->,thin,darkblue] (0.1,0.4) to[out = 180, in = 90] (-0.2,0);
\end{tikzpicture}
}
&:=
\left\{
\begin{array}{cl}
t\mathord{
\begin{tikzpicture}[baseline = -1.5mm]
	\draw[-,thin,darkblue] (0.25,-.5) to[out=120,in=-90] (-0.25,0.1);
	\draw[-,line width=4pt,white] (-0.25,-.5) to[out=60,in=-90] (0.25,0.1);
	\draw[<-,thin,darkblue] (-0.25,-.5) to[out=60,in=-90] (0.25,0.1);
	\draw[-,thin,darkblue] (0.25,0.1) to[out=90, in=0] (0,0.35);
	\draw[-,thin,darkblue] (0,0.35) to[out = 180, in = 90] (-0.25,0.1);
      \node at (-0.42,0.05) {$\color{darkblue}\scriptstyle{k}$};
      \node at (-0.25,0.05) {$\dot$};
\end{tikzpicture}
}
&\text{if $k  \geq 0$,}\\
-\displaystyle{t^{-1}z^{-1}}\mathord{
\begin{tikzpicture}[baseline = 1mm]
	\draw[-,thin,darkblue] (0.4,0) to[out=90, in=0] (0.1,0.4);
	\draw[->,thin,darkblue] (0.1,0.4) to[out = 180, in = 90] (-0.2,0);
      \node at (0.12,0.6) {$\color{darkblue}\scriptstyle{0}$};
     \node at (0.12,0.37) {$\heart$};
\end{tikzpicture}}
&\text{if $k < 0$.}
\end{array}\right.
\end{align}
To complete the definition, we introduce a few more shorthands for
morphisms. We have already introduced one of the two leftward crossings;
define the other one so that the leftward skein relation (\ref{leftwardsskein})
holds.
Also set
\begin{align}
\mathord{
\begin{tikzpicture}[baseline = 1mm]
	\draw[-,thin,darkblue] (0.4,0.4) to[out=-90, in=0] (0.1,0);
	\draw[->,thin,darkblue] (0.1,0) to[out = 180, in = -90] (-0.2,0.4);
      \node at (0.12,-0.2) {$\color{darkblue}\scriptstyle{0}$};
      \node at (0.12,0.01) {$\heart$};
\end{tikzpicture}
}
&:=\mathord{
\begin{tikzpicture}[baseline = 1mm]
	\draw[-,thin,darkblue] (0.4,0.4) to[out=-90, in=0] (0.1,0);
	\draw[->,thin,darkblue] (0.1,0) to[out = 180, in = -90] (-0.2,0.4);
      \node at (0.12,-0.2) {$\color{darkblue}\scriptstyle{0}$};
      \node at (0.12,0.01) {$\diamond$};
\end{tikzpicture}
}+z
\mathord{
\begin{tikzpicture}[baseline = 0]
	\draw[<-,thin,darkblue] (-0.25,.6) to[out=300,in=90] (0.25,-0);
	\draw[-,thin,darkblue] (0.25,-0) to[out=-90, in=0] (0,-0.25);
	\draw[-,thin,darkblue] (0,-0.25) to[out = 180, in = -90] (-0.25,-0);
	\draw[-,line width=4pt,white] (0.25,.6) to[out=240,in=90] (-0.25,-0);
	\draw[-,thin,darkblue] (0.25,.6) to[out=240,in=90] (-0.25,-0);
\end{tikzpicture}
}\quad\text{if $k > 0$},
&\mathord{
\begin{tikzpicture}[baseline = 1mm]
	\draw[-,thin,darkblue] (0.4,0.4) to[out=-90, in=0] (0.1,0);
	\draw[->,thin,darkblue] (0.1,0) to[out = 180, in = -90] (-0.2,0.4);
      \node at (0.12,-0.2) {$\color{darkblue}\scriptstyle{a}$};
      \node at (0.12,0.01) {$\heart$};
\end{tikzpicture}
}
&:=\mathord{
\begin{tikzpicture}[baseline = 1mm]
	\draw[-,thin,darkblue] (0.4,0.4) to[out=-90, in=0] (0.1,0);
	\draw[->,thin,darkblue] (0.1,0) to[out = 180, in = -90] (-0.2,0.4);
      \node at (0.12,-0.2) {$\color{darkblue}\scriptstyle{a}$};
      \node at (0.12,0.01) {$\diamond$};
\end{tikzpicture}
}
\quad\text{if $0 < a < k$,}\label{nakano1}\\\label{nakano3}
\mathord{
\begin{tikzpicture}[baseline = 1mm]
	\draw[-,thin,darkblue] (0.4,0) to[out=90, in=0] (0.1,0.4);
	\draw[->,thin,darkblue] (0.1,0.4) to[out = 180, in = 90] (-0.2,0);
      \node at (0.12,0.6) {$\color{darkblue}\scriptstyle{0}$};
      \node at (0.12,0.4) {$\diamond$};
\end{tikzpicture}
}&:=
\mathord{
\begin{tikzpicture}[baseline = 1mm]
	\draw[-,thin,darkblue] (0.4,0) to[out=90, in=0] (0.1,0.4);
	\draw[->,thin,darkblue] (0.1,0.4) to[out = 180, in = 90] (-0.2,0);
      \node at (0.12,0.6) {$\color{darkblue}\scriptstyle{0}$};
      \node at (0.12,0.4) {$\heart$};
\end{tikzpicture}
}+z
\mathord{
\begin{tikzpicture}[baseline = -1.5mm]
	\draw[<-,thin,darkblue] (-0.25,-.5) to[out=60,in=-90] (0.25,0.1);
	\draw[-,line width=4pt,white] (0.25,-.5) to[out=120,in=-90] (-0.25,0.1);
	\draw[-,thin,darkblue] (0.25,-.5) to[out=120,in=-90] (-0.25,0.1);
	\draw[-,thin,darkblue] (0.25,0.1) to[out=90, in=0] (0,0.35);
	\draw[-,thin,darkblue] (0,0.35) to[out = 180, in = 90] (-0.25,0.1);
\end{tikzpicture}
}\quad\text{if $k < 0$,}
&
\mathord{
\begin{tikzpicture}[baseline = 1mm]
	\draw[-,thin,darkblue] (0.4,0) to[out=90, in=0] (0.1,0.4);
	\draw[->,thin,darkblue] (0.1,0.4) to[out = 180, in = 90] (-0.2,0);
      \node at (0.12,0.6) {$\color{darkblue}\scriptstyle{a}$};
      \node at (0.12,0.4) {$\diamond$};
\end{tikzpicture}
}&:=
\mathord{
\begin{tikzpicture}[baseline = 1mm]
	\draw[-,thin,darkblue] (0.4,0) to[out=90, in=0] (0.1,0.4);
	\draw[->,thin,darkblue] (0.1,0.4) to[out = 180, in = 90] (-0.2,0);
      \node at (0.12,0.6) {$\color{darkblue}\scriptstyle{a}$};
      \node at (0.12,0.4) {$\heart$};
\end{tikzpicture}
}\quad\text{if $0 < a < -k$.}
\end{align}
Next, introduce the following {\em $(+)$-bubbles}
assuming $a \leq 0$:
\begin{align}
\mathord{\begin{tikzpicture}[baseline = .8mm]
  \draw[-,thin,darkblue] (0.2,0.2) to[out=90,in=0] (0,.4);
  \draw[->,thin,darkblue] (0,0.4) to[out=180,in=90] (-.2,0.2);
\draw[-,thin,darkblue] (-.2,0.2) to[out=-90,in=180] (0,0);
  \draw[-,thin,darkblue] (0,0) to[out=0,in=-90] (0.2,0.2);
      \node at (0,0.2) {$\color{darkblue}{+}$};
%      \node at (0.2,0.2) {$\dot$};
      \node at (0.3,0.2) {$\color{darkblue}\scriptstyle{a}$};
 \end{tikzpicture}
}
&:=
\left\{
\begin{array}{ll}
-\displaystyle{tz^{-1}}\mathord{\begin{tikzpicture}[baseline = -1mm]
  \draw[<-,thin,darkblue] (0.2,0.2) to[out=90,in=0] (0,.4);
  \draw[-,thin,darkblue] (0,0.4) to[out=180,in=90] (-.2,0.2);
\draw[-,thin,darkblue] (-.2,0.2) to[out=-90,in=180] (0,0);
  \draw[-,thin,darkblue] (0,0) to[out=0,in=-90] (0.2,0.2);
     \node at (-0.2,0.2) {$\dot$};
      \node at (-0.4,0.2) {$\color{darkblue}\scriptstyle{k}$};
      \node at (0,-0.17) {$\color{darkblue}\scriptstyle{-a}$};
      \node at (0,0.01) {$\diamond$};
 \end{tikzpicture}
}
&\text{if $a > -k$,}\\
\displaystyle{tz^{-1}}1_\unit&\text{if $a=-k$,}\\
0&\text{if $a < -k$;}
\end{array}\right.
&
\mathord{\begin{tikzpicture}[baseline = .8mm]
  \draw[<-,thin,darkblue] (0.2,0.2) to[out=90,in=0] (0,.4);
  \draw[-,thin,darkblue] (0,0.4) to[out=180,in=90] (-.2,0.2);
\draw[-,thin,darkblue] (-.2,0.2) to[out=-90,in=180] (0,0);
  \draw[-,thin,darkblue] (0,0) to[out=0,in=-90] (0.2,0.2);
      \node at (0,0.2) {$\color{darkblue}{+}$};
%      \node at (-0.2,0.2) {$\dot$};
      \node at (-0.3,0.2) {$\color{darkblue}\scriptstyle{a}$};
 \end{tikzpicture}
}
&:=
\left\{
\begin{array}{ll}
\displaystyle{t^{-1}z^{-1}}\mathord{\begin{tikzpicture}[baseline = 1.5mm]
  \draw[-,thin,darkblue] (0.2,0.2) to[out=90,in=0] (0,.4);
  \draw[->,thin,darkblue] (0,0.4) to[out=180,in=90] (-.2,0.2);
\draw[-,thin,darkblue] (-.2,0.2) to[out=-90,in=180] (0,0);
  \draw[-,thin,darkblue] (0,0) to[out=0,in=-90] (0.2,0.2);
     \node at (0.2,0.2) {$\dot$};
      \node at (0.43,0.2) {$\color{darkblue}\scriptstyle{-k}$};
      \node at (-.02,0.55) {$\color{darkblue}\scriptstyle{-a}$};
      \node at (0,0.4) {$\diamond$};
      \node at (0,-.02) {$\color{darkblue}\scriptstyle{\phantom.}$};
 \end{tikzpicture}
}
&\text{if $a > k$,}\\
- t^{-1}z^{-1} 1_\unit&\text{if $a=k$,}\\
0&\text{if $a < k$}.
\end{array}\right.\label{fake0}
\end{align}
Finally, define the $(+)$-bubbles with label $a > 0$ to be the usual bubbles
with $a$ dots:
\begin{align}
\mathord{\begin{tikzpicture}[baseline = -1mm]
  \draw[-,thin,darkblue] (0,0.2) to[out=180,in=90] (-.2,0);
  \draw[->,thin,darkblue] (0.2,0) to[out=90,in=0] (0,.2);
 \draw[-,thin,darkblue] (-.2,0) to[out=-90,in=180] (0,-0.2);
  \draw[-,thin,darkblue] (0,-0.2) to[out=0,in=-90] (0.2,0);
\node at (0,0) {$\color{darkblue}+$};
%      \node at (0.2,0) {$\dot$};
      \node at (0.3,0) {$\color{darkblue}\scriptstyle a$};
\end{tikzpicture}
}&:=
\mathord{\begin{tikzpicture}[baseline = -1mm]
  \draw[-,thin,darkblue] (0,0.2) to[out=180,in=90] (-.2,0);
  \draw[->,thin,darkblue] (0.2,0) to[out=90,in=0] (0,.2);
 \draw[-,thin,darkblue] (-.2,0) to[out=-90,in=180] (0,-0.2);
  \draw[-,thin,darkblue] (0,-0.2) to[out=0,in=-90] (0.2,0);
      \node at (0.2,0) {$\dot$};
      \node at (0.4,0) {$\color{darkblue}\scriptstyle a$};
\end{tikzpicture}
}\:,&
\mathord{\begin{tikzpicture}[baseline = -1mm]
  \draw[<-,thin,darkblue] (0,0.2) to[out=180,in=90] (-.2,0);
  \draw[-,thin,darkblue] (0.2,0) to[out=90,in=0] (0,.2);
 \draw[-,thin,darkblue] (-.2,0) to[out=-90,in=180] (0,-0.2);
  \draw[-,thin,darkblue] (0,-0.2) to[out=0,in=-90] (0.2,0);
\node at (0,0) {$\color{darkblue}+$};
%      \node at (-0.2,0) {$\dot$};
      \node at (-0.3,0) {$\color{darkblue}\scriptstyle a$};
\end{tikzpicture}
}&:=
\mathord{\begin{tikzpicture}[baseline = -1mm]
  \draw[<-,thin,darkblue] (0,0.2) to[out=180,in=90] (-.2,0);
  \draw[-,thin,darkblue] (0.2,0) to[out=90,in=0] (0,.2);
 \draw[-,thin,darkblue] (-.2,0) to[out=-90,in=180] (0,-0.2);
  \draw[-,thin,darkblue] (0,-0.2) to[out=0,in=-90] (0.2,0);
      \node at (-0.2,0) {$\dot$};
      \node at (-0.4,0) {$\color{darkblue}\scriptstyle a$};
\end{tikzpicture}
}\:.
\label{fake2}\\\intertext{Then define {\em $(-)$-bubbles}
for all
$a \in \Z$ by setting}
\label{fake1}
\mathord{\begin{tikzpicture}[baseline = -1mm]
  \draw[-,thin,darkblue] (0,0.2) to[out=180,in=90] (-.2,0);
  \draw[->,thin,darkblue] (0.2,0) to[out=90,in=0] (0,.2);
 \draw[-,thin,darkblue] (-.2,0) to[out=-90,in=180] (0,-0.2);
  \draw[-,thin,darkblue] (0,-0.2) to[out=0,in=-90] (0.2,0);
\node at (0,0) {$\color{darkblue}-$};
 %     \node at (0.2,0) {$\dot$};
      \node at (0.3,0) {$\color{darkblue}\scriptstyle a$};
\end{tikzpicture}
}&:=
\mathord{\begin{tikzpicture}[baseline = -1mm]
  \draw[-,thin,darkblue] (0,0.2) to[out=180,in=90] (-.2,0);
  \draw[->,thin,darkblue] (0.2,0) to[out=90,in=0] (0,.2);
 \draw[-,thin,darkblue] (-.2,0) to[out=-90,in=180] (0,-0.2);
  \draw[-,thin,darkblue] (0,-0.2) to[out=0,in=-90] (0.2,0);
      \node at (0.2,0) {$\dot$};
      \node at (0.4,0) {$\color{darkblue}\scriptstyle a$};
\end{tikzpicture}
}
-
\mathord{\begin{tikzpicture}[baseline = -1mm]
  \draw[-,thin,darkblue] (0,0.2) to[out=180,in=90] (-.2,0);
  \draw[->,thin,darkblue] (0.2,0) to[out=90,in=0] (0,.2);
 \draw[-,thin,darkblue] (-.2,0) to[out=-90,in=180] (0,-0.2);
  \draw[-,thin,darkblue] (0,-0.2) to[out=0,in=-90] (0.2,0);
\node at (0,0) {$\color{darkblue}+$};
%      \node at (0.2,0) {$\dot$};
      \node at (0.3,0) {$\color{darkblue}\scriptstyle a$};
\end{tikzpicture}
}\:,
&
\mathord{\begin{tikzpicture}[baseline = -1mm]
  \draw[<-,thin,darkblue] (0,0.2) to[out=180,in=90] (-.2,0);
  \draw[-,thin,darkblue] (0.2,0) to[out=90,in=0] (0,.2);
 \draw[-,thin,darkblue] (-.2,0) to[out=-90,in=180] (0,-0.2);
  \draw[-,thin,darkblue] (0,-0.2) to[out=0,in=-90] (0.2,0);
\node at (0,0) {$\color{darkblue}-$};
 %     \node at (-0.2,0) {$\dot$};
      \node at (-0.3,0) {$\color{darkblue}\scriptstyle a$};
\end{tikzpicture}
}&:=
\mathord{\begin{tikzpicture}[baseline = -1mm]
  \draw[<-,thin,darkblue] (0,0.2) to[out=180,in=90] (-.2,0);
  \draw[-,thin,darkblue] (0.2,0) to[out=90,in=0] (0,.2);
 \draw[-,thin,darkblue] (-.2,0) to[out=-90,in=180] (0,-0.2);
  \draw[-,thin,darkblue] (0,-0.2) to[out=0,in=-90] (0.2,0);
      \node at (-0.2,0) {$\dot$};
      \node at (-0.4,0) {$\color{darkblue}\scriptstyle a$};
\end{tikzpicture}
}
-
\mathord{\begin{tikzpicture}[baseline = -1mm]
  \draw[<-,thin,darkblue] (0,0.2) to[out=180,in=90] (-.2,0);
  \draw[-,thin,darkblue] (0.2,0) to[out=90,in=0] (0,.2);
 \draw[-,thin,darkblue] (-.2,0) to[out=-90,in=180] (0,-0.2);
  \draw[-,thin,darkblue] (0,-0.2) to[out=0,in=-90] (0.2,0);
\node at (0,0) {$\color{darkblue}+$};
%      \node at (-0.2,0) {$\dot$};
      \node at (-0.3,0) {$\color{darkblue}\scriptstyle a$};
\end{tikzpicture}
}
\:.
\end{align}
\end{definition} 

\vspace{2mm}

In the case $k=0$, the assertion that (\ref{invrel}) and
(\ref{invrel2}) are two-sided inverses means that
\begin{align}\label{lunch}
\mathord{
\begin{tikzpicture}[baseline = 0]
	\draw[->,thin,darkblue] (-0.28,0) to[out=90,in=-90] (0.28,.6);
	\draw[-,thin,darkblue] (0.28,-.6) to[out=90,in=-90] (-0.28,0);
	\draw[-,line width=4pt,white] (0.28,0) to[out=90,in=-90] (-0.28,.6);
	\draw[-,line width=4pt,white] (-0.28,-.6) to[out=90,in=-90] (0.28,0);
	\draw[-,thin,darkblue] (0.28,0) to[out=90,in=-90] (-0.28,.6);
	\draw[<-,thin,darkblue] (-0.28,-.6) to[out=90,in=-90] (0.28,0);
\end{tikzpicture}
}
&=\mathord{
\begin{tikzpicture}[baseline = 0]
	\draw[->,thin,darkblue] (0.08,-.6) to (0.08,.6);
	\draw[<-,thin,darkblue] (-0.28,-.6) to (-0.28,.6);
 \end{tikzpicture}}
\quad\text{if $k=0$,}&
\mathord{
\begin{tikzpicture}[baseline = 0]
	\draw[-,thin,darkblue] (-0.28,-.6) to[out=90,in=-90] (0.28,0);
	\draw[->,thin,darkblue] (0.28,0) to[out=90,in=-90] (-0.28,.6);
	\draw[-,line width=4pt,white] (-0.28,0) to[out=90,in=-90] (0.28,.6);
	\draw[-,line width=4pt,white] (0.28,-.6) to[out=90,in=-90] (-0.28,0);
	\draw[-,thin,darkblue] (-0.28,0) to[out=90,in=-90] (0.28,.6);
	\draw[<-,thin,darkblue] (0.28,-.6) to[out=90,in=-90] (-0.28,0);
\end{tikzpicture}
}
&
=\mathord{
\begin{tikzpicture}[baseline = 0]
	\draw[<-,thin,darkblue] (0.08,-.6) to (0.08,.6);
	\draw[->,thin,darkblue] (-0.28,-.6) to (-0.28,.6);
 \end{tikzpicture}}\quad\text{if $k=0$.}
\end{align}
In fact, the defining relations for $\HEIS_0(z,t)$ from Definition~\ref{def1} are
exactly the same as the ones for the affine HOMFLY-PT
skein category $\AOS(z,t)$ from \cite[Theorem 1.1 and $\S$4]{Bskein}.
Thus,
\[
    \HEIS_0(z,t) = \AOS(z,t).
\]
In this case, most 
of the other relations that we
need have already been proved in {\em
  loc.\ cit.}. 
However, the arguments there exploit a theorem of Turaev
\cite[Lemma I.3.3]{Turaev1} to 
establish all of the relations that
do not involve dots; the approach described below reproves all of these relations
in a way that is indendent of Turaev's work.

When $k > 0$, the assertion that the morphisms (\ref{invrel}) and
(\ref{invrel2}) are two-sided inverses implies the following relations:
\begin{align}\label{tea1}
\mathord{ 
\begin{tikzpicture}[baseline = 0]
	\draw[->,thin,darkblue] (-0.28,0) to[out=90,in=-90] (0.28,.6);
	\draw[-,thin,darkblue] (0.28,-.6) to[out=90,in=-90] (-0.28,0);
	\draw[-,line width=4pt,white] (0.28,0) to[out=90,in=-90] (-0.28,.6);
	\draw[-,line width=4pt,white] (-0.28,-.6) to[out=90,in=-90] (0.28,0);
	\draw[-,thin,darkblue] (0.28,0) to[out=90,in=-90] (-0.28,.6);
	\draw[<-,thin,darkblue] (-0.28,-.6) to[out=90,in=-90] (0.28,0);
\end{tikzpicture}
}
&=\mathord{
\begin{tikzpicture}[baseline = 0]
	\draw[->,thin,darkblue] (0.08,-.6) to (0.08,.6);
	\draw[<-,thin,darkblue] (-0.28,-.6) to (-0.28,.6);
 \end{tikzpicture}}
\quad\text{if $k>0$,}&
\mathord{
\begin{tikzpicture}[baseline = 0]
	\draw[-,thin,darkblue] (-0.28,-.6) to[out=90,in=-90] (0.28,0);
	\draw[->,thin,darkblue] (0.28,0) to[out=90,in=-90] (-0.28,.6);
	\draw[-,line width=4pt,white] (-0.28,0) to[out=90,in=-90] (0.28,.6);
	\draw[-,line width=4pt,white] (0.28,-.6) to[out=90,in=-90] (-0.28,0);
	\draw[-,thin,darkblue] (-0.28,0) to[out=90,in=-90] (0.28,.6);
	\draw[<-,thin,darkblue] (0.28,-.6) to[out=90,in=-90] (-0.28,0);
\end{tikzpicture}
}
&
=\mathord{
\begin{tikzpicture}[baseline = 0]
	\draw[<-,thin,darkblue] (0.08,-.6) to (0.08,.6);
	\draw[->,thin,darkblue] (-0.28,-.6) to (-0.28,.6);
\end{tikzpicture}}
-\sum_{a=0}^{k-1}
\mathord{
\begin{tikzpicture}[baseline=0mm]
	\draw[-,thin,darkblue] (0.3,0.6) to[out=-90, in=0] (0,0.1);
	\draw[->,thin,darkblue] (0,0.1) to[out = 180, in = -90] (-0.3,0.6);
      \node at (0.02,0.28) {$\color{darkblue}\scriptstyle{a}$};
	\draw[<-,thin,darkblue] (0.3,-.6) to[out=90, in=0] (0,-0.1);
	\draw[-,thin,darkblue] (0,-0.1) to[out = 180, in = 90] (-0.3,-.6);
   \node at (-0.25,-0.4) {$\dot$};
   \node at (-.4,-.3) {$\color{darkblue}\scriptstyle{a}$};
      \node at (0.02,0.11) {$\diamond$};
\end{tikzpicture}}\quad\text{if $k>0$,}
\end{align}\begin{align}
\mathord{
\begin{tikzpicture}[baseline = 0]
	\draw[-,thin,darkblue] (0.25,-0.) to[out=-90, in=0] (0,-0.25);
	\draw[-,thin,darkblue] (0,-0.25) to[out = 180, in = -90] (-0.25,-0);
	\draw[-,thin,darkblue] (-0.25,.6) to[out=300,in=90] (0.25,-0);
	\draw[-,line width=4pt,white] (0.25,.6) to[out=240,in=90] (-0.25,-0);
	\draw[<-,thin,darkblue] (0.25,.6) to[out=240,in=90] (-0.25,-0);
%      \node at (0.54,-0) {$\color{darkblue}\scriptstyle{-k}$};
%      \node at (-0.24,-0) {$\dot$};
\end{tikzpicture}
}&=0
\:\text{if $k>0$,}&
\mathord{
\begin{tikzpicture}[baseline = -1.5mm]
	\draw[-,thin,darkblue] (0.25,-.5) to[out=120,in=-90] (-0.25,0.1);
	\draw[-,line width=4pt,white] (-0.25,-.5) to[out=60,in=-90] (0.25,0.1);
	\draw[<-,thin,darkblue] (-0.25,-.5) to[out=60,in=-90] (0.25,0.1);
	\draw[-,thin,darkblue] (0.25,0.1) to[out=90, in=0] (0,0.35);
	\draw[-,thin,darkblue] (0,0.35) to[out = 180, in = 90] (-0.25,0.1);
      \node at (-0.45,0.05) {$\color{darkblue}\scriptstyle{a}$};
      \node at (-0.25,0.05) {$\dot$};
\end{tikzpicture}
}
&=0 \:\text{if $0 \leq a < k$,}
&\mathord{
\begin{tikzpicture}[baseline = 2mm]
  \draw[<-,thin,darkblue] (0,0.4) to[out=180,in=90] (-.2,0.2);
  \draw[-,thin,darkblue] (0.2,0.2) to[out=90,in=0] (0,.4);
 \draw[-,thin,darkblue] (-.2,0.2) to[out=-90,in=180] (0,0);
  \draw[-,thin,darkblue] (0,0) to[out=0,in=-90] (0.2,0.2);
   \node at (-0.2,0.2) {$\dot$};
   \node at (-0.4,0.2) {$\color{darkblue}\scriptstyle{a}$};
\end{tikzpicture}
}&= -\delta_{a,k} \displaystyle{t^{-1}z^{-1}} 1_\unit
\:\text{if $0 < a \leq k$.}\label{tea0}
\end{align}
To derive these relations, we multiplied the matrices (\ref{invrel}) and (\ref{invrel2})
in both orders, then equated the result with the appropriate identity
matrix.
The following useful relation is an easy exercise at this point; one
needs to use (\ref{teaminus}), (\ref{impose}), (\ref{leftwards}) and (\ref{tea0}):
\begin{align}
\mathord{
\begin{tikzpicture}[baseline = 0]
	\draw[<-,thin,darkblue] (0.25,.6) to[out=240,in=90] (-0.25,-0);
	\draw[-,thin,darkblue] (0.25,-0.) to[out=-90, in=0] (0,-0.25);
	\draw[-,thin,darkblue] (0,-0.25) to[out = 180, in = -90] (-0.25,-0);
	\draw[-,line width=4pt,white] (-0.25,.6) to[out=300,in=90] (0.25,-0);
	\draw[-,thin,darkblue] (-0.25,.6) to[out=300,in=90] (0.25,-0);
      \node at (-0.45,-0) {$\color{darkblue}\scriptstyle{a}$};
      \node at (-0.24,-0) {$\dot$};
\end{tikzpicture}
}&= \delta_{a,0}\,
t\:\mathord{
\begin{tikzpicture}[baseline = 1mm]
	\draw[<-,thin,darkblue] (0.4,0.4) to[out=-90, in=0] (0.1,0);
	\draw[-,thin,darkblue] (0.1,0) to[out = 180, in = -90] (-0.2,0.4);
 \end{tikzpicture}
}\qquad\quad\text{for $0 \leq a \leq k$.}\label{exercise}
\end{align}

Finally, when $k < 0$, we will need the following relations which are
deduced from (\ref{invrel}) and (\ref{invrel2}) by the same argument
as explained in the previous paragraph:
\begin{align}\label{tea3}
\mathord{
\begin{tikzpicture}[baseline = 0]
	\draw[->,thin,darkblue] (-0.28,0) to[out=90,in=-90] (0.28,.6);
	\draw[-,line width=4pt,white] (0.28,0) to[out=90,in=-90] (-0.28,.6);
%	\draw[-,line width=4pt,white] (-0.28,-.6) to[out=90,in=-90] (0.28,0);
	\draw[-,thin,darkblue] (0.28,0) to[out=90,in=-90] (-0.28,.6);
	\draw[<-,thin,darkblue] (-0.28,-.6) to[out=90,in=-90] (0.28,0);
	\draw[-,line width=4pt,white] (0.28,-.6) to[out=90,in=-90] (-0.28,0);
	\draw[-,thin,darkblue] (0.28,-.6) to[out=90,in=-90] (-0.28,0);
\end{tikzpicture}
}
&=\mathord{
\begin{tikzpicture}[baseline = 0]
	\draw[->,thin,darkblue] (0.08,-.6) to (0.08,.6);
	\draw[<-,thin,darkblue] (-0.28,-.6) to (-0.28,.6);
 \end{tikzpicture}}
-\sum_{a=0}^{k-1}\:
\mathord{
\begin{tikzpicture}[baseline=0mm]
	\draw[<-,thin,darkblue] (0.3,0.6) to[out=-90, in=0] (0,0.1);
	\draw[-,thin,darkblue] (0,0.1) to[out = 180, in = -90] (-0.3,0.6);
      \node at (0.02,-0.28) {$\color{darkblue}\scriptstyle{a}$};
	\draw[-,thin,darkblue] (0.3,-.6) to[out=90, in=0] (0,-0.1);
	\draw[->,thin,darkblue] (0,-0.1) to[out = 180, in = 90] (-0.3,-.6);
   \node at (0.28,0.35) {$\dot$};
   \node at (.43,.33) {$\color{darkblue}\scriptstyle{a}$};
      \node at (0.02,-0.11) {$\heart$};
\end{tikzpicture}}\quad\text{if $k<0$,}&
\mathord{
\begin{tikzpicture}[baseline = 0]
	\draw[-,thin,darkblue] (-0.28,-.6) to[out=90,in=-90] (0.28,0);
%	\draw[-,line width=4pt,white] (-0.28,0) to[out=90,in=-90] (0.28,.6);
	\draw[-,line width=4pt,white] (0.28,-.6) to[out=90,in=-90] (-0.28,0);
	\draw[-,thin,darkblue] (-0.28,0) to[out=90,in=-90] (0.28,.6);
	\draw[<-,thin,darkblue] (0.28,-.6) to[out=90,in=-90] (-0.28,0);
	\draw[-,line width=4pt,white] (0.28,0) to[out=90,in=-90] (-0.28,.6);
	\draw[->,thin,darkblue] (0.28,0) to[out=90,in=-90] (-0.28,.6);
\end{tikzpicture}
}
&
=\mathord{
\begin{tikzpicture}[baseline = 0]
	\draw[<-,thin,darkblue] (0.08,-.6) to (0.08,.6);
	\draw[->,thin,darkblue] (-0.28,-.6) to (-0.28,.6);
\end{tikzpicture}}
\quad\text{if $k<0$,}
\end{align}\begin{align}\label{tea2}
\mathord{
\begin{tikzpicture}[baseline = -1.5mm]
	\draw[-,thin,darkblue] (-0.25,-.5) to[out=60,in=-90] (0.25,0.1);
	\draw[-,line width=4pt,white] (0.25,-.5) to[out=120,in=-90] (-0.25,0.1);
	\draw[<-,thin,darkblue] (0.25,-.5) to[out=120,in=-90] (-0.25,0.1);
	\draw[-,thin,darkblue] (0.25,0.1) to[out=90, in=0] (0,0.35);
	\draw[-,thin,darkblue] (0,0.35) to[out = 180, in = 90] (-0.25,0.1);
\end{tikzpicture}
}&=0
\:\text{if $k<0$,}
&\mathord{
\begin{tikzpicture}[baseline = 0]
	\draw[-,thin,darkblue] (0.25,.6) to[out=240,in=90] (-0.25,-0);
	\draw[-,thin,darkblue] (0.25,-0.) to[out=-90, in=0] (0,-0.25);
	\draw[-,thin,darkblue] (0,-0.25) to[out = 180, in = -90] (-0.25,-0);
	\draw[-,line width=4pt,white] (-0.25,.6) to[out=300,in=90] (0.25,-0);
	\draw[<-,thin,darkblue] (-0.25,.6) to[out=300,in=90] (0.25,-0);
      \node at (0.45,-0) {$\color{darkblue}\scriptstyle{a}$};
      \node at (0.24,-0) {$\dot$};
\end{tikzpicture}
}
&=0 \:\text{if $0 \leq a < -k$,}
&
\mathord{
\begin{tikzpicture}[baseline = 2mm]
  \draw[->,thin,darkblue] (0.2,0.2) to[out=90,in=0] (0,.4);
  \draw[-,thin,darkblue] (0,0.4) to[out=180,in=90] (-.2,0.2);
\draw[-,thin,darkblue] (-.2,0.2) to[out=-90,in=180] (0,0);
  \draw[-,thin,darkblue] (0,0) to[out=0,in=-90] (0.2,0.2);
   \node at (0.2,0.2) {$\dot$};
   \node at (0.4,0.2) {$\color{darkblue}\scriptstyle{a}$};
\end{tikzpicture}
}&= 
-\delta_{a,0}
\displaystyle{t^{-1}z^{-1}}1_\unit
\:\text{if $0 \leq a < -k$.}
\end{align}

Now we are going to consider the counterpart of 
the morphism (\ref{invrel}) defined
using the negative instead of positive rightward crossing:
\begin{align}
\label{invrel1}
\left\{\begin{array}{rl}
\left[\!\!\!\!\!
\begin{array}{r}
\mathord{
\begin{tikzpicture}[baseline = 0]
	\draw[<-,thin,darkblue] (0.28,-.3) to (-0.28,.4);
	\draw[-,line width=4pt,white] (-0.28,-.3) to (0.28,.4);
	\draw[->,thin,darkblue] (-0.28,-.3) to (0.28,.4);
   \end{tikzpicture}
}\\
\mathord{
\begin{tikzpicture}[baseline = 1mm]
	\draw[<-,thin,darkblue] (0.4,0) to[out=90, in=0] (0.1,0.4);
      \node at (-0.15,0.45) {$\phantom\bullet$};
	\draw[-,thin,darkblue] (0.1,0.4) to[out = 180, in = 90] (-0.2,0);
\end{tikzpicture}
}\\
\mathord{
\begin{tikzpicture}[baseline = 1mm]
	\draw[<-,thin,darkblue] (0.4,0) to[out=90, in=0] (0.1,0.4);
	\draw[-,thin,darkblue] (0.1,0.4) to[out = 180, in = 90] (-0.2,0);
      \node at (-0.15,0.45) {$\phantom\bullet$};
      \node at (-0.15,0.2) {$\dot$};
\end{tikzpicture}
}\\\vdots\:\:\;\\
\mathord{
\begin{tikzpicture}[baseline = 1mm]
	\draw[<-,thin,darkblue] (0.4,0) to[out=90, in=0] (0.1,0.4);
	\draw[-,thin,darkblue] (0.1,0.4) to[out = 180, in = 90] (-0.2,0);
     \node at (-0.52,0.2) {$\color{darkblue}\scriptstyle{k-1}$};
      \node at (-0.15,0.42) {$\phantom\bullet$};
      \node at (-0.15,0.2) {$\dot$};
\end{tikzpicture}
}
\end{array}
\right]
:
\up \otimes \down \rightarrow
\down \otimes \up \oplus \unit^{\oplus k}\hspace{4.5mm}
&\text{if $k >
  0$,}\\\\
\left[\:
\mathord{
\begin{tikzpicture}[baseline = 0]
	\draw[<-,thin,darkblue] (0.28,-.3) to (-0.28,.4);
	\draw[-,line width=4pt,white] (-0.28,-.3) to (0.28,.4);
	\draw[->,thin,darkblue] (-0.28,-.3) to (0.28,.4);
\end{tikzpicture}
}\:\:\:
\mathord{
\begin{tikzpicture}[baseline = -0.9mm]
	\draw[<-,thin,darkblue] (0.4,0.2) to[out=-90, in=0] (0.1,-.2);
	\draw[-,thin,darkblue] (0.1,-.2) to[out = 180, in = -90] (-0.2,0.2);
\end{tikzpicture}
}
\:\:\:
\mathord{
\begin{tikzpicture}[baseline = -0.9mm]
	\draw[<-,thin,darkblue] (0.4,0.2) to[out=-90, in=0] (0.1,-.2);
	\draw[-,thin,darkblue] (0.1,-.2) to[out = 180, in = -90] (-0.2,0.2);
      \node at (0.38,0) {$\dot$};
\end{tikzpicture}
}
\:\:\:\cdots
\:\:\:
\mathord{
\begin{tikzpicture}[baseline = -0.9mm]
	\draw[<-,thin,darkblue] (0.4,0.2) to[out=-90, in=0] (0.1,-.2);
	\draw[-,thin,darkblue] (0.1,-.2) to[out = 180, in = -90] (-0.2,0.2);
     \node at (0.83,0) {$\color{darkblue}\scriptstyle{-k-1}$};
      \node at (0.38,0) {$\dot$};
\end{tikzpicture}
}
\right]
:\up \otimes \down \oplus 
\unit^{\oplus (-k)}
\rightarrow
 \down \otimes  \up&\text{if $k \leq 0$.}
\end{array}
\right.\end{align}

\begin{lemma}\label{irrelevant}
The morphism (\ref{invrel1}) is invertible with two-sided inverse
\begin{align}
\label{invrel3}
\left\{\begin{array}{rl}
\left[\:
\mathord{
\begin{tikzpicture}[baseline = 0]
	\draw[->,thin,darkblue] (0.28,-.3) to (-0.28,.4);
	\draw[-,line width=4pt,white] (-0.28,-.3) to (0.28,.4);
	\draw[<-,thin,darkblue] (-0.28,-.3) to (0.28,.4);
\end{tikzpicture}
}
\:\:\:
\mathord{
\begin{tikzpicture}[baseline = 1mm]
	\draw[-,thin,darkblue] (0.4,0.4) to[out=-90, in=0] (0.1,0);
	\draw[->,thin,darkblue] (0.1,0) to[out = 180, in = -90] (-0.2,0.4);
      \node at (0.12,-0.2) {$\color{darkblue}\scriptstyle{0}$};
      \node at (0.12,0.01) {$\heart$};
\end{tikzpicture}
}
\:\:\:\cdots\:\:\:
\mathord{
\begin{tikzpicture}[baseline = 1mm]
	\draw[-,thin,darkblue] (0.4,0.4) to[out=-90, in=0] (0.1,0);
	\draw[->,thin,darkblue] (0.1,0) to[out = 180, in = -90] (-0.2,0.4);
      \node at (0.12,-0.2) {$\color{darkblue}\scriptstyle{k-1}$};
      \node at (0.12,0.01) {$\heart$};
\end{tikzpicture}
}
\right]
:\down\otimes\up \oplus
\unit^{\oplus k}
\rightarrow
 \up \otimes  \down \hspace{4.5mm}
&\text{if $k > 0$},\\\\
\left[\!\!\!\!\!
\begin{array}{r}
\mathord{
\:\begin{tikzpicture}[baseline = 1mm]
	\draw[<-,thin,darkblue] (-0.28,-.3) to (0.28,.4);
	\draw[-,line width=4pt,white] (0.28,-.3) to (-0.28,.4);
	\draw[->,thin,darkblue] (0.28,-.3) to (-0.28,.4);
\end{tikzpicture}
}\\
\mathord{
\begin{tikzpicture}[baseline = 1mm]
	\draw[-,thin,darkblue] (0.4,0) to[out=90, in=0] (0.1,0.4);
	\draw[->,thin,darkblue] (0.1,0.4) to[out = 180, in = 90] (-0.2,0);
      \node at (0.12,0.62) {$\color{darkblue}\scriptstyle{0}$};
      \node at (0.12,0.4) {$\diamond$};
\end{tikzpicture}
}
\\\vdots\:\:\:\\
\mathord{
\begin{tikzpicture}[baseline = 1mm]
	\draw[-,thin,darkblue] (0.4,0) to[out=90, in=0] (0.1,0.4);
	\draw[->,thin,darkblue] (0.1,0.4) to[out = 180, in = 90] (-0.2,0);
      \node at (0.12,0.6) {$\color{darkblue}\scriptstyle{-k-1}$};
      \node at (0.12,0.4) {$\diamond$};
\end{tikzpicture}
}\!\!\!
\end{array}
\right]
:
\down \otimes \up \rightarrow
\up \otimes \down \oplus \unit^{\oplus(-k)}
&\text{if $k \leq
  0$.}
\end{array}
\right.\end{align}
Moreover, we have that
\begin{align}\label{septimus}
\mathord{
\begin{tikzpicture}[baseline = .8mm]
  \draw[<-,thin,darkblue] (0.2,0.2) to[out=90,in=0] (0,.4);
  \draw[-,thin,darkblue] (0,0.4) to[out=180,in=90] (-.2,0.2);
\draw[-,thin,darkblue] (-.2,0.2) to[out=-90,in=180] (0,0);
  \draw[-,thin,darkblue] (0,0) to[out=0,in=-90] (0.2,0.2);
      \node at (-0.2,0.2) {$\dot$};
      \node at (-0.35,0.2) {$\color{darkblue}\scriptstyle{k}$};
\end{tikzpicture}
}&=-{t^{-1}z^{-1}}1_\unit\:\text{if $k > 0$,}&
\anticlockleft
&= {(t z^{-1}-t^{-1}z^{-1})}1_\unit\:\text{if $k = 0$,}
&\mathord{\begin{tikzpicture}[baseline = .8mm]
  \draw[-,thin,darkblue] (0.2,0.2) to[out=90,in=0] (0,.4);
  \draw[->,thin,darkblue] (0,0.4) to[out=180,in=90] (-.2,0.2);
\draw[-,thin,darkblue] (-.2,0.2) to[out=-90,in=180] (0,0);
  \draw[-,thin,darkblue] (0,0) to[out=0,in=-90] (0.2,0.2);
 \end{tikzpicture}
}
&= -{t^{-1}z^{-1}}1_\unit\:\text{if $k < 0$,}
\end{align}
\begin{align}\label{gloop}
\mathord{
\begin{tikzpicture}[baseline = 1mm]
	\draw[-,thin,darkblue] (0.4,0.4) to[out=-90, in=0] (0.1,0);
	\draw[->,thin,darkblue] (0.1,0) to[out = 180, in = -90] (-0.2,0.4);
 \end{tikzpicture}
}
&=
\left\{
\begin{array}{cl}
\displaystyle{tz^{-1}}\:\mathord{
\begin{tikzpicture}[baseline = 0.5mm]
	\draw[-,thin,darkblue] (0.4,0.4) to[out=-90, in=0] (0.1,0);
	\draw[->,thin,darkblue] (0.1,0) to[out = 180, in = -90] (-0.2,0.4);
      \node at (0.1,-0.21) {$\color{darkblue}\scriptstyle{0}$};
      \node at (0.12,0.01) {$\heart$};
\end{tikzpicture}
}
\hspace{3.5mm}&\!\!\!\text{if $k > 0$,}\\
t^{-1}
\mathord{
\begin{tikzpicture}[baseline = 0]
	\draw[-,thin,darkblue] (0.25,.6) to[out=240,in=90] (-0.25,-0);
	\draw[-,thin,darkblue] (0.25,-0.) to[out=-90, in=0] (0,-0.25);
	\draw[-,thin,darkblue] (0,-0.25) to[out = 180, in = -90] (-0.25,-0);
	\draw[-,line width=4pt,white] (-0.25,.6) to[out=300,in=90] (0.25,-0);
	\draw[<-,thin,darkblue] (-0.25,.6) to[out=300,in=90] (0.25,-0);
      \node at (0.54,-0) {$\color{darkblue}\scriptstyle{-k}$};
      \node at (0.24,-0) {$\dot$};
\end{tikzpicture}
}
&\!\!\!\text{if $k \leq 0$,}
\end{array}\right.
&\mathord{
\begin{tikzpicture}[baseline = 1mm]
	\draw[-,thin,darkblue] (0.4,0) to[out=90, in=0] (0.1,0.4);
	\draw[->,thin,darkblue] (0.1,0.4) to[out = 180, in = 90] (-0.2,0);
\end{tikzpicture}
}
&=
\left\{
\begin{array}{cl}
t\mathord{
\begin{tikzpicture}[baseline = -1.5mm]
	\draw[-,thin,darkblue] (0.25,-.5) to[out=120,in=-90] (-0.25,0.1);
	\draw[-,line width=4pt,white] (-0.25,-.5) to[out=60,in=-90] (0.25,0.1);
	\draw[<-,thin,darkblue] (-0.25,-.5) to[out=60,in=-90] (0.25,0.1);
	\draw[-,thin,darkblue] (0.25,0.1) to[out=90, in=0] (0,0.35);
	\draw[-,thin,darkblue] (0,0.35) to[out = 180, in = 90] (-0.25,0.1);
      \node at (-0.42,0.05) {$\color{darkblue}\scriptstyle{k}$};
      \node at (-0.25,0.05) {$\dot$};
\end{tikzpicture}
}
&\text{if $k  > 0$,}\\
t^{-1}\mathord{
\begin{tikzpicture}[baseline = -1.5mm]
	\draw[<-,thin,darkblue] (-0.25,-.5) to[out=60,in=-90] (0.25,0.1);
	\draw[-,thin,darkblue] (0.25,0.1) to[out=90, in=0] (0,0.35);
	\draw[-,thin,darkblue] (0,0.35) to[out = 180, in = 90] (-0.25,0.1);
	\draw[-,line width=4pt,white] (0.25,-.5) to[out=120,in=-90] (-0.25,0.1);
	\draw[-,thin,darkblue] (0.25,-.5) to[out=120,in=-90] (-0.25,0.1);
\end{tikzpicture}
}
&\text{if $k  = 0$,}\\
\displaystyle{tz^{-1}}\mathord{
\begin{tikzpicture}[baseline = 1mm]
	\draw[-,thin,darkblue] (0.4,0) to[out=90, in=0] (0.1,0.4);
	\draw[->,thin,darkblue] (0.1,0.4) to[out = 180, in = 90] (-0.2,0);
      \node at (0.12,0.6) {$\color{darkblue}\scriptstyle{-k-1}$};
     \node at (0.12,0.4) {$\diamond$};
     \node at (-0.4,0.23) {$\color{darkblue}\scriptstyle{-1}$};
      \node at (-0.15,0.21) {$\dot$};
\end{tikzpicture}}
&\text{if $k < 0$.}\end{array}\right.
\end{align}
\end{lemma}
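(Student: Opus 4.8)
The plan is to deduce the lemma directly from Definition~\ref{def1}, exploiting the fact that \eqref{invrel1} differs from \eqref{invrel} only in its crossing entry, and that the positive and negative rightward crossings are interchanged by the rightward skein relation coming from \eqref{rr2} and the definitions \eqref{rotate}. Write $A$ for \eqref{invrel}, $A'$ for \eqref{invrel1}, and $B$ for its defining two-sided inverse \eqref{invrel2}. The skein relation rewrites the crossing entry of $A'$ as the crossing entry of $A$ plus $z$ (up to a sign) times the morphism $c\circ d$, where $c\colon\unit\to\down\otimes\up$ is the rightward cup and $d\colon\up\otimes\down\to\unit$ is the rightward cap. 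Since $d$ is precisely the label-$0$ cap (resp.\ $c$ the label-$0$ cup) occurring as one of the other matrix entries of $A$, this exhibits $A'$ as the composite of $A$ with a \emph{unitriangular} automorphism of the direct sum $\down\otimes\up\oplus\unit^{\oplus k}$: explicitly $A'=U\circ A$ when $k\ge 0$, and $A'=A\circ U$ when $k<0$, where $U$ is the identity except for a single off-diagonal entry equal to $\pm z$ times the rightward cup or cap. In particular $U$ is invertible with unitriangular inverse.

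Granting this, it follows formally from $A\circ B=\id$ and $B\circ A=\id$ that $A'$ is invertible with two-sided inverse $B\circ U^{-1}$ (resp.\ $U^{-1}\circ B$). It remains to identify this composite with the claimed inverse \eqref{invrel3}. Since $U^{-1}$ again differs from the identity in a single entry, $B\circ U^{-1}$ agrees with $B$ in every entry except the label-$0$ cup/cap, where it is modified by $\mp z$ times the crossing composed with the rightward cup. This modification is exactly the relation between the $\heart$- and $\diamond$-decorated cup/cap of label $0$ recorded in \eqref{nakano1} for $k>0$ and in \eqref{nakano3} for $k<0$, while all remaining entries coincide by the second equalities in \eqref{nakano1} and \eqref{nakano3}. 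Hence $B\circ U^{-1}$ is \eqref{invrel3} on the nose. The case $k=0$ is degenerate but immediate: the matrices are $1\times 1$, both crossings are already invertible by \eqref{lunch}, and uniqueness of inverses together with the definitions \eqref{leftwards} gives the claim.

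With invertibility established, the relations \eqref{septimus} and \eqref{gloop} are obtained by exactly the bookkeeping that produced \eqref{tea1}, \eqref{tea0} and \eqref{exercise} from $A$ and $B$: one multiplies the matrices \eqref{invrel1} and \eqref{invrel3} in both orders and equates each entry with the corresponding entry of the identity matrix. The scalar (i.e.\ $\unit\to\unit$) entries yield the bubble evaluations \eqref{septimus}, which are the negative-crossing counterparts of \eqref{impose}, \eqref{tea0} and \eqref{tea2}; the $\down\otimes\up\to\down\otimes\up$ and mixed entries, after applying the dot-slide relations of Lemma~\ref{dotslide} together with the definitions \eqref{leftwards} and \eqref{nakano1}--\eqref{nakano3}, yield the expressions \eqref{gloop} for the leftward cups and caps.

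The main obstacle I anticipate is not conceptual but a matter of careful bookkeeping: pinning down the exact sign and $z$-coefficient in the unitriangular factor $U$, and tracking the normalizing scalars $t^{\pm 1}z^{-1}$ built into the definitions \eqref{leftwards} of the leftward cups and caps, so that $B\circ U^{-1}$ matches \eqref{invrel3} verbatim and so that the analogous scalars come out correctly in \eqref{septimus} and \eqref{gloop}. A further care point is uniformity across the sign of $k$, since the roles of the positive and negative crossings, and of the $\heart$- and $\diamond$-decorations, are interchanged between the $k>0$ and $k<0$ regimes. A cleaner but equivalent route, should a reflection symmetry of $\HEIS_k(z,t)$ be available, would be to apply the $\operatorname{flip}$ that reverses the orientation of the plane: this exchanges the two crossings and the $\heart$- and $\diamond$-morphisms, carrying the defining inverse pair \eqref{invrel}/\eqref{invrel2} to \eqref{invrel1}/\eqref{invrel3} and the relations \eqref{impose}/\eqref{exercise} to \eqref{septimus}/\eqref{gloop} in one stroke.
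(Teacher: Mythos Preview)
The paper gives no proof of this lemma: it is one of the purely diagrammatic lemmas whose proofs are explicitly omitted (see the closing paragraph of the introduction). So there is no argument to compare against, and the question is whether your outline stands on its own.

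Your unitriangular argument for the invertibility of \eqref{invrel1} is correct and is precisely the right mechanism. The rightward skein relation (rotation of \eqref{rr2} via \eqref{rotate}) gives
\[
\text{(negative rightward crossing)} \;=\; \text{(positive rightward crossing)} \;-\; z\,c\circ d,
\]
and since $d$ (resp.\ $c$) is the label-$0$ cap (resp.\ cup) already occurring as a row (resp.\ column) of \eqref{invrel}, this exhibits $A'=E\circ A$ for $k\ge 0$ (resp.\ $A'=A\circ E$ for $k<0$) with $E$ elementary unitriangular. Computing $B\circ E^{-1}$ (resp.\ $E^{-1}\circ B$) and matching against the definitions \eqref{nakano1}--\eqref{nakano3} gives \eqref{invrel3} on the nose; I checked the $k>0$ case carefully and the signs and the identification of the curl in \eqref{nakano1} with $B_1\circ c$ are exactly right. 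The $k=0$ case via \eqref{lunch} is fine.

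Your account of \eqref{septimus} and \eqref{gloop} is where the outline gets loose. It is not quite accurate to say these are read off from the products \eqref{invrel1}$\cdot$\eqref{invrel3}. For $k\neq 0$, \eqref{septimus} is \emph{already} established before the lemma: the $k>0$ case is literally the $a=k$ instance of the last identity in \eqref{tea0}, and the $k<0$ case is the $a=0$ instance of the last identity in \eqref{tea2}. Likewise, several cases of \eqref{gloop} are tautologically \eqref{leftwards} (e.g.\ the leftward cap for $k>0$, the leftward cup for $k<0$). The genuinely new content is (i) the $k=0$ case of \eqref{septimus}, (ii) the leftward cup for $k>0$, (iii) the leftward cap for $k<0$, and (iv) reconciling the two $k=0$ formulas in \eqref{leftwards} and \eqref{gloop}. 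These do not drop out of a single matrix product; each needs a short argument combining the skein relation with \eqref{leftwards}, \eqref{impose}, and a dot-slide of the type used for \eqref{exercise}. None of this is hard, and your own caveat about ``careful bookkeeping'' is apt, but ``multiply the matrices and equate entries'' undersells what is required here.
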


\section{Second approach}\label{second}

Our second presentation for $\HEIS_k(z,t)$ is very similar to the first
presentation, but we invert the morphism (\ref{invrel1})
instead of (\ref{invrel}).

\begin{definition}\label{def2}
The {\em quantum Heisenberg category} $\HEIS_k(z,t)$ is the strict
$\k$-linear monoidal category obtained from $\AH(z)$ by 
adjoining a right dual $\down$ to $\up$ as 
explained in the introduction,
together with the matrix entries of the 
morphism (\ref{invrel3}),
which we declare to be a two-sided inverse to 
(\ref{invrel1}).
In addition, we impose the relation (\ref{septimus})
for the leftward cups and caps which are defined in this approach from (\ref{gloop}).
Define the other leftward crossing, i.e., the one which does not
appear in (\ref{invrel3}), 
so the leftward skein
relation (\ref{leftwardsskein}) holds.
Also set
\begin{align}
\mathord{
\begin{tikzpicture}[baseline = 1mm]
	\draw[-,thin,darkblue] (0.4,0.4) to[out=-90, in=0] (0.1,0);
	\draw[->,thin,darkblue] (0.1,0) to[out = 180, in = -90] (-0.2,0.4);
      \node at (0.12,-0.2) {$\color{darkblue}\scriptstyle{0}$};
      \node at (0.12,0.01) {$\diamond$};
\end{tikzpicture}
}
&:=\mathord{
\begin{tikzpicture}[baseline = 1mm]
	\draw[-,thin,darkblue] (0.4,0.4) to[out=-90, in=0] (0.1,0);
	\draw[->,thin,darkblue] (0.1,0) to[out = 180, in = -90] (-0.2,0.4);
      \node at (0.12,-0.2) {$\color{darkblue}\scriptstyle{0}$};
      \node at (0.12,0.01) {$\heart$};
\end{tikzpicture}
}-z
\mathord{
\begin{tikzpicture}[baseline = 0]
	\draw[<-,thin,darkblue] (-0.25,.6) to[out=300,in=90] (0.25,-0);
	\draw[-,thin,darkblue] (0.25,-0) to[out=-90, in=0] (0,-0.25);
	\draw[-,thin,darkblue] (0,-0.25) to[out = 180, in = -90] (-0.25,-0);
	\draw[-,line width=4pt,white] (0.25,.6) to[out=240,in=90] (-0.25,-0);
	\draw[-,thin,darkblue] (0.25,.6) to[out=240,in=90] (-0.25,-0);
\end{tikzpicture}
}\quad\text{if $k > 0$},
&\mathord{
\begin{tikzpicture}[baseline = 1mm]
	\draw[-,thin,darkblue] (0.4,0.4) to[out=-90, in=0] (0.1,0);
	\draw[->,thin,darkblue] (0.1,0) to[out = 180, in = -90] (-0.2,0.4);
      \node at (0.12,-0.2) {$\color{darkblue}\scriptstyle{a}$};
      \node at (0.12,0.01) {$\diamond$};
\end{tikzpicture}
}
&:=\mathord{
\begin{tikzpicture}[baseline = 1mm]
	\draw[-,thin,darkblue] (0.4,0.4) to[out=-90, in=0] (0.1,0);
	\draw[->,thin,darkblue] (0.1,0) to[out = 180, in = -90] (-0.2,0.4);
      \node at (0.12,-0.2) {$\color{darkblue}\scriptstyle{a}$};
      \node at (0.12,0.01) {$\heart$};
\end{tikzpicture}
}
\quad\text{if $0 < a < k$,}\\
\mathord{
\begin{tikzpicture}[baseline = 1mm]
	\draw[-,thin,darkblue] (0.4,0) to[out=90, in=0] (0.1,0.4);
	\draw[->,thin,darkblue] (0.1,0.4) to[out = 180, in = 90] (-0.2,0);
      \node at (0.12,0.6) {$\color{darkblue}\scriptstyle{0}$};
      \node at (0.12,0.4) {$\heart$};
\end{tikzpicture}
}&:=
\mathord{
\begin{tikzpicture}[baseline = 1mm]
	\draw[-,thin,darkblue] (0.4,0) to[out=90, in=0] (0.1,0.4);
	\draw[->,thin,darkblue] (0.1,0.4) to[out = 180, in = 90] (-0.2,0);
      \node at (0.12,0.6) {$\color{darkblue}\scriptstyle{0}$};
      \node at (0.12,0.4) {$\diamond$};
\end{tikzpicture}
}-z
\mathord{
\begin{tikzpicture}[baseline = -1.5mm]
	\draw[<-,thin,darkblue] (-0.25,-.5) to[out=60,in=-90] (0.25,0.1);
	\draw[-,line width=4pt,white] (0.25,-.5) to[out=120,in=-90] (-0.25,0.1);
	\draw[-,thin,darkblue] (0.25,-.5) to[out=120,in=-90] (-0.25,0.1);
	\draw[-,thin,darkblue] (0.25,0.1) to[out=90, in=0] (0,0.35);
	\draw[-,thin,darkblue] (0,0.35) to[out = 180, in = 90] (-0.25,0.1);
\end{tikzpicture}
}\quad\text{if $k < 0$,}
&
\mathord{
\begin{tikzpicture}[baseline = 1mm]
	\draw[-,thin,darkblue] (0.4,0) to[out=90, in=0] (0.1,0.4);
	\draw[->,thin,darkblue] (0.1,0.4) to[out = 180, in = 90] (-0.2,0);
      \node at (0.12,0.6) {$\color{darkblue}\scriptstyle{a}$};
      \node at (0.12,0.4) {$\heart$};
\end{tikzpicture}
}&:=
\mathord{
\begin{tikzpicture}[baseline = 1mm]
	\draw[-,thin,darkblue] (0.4,0) to[out=90, in=0] (0.1,0.4);
	\draw[->,thin,darkblue] (0.1,0.4) to[out = 180, in = 90] (-0.2,0);
      \node at (0.12,0.6) {$\color{darkblue}\scriptstyle{a}$};
      \node at (0.12,0.4) {$\diamond$};
\end{tikzpicture}
}\quad\text{if $0 < a < -k$.}\label{nakano2}
\end{align}
Finally define the $(+)$- and $(-)$-bubbles from (\ref{fake0})--(\ref{fake1})
as before.
\end{definition} 

\begin{theorem}
Definitions~\ref{def1} and \ref{def2} 
give two different presentations for the same monoidal category, with
all of the named morphisms introduced in the two definitions being the same.
Moreover, there is a unique 
isomorphism of $\k$-linear monoidal categories
\begin{equation}\label{om}
\Omega_k:\HEIS_k(z,t)\rightarrow \HEIS_{-k}(z,t^{-1})^{\operatorname{op}}
\end{equation}
sending
\begin{align*}
\mathord{
\begin{tikzpicture}[baseline = -.5mm]
	\draw[->,thin,darkblue] (0,-.3) to (0,.4);
      \node at (0,0.05) {$\dot$};
\end{tikzpicture}
}&\mapsto
\mathord{
\begin{tikzpicture}[baseline = -.5mm]
	\draw[<-,thin,darkblue] (0,-.3) to (0,.4);
      \node at (0,0.05) {$\dot$};
\end{tikzpicture}
},&
\mathord{
\begin{tikzpicture}[baseline = -.5mm]
	\draw[->,thin,darkblue] (0.28,-.3) to (-0.28,.4);
	\draw[line width=4pt,white,-] (-0.28,-.3) to (0.28,.4);
	\draw[thin,darkblue,->] (-0.28,-.3) to (0.28,.4);
\end{tikzpicture}
}&\mapsto 
-\mathord{
\begin{tikzpicture}[baseline = -.5mm]
	\draw[thin,darkblue,<-] (-0.28,-.3) to (0.28,.4);
	\draw[-,line width=4pt,white] (0.28,-.3) to (-0.28,.4);
	\draw[<-,thin,darkblue] (0.28,-.3) to (-0.28,.4);
\end{tikzpicture}
}\:,&
\mathord{
\begin{tikzpicture}[baseline = 1mm]
	\draw[<-,thin,darkblue] (0.4,0) to[out=90, in=0] (0.1,0.4);
	\draw[-,thin,darkblue] (0.1,0.4) to[out = 180, in = 90] (-0.2,0);
\end{tikzpicture}
}\:&\mapsto \mathord{
\begin{tikzpicture}[baseline = 1mm]
	\draw[<-,thin,darkblue] (0.4,0.4) to[out=-90, in=0] (0.1,0);
	\draw[-,thin,darkblue] (0.1,0) to[out = 180, in = -90] (-0.2,0.4);
\end{tikzpicture}
},\:&
\mathord{
\begin{tikzpicture}[baseline = 1mm]
	\draw[<-,thin,darkblue] (0.4,0.4) to[out=-90, in=0] (0.1,0);
	\draw[-,thin,darkblue] (0.1,0) to[out = 180, in = -90] (-0.2,0.4);
\end{tikzpicture}
}\:&\mapsto
\mathord{
\begin{tikzpicture}[baseline = 1mm]
	\draw[<-,thin,darkblue] (0.4,0) to[out=90, in=0] (0.1,0.4);
	\draw[-,thin,darkblue] (0.1,0.4) to[out = 180, in = 90] (-0.2,0);
\end{tikzpicture}
}\:.
\end{align*}
The effect of $\Omega_k$ on the other morphisms
is as follows:
\begin{align*}
\mathord{
\begin{tikzpicture}[baseline = -.5mm]
	\draw[<-,thin,darkblue] (0,-.3) to (0,.4);
      \node at (0,0.05) {$\dot$};
\end{tikzpicture}
}&\mapsto
\mathord{
\begin{tikzpicture}[baseline = -.5mm]
	\draw[->,thin,darkblue] (0,-.3) to (0,.4);
      \node at (0,0.05) {$\dot$};
\end{tikzpicture}
},&
\mathord{
\begin{tikzpicture}[baseline = -.5mm]
	\draw[thin,darkblue,->] (-0.28,-.3) to (0.28,.4);
	\draw[-,line width=4pt,white] (0.28,-.3) to (-0.28,.4);
	\draw[<-,thin,darkblue] (0.28,-.3) to (-0.28,.4);
\end{tikzpicture}
}&\mapsto 
-\mathord{
\begin{tikzpicture}[baseline = -.5mm]
	\draw[<-,thin,darkblue] (0.28,-.3) to (-0.28,.4);
	\draw[line width=4pt,white,->] (-0.28,-.3) to (0.28,.4);
	\draw[thin,darkblue,->] (-0.28,-.3) to (0.28,.4);
\end{tikzpicture}
}\:,
&
\mathord{
\begin{tikzpicture}[baseline = -.5mm]
	\draw[<-,thin,darkblue] (0.28,-.3) to (-0.28,.4);
	\draw[line width=4pt,white,<-] (-0.28,-.3) to (0.28,.4);
	\draw[thin,darkblue,<-] (-0.28,-.3) to (0.28,.4);
\end{tikzpicture}
}&\mapsto 
-\mathord{
\begin{tikzpicture}[baseline = -.5mm]
	\draw[thin,darkblue,->] (-0.28,-.3) to (0.28,.4);
	\draw[-,line width=4pt,white] (0.28,-.3) to (-0.28,.4);
	\draw[->,thin,darkblue] (0.28,-.3) to (-0.28,.4);
\end{tikzpicture}
}\:,&
\mathord{
\begin{tikzpicture}[baseline = -.5mm]
	\draw[thin,darkblue,<-] (-0.28,-.3) to (0.28,.4);
	\draw[-,line width=4pt,white] (0.28,-.3) to (-0.28,.4);
	\draw[->,thin,darkblue] (0.28,-.3) to (-0.28,.4);
\end{tikzpicture}
}&\mapsto 
-\mathord{
\begin{tikzpicture}[baseline = -.5mm]
	\draw[->,thin,darkblue] (0.28,-.3) to (-0.28,.4);
	\draw[line width=4pt,white,-] (-0.28,-.3) to (0.28,.4);
	\draw[thin,darkblue,<-] (-0.28,-.3) to (0.28,.4);
\end{tikzpicture}
}\:,\\
\mathord{
\begin{tikzpicture}[baseline = -.5mm]
	\draw[thin,darkblue,->] (-0.28,-.3) to (0.28,.4);
	\draw[-,line width=4pt,white] (0.28,-.3) to (-0.28,.4);
	\draw[->,thin,darkblue] (0.28,-.3) to (-0.28,.4);
\end{tikzpicture}
}&\mapsto 
-\mathord{
\begin{tikzpicture}[baseline = -.5mm]
	\draw[<-,thin,darkblue] (0.28,-.3) to (-0.28,.4);
	\draw[line width=4pt,white,-] (-0.28,-.3) to (0.28,.4);
	\draw[thin,darkblue,<-] (-0.28,-.3) to (0.28,.4);
\end{tikzpicture}
}\:,&
\mathord{
\begin{tikzpicture}[baseline = -.5mm]
	\draw[<-,thin,darkblue] (0.28,-.3) to (-0.28,.4);
	\draw[line width=4pt,white,-] (-0.28,-.3) to (0.28,.4);
	\draw[thin,darkblue,->] (-0.28,-.3) to (0.28,.4);
\end{tikzpicture}
}&\mapsto 
-\mathord{
\begin{tikzpicture}[baseline = -.5mm]
	\draw[thin,darkblue,->] (-0.28,-.3) to (0.28,.4);
	\draw[-,line width=4pt,white] (0.28,-.3) to (-0.28,.4);
	\draw[<-,thin,darkblue] (0.28,-.3) to (-0.28,.4);
\end{tikzpicture}
}\:,
&
\mathord{
\begin{tikzpicture}[baseline = -.5mm]
	\draw[<-,thin,darkblue] (0.28,-.3) to (-0.28,.4);
	\draw[line width=4pt,white,-] (-0.28,-.3) to (0.28,.4);
	\draw[thin,darkblue,<-] (-0.28,-.3) to (0.28,.4);
\end{tikzpicture}
}&\mapsto 
-\mathord{
\begin{tikzpicture}[baseline = -.5mm]
	\draw[->,thin,darkblue] (0.28,-.3) to (-0.28,.4);
	\draw[line width=4pt,white,->] (-0.28,-.3) to (0.28,.4);
	\draw[thin,darkblue,->] (-0.28,-.3) to (0.28,.4);
\end{tikzpicture}
}\:,&
\mathord{
\begin{tikzpicture}[baseline = -.5mm]
	\draw[->,thin,darkblue] (0.28,-.3) to (-0.28,.4);
	\draw[line width=4pt,white,-] (-0.28,-.3) to (0.28,.4);
	\draw[thin,darkblue,<-] (-0.28,-.3) to (0.28,.4);
\end{tikzpicture}
}&\mapsto 
-\mathord{
\begin{tikzpicture}[baseline = -.5mm]
	\draw[thin,darkblue,<-] (-0.28,-.3) to (0.28,.4);
	\draw[-,line width=4pt,white] (0.28,-.3) to (-0.28,.4);
	\draw[->,thin,darkblue] (0.28,-.3) to (-0.28,.4);
\end{tikzpicture}
}\:,
\\
\mathord{
\begin{tikzpicture}[baseline = 1mm]
	\draw[-,thin,darkblue] (0.4,0.4) to[out=-90, in=0] (0.1,0);
	\draw[->,thin,darkblue] (0.1,0) to[out = 180, in = -90] (-0.2,0.4);
      \node at (0.12,-0.2) {$\color{darkblue}\scriptstyle{a}$};
      \node at (0.12,0.01) {$\diamond$};
\end{tikzpicture}
}
&\mapsto
\mathord{
\begin{tikzpicture}[baseline = 1mm]
	\draw[-,thin,darkblue] (0.4,0) to[out=90, in=0] (0.1,0.4);
	\draw[->,thin,darkblue] (0.1,0.4) to[out = 180, in = 90] (-0.2,0);
      \node at (0.12,0.6) {$\color{darkblue}\scriptstyle{a}$};
      \node at (0.12,0.4) {$\diamond$};
\end{tikzpicture}
}\:,&
\mathord{
\begin{tikzpicture}[baseline = 1mm]
	\draw[-,thin,darkblue] (0.4,0) to[out=90, in=0] (0.1,0.4);
	\draw[->,thin,darkblue] (0.1,0.4) to[out = 180, in = 90] (-0.2,0);
      \node at (0.12,0.6) {$\color{darkblue}\scriptstyle{a}$};
      \node at (0.12,0.4) {$\diamond$};
\end{tikzpicture}
}
&\mapsto
\mathord{
\begin{tikzpicture}[baseline = 1mm]
	\draw[-,thin,darkblue] (0.4,0.4) to[out=-90, in=0] (0.1,0);
	\draw[->,thin,darkblue] (0.1,0) to[out = 180, in = -90] (-0.2,0.4);
      \node at (0.12,-0.2) {$\color{darkblue}\scriptstyle{a}$};
      \node at (0.12,0.01) {$\diamond$};
\end{tikzpicture}
}\:,&
\mathord{
\begin{tikzpicture}[baseline = 1mm]
	\draw[-,thin,darkblue] (0.4,0.4) to[out=-90, in=0] (0.1,0);
	\draw[->,thin,darkblue] (0.1,0) to[out = 180, in = -90] (-0.2,0.4);
      \node at (0.12,-0.2) {$\color{darkblue}\scriptstyle{a}$};
      \node at (0.12,0.01) {$\heart$};
\end{tikzpicture}
}&\mapsto
\mathord{
\begin{tikzpicture}[baseline = 1mm]
	\draw[-,thin,darkblue] (0.4,0) to[out=90, in=0] (0.1,0.4);
	\draw[->,thin,darkblue] (0.1,0.4) to[out = 180, in = 90] (-0.2,0);
      \node at (0.12,0.6) {$\color{darkblue}\scriptstyle{a}$};
      \node at (0.12,0.4) {$\heart$};
\end{tikzpicture}
}\:,
&
\mathord{
\begin{tikzpicture}[baseline = 1mm]
	\draw[-,thin,darkblue] (0.4,0) to[out=90, in=0] (0.1,0.4);
	\draw[->,thin,darkblue] (0.1,0.4) to[out = 180, in = 90] (-0.2,0);
      \node at (0.12,0.6) {$\color{darkblue}\scriptstyle{a}$};
      \node at (0.12,0.4) {$\heart$};
\end{tikzpicture}
}&\mapsto
\mathord{
\begin{tikzpicture}[baseline = 1mm]
	\draw[-,thin,darkblue] (0.4,0.4) to[out=-90, in=0] (0.1,0);
	\draw[->,thin,darkblue] (0.1,0) to[out = 180, in = -90] (-0.2,0.4);
      \node at (0.12,-0.2) {$\color{darkblue}\scriptstyle{a}$};
      \node at (0.12,0.01) {$\heart$};
\end{tikzpicture}
}\:,
\\
\mathord{
\begin{tikzpicture}[baseline = 1mm]
	\draw[-,thin,darkblue] (0.4,0) to[out=90, in=0] (0.1,0.4);
	\draw[->,thin,darkblue] (0.1,0.4) to[out = 180, in = 90] (-0.2,0);
\end{tikzpicture}
}\:&\mapsto \mathord{
-\begin{tikzpicture}[baseline = 1mm]
	\draw[-,thin,darkblue] (0.4,0.4) to[out=-90, in=0] (0.1,0);
	\draw[->,thin,darkblue] (0.1,0) to[out = 180, in = -90] (-0.2,0.4);
\end{tikzpicture}
},\:&
\mathord{
\begin{tikzpicture}[baseline = 1mm]
	\draw[-,thin,darkblue] (0.4,0.4) to[out=-90, in=0] (0.1,0);
	\draw[->,thin,darkblue] (0.1,0) to[out = 180, in = -90] (-0.2,0.4);
\end{tikzpicture}
}\:&\mapsto
-\mathord{
\begin{tikzpicture}[baseline = 1mm]
	\draw[-,thin,darkblue] (0.4,0) to[out=90, in=0] (0.1,0.4);
	\draw[->,thin,darkblue] (0.1,0.4) to[out = 180, in = 90] (-0.2,0);
\end{tikzpicture}
}\:,&
\mathord{\begin{tikzpicture}[baseline = .8mm]
  \draw[-,thin,darkblue] (0.2,0.2) to[out=90,in=0] (0,.4);
  \draw[->,thin,darkblue] (0,0.4) to[out=180,in=90] (-.2,0.2);
\draw[-,thin,darkblue] (-.2,0.2) to[out=-90,in=180] (0,0);
  \draw[-,thin,darkblue] (0,0) to[out=0,in=-90] (0.2,0.2);
      \node at (0,0.2) {$\color{darkblue}{\pm}$};
%      \node at (0.2,0.2) {$\dot$};
      \node at (0.3,0.2) {$\color{darkblue}\scriptstyle{a}$};
 \end{tikzpicture}
}
&\mapsto
-\:\mathord{\begin{tikzpicture}[baseline = .8mm]
  \draw[<-,thin,darkblue] (0.2,0.2) to[out=90,in=0] (0,.4);
  \draw[-,thin,darkblue] (0,0.4) to[out=180,in=90] (-.2,0.2);
\draw[-,thin,darkblue] (-.2,0.2) to[out=-90,in=180] (0,0);
  \draw[-,thin,darkblue] (0,0) to[out=0,in=-90] (0.2,0.2);
      \node at (0,0.2) {$\color{darkblue}{\pm}$};
%      \node at (0.2,0.2) {$\dot$};
      \node at (-0.3,0.2) {$\color{darkblue}\scriptstyle{a}$};
 \end{tikzpicture}
}\:,&
\mathord{\begin{tikzpicture}[baseline = .8mm]
  \draw[<-,thin,darkblue] (0.2,0.2) to[out=90,in=0] (0,.4);
  \draw[-,thin,darkblue] (0,0.4) to[out=180,in=90] (-.2,0.2);
\draw[-,thin,darkblue] (-.2,0.2) to[out=-90,in=180] (0,0);
  \draw[-,thin,darkblue] (0,0) to[out=0,in=-90] (0.2,0.2);
      \node at (0,0.2) {$\color{darkblue}{\pm}$};
%      \node at (0.2,0.2) {$\dot$};
      \node at (-0.3,0.2) {$\color{darkblue}\scriptstyle{a}$};
 \end{tikzpicture}
}
&\mapsto
-\:\mathord{\begin{tikzpicture}[baseline = .8mm]
  \draw[-,thin,darkblue] (0.2,0.2) to[out=90,in=0] (0,.4);
  \draw[->,thin,darkblue] (0,0.4) to[out=180,in=90] (-.2,0.2);
\draw[-,thin,darkblue] (-.2,0.2) to[out=-90,in=180] (0,0);
  \draw[-,thin,darkblue] (0,0) to[out=0,in=-90] (0.2,0.2);
      \node at (0,0.2) {$\color{darkblue}{\pm}$};
%      \node at (0.2,0.2) {$\dot$};
      \node at (0.3,0.2) {$\color{darkblue}\scriptstyle{a}$};
 \end{tikzpicture}
}
\:.
\end{align*}
\end{theorem}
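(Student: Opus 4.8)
The plan for the first assertion is to regard Definitions~\ref{def1} and~\ref{def2} as two presentations of a single object and to produce mutually inverse monoidal functors fixing the common generators $x,\tau,c,d$. Lemma~\ref{irrelevant} already supplies one direction: inside the category of Definition~\ref{def1} it shows that \eqref{invrel1} is invertible with two-sided inverse \eqref{invrel3}, that \eqref{septimus} holds, and --- through \eqref{gloop} --- that the leftward cups and caps posited in Definition~\ref{def2} coincide with those of Definition~\ref{def1}; one checks in the same way that the shorthands \eqref{nakano2} and the $(\pm)$-bubbles agree. Hence the assignment fixing $x,\tau,c,d$ and sending the entries of \eqref{invrel3} to their counterparts in the category of Definition~\ref{def1} respects all the relations of Definition~\ref{def2}, giving a monoidal functor $G$ from the category of Definition~\ref{def2} to that of Definition~\ref{def1}. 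I would build the reverse functor $F$ from the mirror image of Lemma~\ref{irrelevant}: in the category of Definition~\ref{def2} the morphism \eqref{invrel} is invertible with inverse \eqref{invrel2}, relation \eqref{impose} holds, and \eqref{leftwards} recovers the leftward cups and caps. This mirror statement is proved by the identical diagrammatic manipulations with the roles of the positive and negative crossings interchanged. Because $F$ and $G$ both fix the generators, the composites $F\circ G$ and $G\circ F$ fix the generators and are therefore the identity; the two presentations thus define the same category, and every named morphism, being given by the same formula in the shared generators, is literally the same in both.

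For the isomorphism $\Omega_k$ I would use Definition~\ref{def1} for both the source $\HEIS_k(z,t)$ and the target $\HEIS_{-k}(z,t^{-1})$, which is legitimate by the first part. Structurally $\Omega_k$ reverses every strand orientation (so $\up\leftrightarrow\down$ on objects), passes to the opposite category to reverse composition, inserts signs on the crossings (and, as it turns out, on the leftward cups, caps and bubbles), and effects the simultaneous substitution $k\mapsto-k$, $t\mapsto t^{-1}$. Concretely I define it on the generators $x,\tau,c,d$ and on the entries of \eqref{invrel2} by the formulas displayed in the statement, and then verify that it respects each defining relation of Definition~\ref{def1}. The affine Hecke relations \eqref{rr3}, \eqref{rr2}, \eqref{rr0} are sent to the corresponding downward relations in the target; the sign attached to each crossing is precisely what makes the skein relation \eqref{rr2} land on the downward skein relation with the correct sign of $z$, while the braid relation and the dot slides of Lemma~\ref{dotslide} are neutral under the operation. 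The adjunction relations \eqref{rightadj} map to the adjunction relations read in the opposite category.

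The crux is the relation declaring \eqref{invrel2} to be a two-sided inverse of \eqref{invrel}. Under $\Omega_k$ the matrix \eqref{invrel}, built from the positive rightward crossing and the $\diamond$-caps, transforms --- after reindexing the $\unit^{\oplus}$ summands and passing to the opposite category --- into exactly the matrix \eqref{invrel1} for the target parameters $(-k,t^{-1})$, while \eqref{invrel2} transforms into \eqref{invrel3}; these are mutually inverse in the target by Lemma~\ref{irrelevant}. The remaining imposed relation \eqref{impose} must then be matched against \eqref{septimus} for the target, and this is where the substitution $t\mapsto t^{-1}$ and the exact scalars $tz^{-1}$, $-t^{-1}z^{-1}$ and $tz^{-1}-t^{-1}z^{-1}$ have to be checked case by case according to the sign of $k$. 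Once $\Omega_k$ is known to be well defined, its effect on the remaining morphisms --- the downward dot \eqref{downwarddot}, the rotated crossings \eqref{rotate}, the leftward cups and caps \eqref{leftwards} and \eqref{gloop}, and the $(\pm)$-bubbles \eqref{fake0}--\eqref{fake1} --- is obtained by applying $\Omega_k$ to each defining formula and simplifying, which yields the second table in the statement. Invertibility of $\Omega_k$ follows by constructing the analogous functor for the parameters $(-k,t^{-1})$ and observing that the two composites fix all generators, and uniqueness is automatic since a monoidal functor is determined by its values on a generating set.

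The main obstacle is precisely this bookkeeping in the invertibility step and in the passage from \eqref{impose} to \eqref{septimus}. Because the two sides of $\Omega_k$ use opposite crossings, exchange the $\heart$ and $\diamond$ conventions, and involve the reciprocal parameters $t$ and $t^{-1}$, keeping the signs, the powers of $t$, and the indices of the finitely many $\unit$-summands mutually consistent is the delicate part; it is also the step that genuinely couples the two assertions of the theorem, since it relies on Lemma~\ref{irrelevant} holding at the transformed central charge $-k$.
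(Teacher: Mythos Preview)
Your argument is correct in outline, but it takes a different and somewhat heavier route than the paper.

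The paper's proof does not prove a ``mirror Lemma~\ref{irrelevant}'' inside Definition~\ref{def2}. Instead, it observes at the outset that the presentation of Definition~\ref{def2} at parameters $(k,t)$ and the presentation of Definition~\ref{def1} at parameters $(-k,t^{-1})$ are related by the operation ``reflect in a horizontal plane and multiply by $(-1)^{x+y}$,'' where $x$ counts crossings and $y$ counts leftward cups and caps. This single observation immediately gives mutually inverse isomorphisms $\Omega_\pm$ between $\HEIS_{-k}^{\operatorname{old}}(z,t^{-1})$ (Definition~\ref{def1}) and $\HEIS_k^{\operatorname{new}}(z,t)^{\operatorname{op}}$ (Definition~\ref{def2}), with no relation-checking beyond matching the two presentations. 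Lemma~\ref{irrelevant} then gives a functor $\Theta_k:\HEIS_k^{\operatorname{new}}(z,t)\to\HEIS_k^{\operatorname{old}}(z,t)$ which is the identity on diagrams, and its inverse is simply $\Omega_+\circ\Theta_{-k}\circ\Omega_-$. Thus the only diagrammatic input is Lemma~\ref{irrelevant}, applied once at $(k,t)$ and once at $(-k,t^{-1})$; no separate mirror lemma is needed. The desired isomorphism $\Omega_k$ is then just $\Omega_+$, obtained for free from the symmetry.

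Your approach instead proves the mirror of Lemma~\ref{irrelevant} by repeating the diagrammatic calculation inside Definition~\ref{def2}, and then independently verifies that $\Omega_k$ respects each defining relation. Both steps are doable, but you are essentially proving the same symmetry twice: once implicitly (as the mirror lemma) and once explicitly (as $\Omega_k$). The paper's organization recognizes that these are the same fact and uses it once. What your route buys is conceptual directness---each assertion is established by a self-contained check---at the cost of redundant work; the paper's route buys economy and makes transparent why the two parts of the theorem are really one statement.
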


\begin{proof}
To avoid confusion, denote the category $\HEIS_k(z,t)$ from
Definition~\ref{def1} by $\HEIS_k^{\operatorname{old}}(z,t)$
and the one from Definition~\ref{def2} by
$\HEIS_k^{\operatorname{new}}(z,t)$.
The relations and other definitions for the category
$\HEIS^{\operatorname{new}}_k(z,t)$
in Definition~\ref{def2} 
and the ones for
$\HEIS^{\operatorname{old}}_{-k}(z,t^{-1})$
from Definition~\ref{def1} are related 
by reflecting all diagrams in a
horizontal plane and multiplying by $(-1)^{x+y}$, where $x$ is the
number of crossings and $y$ is the number of leftward cups and caps
(including leftward cups and caps in $(+)$- and $(-)$-bubbles but
not ones labelled by $\diamond$ or $\heart$).
It follows that there are mutually inverse isomorphisms
$$
\HEIS_{-k}^{\operatorname{old}}(z,t^{-1})
\:\substack{\Omega_-\\{\textstyle\rightleftarrows}\\\Omega_+}\:
\HEIS_{k}^{\operatorname{new}}(z,t)^{\operatorname{op}} 
$$
both defined in the same way as the functor $\Omega_k$ in the statement of the theorem.
Now we apply
Lemma~\ref{irrelevant}  and
Definition~\ref{def2} to construct a strict $\k$-linear monoidal
functor $$
\Theta_k:\HEIS_k^{\operatorname{new}}(z,t) \rightarrow
\HEIS_k^{\operatorname{old}}(z,t)
$$ which is the identity on diagrams.
This functor is an isomorphism because it has a two-sided inverse, namely,
$\Omega_+ \circ \Theta_{-k} \circ \Omega_-$.
Thus, using $\Theta_k$, we may identify $\HEIS_k^{\operatorname{new}}(z,t)$
and $\HEIS_k^{\operatorname{old}}(z,t)$. Finally,  $\Omega_k := \Omega_+$ gives
the required symmetry.
\end{proof}

In the remainder of the section, we record some further consequences
of the defining relations, thereby showing that $\HEIS_k(z,t)$ is
strictly pivotal.
The first lemma explains how dots slide past leftward cups, caps
and crossings. Its generalization to 
dots with arbitrary multiplicities $n \in
\Z$ may also be deduced using induction and the leftward skein
relation like in Lemma~\ref{dotslide}.

\begin{lemma}
The following relations hold:
\begin{align}\label{piv1}
\mathord{
\begin{tikzpicture}[baseline = 0]
	\draw[-,thin,darkblue] (0.4,0.4) to[out=-90, in=0] (0.1,-0.1);
	\draw[->,thin,darkblue] (0.1,-0.1) to[out = 180, in = -90] (-0.2,0.4);
      \node at (0.37,0.15) {$\dot$};
\end{tikzpicture}
}
&=
\mathord{
\begin{tikzpicture}[baseline = 0]
	\draw[-,thin,darkblue] (0.4,0.4) to[out=-90, in=0] (0.1,-0.1);
	\draw[->,thin,darkblue] (0.1,-0.1) to[out = 180, in = -90] (-0.2,0.4);
      \node at (-0.16,0.15) {$\dot$};
\end{tikzpicture}
}\:,
&
\mathord{
\begin{tikzpicture}[baseline = 1mm]
	\draw[-,thin,darkblue] (0.4,-0.1) to[out=90, in=0] (0.1,0.4);
	\draw[->,thin,darkblue] (0.1,0.4) to[out = 180, in = 90] (-0.2,-0.1);
      \node at (-0.14,0.25) {$\dot$};
\end{tikzpicture}
}&
=
\mathord{
\begin{tikzpicture}[baseline = 1mm]
	\draw[-,thin,darkblue] (0.4,-0.1) to[out=90, in=0] (0.1,0.4);
	\draw[->,thin,darkblue] (0.1,0.4) to[out = 180, in = 90] (-0.2,-0.1);
      \node at (0.32,0.25) {$\dot$};
\end{tikzpicture}
}\:,\\
\mathord{
\begin{tikzpicture}[baseline = -.5mm]
	\draw[->,thin,darkblue] (0.28,-.3) to (-0.28,.4);
      \node at (0.165,-0.15) {$\dot$};
	\draw[line width=4pt,white,-] (-0.28,-.3) to (0.28,.4);
	\draw[thin,darkblue,<-] (-0.28,-.3) to (0.28,.4);
\end{tikzpicture}
}&=\mathord{
\begin{tikzpicture}[baseline = -.5mm]
	\draw[thin,darkblue,<-] (-0.28,-.3) to (0.28,.4);
	\draw[-,line width=4pt,white] (0.28,-.3) to (-0.28,.4);
	\draw[->,thin,darkblue] (0.28,-.3) to (-0.28,.4);
      \node at (-0.14,0.23) {$\dot$};
\end{tikzpicture}
}\
\:,
&\mathord{
\begin{tikzpicture}[baseline = -.5mm]
	\draw[thin,darkblue,<-] (-0.28,-.3) to (0.28,.4);
	\draw[-,line width=4pt,white] (0.28,-.3) to (-0.28,.4);
	\draw[->,thin,darkblue] (0.28,-.3) to (-0.28,.4);
      \node at (0.145,0.23) {$\dot$};
\end{tikzpicture}
}&= 
\mathord{
\begin{tikzpicture}[baseline = -.5mm]
	\draw[->,thin,darkblue] (0.28,-.3) to (-0.28,.4);
	\draw[line width=4pt,white,-] (-0.28,-.3) to (0.28,.4);
	\draw[thin,darkblue,<-] (-0.28,-.3) to (0.28,.4);
      \node at (-0.16,-0.15) {$\dot$};
\end{tikzpicture}
}\:.\label{rr4}
\end{align}
\end{lemma}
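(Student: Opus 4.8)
The plan is to reduce both \eqref{piv1} and \eqref{rr4} to relations that are already available for the \emph{rightward} cups and caps and for the upward/downward crossings, by unfolding the definitions of the leftward morphisms. The single enabling observation is that one dot slides freely through any crossing: in Lemma~\ref{dotslide} the correction sums $\sum_{b+c=a,\,b,c>0}$ and their variants are empty when $a=\pm1$, so \eqref{teaminus}--\eqref{teaplus} reduce to the unadorned slide \eqref{rr3}, and likewise for the rotated (rightward and downward) versions mentioned after that lemma. Combined with the fact that dots already slide past rightward cups and caps via \eqref{cough1}--\eqref{cough2}, this means a lone dot can be transported along any strand that is assembled out of rightward cups/caps and crossings, passing each feature with no correction term.

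For \eqref{piv1} I would substitute the definition \eqref{leftwards} (equivalently \eqref{gloop}) of the leftward cup and cap. In each of the cases $k>0$, $k=0$, $k<0$ the leftward cup, respectively cap, is a scalar multiple of a rightward cup, respectively cap, possibly carrying a fixed string of dots, a curl, or a $\diamond$/$\heart$-labelled endpoint. Placing one extra dot at the right-hand endpoint, I would push it along the single underlying strand to the left-hand endpoint: it passes any rightward cup/cap by \eqref{cough1}--\eqref{cough2} and any crossing inside a curl by \eqref{rr3}, in each case freely because only one dot is present. The one extra ingredient needed is that a single dot also commutes past the $\diamond$- and $\heart$-labelled cups and caps; I would record these auxiliary slides first, deriving them in the same manner from the inverse relations defining \eqref{invrel2}/\eqref{invrel3} together with \eqref{cough1}--\eqref{cough2}. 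Since the cup and cap statements are interchanged, up to sign, by the symmetry $\Omega_k$ of \eqref{om}, it suffices to treat the caps and then transport the result to the cups.

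For \eqref{rr4} I would first prove the slide for whichever leftward crossing is obtained by rotation, i.e.\ by conjugating a downward (or upward) crossing with rightward cups and caps by the procedure of \eqref{rotate}. Pushing the lone dot through this composite uses only \eqref{cough1}--\eqref{cough2} on the outer cups/caps and the single-dot relation \eqref{rr3} on the inner crossing, with no correction terms. To obtain the same slide for the \emph{other} leftward crossing---the one defined precisely so that the leftward skein relation \eqref{leftwardsskein} holds---I would decorate \eqref{leftwardsskein} with a dot and rearrange: the dot slides through the first crossing by the step just completed, and through the turnback term on the right-hand side by \eqref{piv1}, which forces it to slide through the second crossing as well.

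The main obstacle is bookkeeping rather than any conceptual difficulty. The $k>0$, $k=0$, $k<0$ branches of \eqref{leftwards}/\eqref{gloop} each present the leftward cup/cap in a slightly different normalized form, and verifying the dot-slides past the $\diamond$- and $\heart$-caps must be carried out carefully, and uniformly enough that the resulting \eqref{piv1} is genuinely case-independent, before the clean skein-relation argument for \eqref{rr4} can be invoked.
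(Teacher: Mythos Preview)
The paper omits the proof of this lemma (as it does for all purely diagrammatic lemmas, per the remark at the end of the introduction), so there is no paper proof to compare against. Your overall strategy---reduce to the single-dot slides \eqref{rr3}, \eqref{cough1}--\eqref{cough2} by unfolding the definitions, then use the leftward skein relation \eqref{leftwardsskein} to transfer from one leftward crossing to the other---is the right one and is essentially how the argument goes.

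There is, however, a genuine confusion in your plan for \eqref{rr4}. You say you will first treat ``whichever leftward crossing is obtained by rotation, i.e.\ by conjugating a downward (or upward) crossing with rightward cups and caps by the procedure of \eqref{rotate}.'' But at this point in the paper neither leftward crossing is defined that way: \eqref{rotate} only produces \emph{rightward} crossings, and the rotation formula \eqref{cold} for leftward crossings uses \emph{leftward} cups and caps, which are not yet known to satisfy the zig-zag relations \eqref{adjfinal} (that lemma comes later). In Definitions~\ref{def1} and~\ref{def2} one leftward crossing is a generating morphism, namely the $(1,1)$-entry of the inverse matrix \eqref{invrel2} or \eqref{invrel3}, and the other is defined via the skein relation. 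So a rotation argument is not available here.

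The fix is exactly the ``inverse relations'' argument you already propose for the $\diamond$/$\heartsuit$-decorated cups and caps, and you should apply it to the leftward crossing as well. Concretely: a single dot placed below the matrix \eqref{invrel} (or \eqref{invrel1}) can be slid to the top using only \eqref{cough1}--\eqref{cough2} and the rotated single-dot case of \eqref{rr3}, since every entry of that matrix is built from rightward cups, caps, and crossings. Composing on top and bottom with the inverse matrix \eqref{invrel2} (or \eqref{invrel3}) then shows that a single dot slides past each entry of the inverse---in particular past the leftward crossing appearing there, and simultaneously past the $\diamond$/$\heartsuit$ cups and caps. This gives \eqref{rr4} for one leftward crossing directly; your skein-relation step then handles the other. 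With this correction your outline is complete.
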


Let $\Sym$ be the algebra of symmetric functions over $\k$. This is an
infinite rank polynomial
algebra with two sets of algebraically independent generators,
namely, the {\em elementary symmetric functions} $\e_1,\e_2,\dots$ and the 
{\em complete symmetric functions} $\h_1,\h_2,\dots$.
Adopting the convention that  
$\e_n = \h_n := \delta_{n,0}$ for $n \leq
0$, the elementary and complete symmetric functions are related by the
following well-known
identity \cite[(I.2.6)]{Mac}:
\begin{equation}\label{symid}
\sum_{r+s=n} (-1)^s \e_r \h_{s} = \delta_{n,0}.
\end{equation}
The following lemma, which we may refer to as the 
{\em infinite Grassmannian relation} (following Lauda), shows that
there is a well-defined homomorphism
\begin{equation}\label{beta}
\beta:\Sym\otimes \Sym \rightarrow \End_{\HEIS_k(z,t)}(\unit)
\end{equation}
such that
\begin{align}\label{new1}
\h_n \otimes 1 &\mapsto (-1)^{n-1}t z\:
\mathord{\begin{tikzpicture}[baseline = -1mm]
  \draw[<-,thin,darkblue] (0,0.2) to[out=180,in=90] (-.2,0);
  \draw[-,thin,darkblue] (0.2,0) to[out=90,in=0] (0,.2);
 \draw[-,thin,darkblue] (-.2,0) to[out=-90,in=180] (0,-0.2);
  \draw[-,thin,darkblue] (0,-0.2) to[out=0,in=-90] (0.2,0);
\node at (0,0) {$\color{darkblue}+$};
%      \node at (-0.2,0) {$\dot$};
      \node at (-0.47,0) {$\color{darkblue}\scriptstyle n+k$};
\end{tikzpicture}
}\:,&
1\otimes \h_n &\mapsto (-1)^n t^{-1}z \:
\mathord{\begin{tikzpicture}[baseline = -1mm]
  \draw[<-,thin,darkblue] (0,0.2) to[out=180,in=90] (-.2,0);
  \draw[-,thin,darkblue] (0.2,0) to[out=90,in=0] (0,.2);
 \draw[-,thin,darkblue] (-.2,0) to[out=-90,in=180] (0,-0.2);
  \draw[-,thin,darkblue] (0,-0.2) to[out=0,in=-90] (0.2,0);
\node at (0,0) {$\color{darkblue}-$};
%      \node at (-0.2,0) {$\dot$};
      \node at (-0.4,0) {$\color{darkblue}\scriptstyle -n$};
\end{tikzpicture}
}\:,\\\label{new2}
\e_n\otimes 1 &\mapsto t^{-1} z\:
\mathord{\begin{tikzpicture}[baseline = -1mm]
  \draw[-,thin,darkblue] (0,0.2) to[out=180,in=90] (-.2,0);
  \draw[->,thin,darkblue] (0.2,0) to[out=90,in=0] (0,.2);
 \draw[-,thin,darkblue] (-.2,0) to[out=-90,in=180] (0,-0.2);
  \draw[-,thin,darkblue] (0,-0.2) to[out=0,in=-90] (0.2,0);
\node at (0,0) {$\color{darkblue}+$};
%      \node at (0.2,0) {$\dot$};
      \node at (0.45,0) {$\color{darkblue}\scriptstyle n-k$};
\end{tikzpicture}
}\:,&
1\otimes \e_n &\mapsto -tz \:
\mathord{\begin{tikzpicture}[baseline = -1mm]
  \draw[-,thin,darkblue] (0,0.2) to[out=180,in=90] (-.2,0);
  \draw[->,thin,darkblue] (0.2,0) to[out=90,in=0] (0,.2);
 \draw[-,thin,darkblue] (-.2,0) to[out=-90,in=180] (0,-0.2);
  \draw[-,thin,darkblue] (0,-0.2) to[out=0,in=-90] (0.2,0);
\node at (0,0) {$\color{darkblue}-$};
%      \node at (0.2,0) {$\dot$};
      \node at (0.4,0) {$\color{darkblue}\scriptstyle -n$};
\end{tikzpicture}
}\:.
\end{align}
We will prove in Corollary \ref{cor:beta-iso} that $\beta$ is actually an {\em isomorphism}.

\begin{lemma}\label{infgrass}
For any $a \in \Z$, we have that
\begin{align}
\sum_{\substack{b,c\in\Z\\b+c = a}}
\mathord{
\begin{tikzpicture}[baseline = 1mm]
  \draw[-,thin,darkblue] (0.2,0.2) to[out=90,in=0] (0,.4);
  \draw[->,thin,darkblue] (0,0.4) to[out=180,in=90] (-.2,0.2);
\draw[-,thin,darkblue] (-.2,0.2) to[out=-90,in=180] (0,0);
  \draw[-,thin,darkblue] (0,0) to[out=0,in=-90] (0.2,0.2);
\node at (0,.2) {$\color{darkblue}+$};
\node at (.3,.2) {$\color{darkblue}\scriptstyle{b}$};
%\node at (.2,.2) {$\bluedot$};
\end{tikzpicture}
}
\mathord{
\begin{tikzpicture}[baseline = 1mm]
  \draw[<-,thin,darkblue] (0.2,0.2) to[out=90,in=0] (0,.4);
  \draw[-,thin,darkblue] (0,0.4) to[out=180,in=90] (-.2,0.2);
\draw[-,thin,darkblue] (-.2,0.2) to[out=-90,in=180] (0,0);
  \draw[-,thin,darkblue] (0,0) to[out=0,in=-90] (0.2,0.2);
\node at (0,.2) {$\color{darkblue}+$};
\node at (-.3,.2) {$\color{darkblue}\scriptstyle{c}$};
%\node at (-.2,.2) {$\bluedot$};
\end{tikzpicture}
}
=\sum_{\substack{b,c\in\Z\\b+c = a}}
\mathord{
\begin{tikzpicture}[baseline = 1mm]
  \draw[-,thin,darkblue] (0.2,0.2) to[out=90,in=0] (0,.4);
  \draw[->,thin,darkblue] (0,0.4) to[out=180,in=90] (-.2,0.2);
\draw[-,thin,darkblue] (-.2,0.2) to[out=-90,in=180] (0,0);
  \draw[-,thin,darkblue] (0,0) to[out=0,in=-90] (0.2,0.2);
\node at (0,.2) {$\color{darkblue}-$};
\node at (.3,.2) {$\color{darkblue}\scriptstyle{b}$};
%\node at (.2,.2) {$\bluedot$};
\end{tikzpicture}
}
\mathord{
\begin{tikzpicture}[baseline = 1mm]
  \draw[<-,thin,darkblue] (0.2,0.2) to[out=90,in=0] (0,.4);
  \draw[-,thin,darkblue] (0,0.4) to[out=180,in=90] (-.2,0.2);
\draw[-,thin,darkblue] (-.2,0.2) to[out=-90,in=180] (0,0);
  \draw[-,thin,darkblue] (0,0) to[out=0,in=-90] (0.2,0.2);
\node at (0,.2) {$\color{darkblue}-$};
\node at (-.3,.2) {$\color{darkblue}\scriptstyle{c}$};
%\node at (-.2,.2) {$\bluedot$};
\end{tikzpicture}
}&= -\delta_{a,0}\: z^{-2} 1_\unit.
\end{align}
Moreover:
\begin{align}
\label{tanks}
\mathord{\begin{tikzpicture}[baseline = .8mm]
  \draw[-,thin,darkblue] (0.2,0.2) to[out=90,in=0] (0,.4);
  \draw[->,thin,darkblue] (0,0.4) to[out=180,in=90] (-.2,0.2);
\draw[-,thin,darkblue] (-.2,0.2) to[out=-90,in=180] (0,0);
  \draw[-,thin,darkblue] (0,0) to[out=0,in=-90] (0.2,0.2);
      \node at (0,0.2) {$\color{darkblue}{+}$};
      \node at (0.3,0.2) {$\color{darkblue}\scriptstyle{a}$};
 \end{tikzpicture}
}
&=\delta_{a,-k} tz^{-1} 1_\unit\quad\text{if $a\leq -k$,}&
\mathord{\begin{tikzpicture}[baseline = .8mm]
  \draw[<-,thin,darkblue] (0.2,0.2) to[out=90,in=0] (0,.4);
  \draw[-,thin,darkblue] (0,0.4) to[out=180,in=90] (-.2,0.2);
\draw[-,thin,darkblue] (-.2,0.2) to[out=-90,in=180] (0,0);
  \draw[-,thin,darkblue] (0,0) to[out=0,in=-90] (0.2,0.2);
      \node at (0,0.2) {$\color{darkblue}{+}$};
      \node at (-0.3,0.2) {$\color{darkblue}\scriptstyle{a}$};
 \end{tikzpicture}
}
&=-\delta_{a,k} t^{-1}z^{-1} 1_\unit\quad\text{if $a \leq k$,}\\\label{tanks2}
\mathord{\begin{tikzpicture}[baseline = .8mm]
  \draw[<-,thin,darkblue] (0.2,0.2) to[out=90,in=0] (0,.4);
  \draw[-,thin,darkblue] (0,0.4) to[out=180,in=90] (-.2,0.2);
\draw[-,thin,darkblue] (-.2,0.2) to[out=-90,in=180] (0,0);
  \draw[-,thin,darkblue] (0,0) to[out=0,in=-90] (0.2,0.2);
      \node at (0,0.2) {$\color{darkblue}{-}$};
      \node at (-0.3,0.2) {$\color{darkblue}\scriptstyle{a}$};
 \end{tikzpicture}
}
&=\delta_{a,0} tz^{-1} 1_\unit\:\,\quad\text{if $a \geq 0$,}&
\mathord{\begin{tikzpicture}[baseline = .8mm]
  \draw[-,thin,darkblue] (0.2,0.2) to[out=90,in=0] (0,.4);
  \draw[->,thin,darkblue] (0,0.4) to[out=180,in=90] (-.2,0.2);
\draw[-,thin,darkblue] (-.2,0.2) to[out=-90,in=180] (0,0);
  \draw[-,thin,darkblue] (0,0) to[out=0,in=-90] (0.2,0.2);
      \node at (0,0.2) {$\color{darkblue}{-}$};
      \node at (0.3,0.2) {$\color{darkblue}\scriptstyle{a}$};
 \end{tikzpicture}
}
&=
-\delta_{a,0} t^{-1}z^{-1} 1_\unit\quad\text{if $a \geq 0$.}
\end{align}
\end{lemma}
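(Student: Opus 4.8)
The plan is to read the two displayed identities as statements about formal generating series and to recognise them as the categorical shadow of the classical identity (\ref{symid}). Write $R^{\pm}(u)=\sum_{b}\bigcirc^{\pm,\mathrm r}_{b}\,u^{b}$ and $L^{\pm}(u)=\sum_{c}\bigcirc^{\pm,\mathrm l}_{c}\,u^{c}$, where $\bigcirc^{\pm,\mathrm r}_{a}$ (resp.\ $\bigcirc^{\pm,\mathrm l}_{a}$) denotes the $(\pm)$-bubble of label $a$ with its label on the right (resp.\ left). Each convolution identity asserts precisely that the product $R^{\pm}(u)L^{\pm}(u)=-z^{-2}$ is a constant series. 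Since $\End_{\HEIS_k(z,t)}(\unit)$ is commutative, this is exactly the relation needed for the assignment (\ref{new1})--(\ref{new2}) to respect (\ref{symid}) in each tensor factor, so proving the lemma is the same as proving that the map $\beta$ of (\ref{beta}) is a well-defined homomorphism. Before anything else I would invoke the symmetry $\Omega_{k}$ of (\ref{om}): it carries a right $(\pm)$-bubble of label $a$ to $-1$ times the left $(\pm)$-bubble of the same label (and conversely), preserves the sign $\pm$, and fixes $z$, so it turns the $(+)$- (resp.\ $(-)$-) convolution for $\HEIS_{k}(z,t)$ into the same convolution for $\HEIS_{-k}(z,t^{-1})$. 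Thus I may assume $k\geq 0$ throughout.

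Step one is the family of boundary evaluations (\ref{tanks})--(\ref{tanks2}). I would obtain these by unwinding the definitions (\ref{fake0})--(\ref{fake1}) of the $(\pm)$-bubbles and substituting the curl and bubble evaluations already established: (\ref{impose}) and (\ref{tea0}) together with (\ref{septimus}) and (\ref{tea2}) from Lemma~\ref{irrelevant}, according to the case. The only content is case-checking across $a>0$ (where a $(+)$-bubble is a genuine dotted bubble by (\ref{fake2})), $a=0$, and $a<0$. The outcome is that $R^{+}(u)$ begins in degree $-k$ with leading coefficient $tz^{-1}$ while $L^{+}(u)$ begins in degree $k$ with leading coefficient $-t^{-1}z^{-1}$, whereas the $(-)$-series $R^{-}(u),L^{-}(u)$ are supported in degrees $\leq 0$ with top coefficients $-t^{-1}z^{-1}$ and $tz^{-1}$. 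In either case the extreme coefficient of the product $R^{\pm}L^{\pm}$ equals $-z^{-2}$, which settles the two easy ranges of each convolution: the range where the sum is empty ($a<0$ for $(+)$, $a>0$ for $(-)$) and the single-term range $a=0$.

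The heart of the matter is the vanishing of the interior coefficients, i.e.\ the powers $u^{a}$ with $a>0$ of $R^{+}L^{+}$ and $a<0$ of $R^{-}L^{-}$. For this I would decorate the invertibility relation (\ref{tea1}) (resp.\ its Definition~\ref{def2}/Lemma~\ref{irrelevant} counterpart (\ref{tea3})) with dots and close it into a diagram: the left-hand double crossing can be opened using the skein relation (\ref{rr2})/(\ref{leftwardsskein}) and then simplified by migrating the dots across the resulting crossing with (\ref{teaminus})--(\ref{teaplus}), while the cup-cap terms on the right close up into products $\bigcirc^{\pm,\mathrm r}_{b}\bigcirc^{\pm,\mathrm l}_{c}$. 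The correction terms generated by each dot slide are of telescoping shape, so summing over $b+c=a$ with $a$ fixed makes the $z\sum$-corrections cancel in consecutive pairs, and the straightened leading contributions are annihilated by the vanishing curl relations (\ref{tea0}) and (\ref{tea2}) (using (\ref{exercise}) at the boundary). What remains is the extreme term already computed in Step one, which vanishes throughout the interior range. Running this mechanism with the Definition~\ref{def1} relations handles the $(+)$-series and with the Definition~\ref{def2} relations the $(-)$-series; alternatively the $(-)$-identity can be extracted from the $(+)$-identity through (\ref{fake1}).

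I expect the genuine obstacle to be precisely this interior telescoping. The difficulty is bookkeeping rather than conceptual: the $(+)$-bubbles switch between the fake rotated-curl description (\ref{fake0}) for labels $\leq 0$ and the genuine dotted-bubble description (\ref{fake2}) for labels $>0$, so the closed-diagram manipulation must be carried out uniformly across that break, and the $z$-corrections coming from (\ref{teaminus})--(\ref{teaplus}) have to be matched term by term for the cancellation to be exact. Once the interior vanishing is secured, combining it with the boundary evaluations of Step one yields both convolution identities, and with them the well-definedness of the homomorphism $\beta$ in (\ref{beta}).
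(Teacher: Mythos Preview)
The paper does not give a proof of this lemma: it falls under the blanket statement in the introduction that ``proofs of all lemmas involving purely diagrammatic manipulations have been omitted,'' with the reader referred to the degenerate analogs in \cite{Bheis,BSW1}. So there is no proof to compare against, only the implicit claim that the argument is routine diagram-chasing parallel to the degenerate case.

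Your outline is consistent with that expectation. The reduction via $\Omega_k$ to $k\geq 0$ is correct (and matches how the paper handles most such symmetries). The boundary evaluations (\ref{tanks})--(\ref{tanks2}) really do fall out of the definitions (\ref{fake0})--(\ref{fake1}) combined with (\ref{impose}), (\ref{tea0}), (\ref{septimus}), and (\ref{tea2}); this is pure case-checking as you say. For the interior vanishing, closing up a dot-decorated version of the second identity in (\ref{tea1}) (resp.\ the Lemma~\ref{irrelevant} analog coming from the invertibility of (\ref{invrel1})) and simplifying with (\ref{teaminus})--(\ref{teaplus}) and the curl relations (\ref{tea0}), (\ref{exercise}) is the standard mechanism, and is exactly what is done in the degenerate setting of \cite[Lemma~2.3]{Bheis}.

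One caveat: your throwaway alternative ``the $(-)$-identity can be extracted from the $(+)$-identity through (\ref{fake1})'' does not work on its own. Expanding $\bigcirc^{-}=\bigcirc^{\text{dotted}}-\bigcirc^{+}$ inside the convolution produces cross-terms of the form $\sum_{b+c=a}\bigcirc^{\text{dotted}}_{b}\bigcirc^{+}_{c}$, which are genuinely infinite sums (the dotted bubbles are nonzero for all $b\in\Z$) and have no meaning in $\End(\unit)$. You need the second, independent calculation via the Definition~\ref{def2} side of Lemma~\ref{irrelevant}, as in your primary plan. With that adjustment, your strategy is the intended one.
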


\begin{corollary}
For an indeterminate $w$, we have that 
\begin{equation}\label{igproper}
\anticlockplus\,(w)\; \clockplus\,(w)=
\anticlockminus\,(w)\; \clockminus\,(w) = 
1_\unit,
\end{equation}
where
\begin{align}\label{fourofem}
\anticlockplus\,(w) &:= t^{-1}z\sum_{n\in\Z}
\mathord{
\begin{tikzpicture}[baseline = 1.25mm]
  \draw[-,thin] (0,0.4) to[out=180,in=90] (-.2,0.2);
  \draw[->,thin] (0.2,0.2) to[out=90,in=0] (0,.4);
 \draw[-,thin] (-.2,0.2) to[out=-90,in=180] (0,0);
  \draw[-,thin] (0,0) to[out=0,in=-90] (0.2,0.2);
   \node at (0,.21) {$+$};
   \node at (0.33,0.2) {$\scriptstyle{n}$};
\end{tikzpicture}
}\: w^{-n}
\in w^k 1_\unit + w^{k-1} \End_{\HEIS_k(z,t)}(\unit)\llbracket
w^{-1} \rrbracket
,\\
\clockplus\,(w)&:= -tz\sum_{n\in\Z}
\mathord{
\begin{tikzpicture}[baseline = 1.25mm]
  \draw[<-,thin] (0,0.4) to[out=180,in=90] (-.2,0.2);
  \draw[-,thin] (0.2,0.2) to[out=90,in=0] (0,.4);
 \draw[-,thin] (-.2,0.2) to[out=-90,in=180] (0,0);
  \draw[-,thin] (0,0) to[out=0,in=-90] (0.2,0.2);
   \node at (0,.21) {$+$};
   \node at (-0.33,0.2) {$\scriptstyle{n}$};
\end{tikzpicture}
} \:w^{-n} \in w^{-k}1_\unit + 
w^{-k-1}\End_{\HEIS_k(z,t)}(\unit)\llbracket
w^{-1} \rrbracket, \label{igp}
\\
\anticlockminus\,(w) &:= -tz\sum_{n\in\Z}
\mathord{
\begin{tikzpicture}[baseline = 1.25mm]
  \draw[-,thin] (0,0.4) to[out=180,in=90] (-.2,0.2);
  \draw[->,thin] (0.2,0.2) to[out=90,in=0] (0,.4);
 \draw[-,thin] (-.2,0.2) to[out=-90,in=180] (0,0);
  \draw[-,thin] (0,0) to[out=0,in=-90] (0.2,0.2);
   \node at (0,.21) {$-$};
   \node at (0.33,0.2) {$\scriptstyle{n}$};
\end{tikzpicture}
}\: w^{-n}\in 1_\unit+ w\End_{\HEIS_k(z,t)}(\unit)\llbracket
w \rrbracket,\\
\clockminus\,(w) &:= t^{-1}z\sum_{n\in\Z}
\mathord{
\begin{tikzpicture}[baseline = 1.25mm]
  \draw[<-,thin] (0,0.4) to[out=180,in=90] (-.2,0.2);
  \draw[-,thin] (0.2,0.2) to[out=90,in=0] (0,.4);
 \draw[-,thin] (-.2,0.2) to[out=-90,in=180] (0,0);
  \draw[-,thin] (0,0) to[out=0,in=-90] (0.2,0.2);
   \node at (0,.21) {$-$};
   \node at (-0.33,0.2) {$\scriptstyle{n}$};
\end{tikzpicture}
} \:w^{-n}\in 1_\unit + w\End_{\HEIS_k(z,t)}(\unit)\llbracket
w \rrbracket.\label{igm}
\end{align}
\end{corollary}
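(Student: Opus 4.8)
The plan is to read \eqref{igproper} as nothing more than a generating-function repackaging of the infinite Grassmannian relation established in Lemma~\ref{infgrass}. Once that lemma is granted, the corollary follows by a purely formal manipulation of power series, and the normalizing scalars in \eqref{fourofem}--\eqref{igm} have evidently been chosen precisely so that everything collapses to $1_\unit$.

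First I would verify the four membership assertions, which merely pin down the extremal nonzero coefficient of each series. For $\anticlockplus\,(w)$ the $(+)$-bubble carrying label $n$ on the right vanishes for $n<-k$ and equals $tz^{-1}1_\unit$ at $n=-k$ by \eqref{tanks}, so after multiplying by the prefactor $t^{-1}z$ the top term is $w^{k}1_\unit$ and all remaining terms involve strictly smaller powers of $w$; this gives membership in $w^{k}1_\unit+w^{k-1}\End_{\HEIS_k(z,t)}(\unit)\llbracket w^{-1}\rrbracket$. The series $\clockplus\,(w)$ is handled identically using the left-hand evaluation in \eqref{tanks}, whose leading value $-t^{-1}z^{-1}1_\unit$ at $n=k$ is cancelled by the prefactor $-tz$ to produce $w^{-k}1_\unit$. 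For the two $(-)$-series one instead invokes \eqref{tanks2}: the right (resp.\ left) $(-)$-bubble labelled $n$ vanishes for $n>0$ and contributes $-t^{-1}z^{-1}1_\unit$ (resp.\ $tz^{-1}1_\unit$) at $n=0$, so after the prefactors the constant terms are both $1_\unit$ and no negative powers of $w$ survive, yielding membership in $1_\unit+w\End_{\HEIS_k(z,t)}(\unit)\llbracket w\rrbracket$.

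For the identities themselves I would substitute the definitions and collect terms of each total degree. Expanding $\anticlockplus\,(w)\,\clockplus\,(w)$ via \eqref{fourofem} and \eqref{igp} produces the scalar $(t^{-1}z)(-tz)=-z^{2}$ times a double sum over the right-labelled and left-labelled $(+)$-bubbles; grouping by the total label $a$, the coefficient of $w^{-a}$ is $-z^{2}$ times the left-hand side of the first displayed equation of Lemma~\ref{infgrass}, which equals $-\delta_{a,0}z^{-2}1_\unit$. Hence every coefficient with $a\neq 0$ vanishes while the $a=0$ coefficient is $(-z^{2})(-z^{-2})1_\unit=1_\unit$, proving $\anticlockplus\,(w)\,\clockplus\,(w)=1_\unit$. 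The product $\anticlockminus\,(w)\,\clockminus\,(w)$ is treated the same way: the prefactor is again $(-tz)(t^{-1}z)=-z^{2}$, and the relevant double sum now matches the second equality in Lemma~\ref{infgrass}, so the identical cancellation gives $1_\unit$.

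The only point needing care, rather than a genuine obstacle, is that the product of two two-sided-infinite series is well defined, i.e.\ that each coefficient of the product is a finite sum. This follows from the support bounds recorded above: the right and left $(+)$-bubbles are supported in $n\geq -k$ and $n\geq k$ respectively, so for fixed $a$ only finitely many pairs $(b,c)$ with $b+c=a$ contribute; dually the $(-)$-bubbles are supported in $n\leq 0$. Consequently each product lands in the appropriate completed ring---$\End_{\HEIS_k(z,t)}(\unit)\llbracket w^{-1}\rrbracket$ in the $(+)$-case and $\End_{\HEIS_k(z,t)}(\unit)\llbracket w\rrbracket$ in the $(-)$-case---and the formal manipulation above is legitimate. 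Since all the substantive content is carried by Lemma~\ref{infgrass}, there is no separate hard step here.
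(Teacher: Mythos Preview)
Your proposal is correct and is exactly the argument the paper intends: the corollary is stated immediately after Lemma~\ref{infgrass} with no separate proof precisely because it is the generating-function repackaging you describe, with the membership assertions following from \eqref{tanks}--\eqref{tanks2} and the identities from the convolution identity of the lemma. Your care about well-definedness of the products is appropriate and your computations of the prefactors and leading coefficients are all accurate.
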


Using the next relations plus (\ref{nakano1}) and (\ref{nakano2}), the leftward cups and caps decorated by
$\diamond$ or $\heart$ can be eliminated from any diagram.

\begin{lemma}\label{redundant}
The following relations hold:
\begin{align}
\mathord{
\begin{tikzpicture}[baseline = 1mm]
	\draw[-,thin,darkblue] (0.4,0.4) to[out=-90, in=0] (0.1,0);
	\draw[->,thin,darkblue] (0.1,0) to[out = 180, in = -90] (-0.2,0.4);
      \node at (0.12,-0.2) {$\color{darkblue}\scriptstyle{a}$};
      \node at (0.12,0.01) {$\diamond$};
\end{tikzpicture}
}
&=-z^2 \sum_{b \geq 1}
\mathord{
\begin{tikzpicture}[baseline = 1mm]
	\draw[-,thin,darkblue] (0.4,0.4) to[out=-90, in=0] (0.1,0);
	\draw[->,thin,darkblue] (0.1,0) to[out = 180, in = -90] (-0.2,0.4);
      \node at (-0.33,0.18) {$\color{darkblue}\scriptstyle{b}$};
      \node at (-0.17,0.18) {$\dot$};
\end{tikzpicture}
}\quad\mathord{\begin{tikzpicture}[baseline = .8mm]
  \draw[-,thin,darkblue] (0.2,0.2) to[out=90,in=0] (0,.4);
  \draw[->,thin,darkblue] (0,0.4) to[out=180,in=90] (-.2,0.2);
\draw[-,thin,darkblue] (-.2,0.2) to[out=-90,in=180] (0,0);
  \draw[-,thin,darkblue] (0,0) to[out=0,in=-90] (0.2,0.2);
      \node at (0,0.2) {$\color{darkblue}{+}$};
      \node at (0.51,0.2) {$\color{darkblue}\scriptstyle{-a-b}$};
 \end{tikzpicture}
}&&\text{if $0 \leq a < k$,}\\\label{nakano5}
\mathord{
\begin{tikzpicture}[baseline = 1mm]
	\draw[-,thin,darkblue] (0.4,0) to[out=90, in=0] (0.1,0.4);
	\draw[->,thin,darkblue] (0.1,0.4) to[out = 180, in = 90] (-0.2,0);
      \node at (0.12,0.6) {$\color{darkblue}\scriptstyle{a}$};
      \node at (0.12,0.4) {$\diamond$};
\end{tikzpicture}
}
&=-z^2 \sum_{b \geq 1}
\:\mathord{
\begin{tikzpicture}[baseline = 1mm]
	\draw[-,thin,darkblue] (0.4,0) to[out=90, in=0] (0.1,0.4);
	\draw[->,thin,darkblue] (0.1,0.4) to[out = 180, in = 90] (-0.2,0);
      \node at (0.48,0.25) {$\color{darkblue}\scriptstyle{b}$};
      \node at (0.32,0.25) {$\dot$};
\end{tikzpicture}}
\,\mathord{\begin{tikzpicture}[baseline = .8mm]
  \draw[<-,thin,darkblue] (0.2,0.2) to[out=90,in=0] (0,.4);
  \draw[-,thin,darkblue] (0,0.4) to[out=180,in=90] (-.2,0.2);
\draw[-,thin,darkblue] (-.2,0.2) to[out=-90,in=180] (0,0);
  \draw[-,thin,darkblue] (0,0) to[out=0,in=-90] (0.2,0.2);
      \node at (0,0.2) {$\color{darkblue}{+}$};
      \node at (-0.51,0.2) {$\color{darkblue}\scriptstyle{-a-b}$};
 \end{tikzpicture}
}&&\text{if $0 \leq a < -k$.}
\end{align}
\end{lemma}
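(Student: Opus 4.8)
The plan is to prove the first identity (the case $0\le a<k$, which is vacuous unless $k>0$), and to deduce the second identity \eqref{nakano5} (the case $0\le a<-k$, relevant when $k<0$) by applying the isomorphism $\Omega_k$ of \eqref{om}. Since $\Omega_k:\HEIS_k(z,t)\to\HEIS_{-k}(z,t^{-1})^{\operatorname{op}}$ interchanges $k$ with $-k$, carries leftward cups to leftward caps and $b$-dotted cups to $b$-dotted caps, and sends the $(+)$-bubbles to $(+)$-bubbles up to an overall sign (as recorded in the statement of the preceding theorem), transporting the first relation through $\Omega_k$ produces precisely the cap relation \eqref{nakano5} with the range $0\le a<-k$. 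Thus it suffices to work inside $\HEIS_k(z,t)$ with $k>0$.

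To establish the first relation I would first note that the sum on the right-hand side is in fact finite: as $a\ge 0$ and $b\ge 1$ we have $-a-b<0$, and by the convention \eqref{fake0} the right-labelled $(+)$-bubble of label $-a-b$ vanishes once $a+b>k$, so only the terms $1\le b\le k-a$ contribute. The strategy is then to isolate the single morphism (the diamond-labelled leftward cup of label $a$) from the defining inverse relation of Definition~\ref{def1}. Composing the matrices \eqref{invrel} and \eqref{invrel2} in the order that yields an endomorphism of $\up\otimes\down$ expresses $1_{\up\otimes\down}$ as a double crossing (positive after negative) plus the sum over $a'=0,\dots,k-1$ of the diamond-labelled cup of label $a'$ capped by the $a'$-dotted rightward cap $d$. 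To decouple the cups I would precompose this identity with the honest leftward cup carrying $k-a$ dots; the bubble evaluations \eqref{tea0} (equivalently the cases of \eqref{tanks}) show that closing the $a'$-dotted cap against this dotted cup yields the scalar $-\delta_{a',a}t^{-1}z^{-1}1_\unit$, so every term but $a'=a$ drops out and one solves for the desired cup. It then remains to simplify the image of the double crossing precomposed with the dotted leftward cup: resolving the double crossing via the (rightward/leftward) skein relations derived from \eqref{rr2}, sliding the dots through the crossing with Lemma~\ref{dotslide}, and rewriting the resulting curls as $(+)$-bubbles through \eqref{fake0} should reproduce exactly $-z^2\sum_{b\ge 1}(\text{$b$-dotted cup})(\text{$(+)$-bubble}_{-a-b})$. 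Organizing the entire manipulation through the bubble generating functions \eqref{fourofem}--\eqref{igm} and the inverse relation \eqref{igproper} is the cleanest way to keep track of the infinite tail of curls and to see that it truncates to the stated finite sum.

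The main obstacle will be the bookkeeping at the boundaries of this computation rather than any conceptual difficulty. On one side, at $a=0$ the diamond- and heart-labelled cups differ by a curl term according to \eqref{nakano1}, so the case $a=0$ must be tracked separately to avoid an off-by-one discrepancy between the cup being computed and the curls generated when the double crossing is resolved. On the other side, resolving the double crossing against a dotted cup produces a priori an unbounded family of curls with arbitrarily many dots, and one must invoke the vanishing built into \eqref{fake0} — that sufficiently negative $(+)$-bubbles are zero — to confirm the truncation. I expect that essentially all of the work lies in pinning down the precise powers $t^{\pm 1}z^{\pm 1}$ and the signs that accompany each curl and each bubble evaluation, so the principal risk is an arithmetic slip in these scalars rather than a gap in the overall strategy.
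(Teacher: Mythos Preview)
The paper explicitly omits the proof of this lemma (it states that proofs of purely diagrammatic lemmas are deferred to the sequel), so there is nothing in the paper to compare against; what follows assesses your outline on its own merits.

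Your reduction of the second identity to the first via $\Omega_k$ is correct, and starting from the inverse relation \eqref{tea1} is the natural move. But the isolation step does not work as you describe. When you close the $a'$-dotted rightward cap against a leftward cup carrying $k-a$ dots you obtain the clockwise bubble with $a'+k-a$ dots. Relation \eqref{tea0} (equivalently \eqref{tanks}) evaluates this to $-\delta_{a',a}t^{-1}z^{-1}$ only in the range $0<a'+k-a\le k$, i.e.\ only for $a'\le a$. For $a'>a$ the bubble carries more than $k$ dots; by \eqref{fake2} it is the honest $(+)$-bubble $\clockplus_{\,a'+k-a}$ of label exceeding $k$, which is not zero---indeed by Corollary~\ref{cor:beta-iso} these bubbles are algebraically independent generators of the center. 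So those terms survive, the sum does not collapse to a single $\diamond_a$, and the remainder of your computation cannot proceed from that point. Your concluding remark that the only danger is an arithmetic slip in scalars is therefore too optimistic: there is a genuine structural obstruction here, not merely a bookkeeping issue.

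One repair is downward induction on $a$: at $a=k-1$ there is no $a'>a$, and at each smaller $a$ the surviving terms involve only diamond cups of strictly larger label, which are already expressed via the lemma by the inductive hypothesis. A cleaner route is to verify directly that the right-hand side $\rho_a$ satisfies the two equations characterizing $\diamond_a$ as the $(a{+}2)$-th column of \eqref{invrel2}, namely $d_{a'}\circ\rho_a=\delta_{a',a}1_\unit$ for $0\le a'<k$ and $(\text{positive rightward crossing})\circ\rho_a=0$. For the first, $d_{a'}\circ\rho_a=-z^2\sum_{b\ge 1}\clockplus_{\,a'+b}\,\anticlockplus_{\,-a-b}$; the vanishing in \eqref{tanks} allows the sum to be extended to all integers, after which Lemma~\ref{infgrass} gives $\delta_{a',a}$. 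The second condition follows from the curl relations in \eqref{tea0}.
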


The next lemma shows that $\down$ is left dual to $\up$ (as well as being
right dual by the original construction).
Thus, the monoidal category $\HEIS_k(z,t)$ is rigid.

\begin{lemma}\label{rigiditylemma}
The following relations hold:
\begin{align}\label{adjfinal}
\mathord{
\begin{tikzpicture}[baseline = 0]
  \draw[-,thin,darkblue] (0.3,0) to (0.3,-.4);
	\draw[-,thin,darkblue] (0.3,0) to[out=90, in=0] (0.1,0.4);
	\draw[-,thin,darkblue] (0.1,0.4) to[out = 180, in = 90] (-0.1,0);
	\draw[-,thin,darkblue] (-0.1,0) to[out=-90, in=0] (-0.3,-0.4);
	\draw[-,thin,darkblue] (-0.3,-0.4) to[out = 180, in =-90] (-0.5,0);
  \draw[->,thin,darkblue] (-0.5,0) to (-0.5,.4);
\end{tikzpicture}
}
&=
\mathord{\begin{tikzpicture}[baseline=0]
  \draw[->,thin,darkblue] (0,-0.4) to (0,.4);
\end{tikzpicture}
}\:,
&\mathord{
\begin{tikzpicture}[baseline = 0]
  \draw[-,thin,darkblue] (0.3,0) to (0.3,.4);
	\draw[-,thin,darkblue] (0.3,0) to[out=-90, in=0] (0.1,-0.4);
	\draw[-,thin,darkblue] (0.1,-0.4) to[out = 180, in = -90] (-0.1,0);
	\draw[-,thin,darkblue] (-0.1,0) to[out=90, in=0] (-0.3,0.4);
	\draw[-,thin,darkblue] (-0.3,0.4) to[out = 180, in =90] (-0.5,0);
  \draw[->,thin,darkblue] (-0.5,0) to (-0.5,-.4);
\end{tikzpicture}
}
&=
\mathord{\begin{tikzpicture}[baseline=0]
  \draw[<-,thin,darkblue] (0,-0.4) to (0,.4);
\end{tikzpicture}
}\:.
\end{align}
\end{lemma}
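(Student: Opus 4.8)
The plan is to reduce both identities in \eqref{adjfinal} to the right adjunction \eqref{rightadj}, which is built into the construction, by unfolding the definitions of the leftward cups and caps. The first thing I would record is that the two equations in \eqref{adjfinal} correspond to one another under the symmetry $\Omega_k$ of \eqref{om}: since $\Omega_k$ reverses the order of composition, interchanges $\up$ and $\down$, and sends the undecorated leftward cap and cup to minus the leftward cup and cap (the two signs cancelling in any zig-zag built from one cup and one cap), applying $\Omega_k$ to the first relation of \eqref{adjfinal} in $\HEIS_k(z,t)$ yields the second relation in $\HEIS_{-k}(z,t^{-1})$. Hence it suffices to establish the \emph{first} relation of \eqref{adjfinal} for all $k$, splitting into the cases $k=0$, $k>0$, $k<0$, and the second relation then comes for free.

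For the base case $k=0$ I would substitute the definitions from \eqref{leftwards} (equivalently \eqref{gloop}), which present the leftward cup and cap as $t^{\pm 1}$ times a single crossing composed with the rightward cup $c$ or cap $d$. The leftward zig-zag then becomes $t\cdot t^{-1}$ times a diagram assembled from two crossings, one rightward cup, and one rightward cap. I would slide the crossings past the rightward cup and cap using \eqref{cough1}--\eqref{cough2}, untangle the result with the relations \eqref{rr0} and the right adjunction \eqref{rightadj}, and use $t\cdot t^{-1}=1$ to arrive at the bare identity string. This is precisely the role of the normalizing scalars inserted into \eqref{leftwards}.

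For $k>0$ the same strategy applies, but the leftward cup and cap now carry the $\diamond$- and $\heart$-decorations of \eqref{leftwards} (more conveniently \eqref{gloop}), so after sliding them through the zig-zag with \eqref{cough1}--\eqref{cough2} and resolving crossings by the skein relation \eqref{rr2}, several correction terms of the shape occurring in \eqref{tea1} are produced. I would annihilate these using the vanishing curls of \eqref{tea0} and the curl evaluation \eqref{exercise}, while the scalar attached to the surviving identity term is pinned down by the normalizations \eqref{impose} and \eqref{septimus}. The mechanism is that the extra strand created when a decorated cap passes through the zig-zag either closes into a curl that is killed by \eqref{tea0}/\eqref{exercise}, or contributes the single identity term with the correct coefficient. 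The case $k<0$ is handled symmetrically, using the $k<0$ branches of \eqref{gloop} together with \eqref{tea3} and \eqref{tea2}.

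The main obstacle will be the scalar and summand bookkeeping in the cases $k\neq 0$: one must verify that each of the finitely many correction terms indexed by $0\le a<k$ is either killed by a vanishing curl from \eqref{tea0}/\eqref{exercise} or cancels against a partner term generated by the skein expansion \eqref{rr2} of the crossing, leaving exactly the identity contribution with the $t^{\pm1}z^{\pm1}$ factors matching. Keeping straight which decorated cup/cap ($\diamond$ versus $\heart$) appears at each stage, and converting between them via \eqref{nakano1} and \eqref{nakano3}, is where the argument takes genuine care; organizing this so that the cancellations are transparent, rather than expanding every summand by brute force, is the delicate part of the proof.
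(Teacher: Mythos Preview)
The paper gives no proof of this lemma: as stated at the end of the introduction, proofs of all the purely diagrammatic lemmas in sections~\ref{first}--\ref{third} are omitted, with full details deferred to the Frobenius sequel \cite{BSW3} and the degenerate analog \cite[\S2]{Bheis}. So there is nothing to compare your argument against directly.

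That said, your approach is sound and is the natural one. The reduction via $\Omega_k$ is valid: the two minus signs from $\leftcap\mapsto-\leftcup$ and $\leftcup\mapsto-\leftcap$ cancel in any zig-zag, and since you prove the first relation for all $k$ and all $t$, applying $\Omega_k$ yields the second relation in $\HEIS_{-k}(z,t^{-1})$, hence for all central charges and parameters. The case split on the sign of $k$, followed by unfolding \eqref{leftwards}/\eqref{gloop} and invoking the curl evaluations \eqref{tea0}--\eqref{tea2}, \eqref{exercise} together with the normalizations \eqref{impose}, \eqref{septimus}, is exactly how such arguments proceed in the degenerate setting. One small imprecision worth flagging: for $k>0$ the leftward \emph{cap} in \eqref{leftwards} is not $\diamond$- or $\heart$-decorated but is already a dotted curl built from a rightward cap and a sideways crossing; it is only the leftward \emph{cup} that carries a decoration there (and the roles reverse for $k<0$). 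Your references to \eqref{cough1}--\eqref{cough2} then apply to the rightward cap/cup appearing inside that curl, not to the leftward pieces themselves. This does not affect the strategy, only the bookkeeping you rightly identify as the delicate part.
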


The final lemma together with (\ref{piv1}) implies that
$\HEIS_k(z,t)$ is strictly pivotal, with duality functor
\begin{equation}\label{spiv}
*:\HEIS_k(z,t)\stackrel{\sim}{\rightarrow}
\left(\HEIS_k(z,t)^{\operatorname{op}}\right)^{\operatorname{rev}}
\end{equation}
defined on morphisms by rotating diagrams through $180^\circ$.

\begin{lemma}
The following relations hold:
\begin{align}\label{cough3}
\mathord{
\begin{tikzpicture}[baseline = 0]
\draw[->,thin,darkblue](.6,-.3) to (.1,.4);
	\draw[-,line width=4pt,white] (0.6,0.4) to[out=-140, in=0] (0.1,-0.1);
	\draw[-,thin,darkblue] (0.6,0.4) to[out=-140, in=0] (0.1,-0.1);
	\draw[->,thin,darkblue] (0.1,-0.1) to[out = 180, in = -90] (-0.2,0.4);
\end{tikzpicture}
}&=
\mathord{
\begin{tikzpicture}[baseline = 0]
\draw[->,thin,darkblue](-.5,-.3) to (0,.4);
	\draw[-,thin,darkblue] (0.3,0.4) to[out=-90, in=0] (0,-0.1);
	\draw[-,line width=4pt,white] (0,-0.1) to[out = 180, in = -40] (-0.5,0.4);
	\draw[->,thin,darkblue] (0,-0.1) to[out = 180, in = -40] (-0.5,0.4);
\end{tikzpicture}
}\:,&
\mathord{
\begin{tikzpicture}[baseline = 0]
	\draw[-,thin,darkblue] (0.6,0.4) to[out=-140, in=0] (0.1,-0.1);
	\draw[->,thin,darkblue] (0.1,-0.1) to[out = 180, in = -90] (-0.2,0.4);
\draw[-,line width=4pt,white](.6,-.3) to (.1,.4);
\draw[<-,thin,darkblue](.6,-.3) to (.1,.4);
\end{tikzpicture}
}&=
\mathord{
\begin{tikzpicture}[baseline = 0]
	\draw[-,thin,darkblue] (0.3,0.4) to[out=-90, in=0] (0,-0.1);
	\draw[->,thin,darkblue] (0,-0.1) to[out = 180, in = -40] (-0.5,0.4);
\draw[-,line width=4pt,white](-.5,-.3) to (0,.4);
\draw[<-,thin,darkblue](-.5,-.3) to (0,.4);
\end{tikzpicture}
}\:,&
\mathord{
\begin{tikzpicture}[baseline = 0]
	\draw[-,thin,darkblue] (0.6,0.4) to[out=-140, in=0] (0.1,-0.1);
	\draw[->,thin,darkblue] (0.1,-0.1) to[out = 180, in = -90] (-0.2,0.4);
\draw[-,line width=4pt,white](.6,-.3) to (.1,.4);
\draw[->,thin,darkblue](.6,-.3) to (.1,.4);
\end{tikzpicture}
}&=
\mathord{
\begin{tikzpicture}[baseline = 0]
	\draw[-,thin,darkblue] (0.3,0.4) to[out=-90, in=0] (0,-0.1);
	\draw[->,thin,darkblue] (0,-0.1) to[out = 180, in = -40] (-0.5,0.4);
\draw[-,line width=4pt,white](-.5,-.3) to (0,.4);
\draw[->,thin,darkblue](-.5,-.3) to (0,.4);
\end{tikzpicture}
}\:,&
\mathord{
\begin{tikzpicture}[baseline = 0]
\draw[<-,thin,darkblue](.6,-.3) to (.1,.4);
	\draw[-,line width=4pt,white] (0.6,0.4) to[out=-140, in=0] (0.1,-0.1);
	\draw[-,thin,darkblue] (0.6,0.4) to[out=-140, in=0] (0.1,-0.1);
	\draw[->,thin,darkblue] (0.1,-0.1) to[out = 180, in = -90] (-0.2,0.4);
\end{tikzpicture}
}&=
\mathord{
\begin{tikzpicture}[baseline = 0]
\draw[<-,thin,darkblue](-.5,-.3) to (0,.4);
	\draw[-,thin,darkblue] (0.3,0.4) to[out=-90, in=0] (0,-0.1);
	\draw[-,line width=4pt,white] (0,-0.1) to[out = 180, in = -40] (-0.5,0.4);
	\draw[->,thin,darkblue] (0,-0.1) to[out = 180, in = -40] (-0.5,0.4);
\end{tikzpicture}
}\:,
\\\label{cough4}
\mathord{
\begin{tikzpicture}[baseline = 0]
\draw[->,thin,darkblue](.6,.4) to (.1,-.3);
	\draw[-,line width=4pt,white] (0.6,-0.3) to[out=140, in=0] (0.1,0.2);
	\draw[-,thin,darkblue] (0.6,-0.3) to[out=140, in=0] (0.1,0.2);
	\draw[->,thin,darkblue] (0.1,0.2) to[out = -180, in = 90] (-0.2,-0.3);
\end{tikzpicture}
}&=\mathord{
\begin{tikzpicture}[baseline = 0]
\draw[->,thin,darkblue](-.5,.4) to (0,-.3);
	\draw[-,thin,darkblue] (0.3,-0.3) to[out=90, in=0] (0,0.2);
	\draw[-,line width=4pt,white] (0,0.2) to[out = -180, in = 40] (-0.5,-0.3);
	\draw[->,thin,darkblue] (0,0.2) to[out = -180, in = 40] (-0.5,-0.3);
\end{tikzpicture}
}\:,&
\mathord{
\begin{tikzpicture}[baseline = 0]
	\draw[-,thin,darkblue] (0.6,-0.3) to[out=140, in=0] (0.1,0.2);
	\draw[->,thin,darkblue] (0.1,0.2) to[out = -180, in = 90] (-0.2,-0.3);
\draw[-,line width=4pt,white](.6,.4) to (.1,-.3);
\draw[<-,thin,darkblue](.6,.4) to (.1,-.3);
\end{tikzpicture}
}&=
\mathord{
\begin{tikzpicture}[baseline = 0]
	\draw[-,thin,darkblue] (0.3,-0.3) to[out=90, in=0] (0,0.2);
	\draw[->,thin,darkblue] (0,0.2) to[out = -180, in = 40] (-0.5,-0.3);
\draw[-,line width=4pt,white](-.5,.4) to (0,-.3);
\draw[<-,thin,darkblue](-.5,.4) to (0,-.3);
\end{tikzpicture}
}\:,&
\mathord{
\begin{tikzpicture}[baseline = 0]
	\draw[-,thin,darkblue] (0.3,-0.3) to[out=90, in=0] (0,0.2);
	\draw[->,thin,darkblue] (0,0.2) to[out = -180, in = 40] (-0.5,-0.3);
\draw[-,line width=4pt,white](-.5,.4) to (0,-.3);
\draw[->,thin,darkblue](-.5,.4) to (0,-.3);
\end{tikzpicture}
}&=
\mathord{
\begin{tikzpicture}[baseline = 0]
	\draw[-,thin,darkblue] (0.6,-0.3) to[out=140, in=0] (0.1,0.2);
	\draw[->,thin,darkblue] (0.1,0.2) to[out = -180, in = 90] (-0.2,-0.3);
\draw[-,line width=4pt,white](.6,.4) to (.1,-.3);
\draw[->,thin,darkblue](.6,.4) to (.1,-.3);
\end{tikzpicture}
}\:,&
\mathord{
\begin{tikzpicture}[baseline = 0]
\draw[<-,thin,darkblue](-.5,.4) to (0,-.3);
	\draw[-,thin,darkblue] (0.3,-0.3) to[out=90, in=0] (0,0.2);
	\draw[-,line width=4pt,white] (0,0.2) to[out = -180, in = 40] (-0.5,-0.3);
	\draw[->,thin,darkblue] (0,0.2) to[out = -180, in = 40] (-0.5,-0.3);
\end{tikzpicture}
}&=\mathord{
\begin{tikzpicture}[baseline = 0]
\draw[<-,thin,darkblue](.6,.4) to (.1,-.3);
	\draw[-,line width=4pt,white] (0.6,-0.3) to[out=140, in=0] (0.1,0.2);
	\draw[-,thin,darkblue] (0.6,-0.3) to[out=140, in=0] (0.1,0.2);
	\draw[->,thin,darkblue] (0.1,0.2) to[out = -180, in = 90] (-0.2,-0.3);
\end{tikzpicture}
}\:.
\end{align}
\end{lemma}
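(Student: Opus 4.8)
The plan is to obtain the leftward sliding relations \eqref{cough3}--\eqref{cough4} as the mirror images of the rightward sliding relations \eqref{cough1}--\eqref{cough2}, now powering the argument with the \emph{left} duality established in Lemma~\ref{rigiditylemma} in place of the right duality \eqref{rightadj} that was used for the rightward case. First I would record that, by Lemma~\ref{rigiditylemma}, the leftward cup and cap defined in \eqref{leftwards}/\eqref{gloop} satisfy the snake identities \eqref{adjfinal}, so that a strand can be bent to the left exactly as the relations \eqref{rightadj} allow it to be bent to the right. Together with the mixed crossing relations \eqref{spit1} this provides enough freedom to rotate any local portion of a diagram.

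For a representative relation in \eqref{cough3} I would then bend the strand carrying the crossing around the leftward cap: using one snake from \eqref{adjfinal} to trade the leftward cap for a rightward cap with a left zigzag attached, the local picture becomes a crossing sitting next to a \emph{rightward} cap. Applying the already-proved relation from \eqref{cough1} (or \eqref{cough2}) slides the crossing through, after which the introduced zigzags are removed using the snake identities \eqref{rightadj} and \eqref{adjfinal}, leaving the right-hand side of the desired relation. The relations in \eqref{cough4} are handled in the same way, bending past the leftward cup instead of the leftward cap.

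To reduce the casework I would invoke the isomorphism $\Omega_k$ of \eqref{om}. Since $\Omega_k$ reflects diagrams in a horizontal plane and is a monoidal isomorphism, each relation in \eqref{cough3}--\eqref{cough4} is carried (up to the $(-1)^{x+y}$ sign bookkeeping already used in the proof of the Theorem) to another relation in the same display; hence it suffices to verify one relation from each reflection pair, and the remaining ones follow automatically.

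The main obstacle is that the two leftward crossings are not independent: one is the matrix entry appearing in \eqref{invrel3}, while the other is \emph{defined} through the leftward skein relation \eqref{leftwardsskein}. Consequently, converting an ``over'' slide into the corresponding ``under'' slide introduces the extra cup--cap term on the right-hand side of \eqref{leftwardsskein}, and the genuine content of the lemma is to check that these correction terms cancel against the correction terms produced when the same skein relation is applied on the opposite side of the cap (resp.\ cup). Verifying this cancellation --- in effect, that the skein relation \eqref{leftwardsskein} is compatible with the snake moves of \eqref{adjfinal} and \eqref{rightadj} --- is the step that requires care; everything else is routine isotopy of diagrams of exactly the kind already carried out for \eqref{cough1}--\eqref{cough2} and in \cite[$\S$2]{Bheis}, \cite[$\S$5]{BSW1}.
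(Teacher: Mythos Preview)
The paper omits the proof of this lemma entirely, deferring instead to the analogous diagrammatic calculations in the degenerate case in \cite[\S2]{Bheis} and \cite[\S5]{BSW1}. Your proposal is consistent with that intended approach: once Lemma~\ref{rigiditylemma} supplies the left snake identities \eqref{adjfinal}, one has both a left and a right duality, and the leftward sliding relations \eqref{cough3}--\eqref{cough4} can be obtained from the rightward ones \eqref{cough1}--\eqref{cough2} by the standard ``rotate using one duality, slide, rotate back using the other'' argument, with $\Omega_k$ halving the casework by pairing each relation in \eqref{cough3} with one in \eqref{cough4}.

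One small correction: in your description you repeatedly write ``leftward cap'' where the relations \eqref{cough3} concern leftward \emph{cups}; this is cosmetic, but be careful when you actually carry out the bending. More substantively, the step you flag as the ``main obstacle'' is real but slightly misdiagnosed. The content is not just that skein correction terms must cancel, but that for each fixed orientation pattern the relevant crossing has \emph{two} descriptions --- as a right mate via \eqref{rotate} and as a left mate via \eqref{cold} (equivalently, via the definitions in \eqref{leftwards}/\eqref{gloop} expanded using Lemma~\ref{redundant}) --- and one must check these agree. In practice the cleanest route is to expand the leftward cup/cap using its definition \eqref{leftwards} (or \eqref{gloop}), then apply the already-known relations \eqref{cough1}--\eqref{cough2}, \eqref{spit1}, and Lemma~\ref{dotslide} to the resulting diagram; the skein corrections then visibly cancel against each other. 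Your sketch is on the right track and would succeed once made precise in this way.
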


\section{Third approach}\label{third}

Now we have enough relations in hand to formulate our third
presentation for $\HEIS_k(z,t)$.
This presentation does not involve any leftward cups or caps
decorated by $\diamond$ or $\heart$; Lemma~\ref{redundant} showed
already that these are redundant as generators.

\begin{definition}\label{def3}
The {\em quantum Heisenberg category} $\HEIS_k(z,t)$
is the strict $\k$-linear monoidal category obtained from $\AH(z)$
by adjoining a right dual $\down$ to $\up$ as explained in the
introduction, plus two more generating morphisms
 $\:\mathord{
% [inline block 1: 54 envs, 20508 chars -> data_tex | \begin{tikzpicture}[baseline = 1mm] 	\draw[-,thin,darkblue] (0.4,0) to[out=90, in=0] (0.1,0.4);...]

}\right)^{-1},
\end{equation}
i.e., both of the relations from (\ref{lunch}).

\begin{theorem}\label{toronto}
The category $\HEIS_k(z,t)$ defined by Definition~\ref{def3} is the same
as the one from Definitions~\ref{def1} and \ref{def2},
with all morphisms introduced in the third definition being the
same as the ones from before.
%Moreover, the leftward cup and cap are the unique morphisms in
%$\HEIS_k(z,t)$
%such that all of the relations (\ref{pos})--(\ref{d2}) are satisfied.
\end{theorem}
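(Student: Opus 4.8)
The plan is to identify the category $\HEIS_k^{(3)}$ of Definition~\ref{def3} with the category $\HEIS_k^{(1)}$ of Definition~\ref{def1} (already identified with the category of Definition~\ref{def2}) by constructing mutually inverse strict $\k$-linear monoidal functors that are the identity on the common generators coming from $\AH(z)$ and on the rightward cup and cap, and that match the two undecorated leftward cup and cap morphisms. Since both functors fix the shared generators, they are automatically two-sided inverses, and the asserted identification of all named morphisms is then immediate. Because the op-duality $\Omega_k$ of the previous theorem transports the defining data of Definition~\ref{def3} for parameters $(k,t)$ to that for $(-k,t^{-1})$ in the same way as it does for Definition~\ref{def1}, it will suffice to carry out the verifications for $k \geq 0$, the case $k<0$ following by applying $\Omega_k$.

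First I would construct $F:\HEIS_k^{(3)} \to \HEIS_k^{(1)}$ by checking that every defining relation of Definition~\ref{def3} already holds in $\HEIS_k^{(1)}$. This direction is essentially bookkeeping, since the relevant identities have all been recorded in Sections~\ref{first}--\ref{second}. Concretely: the leftward crossings defined by \eqref{cold} agree with those of $\HEIS_k^{(1)}$ because both satisfy the rotation prescription \eqref{rotate} and the leftward skein relation \eqref{leftwardsskein}; the determinantal $(+)$-bubbles \eqref{d1}--\eqref{d2} coincide with the $(+)$-bubbles \eqref{fake0} of Definition~\ref{def1} because these Jacobi--Trudi determinants are precisely the coefficients produced by the generating-function inverse relations \eqref{igproper}--\eqref{igm}; the crossing relations \eqref{pos}--\eqref{neg} are the inverse relations \eqref{tea1} and \eqref{tea3} rewritten using these bubble definitions, together with their case simplifications \eqref{posalt}, \eqref{negalt}, \eqref{posalter}, \eqref{negalter} and \eqref{bothalt}; and the curl relations \eqref{curls}--\eqref{morecurls} follow from \eqref{impose}, \eqref{septimus}, \eqref{tea0}, \eqref{tea2}, \eqref{exercise} and \eqref{l1alt}--\eqref{r1alt}.

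The substantive direction is the functor $G:\HEIS_k^{(1)} \to \HEIS_k^{(3)}$, for which I must verify the defining relations of Definition~\ref{def1} inside $\HEIS_k^{(3)}$ --- that is, that the morphism \eqref{invrel} is a two-sided inverse to \eqref{invrel2}, and that \eqref{impose} holds. First I would define the $\diamond$- and $\heart$-decorated leftward cups and caps appearing in \eqref{invrel2} inside $\HEIS_k^{(3)}$ using the formulas of Lemma~\ref{redundant}, whose right-hand sides involve only the undecorated leftward cups and caps, dots, and the determinantal bubbles of \eqref{d1}--\eqref{d2}; one then checks that \eqref{leftwards} holds, so that the undecorated leftward cup and cap of Definition~\ref{def3} coincide with those of Definition~\ref{def1}. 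I would then compute the two matrix products of \eqref{invrel} and \eqref{invrel2} in both orders and equate each with the appropriate identity matrix: the crossing entry on the diagonal reproduces \eqref{pos} (equivalently \eqref{posalt} or \eqref{negalter} in the relevant sign range), the entries involving the $\unit^{\oplus k}$ summand reduce to the curl relations \eqref{curls}--\eqref{morecurls}, and the off-diagonal entries vanish by the bubble-evaluation relations; finally \eqref{impose} is read off directly from the curl/bubble values in \eqref{curls}.

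The main obstacle will be establishing that the determinantal $(+)$-bubbles of \eqref{d1}--\eqref{d2} really furnish two-sided-inverse data, i.e.\ that these Jacobi--Trudi expressions cause all four matrix products to collapse to identity matrices with the off-diagonal and $\unit$-diagonal entries behaving correctly. This is exactly the content of the infinite Grassmannian relation of Lemma~\ref{infgrass} and its Corollary, and the delicate point is that I must run that relation \emph{in reverse}: deriving the full invertibility of \eqref{invrel} from the a priori weaker local relations \eqref{pos}--\eqref{morecurls} of Definition~\ref{def3}, rather than deducing those relations from invertibility as in Section~\ref{first}. Once invertibility has been secured the remaining verifications are routine, and the op-symmetry $\Omega_k$ removes the need to repeat the entire argument for $k<0$.
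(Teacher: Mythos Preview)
Your proposal is correct and follows essentially the same approach as the paper: reduce to one sign of $k$ via $\Omega_k$, build the easy functor from Definition~\ref{def3} to Definition~\ref{def1} by citing the identities already established in Sections~\ref{first}--\ref{second}, then for the inverse functor verify directly in the category of Definition~\ref{def3} that \eqref{invrel} and \eqref{invrel2} are mutual inverses and that \eqref{impose} holds. The paper reduces to $k\leq 0$ rather than $k\geq 0$, but this is inessential; your identification of the key difficulty---that the infinite Grassmannian/Jacobi--Trudi mechanism must be run in reverse, using only the relations of Definition~\ref{def3}, to force the determinantal bubbles to behave as genuine inverse-matrix entries---is exactly the point the paper handles by introducing an auxiliary homomorphism $\gamma:\Sym\to\End_{\HEIS_k^{\mathrm{new}}(z,t)}(\unit)$ and applying it to the classical identity $\sum_c(-1)^c\e_{r+c}\h_{s-c}=\delta_{r,-s}$.
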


\begin{proof}
To avoid confusion in the proof, we denote the category from
the equivalent Definitions~\ref{def1} and \ref{def2} by
$\HEIS_k^{\operatorname{old}}(z,t)$, and the one from
Definition~\ref{def3}
by $\HEIS_k^{\operatorname{new}}(z,t)$. 
From the evident symmetry in the relations (\ref{pos})--(\ref{d2}), 
it follows that there is an isomorphism
$$
\Omega_k:\HEIS^{\operatorname{new}}_k(z,t) \rightarrow \HEIS^{\operatorname{new}}_{-k}(z,t^{-1})^{\operatorname{op}}
$$
which reflects diagrams in a horizontal plane and multiplies by
$(-1)^{x+y}$
where $x$ is the number of crossings and $y$ is the number of
leftward cups and caps. Combining this with (\ref{om}), we are reduced to
proving the theorem under the assumption that $k \leq 0$.

We first check that all of
the defining relations (\ref{pos})--(\ref{d2}) of
$\HEIS_k^{\operatorname{new}}(z,t)$ are
satisfied in $\HEIS_k^{\operatorname{old}}(z,t)$, so that there is a
strict $\k$-linear monoidal functor
$$
\Theta:\HEIS_k^{\operatorname{new}}(z,t) \rightarrow
\HEIS_k^{\operatorname{old}}(z,t)
$$
which is the identity on diagrams.
For this, note to start with that (\ref{cold}) holds in
$\HEIS_k^{\operatorname{old}}(z,t)$ as we have shown that the latter
category is strictly pivotal.
The relation (\ref{d1}) is almost trivial when $k \leq 0$ and holds
thanks to (\ref{tanks}).
For (\ref{d2}), the identity holds if $a-k \leq 0$ due again to (\ref{tanks}), so assume that  $a-k > 0$.
Then the desired identity is the image under the
homomorphism $\beta$ from (\ref{beta})
of the identity
$$
(-1)^{a-k-1} t^{-1}z^{-1} \h_{a-k} \otimes
1
= -z^{a-k-1} t^{-a+k-1} \det \left(-tz^{-1}\e_{i-j+1}\otimes 1\right)_{i,j=1,\dots,a-k}
$$
in $\Sym\otimes\Sym$. This follows from the well-known identity
$\h_{n} = \det \left(\e_{i-j+1}\right)_{i,j=1,\dots,n}$; see \cite[Exercise
I.2.8]{Mac}.
It remains to check the relations
(\ref{pos})--(\ref{morecurls}). 
For (\ref{pos})--(\ref{neg}) when $k=0$, we just need to check the
equivalent form (\ref{bothalt}), which follows by (\ref{lunch}).
For (\ref{pos}) when $k < 0$, we check the equivalent form
(\ref{posalter}),
which holds due to the second
relation from (\ref{tea3}).
For (\ref{neg}) when $k < 0$,
 we use the first relation from
(\ref{tea3}), expanding the leftward caps decorated by $\heart$ 
using (\ref{leftwards}) when $a=0$ or
(\ref{nakano3}) and (\ref{nakano5}) when $a > 0$.
Finally, the relations (\ref{curls})--(\ref{morecurls}) follow easily from
(\ref{tea2}), (\ref{impose})--(\ref{leftwards}) and (\ref{septimus})--(\ref{gloop}).

Now we want to show that $\Theta$ is an isomorphism. We do this by
using the presentation from Definition~\ref{def1} to
construct a two-sided inverse
$$
\Phi:\HEIS_k^{\operatorname{old}}(z,t) \rightarrow
\HEIS_k^{\operatorname{new}}(z,t),
$$
still assuming that $k \leq 0$.
We define $\Phi$ on morphisms by declaring that it 
takes the rightward cup, the rightward cap, and all dots and crossings
(with any
orientation) to the
corresponding morphisms in $\HEIS_k^{\operatorname{new}}(z,t)$, 
and also
\begin{align*}
\Phi\Big(\:\mathord{
\begin{tikzpicture}[baseline = 1mm]
	\draw[-,thin,darkblue] (0.4,0) to[out=90, in=0] (0.1,0.4);
	\draw[->,thin,darkblue] (0.1,0.4) to[out = 180, in = 90] (-0.2,0);
      \node at (0.12,0.6) {$\color{darkblue}\scriptstyle{0}$};
      \node at (0.12,0.37) {$\heart$};
\end{tikzpicture}
}\:\Big) &:= -tz\: \mathord{
\begin{tikzpicture}[baseline = 1mm]
	\draw[-,thin,darkblue] (0.4,0) to[out=90, in=0] (0.1,0.4);
	\draw[->,thin,darkblue] (0.1,0.4) to[out = 180, in = 90] (-0.2,0);
 \end{tikzpicture}
}\quad\text{if $k < 0$},
&\Phi\Big(\:\mathord{
\begin{tikzpicture}[baseline = 1mm]
	\draw[-,thin,darkblue] (0.4,0) to[out=90, in=0] (0.1,0.4);
	\draw[->,thin,darkblue] (0.1,0.4) to[out = 180, in = 90] (-0.2,0);
      \node at (0.12,0.55) {$\color{darkblue}\scriptstyle{a}$};
      \node at (0.12,0.37) {$\heart$};
\end{tikzpicture}
}\:\Big) &:=
-z^2 \sum_{b \geq 1}\:
\mathord{
\begin{tikzpicture}[baseline = 1mm]
	\draw[-,thin,darkblue] (0.4,0) to[out=90, in=0] (0.1,0.4);
	\draw[->,thin,darkblue] (0.1,0.4) to[out = 180, in = 90] (-0.2,0);
      \node at (0.48,0.25) {$\color{darkblue}\scriptstyle{b}$};
      \node at (0.32,0.25) {$\dot$};
\end{tikzpicture}}
\,\mathord{\begin{tikzpicture}[baseline = .8mm]
  \draw[<-,thin,darkblue] (0.2,0.2) to[out=90,in=0] (0,.4);
  \draw[-,thin,darkblue] (0,0.4) to[out=180,in=90] (-.2,0.2);
\draw[-,thin,darkblue] (-.2,0.2) to[out=-90,in=180] (0,0);
  \draw[-,thin,darkblue] (0,0) to[out=0,in=-90] (0.2,0.2);
      \node at (0,0.2) {$\color{darkblue}{+}$};
      \node at (-0.51,0.2) {$\color{darkblue}\scriptstyle{-a-b}$};
 \end{tikzpicture}
}\quad\text{if $0 <a < -k$.}
\end{align*}
To see that $\Phi$ is well defined, we must verify the relations
from Definition~\ref{def1}. 
For (\ref{impose}), we must check the following in
$\HEIS_k^{\operatorname{new}}(z,t)$:
\begin{align*}
t\mathord{
\begin{tikzpicture}[baseline = 2mm]
	\draw[<-,thin,darkblue] (-0.25,.6) to[out=-90,in=90] (0.25,-0);
	\draw[-,thin,darkblue] (0.25,-0) to[out=-90, in=0] (0,-0.25);
	\draw[-,thin,darkblue] (0,-0.25) to[out = 180, in = -90] (-0.25,-0);
	\draw[-,line width=4pt,white] (0.25,.6) to[out=-90,in=90] (-0.25,-0);
	\draw[-,thin,darkblue] (0.25,.6) to[out=-90,in=90]    (-0.25,-0);
        \draw[-,thin,darkblue] (.25,.6) to [out=90,in=0] (0,.85);
        \draw[-,thin,darkblue] (-.25,.6) to [out=90,in=180] (0,.85);
\end{tikzpicture}
}
&= (t z^{-1}-t^{-1}z^{-1}) 1_\unit\quad\text{if $k=0$,}
&\mathord{\begin{tikzpicture}[baseline = .8mm]
  \draw[-,thin,darkblue] (0.2,0.2) to[out=90,in=0] (0,.4);
  \draw[->,thin,darkblue] (0,0.4) to[out=180,in=90] (-.2,0.2);
\draw[-,thin,darkblue] (-.2,0.2) to[out=-90,in=180] (0,0);
  \draw[-,thin,darkblue] (0,0) to[out=0,in=-90] (0.2,0.2);
      \node at (0.2,0.2) {$\dot$};
      \node at (0.49,0.2) {$\color{darkblue}\scriptstyle{-k}$};
 \end{tikzpicture}
}
&= tz^{-1} 1_\unit\quad\text{if $k < 0$.}
\end{align*}
These follow from (\ref{morecurls}) and (\ref{r2alt}).
Then the main work is to show that the images under $\Phi$ of the
morphisms
(\ref{invrel}) and (\ref{invrel2}) are two-sided inverses in
$\HEIS_k^{\operatorname{new}}(z,t)$.
When $k=0$, this is immediate from (\ref{bothalt}), so suppose that $k
< 0$.
The images under $\Phi$ of the two equations in (\ref{tea3}) are precisely the
known relations (\ref{neg}) and (\ref{posalter}). We are left with
checking that the images under $\Phi$ of the 
relations
\begin{align*}
\mathord{
\begin{tikzpicture}[baseline = 0]
	\draw[-,thin,darkblue] (0.25,.6) to[out=240,in=90] (-0.25,-0);
	\draw[-,thin,darkblue] (0.25,-0.) to[out=-90, in=0] (0,-0.25);
	\draw[-,thin,darkblue] (0,-0.25) to[out = 180, in = -90] (-0.25,-0);
	\draw[-,line width=4pt,white] (-0.25,.6) to[out=300,in=90] (0.25,-0);
	\draw[<-,thin,darkblue] (-0.25,.6) to[out=300,in=90] (0.25,-0);
      \node at (0.45,-0) {$\color{darkblue}\scriptstyle{a}$};
      \node at (0.24,-0) {$\dot$};
\end{tikzpicture}
}
&=0,
&\mathord{
\begin{tikzpicture}[baseline = -1.5mm]
	\draw[-,thin,darkblue] (-0.25,-.5) to[out=60,in=-90] (0.25,0.1);
	\draw[-,line width=4pt,white] (0.25,-.5) to[out=120,in=-90] (-0.25,0.1);
	\draw[<-,thin,darkblue] (0.25,-.5) to[out=120,in=-90] (-0.25,0.1);
	\draw[-,thin,darkblue] (0.25,0.1) to[out=90, in=0] (0,0.35);
	\draw[-,thin,darkblue] (0,0.35) to[out = 180, in = 90] (-0.25,0.1);
      \node at (0,.53) {$\color{darkblue}\scriptstyle{b}$};
      \node at (0,.33) {$\heart$};
\end{tikzpicture}
}&=0,
&
\mathord{
\begin{tikzpicture}[baseline = 2mm]
  \draw[-,thin,darkblue] (0.2,0.2) to[out=90,in=0] (0,.4);
  \draw[->,thin,darkblue] (0,0.4) to[out=180,in=90] (-.2,0.2);
\draw[-,thin,darkblue] (-.2,0.2) to[out=-90,in=180] (0,0);
  \draw[-,thin,darkblue] (0,0) to[out=0,in=-90] (0.2,0.2);
      \node at (0,.58) {$\color{darkblue}\scriptstyle{b}$};
      \node at (0,.38) {$\heart$};
   \node at (0.2,0.2) {$\dot$};
   \node at (0.4,0.2) {$\color{darkblue}\scriptstyle{a}$};
\end{tikzpicture}
}&= 
\delta_{a,b}1_\unit
\end{align*}
 hold in $\HEIS_k^{\operatorname{new}}(z,t)$
 for $0 \leq a,b < -k$.
The first of these when $a=0$ follows by (\ref{r1alt}).
To see it for $0 < a < -k$, we first apply the leftward skein relation, then
slide the dots past the crossing using the leftward analog of (\ref{teaplus}) which
may be deduced from the definition (\ref{cold}), and finally appeal to
(\ref{morecurls}).
The second and third relations follow from
(\ref{l2alt}) and (\ref{morecurls}) in the case that $b=0$.
To prove them when $0 < b < -k$, we must show that
\begin{align*}
\sum_{c \geq 1}
\mathord{
\begin{tikzpicture}[baseline = -1.5mm]
	\draw[-,thin,darkblue] (-0.25,-.5) to[out=60,in=-90] (0.25,0.1);
	\draw[-,line width=4pt,white] (0.25,-.5) to[out=120,in=-90] (-0.25,0.1);
	\draw[<-,thin,darkblue] (0.25,-.5) to[out=120,in=-90] (-0.25,0.1);
	\draw[-,thin,darkblue] (0.25,0.1) to[out=90, in=0] (0,0.35);
	\draw[-,thin,darkblue] (0,0.35) to[out = 180, in = 90] (-0.25,0.1);
      \node at (.4,.13) {$\color{darkblue}\scriptstyle{c}$};
      \node at (.25,.13) {$\dot$};
\end{tikzpicture}
}\!\!\!\mathord{\begin{tikzpicture}[baseline = .8mm]
  \draw[<-,thin,darkblue] (0.2,0.2) to[out=90,in=0] (0,.4);
  \draw[-,thin,darkblue] (0,0.4) to[out=180,in=90] (-.2,0.2);
\draw[-,thin,darkblue] (-.2,0.2) to[out=-90,in=180] (0,0);
  \draw[-,thin,darkblue] (0,0) to[out=0,in=-90] (0.2,0.2);
      \node at (0,0.2) {$\color{darkblue}{+}$};
      \node at (-0.51,0.2) {$\color{darkblue}\scriptstyle{-b-c}$};
 \end{tikzpicture}
}
&=0,
&
\sum_{c \geq 1}
\mathord{
\begin{tikzpicture}[baseline =.6mm]
  \draw[-,thin,darkblue] (0.2,0.2) to[out=90,in=0] (0,.4);
  \draw[->,thin,darkblue] (0,0.4) to[out=180,in=90] (-.2,0.2);
\draw[-,thin,darkblue] (-.2,0.2) to[out=-90,in=180] (0,0);
  \draw[-,thin,darkblue] (0,0) to[out=0,in=-90] (0.2,0.2);
   \node at (0.2,0.2) {$\dot$};
   \node at (0.55,0.2) {$\color{darkblue}\scriptstyle{a+c}$};
\end{tikzpicture}
}
\mathord{\begin{tikzpicture}[baseline = .6mm]
  \draw[<-,thin,darkblue] (0.2,0.2) to[out=90,in=0] (0,.4);
  \draw[-,thin,darkblue] (0,0.4) to[out=180,in=90] (-.2,0.2);
\draw[-,thin,darkblue] (-.2,0.2) to[out=-90,in=180] (0,0);
  \draw[-,thin,darkblue] (0,0) to[out=0,in=-90] (0.2,0.2);
      \node at (0,0.2) {$\color{darkblue}{+}$};
      \node at (-0.51,0.2) {$\color{darkblue}\scriptstyle{-b-c}$};
 \end{tikzpicture}
}&= 
-\delta_{a,b}\:z^{-2} 1_\unit
\end{align*}
in $\HEIS_k^{\operatorname{new}}(z,t)$.
For the first identity, it is zero if $b \geq -k$ as the
$(+)$-bubble vanishes by (\ref{r2}). To see it for $0 < b < -k$, use the
skein relation, commute the dots past the crossing, then appeal to
(\ref{morecurls}) and (\ref{l2alt}).
For the second identity, define a homomorphism
$\gamma:\Sym \rightarrow \End_{\HEIS_k^{\operatorname{new}}(z,t)}(\unit)$
by sending $\e_n \mapsto t^{-1} z \:
\mathord{\begin{tikzpicture}[baseline = -1mm]
  \draw[-,thin,darkblue] (0,0.2) to[out=180,in=90] (-.2,0);
  \draw[->,thin,darkblue] (0.2,0) to[out=90,in=0] (0,.2);
 \draw[-,thin,darkblue] (-.2,0) to[out=-90,in=180] (0,-0.2);
  \draw[-,thin,darkblue] (0,-0.2) to[out=0,in=-90] (0.2,0);
      \node at (0.2,0) {$\dot$};
      \node at (0.55,0) {$\color{darkblue}\scriptstyle n-k$};
\end{tikzpicture}
}$ for $n \geq 0$.
Using
$\h_{n} = \det \left(\e_{i-j+1}\right)_{i,j=1,\dots,n}$ and (\ref{d2}), it follows that 
$\gamma$ sends $\h_n \mapsto 
(-1)^{n-1} tz
\mathord{
\begin{tikzpicture}[baseline = 1.25mm]
  \draw[<-,thin,darkblue] (0,0.4) to[out=180,in=90] (-.2,0.2);
  \draw[-,thin,darkblue] (0.2,0.2) to[out=90,in=0] (0,.4);
 \draw[-,thin,darkblue] (-.2,0.2) to[out=-90,in=180] (0,0);
  \draw[-,thin,darkblue] (0,0) to[out=0,in=-90] (0.2,0.2);
%   \node at (-0.2,0.2) {$\dot$};
   \node at (0,0.2) {$\color{darkblue}+$};
   \node at (-0.5,0.2) {$\color{darkblue}\scriptstyle{n+k}$};
\end{tikzpicture}
}$
for $n\leq -k$. Then the identity we are trying to prove follows by
applying $\gamma$ to
the identity
$\sum_{c \geq 1} (-1)^{-k-b-c} \e_{k+a+c} \h_{-k-b-c} = \delta_{a,b}$,
which is (\ref{symid}).

To complete the proof, we must show that $\Theta$ and $\Phi$ are indeed
two-sided inverses. To check that $\Theta \circ \Phi = \operatorname{Id}$,
the only difficulty is to see that
$$
\Theta\bigg(\Phi\Big(\:\mathord{
\begin{tikzpicture}[baseline = 1mm]
	\draw[-,thin,darkblue] (0.4,0) to[out=90, in=0] (0.1,0.4);
	\draw[->,thin,darkblue] (0.1,0.4) to[out = 180, in = 90] (-0.2,0);
      \node at (0.12,0.6) {$\color{darkblue}\scriptstyle{a}$};
      \node at (0.12,0.37) {$\heart$};
\end{tikzpicture}
}\:\Big)\bigg) = \mathord{
\begin{tikzpicture}[baseline = 1mm]
	\draw[-,thin,darkblue] (0.4,0) to[out=90, in=0] (0.1,0.4);
	\draw[->,thin,darkblue] (0.1,0.4) to[out = 180, in = 90] (-0.2,0);
      \node at (0.12,0.6) {$\color{darkblue}\scriptstyle{a}$};
      \node at (0.12,0.37) {$\heart$};
\end{tikzpicture}
}.
$$
When $a=0$, this is immediate from (\ref{leftwards}), while if $0 < a <
-k$ it follows from (\ref{nakano3}) and (\ref{nakano5}).
To check that $\Phi \circ \Theta = \operatorname{Id}$, the only difficulty is to
see that
$$
\Phi\Big(\:\mathord{
\begin{tikzpicture}[baseline = 1mm]
	\draw[-,thin,darkblue] (0.4,0) to[out=90, in=0] (0.1,0.4);
	\draw[->,thin,darkblue] (0.1,0.4) to[out = 180, in = 90] (-0.2,0);
\end{tikzpicture}
}\:\Big)
=\mathord{
\begin{tikzpicture}[baseline = 1mm]
	\draw[-,thin,darkblue] (0.4,0) to[out=90, in=0] (0.1,0.4);
	\draw[->,thin,darkblue] (0.1,0.4) to[out = 180, in = 90] (-0.2,0);
\end{tikzpicture}
}\:,
\qquad
\Phi\Big(\:
\mathord{
\begin{tikzpicture}[baseline = 1mm]
	\draw[-,thin,darkblue] (0.4,0.4) to[out=-90, in=0] (0.1,0);
	\draw[->,thin,darkblue] (0.1,0) to[out = 180, in = -90] (-0.2,0.4);
 \end{tikzpicture}
}\:\Big)
=
\mathord{
\begin{tikzpicture}[baseline = 1mm]
	\draw[-,thin,darkblue] (0.4,0.4) to[out=-90, in=0] (0.1,0);
	\draw[->,thin,darkblue] (0.1,0) to[out = 180, in = -90] (-0.2,0.4);
 \end{tikzpicture}
}.
$$
These follow from (\ref{leftwards}) and (\ref{r2alt})--(\ref{r1alt}).
\end{proof}

% Here is a new lemma which seems useful enough to include here.

\begin{lemma}\label{lateaddition}
Suppose that $\mathcal C$ is a strict $\k$-linear monoidal category
containing objects
$\up$ and $\down$
and
morphisms
$\mathord{
\begin{tikzpicture}[baseline = 0]
	\draw[->,thin] (0.08,-.1) to (0.08,.3);
      \node at (0.08,0.07) {$\dot$};
\end{tikzpicture}
}$,
$
\mathord{\begin{tikzpicture}[baseline = 0]
	\draw[->,thin] (0.18,-.1) to (-0.18,.3);
		\draw[-,white,line width=4pt] (-0.18,-.1) to (0.18,.3);
	\draw[thin,->] (-0.18,-.1) to (0.18,.3);
\end{tikzpicture}}\;$,
$
\mathord{\begin{tikzpicture}[baseline = 0]
	\draw[thin,->] (-0.18,-.1) to (0.18,.3);
	\draw[-,white,line width=4pt] (0.18,-.1) to (-0.18,.3);
	\draw[->,thin] (0.18,-.1) to (-0.18,.3);
\end{tikzpicture}}\;$,
$\:\mathord{
\begin{tikzpicture}[baseline = .5mm]
	\draw[<-,thin] (0.35,0.3) to[out=-90, in=0] (0.1,0);
	\draw[-,thin] (0.1,0) to[out = 180, in = -90] (-0.15,0.3);
\end{tikzpicture}
}\:$
and $\:\mathord{
\begin{tikzpicture}[baseline = .5mm]
	\draw[<-,thin] (0.35,0) to[out=90, in=0] (0.1,0.3);
	\draw[-,thin] (0.1,0.3) to[out = 180, in = 90] (-0.15,0);
\end{tikzpicture}
}\:$
 satisfying (\ref{rr3})--(\ref{rightadj}).
 Then $\mathcal C$ contains at most one pair of morphisms
$\:\mathord{
\begin{tikzpicture}[baseline = .5mm]
	\draw[-,thin] (0.35,0.3) to[out=-90, in=0] (0.1,0);
	\draw[->,thin] (0.1,0) to[out = 180, in = -90] (-0.15,0.3);
\end{tikzpicture}
}\:$
and $\:\mathord{
\begin{tikzpicture}[baseline = .5mm]
	\draw[-,thin] (0.35,0) to[out=90, in=0] (0.1,0.3);
	\draw[->,thin] (0.1,0.3) to[out = 180, in = 90] (-0.15,0);
\end{tikzpicture}
}\:$ which 
satisfy
(\ref{pos})--(\ref{morecurls})
(for the sideways crossings and the
$(+)$-bubbles defined via
(\ref{rotate}) and (\ref{cold})--(\ref{d2})).
\end{lemma}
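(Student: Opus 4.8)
The plan is to prove the statement by showing that the defining relations force the leftward cup $\leftcup$ and leftward cap $\leftcap$ to be specific morphisms built from the data $x,\tau,\tau^{-1},c,d$ already present. So suppose $(\leftcup,\leftcap)$ and $(\leftcup',\leftcap')$ are two pairs in $\mathcal C$, each satisfying all of (\ref{pos})--(\ref{morecurls}) (with the associated sideways crossings and $(+)$-bubbles defined from them via (\ref{rotate}) and (\ref{cold})--(\ref{d2})); the goal is to show $\leftcup=\leftcup'$ and $\leftcap=\leftcap'$.

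First I would cut the two unknowns down to one. For \emph{any} valid pair the derived rigidity relations (\ref{adjfinal}) of Lemma~\ref{rigiditylemma} hold, so $\leftcup$ and $\leftcap$ are the coevaluation and evaluation exhibiting $\down$ as a left dual of $\up$. By the standard zig-zag argument (using (\ref{adjfinal}) together with the given right-duality relations (\ref{rightadj})), the evaluation of a dual pair is uniquely determined by its coevaluation; hence it suffices to show that $\leftcup$ is determined, and then $\leftcap$ follows automatically. Moreover, by the reflection symmetry $\Omega_k$ of (\ref{om}) — which interchanges leftward cups and caps and sends $k\mapsto -k$, $t\mapsto t^{-1}$ — I may assume $k\le 0$.

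The main step is to pin down $\leftcup$ from the crossing relations, and here is the point requiring care. The leftward crossings occurring in the curl relations are themselves \emph{defined}, via (\ref{cold}), in terms of $\leftcup$ and $\leftcap$, so a relation such as (\ref{r1alt}) — whose right-hand side is exactly $t^{-1}\leftcup$ — appears circular. This circularity is broken by the crossing-inversion relations: for $k<0$ relation (\ref{pos}) is equivalent to (\ref{posalter}) (and for $k=0$ to (\ref{bothalt})), which forces the leftward crossing morphism $\down\otimes\up\to\up\otimes\down$ to equal the inverse of the rightward crossing of (\ref{rotate}). Crucially this inverse is a \emph{fixed} morphism of the given data, identical for both candidate pairs. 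Together with the leftward skein relation (\ref{leftwardsskein}) to treat the opposite crossing sign, both leftward crossings are thereby expressed through the given data and $\leftcup$ alone. Substituting these expressions into (\ref{r1alt}) produces a linear identity in $\leftcup$ whose coefficients lie entirely in the given data; as this identity is the same for $(\leftcup,\leftcap)$ and $(\leftcup',\leftcap')$, it forces $\leftcup=\leftcup'$, whence $\leftcap=\leftcap'$ by the previous paragraph.

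The principal obstacle is the bookkeeping in this last step: one must track the over/under strand and the two crossing signs correctly when inserting the data-level form of the leftward crossings into (\ref{r1alt}), and then verify that the resulting linear relation genuinely pins $\leftcup$ down rather than leaving residual freedom. The ambiguity that must be excluded is the one automorphism of $\up$ (an element of $\End_{\mathcal C}(\up)\cong \k[x^{\pm 1}]$) by which two left-dual structures on the fixed object $\down$ can a priori differ; the explicit normalizations in the curl relations (\ref{curls})--(\ref{morecurls}), refined through the auxiliary identities (\ref{l2alt})--(\ref{r2alt}), are exactly what kill this last scalar, and confirming that they do so is the crux of the argument.
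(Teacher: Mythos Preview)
Your overall strategy---determine the leftward crossing first, then recover the leftward cup from a curl relation---is close to the paper's, but there is a genuine gap in the case $k<0$. You claim that relation (\ref{posalter}) forces the positive leftward crossing $\posleft$ to equal ``the inverse of the rightward crossing of (\ref{rotate}).'' But for $k<0$ the rightward crossing $\sigma^+:\up\otimes\down\to\down\otimes\up$ is \emph{not} invertible: by (\ref{invrel}) we have $\up\otimes\down\oplus\unit^{\oplus(-k)}\cong\down\otimes\up$, so $\sigma^+$ is only a split monomorphism. Relation (\ref{posalter}) says merely that $\posleft\circ\sigma^+=\text{id}_{\up\otimes\down}$, i.e., that $\posleft$ is \emph{a} left inverse of $\sigma^+$; left inverses of a proper split mono are not unique. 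In particular, two candidate pairs could yield different morphisms $\posleft$ and hence different values of $\posleft\circ c$, so your substitution into (\ref{r1alt}) does not yet pin down $\leftcup$. Your final paragraph anticipates some residual freedom, but frames it as an automorphism of $\up$ governing different left-dual structures; the actual ambiguity here is the $(-k)$-dimensional space of left inverses of $\sigma^+$, which is a different (and larger) indeterminacy that the curl normalizations alone do not obviously resolve.

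The paper's fix is simple and clean: rather than inverting $\sigma^+$ alone, one inverts the full matrix (\ref{invrel}), which is built entirely from the given data and \emph{is} invertible (this is what Theorem~\ref{toronto} establishes from (\ref{pos})--(\ref{morecurls})). Then $\posleft$ is uniquely determined as the $(1,1)$-entry of the inverse matrix, and for $k<0$ the leftward cap is read off (up to scalar) as the $(2,1)$-entry; the leftward cup is then recovered from (\ref{leftwards}). For $k=0$ your argument via (\ref{bothalt}) is correct, and for $k>0$ the paper uses the analogous matrix (\ref{invrel1}) directly rather than invoking $\Omega_k$ (which, incidentally, is defined on $\HEIS_k(z,t)$ rather than on an arbitrary $\mathcal C$, so your reduction step also needs rephrasing in terms of the symmetry of the relations themselves).
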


\begin{proof}
If $k \leq 0$, 
Theorem~\ref{toronto} implies that the morphism (\ref{invrel}) is invertible in $\mathcal C$, 
and $\posleft$ is the $(1,1)$-entry of the inverse matrix. This property characterizes $\posleft$ uniquely as a morphism in $\mathcal C$ when $k \leq 0$, independent of the choices of $\leftcap$ or $\leftcup$. Similarly, when $k \geq 0$, the morphism (\ref{invrel1}) is invertible in $\mathcal C$, and $\negleft$ is the $(1,1)$-entry of the inverse matrix. Thus $\negleft$ is characterized uniquely when $k \leq 0$.
To complete the proof when $k=0$, it remains to use (\ref{r2alt})--(\ref{r1alt}), since these show how to express $\leftcap$ and $\leftcup$ in terms of $\rightcap$ and $\rightcup$ and the two leftward crossings.
To complete the proof when $k < 0$, we note instead that the $(2,1)$-entry of the inverse of (\ref{invrel}) is $-tz \leftcap$, so $\leftcap$ is uniquely determined in $\mathcal C$. Then $\leftcup$ may be recovered uniquely using the relation (\ref{leftwards}) and our knowledge of $\posleft$.
Finally when $k > 0$, the $(1,2)$-entry of the inverse of (\ref{invrel1}) gives $t^{-1}z \leftcup$ and then $\leftcap$ may be recovered using (\ref{gloop}) and our knowledge of $\negleft$.
\end{proof}

To conclude the section, we formulate three more important sets of
relations.
The first of these explains how to expand
{\em curls}. It is quite surprising that we have
never needed to simplify left curls when $k > 0$ (or right curls when
$k < 0$) before this point.

\begin{lemma}
The following relations hold for any $a \in \Z$:
\begin{align}\label{dog1}
\mathord{
\begin{tikzpicture}[baseline = -0.5mm]
	\draw[<-,thin,darkblue] (0,0.6) to (0,0.3);
	\draw[-,thin,darkblue] (-0.3,-0.2) to [out=180,in=-90](-.5,0);
	\draw[-,thin,darkblue] (-0.5,0) to [out=90,in=180](-.3,0.2);
	\draw[-,thin,darkblue] (-0.3,.2) to [out=0,in=90](0,-0.3);
	\draw[-,thin,darkblue] (0,-0.3) to (0,-0.6);
	\draw[-,line width=4pt,white] (0,0.3) to [out=-90,in=0] (-.3,-0.2);
	\draw[-,thin,darkblue] (0,0.3) to [out=-90,in=0] (-.3,-0.2);
   \node at (-0.7,0.0) {$\color{darkblue}\scriptstyle{a}$};
      \node at (-0.5,0.0) {$\dot$};
\end{tikzpicture}
}&=
z\sum_{b\geq 0}
\mathord{\begin{tikzpicture}[baseline = 0.5mm]
  \draw[-,thin,darkblue] (0,0.35) to[out=180,in=90] (-.2,0.15);
  \draw[->,thin,darkblue] (0.2,0.15) to[out=90,in=0] (0,.35);
 \draw[-,thin,darkblue] (-.2,0.15) to[out=-90,in=180] (0,-0.05);
  \draw[-,thin,darkblue] (0,-0.05) to[out=0,in=-90] (0.2,0.15);
   \node at (0,.15) {$\color{darkblue}+$};
   \node at (.47,.15) {$\color{darkblue}\scriptstyle{a-b}$};
\end{tikzpicture}
}
\!\!\mathord{
\begin{tikzpicture}[baseline = -1mm]
	\draw[->,thin,darkblue] (0.08,-.4) to (0.08,.4);
   \node at (0.26,0) {$\color{darkblue}\scriptstyle{b}$};
      \node at (0.08,0) {$\dot$};
\end{tikzpicture}
}-
z\sum_{b< 0}
\mathord{\begin{tikzpicture}[baseline = 0.5mm]
  \draw[-,thin,darkblue] (0,0.35) to[out=180,in=90] (-.2,0.15);
  \draw[->,thin,darkblue] (0.2,0.15) to[out=90,in=0] (0,.35);
 \draw[-,thin,darkblue] (-.2,0.15) to[out=-90,in=180] (0,-0.05);
  \draw[-,thin,darkblue] (0,-0.05) to[out=0,in=-90] (0.2,0.15);
   \node at (0,.15) {$\color{darkblue}-$};
   \node at (.47,.15) {$\color{darkblue}\scriptstyle{a-b}$};
\end{tikzpicture}
}
\!\!\mathord{
\begin{tikzpicture}[baseline = -1mm]
	\draw[->,thin,darkblue] (0.08,-.4) to (0.08,.4);
   \node at (0.26,0) {$\color{darkblue}\scriptstyle{b}$};
      \node at (0.08,0) {$\dot$};
\end{tikzpicture}
},\\
\mathord{
\begin{tikzpicture}[baseline = -0.5mm]
	\draw[<-,thin,darkblue] (0,0.6) to (0,0.3);
	\draw[-,thin,darkblue] (-0.3,-0.2) to [out=180,in=-90](-.5,0);
	\draw[-,thin,darkblue] (-0.5,0) to [out=90,in=180](-.3,0.2);
	\draw[-,thin,darkblue] (0,-0.3) to (0,-0.6);
	\draw[-,thin,darkblue] (0,0.3) to [out=-90,in=0] (-.3,-0.2);
	\draw[-,line width=4pt,white] (-0.3,.2) to [out=0,in=90](0,-0.3);
	\draw[-,thin,darkblue] (-0.3,.2) to [out=0,in=90](0,-0.3);
   \node at (-0.7,0.0) {$\color{darkblue}\scriptstyle{a}$};
      \node at (-0.5,0.0) {$\dot$};
\end{tikzpicture}
}&=
z\sum_{b> 0}
\mathord{\begin{tikzpicture}[baseline = 0.5mm]
  \draw[-,thin,darkblue] (0,0.35) to[out=180,in=90] (-.2,0.15);
  \draw[->,thin,darkblue] (0.2,0.15) to[out=90,in=0] (0,.35);
 \draw[-,thin,darkblue] (-.2,0.15) to[out=-90,in=180] (0,-0.05);
  \draw[-,thin,darkblue] (0,-0.05) to[out=0,in=-90] (0.2,0.15);
   \node at (0,.15) {$\color{darkblue}+$};
   \node at (.47,.15) {$\color{darkblue}\scriptstyle{a-b}$};
\end{tikzpicture}
}
\!\!\mathord{
\begin{tikzpicture}[baseline = -1mm]
	\draw[->,thin,darkblue] (0.08,-.4) to (0.08,.4);
   \node at (0.26,0) {$\color{darkblue}\scriptstyle{b}$};
      \node at (0.08,0) {$\dot$};
\end{tikzpicture}
}-
z\sum_{b\leq 0}
\mathord{\begin{tikzpicture}[baseline = 0.5mm]
  \draw[-,thin,darkblue] (0,0.35) to[out=180,in=90] (-.2,0.15);
  \draw[->,thin,darkblue] (0.2,0.15) to[out=90,in=0] (0,.35);
 \draw[-,thin,darkblue] (-.2,0.15) to[out=-90,in=180] (0,-0.05);
  \draw[-,thin,darkblue] (0,-0.05) to[out=0,in=-90] (0.2,0.15);
   \node at (0,.15) {$\color{darkblue}-$};
   \node at (.47,.15) {$\color{darkblue}\scriptstyle{a-b}$};
\end{tikzpicture}
}
\!\!\mathord{
\begin{tikzpicture}[baseline = -1mm]
	\draw[->,thin,darkblue] (0.08,-.4) to (0.08,.4);
   \node at (0.26,0) {$\color{darkblue}\scriptstyle{b}$};
      \node at (0.08,0) {$\dot$};
\end{tikzpicture}
},\\
\mathord{
\begin{tikzpicture}[baseline = -0.5mm]
	\draw[<-,thin,darkblue] (0,0.6) to (0,0.3);
	\draw[-,thin,darkblue] (0,0.3) to [out=-90,in=180] (.3,-0.2);
	\draw[-,thin,darkblue] (0.3,-0.2) to [out=0,in=-90](.5,0);
	\draw[-,thin,darkblue] (0.5,0) to [out=90,in=0](.3,0.2);
	\draw[-,thin,darkblue] (0,-0.3) to (0,-0.6);
	\draw[-,line width=4pt,white] (0.3,.2) to [out=180,in=90](0,-0.3);
	\draw[-,thin,darkblue] (0.3,.2) to [out=180,in=90](0,-0.3);
   \node at (0.65,0.0) {$\color{darkblue}\scriptstyle{a}$};
      \node at (0.5,.0) {$\dot$};
\end{tikzpicture}
}&=
z\sum_{b\leq 0}
\mathord{
\begin{tikzpicture}[baseline = -.5mm]
	\draw[->,thin,darkblue] (0.08,-.3) to (0.08,.5);
   \node at (-0.12,0.1) {$\color{darkblue}\scriptstyle{b}$};
      \node at (0.08,0.1) {$\dot$};
\end{tikzpicture}
}
\mathord{\begin{tikzpicture}[baseline = -1.5mm]
  \draw[<-,thin,darkblue] (0,0.2) to[out=180,in=90] (-.2,0);
  \draw[-,thin,darkblue] (0.2,0) to[out=90,in=0] (0,.2);
 \draw[-,thin,darkblue] (-.2,0) to[out=-90,in=180] (0,-0.2);
  \draw[-,thin,darkblue] (0,-0.2) to[out=0,in=-90] (0.2,0);
   \node at (-.45,0) {$\color{darkblue}\scriptstyle{a-b}$};
      \node at (0,0) {$\color{darkblue}-$};
\end{tikzpicture}
}-z\sum_{b> 0}
\mathord{
\begin{tikzpicture}[baseline = -.5mm]
	\draw[->,thin,darkblue] (0.08,-.3) to (0.08,.5);
   \node at (-0.11,0.1) {$\color{darkblue}\scriptstyle{b}$};
      \node at (0.08,0.1) {$\dot$};
\end{tikzpicture}
}
\mathord{\begin{tikzpicture}[baseline = -1.5mm]
  \draw[<-,thin,darkblue] (0,0.2) to[out=180,in=90] (-.2,0);
  \draw[-,thin,darkblue] (0.2,0) to[out=90,in=0] (0,.2);
 \draw[-,thin,darkblue] (-.2,0) to[out=-90,in=180] (0,-0.2);
  \draw[-,thin,darkblue] (0,-0.2) to[out=0,in=-90] (0.2,0);
   \node at (-.45,0) {$\color{darkblue}\scriptstyle{a-b}$};
      \node at (0,0) {$\color{darkblue}+$};
\end{tikzpicture}
}\:,\\\label{lastofthecurls}
\mathord{
\begin{tikzpicture}[baseline = -0.5mm]
	\draw[<-,thin,darkblue] (0,0.6) to (0,0.3);
	\draw[-,thin,darkblue] (0.3,-0.2) to [out=0,in=-90](.5,0);
	\draw[-,thin,darkblue] (0.5,0) to [out=90,in=0](.3,0.2);
	\draw[-,thin,darkblue] (0,-0.3) to (0,-0.6);
	\draw[-,thin,darkblue] (0.3,.2) to [out=180,in=90](0,-0.3);
	\draw[-,line width=4pt,white] (0,0.3) to [out=-90,in=180] (.3,-0.2);
	\draw[-,thin,darkblue] (0,0.3) to [out=-90,in=180] (.3,-0.2);
   \node at (0.65,0.0) {$\color{darkblue}\scriptstyle{a}$};
      \node at (0.5,.0) {$\dot$};
\end{tikzpicture}
}&=
z\sum_{b< 0}
\mathord{
\begin{tikzpicture}[baseline = -.5mm]
	\draw[->,thin,darkblue] (0.08,-.3) to (0.08,.5);
   \node at (-0.12,0.1) {$\color{darkblue}\scriptstyle{b}$};
      \node at (0.08,0.1) {$\dot$};
\end{tikzpicture}
}
\mathord{\begin{tikzpicture}[baseline = -1.5mm]
  \draw[<-,thin,darkblue] (0,0.2) to[out=180,in=90] (-.2,0);
  \draw[-,thin,darkblue] (0.2,0) to[out=90,in=0] (0,.2);
 \draw[-,thin,darkblue] (-.2,0) to[out=-90,in=180] (0,-0.2);
  \draw[-,thin,darkblue] (0,-0.2) to[out=0,in=-90] (0.2,0);
   \node at (-.45,0) {$\color{darkblue}\scriptstyle{a-b}$};
      \node at (0,0) {$\color{darkblue}-$};
\end{tikzpicture}
}-z\sum_{b\geq 0}
\mathord{
\begin{tikzpicture}[baseline = -.5mm]
	\draw[->,thin,darkblue] (0.08,-.3) to (0.08,.5);
   \node at (-0.11,0.1) {$\color{darkblue}\scriptstyle{b}$};
      \node at (0.08,0.1) {$\dot$};
\end{tikzpicture}
}
\mathord{\begin{tikzpicture}[baseline = -1.5mm]
  \draw[<-,thin,darkblue] (0,0.2) to[out=180,in=90] (-.2,0);
  \draw[-,thin,darkblue] (0.2,0) to[out=90,in=0] (0,.2);
 \draw[-,thin,darkblue] (-.2,0) to[out=-90,in=180] (0,-0.2);
  \draw[-,thin,darkblue] (0,-0.2) to[out=0,in=-90] (0.2,0);
   \node at (-.45,0) {$\color{darkblue}\scriptstyle{a-b}$};
      \node at (0,0) {$\color{darkblue}+$};
\end{tikzpicture}
}\:.
\end{align}
\end{lemma}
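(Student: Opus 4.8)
The plan is to prove just one of the four identities from scratch and to deduce the remaining three by symmetry. First I would observe that the two left curls \eqref{dog1} and its successor differ only by the crossing inside the loop, so the leftward skein relation \eqref{leftwardsskein} converts one into the other plus $z$ times the smoothed diagram; the smoothed diagram is a straight strand carrying $b$ dots stacked with a plain dotted bubble, and the conversion \eqref{fake1} between $(-)$-bubbles and $(+)$-bubbles shows that this $z$-term accounts exactly for the shift in the summation ranges (the boundary $b\geq 0$ versus $b>0$, and $b<0$ versus $b\leq 0$) between the two relations. Thus \eqref{dog1} yields the second relation. Likewise, combining the horizontal-reflection symmetry $\Omega_k$ of \eqref{om} with the pivotal duality $*$ of \eqref{spiv}, one checks that these symmetries permute the four statements, sending the two left-curl relations to the two right-curl relations (interchanging upward and downward dots and swapping the orientations of the bubbles); so the third relation and \eqref{lastofthecurls} follow once \eqref{dog1} and the second relation are in hand. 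This reduces everything to \eqref{dog1}.

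For \eqref{dog1} itself, I would slide the block of $a$ dots off the curved part of the curl and onto a straight strand, using the leftward analogues of the dot-slide relations \eqref{teaminus}--\eqref{teaplus} (available as in the remark following Lemma~\ref{dotslide}). Each time a dot is pushed through the self-crossing it leaves behind, with weight $z$, a pair of parallel dotted strands; once these are closed up by the cup and cap of the curl they become a $(+)$- or $(-)$-bubble with $a-b$ dots multiplied by a strand with $b$ dots, which are precisely the terms on the right-hand side. The undotted residual curl is then evaluated by the base relation \eqref{morecurls}, and the bubble conventions \eqref{fake0} together with the vanishing statements \eqref{tanks} ensure that only finitely many $b$ contribute. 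The cleanest way to organize the bookkeeping is through generating functions: packaging the identity as $\sum_a(\text{LHS})\,w^{-a}$ turns the right-hand side into the product of a bubble series from \eqref{fourofem} with the dot series $\sum_b(\text{strand with }b\text{ dots})\,w^{-b}$, and the split into $b\geq 0$ and $b<0$ is explained by expanding the rational bubble series once at $w=\infty$ and once at $w=0$, as in \eqref{igp}--\eqref{igm}; the inverse relation \eqref{igproper} is what makes the two expansions mutually consistent.

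The main obstacle will be the sign and range bookkeeping rather than any single hard step. The dot-slide relations branch according to whether the relevant dot exponent is positive or non-positive, and it is exactly this branching that produces the asymmetric summation bounds in the four statements; tracking it carefully, together with the repeated passage between plain bubbles, $(+)$-bubbles and $(-)$-bubbles via \eqref{fake1}, is where errors are most likely to arise. A secondary subtlety is that for $k\neq 0$ the undotted curl is \emph{not} a scalar multiple of the identity (the scalar in \eqref{morecurls} vanishes unless $k=0$), so one genuinely must carry the dots all the way around the loop and land on the $(+)$-bubbles defined in \eqref{fake0} and \eqref{d1}--\eqref{d2}, rather than collapsing the loop to a number; confirming that this reproduces the claimed coefficients is the part that most demands care.
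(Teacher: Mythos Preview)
The paper omits this proof entirely, in line with the blanket statement in the final paragraph of the introduction that all purely diagrammatic lemmas are left to the reader (with full arguments deferred to the sequel \cite{BSW3}). So there is no proof here to compare against; your outline is a reasonable reconstruction and uses the expected ingredients.

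Two small sharpenings. First, your skein step is correct: applying \eqref{leftwardsskein} to the self-crossing and then writing the plain dotted bubble as $(+\text{-bubble})+(-\text{-bubble})$ via \eqref{fake1} converts \eqref{dog1} into the second relation (and the third into \eqref{lastofthecurls}) with precisely the boundary shift $b\ge 0\leftrightarrow b>0$, $b<0\leftrightarrow b\le 0$ you describe. (Your phrase ``a straight strand carrying $b$ dots'' should read ``no dots'': the smoothed diagram is the undotted strand next to a plain counterclockwise bubble with $a$ dots.)

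Second, the symmetry step needs a little more care. Neither $\Omega_k$ nor $*$ alone sends upward left curls to upward right curls inside the \emph{same} $\HEIS_k(z,t)$: $\Omega_k$ changes the central charge to $-k$, and $*$ turns upward strands into downward ones. Their composite does exchange the left-curl identities in $\HEIS_k$ with the right-curl identities in $\HEIS_{-k}$, but this means that proving the left-curl relations for $k\le 0$ (your dot-slide argument together with \eqref{morecurls}) and the right-curl relations for $k\ge 0$ (the mirror argument using \eqref{curls}) still leaves the two equivalent cases ``left curls for $k>0$'' and ``right curls for $k<0$'' outstanding. This is exactly the point you flag at the end---the paper even remarks, just before the lemma, that left curls for $k>0$ have not been needed until now---but the fix is not to unwind the determinants \eqref{d1}--\eqref{d2}. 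A cleaner route is to attach a cap and a cup to one side of the inversion relation \eqref{pos} or \eqref{neg} (these hold for all $k$ and already contain the $(+)$-bubble sums) and simplify; this yields the missing curl identity directly.
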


The following lemma gives a braid relation for {\em alternating
  crossings}. All other variations on the braid relation can be
deduced from this plus the original braid relation from
(\ref{rr0}), by arguments similar to the proof of the braid relations in (\ref{spit1}).

\begin{lemma}
The following relation holds:
\begin{align}
\mathord{
\begin{tikzpicture}[baseline = 2mm]
	\draw[<-,thin,darkblue] (0.45,.8) to (-0.45,-.4);
        \draw[-,thin,darkblue] (-0.45,0.2) to[out=90,in=-90] (0,0.8);
	\draw[-,line width=4pt,white] (0.45,-.4) to (-0.45,.8);
	\draw[->,thin,darkblue] (0.45,-.4) to (-0.45,.8);
        \draw[-,line width=4pt,white] (0,-.4) to[out=90,in=-90] (-.45,0.2);
        \draw[<-,thin,darkblue] (0,-.4) to[out=90,in=-90] (-.45,0.2);
\end{tikzpicture}
}
-
\mathord{
\begin{tikzpicture}[baseline = 2mm]
	\draw[<-,thin,darkblue] (0.45,.8) to (-0.45,-.4);
        \draw[<-,thin,darkblue] (0,-.4) to[out=90,in=-90] (.45,0.2);
	\draw[-,line width=4pt,white] (0.45,-.4) to (-0.45,.8);
	\draw[->,thin,darkblue] (0.45,-.4) to (-0.45,.8);
        \draw[-,line width=4pt,white] (0.45,0.2) to[out=90,in=-90] (0,0.8);
        \draw[-,thin,darkblue] (0.45,0.2) to[out=90,in=-90] (0,0.8);
\end{tikzpicture}
}
&=
z^3\displaystyle\sum_{\substack{a,b \geq 0\\c > 0}}
\mathord{
\begin{tikzpicture}[baseline = 0]
	\draw[-,thin,darkblue] (0.3,0.6) to[out=-90, in=0] (0,0.2);
	\draw[->,thin,darkblue] (0,0.2) to[out = 180, in = -90] (-0.3,0.6);
  \draw[->,thin,darkblue] (-0.8,0) to[out=90,in=0] (-1,0.2);
  \draw[-,thin,darkblue] (-1,0.2) to[out=180,in=90] (-1.2,0);
\draw[-,thin,darkblue] (-1.2,0) to[out=-90,in=180] (-1,-0.2);
  \draw[-,thin,darkblue] (-1,-0.2) to[out=0,in=-90] (-0.8,0);
   \node at (-1,0) {$+$};
   \node at (-.34,0) {$\color{darkblue}\scriptstyle{-a-b-c}$};
   \node at (-0.23,0.33) {$\dot$};
   \node at (-0.43,0.33) {$\color{darkblue}\scriptstyle{a}$};
	\draw[<-,thin,darkblue] (0.3,-.6) to[out=90, in=0] (0,-0.2);
	\draw[-,thin,darkblue] (0,-0.2) to[out = 180, in = 90] (-0.3,-.6);
   \node at (-0.25,-0.4) {$\dot$};
   \node at (-.4,-.4) {$\color{darkblue}\scriptstyle{b}$};
	\draw[->,thin,darkblue] (.68,-0.6) to (.68,0.6);
   \node at (0.68,0) {$\dot$};
   \node at (.85,0) {$\color{darkblue}\scriptstyle{c}$};
\end{tikzpicture}
}&&\text{if $k \geq 0$,}\\
\mathord{
\begin{tikzpicture}[baseline = 2mm]
	\draw[->,thin,darkblue] (0.45,-.4) to (-0.45,.8);
        \draw[<-,thin,darkblue] (0,-.4) to[out=90,in=-90] (-.45,0.2);
	\draw[-,line width=4pt,white] (0.45,.8) to (-0.45,-.4);
	\draw[<-,thin,darkblue] (0.45,.8) to (-0.45,-.4);
        \draw[-,line width=4pt,white] (-0.45,0.2) to[out=90,in=-90] (0,0.8);
        \draw[-,thin,darkblue] (-0.45,0.2) to[out=90,in=-90] (0,0.8);
\end{tikzpicture}
}
-
\mathord{
\begin{tikzpicture}[baseline = 2mm]
	\draw[->,thin,darkblue] (0.45,-.4) to (-0.45,.8);
        \draw[-,thin,darkblue] (0.45,0.2) to[out=90,in=-90] (0,0.8);
	\draw[-,line width=4pt,white] (0.45,.8) to (-0.45,-.4);
	\draw[<-,thin,darkblue] (0.45,.8) to (-0.45,-.4);
        \draw[-,line width=4pt,white] (0,-.4) to[out=90,in=-90] (.45,0.2);
        \draw[<-,thin,darkblue] (0,-.4) to[out=90,in=-90] (.45,0.2);
\end{tikzpicture}
}
&=
z^3\displaystyle\sum_{\substack{a,b \geq 0\\c > 0}}
\mathord{
\begin{tikzpicture}[baseline = 0]
	\draw[<-,thin,darkblue] (0.3,0.6) to[out=-90, in=0] (0,0.2);
	\draw[-,thin,darkblue] (0,0.2) to[out = 180, in = -90] (-0.3,0.6);
  \draw[-,thin,darkblue] (1.2,0) to[out=90,in=0] (1,0.2);
  \draw[<-,thin,darkblue] (1,0.2) to[out=180,in=90] (.8,0);
\draw[-,thin,darkblue] (.8,0) to[out=-90,in=180] (1,-0.2);
  \draw[-,thin,darkblue] (1,-0.2) to[out=0,in=-90] (1.2,0);
   \node at (.3,0) {$\color{darkblue}\scriptstyle{-a-b-c}$};
   \node at (0.23,0.33) {$\dot$};
\node at (1,0) {$\color{darkblue}+$};
   \node at (0.43,0.43) {$\color{darkblue}\scriptstyle{a}$};
	\draw[-,thin,darkblue] (0.3,-.6) to[out=90, in=0] (0,-0.2);
	\draw[->,thin,darkblue] (0,-0.2) to[out = 180, in = 90] (-0.3,-.6);
   \node at (0.25,-0.4) {$\dot$};
   \node at (.4,-.4) {$\color{darkblue}\scriptstyle{b}$};
	\draw[->,thin,darkblue] (-.68,-0.6) to (-.68,0.6);
   \node at (-0.68,0) {$\dot$};
   \node at (-.85,0) {$\color{darkblue}\scriptstyle{c}$};
\end{tikzpicture}
}&&\text{if $k \leq 0$}.
\label{altbraid}
\end{align}
\end{lemma}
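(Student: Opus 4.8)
The plan is to reduce to the case $k \geq 0$ using the symmetry of the category, and then to assemble the relation from the ordinary upward braid relation together with the single sideways-bigon resolution relation (\ref{pos}). First I would invoke the isomorphism $\Omega_k$ of (\ref{om}). Since $\Omega_k$ reflects diagrams in a horizontal plane and rescales crossings and leftward cups/caps by signs, it sends the first of the two displayed relations (in $\HEIS_k(z,t)$ with $k \geq 0$) to the second (in $\HEIS_{-k}(z,t^{-1})^{\operatorname{op}}$ with $-k \leq 0$). Crucially, the right-hand sides of (\ref{altbraid}) carry no explicit factor of $t$, so the parameter change $t \mapsto t^{-1}$ is harmless here, and $\Omega_k$ matches the $(+)$-bubbles on the two sides consistently by the computation recorded in the theorem. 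Thus it suffices to prove the first relation, assuming $k \geq 0$.

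For the $k \geq 0$ case I would expand the two sideways crossings appearing on the left-hand side using their definitions (\ref{rotate}) and (\ref{cold}), turning each of the two three-strand diagrams into a configuration built from upward crossings, dots, and rightward cups and caps. The two diagrams differ only in how the downward strand is routed past the upward pair, so after applying the mixed braid relations (\ref{spit1}) (which come from attaching rightward cups and caps to the upward braid relation (\ref{rr0})) the difference localizes at a single place. Resolving the remaining sideways up-down bigon by relation (\ref{pos}): the identity term and the linear $t^{-1}z$ cup-cap term cancel against the contributions already present, while the quadratic term $z^2\sum_{a,b>0}$ supplies the $(+)$-bubble and two of the three dotted strands. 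Combined with the factor of $z$ produced by the one skein smoothing, this accounts for the overall $z^3$. Finally the dot-slide relations of Lemma~\ref{dotslide} and their rotations move the accumulated dots into the standard positions on the right and generate the third summation index $c$, and the vanishing of $(+)$-bubbles with out-of-range labels from (\ref{tanks}) fixes the summation ranges $a,b \geq 0$, $c > 0$.

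The hard part will be the bookkeeping: tracking how three independent summation indices $a,b,c$ and their dot labels emerge from a single application of (\ref{pos}) followed by repeated dot slides, and verifying that the boundary contributions (negative dots, or $c = 0$) are exactly annihilated by the bubble-vanishing relations so that the resulting ranges coincide with those in the statement. Keeping the powers of $z$ and all signs consistent through the chain of expansion, skein smoothing, resolution, and slide steps is where the calculation is most delicate; the degenerate analog treated in \cite[\S5]{BSW1} gives a template for organizing this computation.
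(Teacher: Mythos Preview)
The paper omits the proof of this lemma entirely (as announced at the end of the introduction, all purely diagrammatic lemmas are left to the reader, with the degenerate analogs in \cite[\S2]{Bheis} and \cite[\S5]{BSW1} as a guide), so there is no explicit argument to compare against. Your outline is consistent with that template: reduce to one sign of $k$ via $\Omega_k$, then assemble the identity from the upward braid relation, the sideways resolution (\ref{pos})/(\ref{tea1}), and dot slides.

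Two small points of caution. First, your phrasing of the $\Omega_k$ reduction has the direction slightly tangled: to deduce the $k\leq 0$ identity in $\HEIS_k(z,t)$ you apply $\Omega_k$ to pass to $\HEIS_{-k}(z,t^{-1})^{\operatorname{op}}$ where $-k\geq 0$ and invoke the first identity there; check carefully that the sign count $(-1)^{x+y}$ matches on both sides (three one-colored crossings on the left versus two leftward cups/caps plus one bubble on the right). Second, for $k\geq 0$ it is cleaner to use the equivalent forms (\ref{tea1}) rather than (\ref{pos}) directly, since the left-hand side already sits in the up--down--up configuration those relations address; after that, the correction terms from (\ref{tea1}) combined with a single skein resolution and the curl expansions (\ref{dog1})--(\ref{lastofthecurls}) produce exactly the triple sum with the $(+)$-bubble. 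Either route works, but the bookkeeping you flag as delicate is genuinely easier if you avoid unwinding (\ref{pos}) from scratch.
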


Finally we have the {\em bubble slides}:

\begin{lemma}\label{bubbleslides}
The following relations hold for any $a \in \Z$:
\begin{align}\label{bs1}
\mathord{\begin{tikzpicture}[baseline = -1mm]
  \draw[<-,thin,darkblue] (0,0.2) to[out=180,in=90] (-.2,0);
  \draw[-,thin,darkblue] (0.2,0) to[out=90,in=0] (0,.2);
 \draw[-,thin,darkblue] (-.2,0) to[out=-90,in=180] (0,-0.2);
  \draw[-,thin,darkblue] (0,-0.2) to[out=0,in=-90] (0.2,0);
\node at (0,0) {$\color{darkblue}+$};
   \node at (-0.32,0) {$\color{darkblue}\scriptstyle{a}$};
\end{tikzpicture}
}
\:\:\mathord{
\begin{tikzpicture}[baseline = -1mm]
	\draw[->,thin,darkblue] (0.08,-.4) to (0.08,.4);
\end{tikzpicture}
}
&=
\mathord{
\begin{tikzpicture}[baseline = -1mm]
	\draw[->,thin,darkblue] (0.08,-.4) to (0.08,.4);
\end{tikzpicture}
}
\:
\mathord{\begin{tikzpicture}[baseline = -1mm]
  \draw[<-,thin,darkblue] (0,0.2) to[out=180,in=90] (-.2,0);
  \draw[-,thin,darkblue] (0.2,0) to[out=90,in=0] (0,.2);
 \draw[-,thin,darkblue] (-.2,0) to[out=-90,in=180] (0,-0.2);
  \draw[-,thin,darkblue] (0,-0.2) to[out=0,in=-90] (0.2,0);
\node at (0,0) {$\color{darkblue}+$};
   \node at (-0.32,0) {$\color{darkblue}\scriptstyle{a}$};
\end{tikzpicture}
}
-z^2 
\sum_{\substack{b \geq 0\\c > 0}}
\,
\mathord{
\begin{tikzpicture}[baseline = -1mm]
	\draw[->,thin,darkblue] (0.08,-.4) to (0.08,.4);
   \node at (-.23,0) {$\color{darkblue}\scriptstyle{b+c}$};
      \node at (.08,0) {$\dot$};
\end{tikzpicture}
}
\:\mathord{\begin{tikzpicture}[baseline = -1mm]
  \draw[<-,thin,darkblue] (0,0.2) to[out=180,in=90] (-.2,0);
  \draw[-,thin,darkblue] (0.2,0) to[out=90,in=0] (0,.2);
 \draw[-,thin,darkblue] (-.2,0) to[out=-90,in=180] (0,-0.2);
  \draw[-,thin,darkblue] (0,-0.2) to[out=0,in=-90] (0.2,0);
\node at (0,0) {$\color{darkblue}+$};
   \node at (-0.59,0) {$\color{darkblue}\scriptstyle{a-b-c}$};
\end{tikzpicture}
}\:,\\
\label{bs2}
\mathord{
\begin{tikzpicture}[baseline = -1mm]
	\draw[->,thin,darkblue] (0.08,-.4) to (0.08,.4);
\end{tikzpicture}
}
\,\:
\mathord{\begin{tikzpicture}[baseline = -1mm]
  \draw[-,thin,darkblue] (0,0.2) to[out=180,in=90] (-.2,0);
  \draw[->,thin,darkblue] (0.2,0) to[out=90,in=0] (0,.2);
 \draw[-,thin,darkblue] (-.2,0) to[out=-90,in=180] (0,-0.2);
  \draw[-,thin,darkblue] (0,-0.2) to[out=0,in=-90] (0.2,0);
\node at (0,0) {$\color{darkblue}+$};
   \node at (0.32,0) {$\color{darkblue}\scriptstyle{a}$};
\end{tikzpicture}
}
&=
\mathord{\begin{tikzpicture}[baseline = -1mm]
  \draw[-,thin,darkblue] (0,0.2) to[out=180,in=90] (-.2,0);
  \draw[->,thin,darkblue] (0.2,0) to[out=90,in=0] (0,.2);
 \draw[-,thin,darkblue] (-.2,0) to[out=-90,in=180] (0,-0.2);
  \draw[-,thin,darkblue] (0,-0.2) to[out=0,in=-90] (0.2,0);
\node at (0,0) {$\color{darkblue}+$};
   \node at (0.32,0) {$\color{darkblue}\scriptstyle{a}$};
\end{tikzpicture}
}
\,\:
\mathord{
\begin{tikzpicture}[baseline = -1mm]
	\draw[->,thin,darkblue] (0.08,-.4) to (0.08,.4);
\end{tikzpicture}
}
-
z^2 
\sum_{\substack{b \geq 0\\c > 0}}
\:
\mathord{\begin{tikzpicture}[baseline = -1mm]
  \draw[-,thin,darkblue] (0,0.2) to[out=180,in=90] (-.2,0);
  \draw[->,thin,darkblue] (0.2,0) to[out=90,in=0] (0,.2);
 \draw[-,thin,darkblue] (-.2,0) to[out=-90,in=180] (0,-0.2);
  \draw[-,thin,darkblue] (0,-0.2) to[out=0,in=-90] (0.2,0);
\node at (0,0) {$\color{darkblue}+$};
   \node at (.59,0) {$\color{darkblue}\scriptstyle{a-b-c}$};
\end{tikzpicture}
}
\mathord{
\begin{tikzpicture}[baseline = -1mm]
	\draw[->,thin,darkblue] (0.08,-.4) to (0.08,.4);
   \node at (.4,0) {$\color{darkblue}\scriptstyle{b+c}$};
      \node at (.08,0) {$\dot$};
\end{tikzpicture}
}
\:,
\\
\mathord{\begin{tikzpicture}[baseline = -1mm]
  \draw[<-,thin,darkblue] (0,0.2) to[out=180,in=90] (-.2,0);
  \draw[-,thin,darkblue] (0.2,0) to[out=90,in=0] (0,.2);
 \draw[-,thin,darkblue] (-.2,0) to[out=-90,in=180] (0,-0.2);
  \draw[-,thin,darkblue] (0,-0.2) to[out=0,in=-90] (0.2,0);
\node at (0,0) {$\color{darkblue}-$};
   \node at (-0.32,0) {$\color{darkblue}\scriptstyle{a}$};
\end{tikzpicture}
}
\:\:\mathord{
\begin{tikzpicture}[baseline = -1mm]
	\draw[->,thin,darkblue] (0.08,-.4) to (0.08,.4);
\end{tikzpicture}
}
&=
\mathord{
\begin{tikzpicture}[baseline = -1mm]
	\draw[->,thin,darkblue] (0.08,-.4) to (0.08,.4);
\end{tikzpicture}
}
\:
\mathord{\begin{tikzpicture}[baseline = -1mm]
  \draw[<-,thin,darkblue] (0,0.2) to[out=180,in=90] (-.2,0);
  \draw[-,thin,darkblue] (0.2,0) to[out=90,in=0] (0,.2);
 \draw[-,thin,darkblue] (-.2,0) to[out=-90,in=180] (0,-0.2);
  \draw[-,thin,darkblue] (0,-0.2) to[out=0,in=-90] (0.2,0);
\node at (0,0) {$\color{darkblue}-$};
   \node at (-0.32,0) {$\color{darkblue}\scriptstyle{a}$};
\end{tikzpicture}
}
-z^2 
\sum_{\substack{b \leq 0\\c < 0}}
\:
\mathord{
\begin{tikzpicture}[baseline = -1mm]
	\draw[->,thin,darkblue] (0.08,-.4) to (0.08,.4);
   \node at (-.28,0) {$\color{darkblue}\scriptstyle{b+c}$};
      \node at (.08,0) {$\dot$};
\end{tikzpicture}
}
\:\mathord{\begin{tikzpicture}[baseline = -1mm]
  \draw[<-,thin,darkblue] (0,0.2) to[out=180,in=90] (-.2,0);
  \draw[-,thin,darkblue] (0.2,0) to[out=90,in=0] (0,.2);
 \draw[-,thin,darkblue] (-.2,0) to[out=-90,in=180] (0,-0.2);
  \draw[-,thin,darkblue] (0,-0.2) to[out=0,in=-90] (0.2,0);
\node at (0,0) {$\color{darkblue}-$};
   \node at (-0.59,0) {$\color{darkblue}\scriptstyle{a-b-c}$};
\end{tikzpicture}
}\:,\label{bs3}\\\label{bs4}
\mathord{
\begin{tikzpicture}[baseline = -1mm]
	\draw[->,thin,darkblue] (0.08,-.4) to (0.08,.4);
\end{tikzpicture}
}
\,\:
\mathord{\begin{tikzpicture}[baseline = -1mm]
  \draw[-,thin,darkblue] (0,0.2) to[out=180,in=90] (-.2,0);
  \draw[->,thin,darkblue] (0.2,0) to[out=90,in=0] (0,.2);
 \draw[-,thin,darkblue] (-.2,0) to[out=-90,in=180] (0,-0.2);
  \draw[-,thin,darkblue] (0,-0.2) to[out=0,in=-90] (0.2,0);
\node at (0,0) {$\color{darkblue}-$};
   \node at (0.32,0) {$\color{darkblue}\scriptstyle{a}$};
\end{tikzpicture}
}
&=
\mathord{\begin{tikzpicture}[baseline = -1mm]
  \draw[-,thin,darkblue] (0,0.2) to[out=180,in=90] (-.2,0);
  \draw[->,thin,darkblue] (0.2,0) to[out=90,in=0] (0,.2);
 \draw[-,thin,darkblue] (-.2,0) to[out=-90,in=180] (0,-0.2);
  \draw[-,thin,darkblue] (0,-0.2) to[out=0,in=-90] (0.2,0);
\node at (0,0) {$\color{darkblue}-$};
   \node at (0.32,0) {$\color{darkblue}\scriptstyle{a}$};
\end{tikzpicture}
}
\,\:
\mathord{
\begin{tikzpicture}[baseline = -1mm]
	\draw[->,thin,darkblue] (0.08,-.4) to (0.08,.4);
\end{tikzpicture}
}
-
z^2 
\sum_{\substack{b \leq 0\\c < 0}}
\:
\mathord{\begin{tikzpicture}[baseline = -1mm]
  \draw[-,thin,darkblue] (0,0.2) to[out=180,in=90] (-.2,0);
  \draw[->,thin,darkblue] (0.2,0) to[out=90,in=0] (0,.2);
 \draw[-,thin,darkblue] (-.2,0) to[out=-90,in=180] (0,-0.2);
  \draw[-,thin,darkblue] (0,-0.2) to[out=0,in=-90] (0.2,0);
\node at (0,0) {$\color{darkblue}-$};
   \node at (.59,0) {$\color{darkblue}\scriptstyle{a-b-c}$};
\end{tikzpicture}
}
\mathord{
\begin{tikzpicture}[baseline = -1mm]
	\draw[->,thin,darkblue] (0.08,-.4) to (0.08,.4);
   \node at (.41,0) {$\color{darkblue}\scriptstyle{b+c}$};
      \node at (.08,0) {$\dot$};
\end{tikzpicture}
}
\:.
\end{align}
\end{lemma}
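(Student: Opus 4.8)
The plan is to prove \eqref{bs1} by a ``swallowing'' computation---dragging the upward strand leftward past the closed loop that constitutes the bubble, opening the two crossings that are thereby created, and simplifying the resulting curls---and then to obtain the other three relations by symmetry. First I would reduce the casework. Using the isomorphism $\Omega_k$ of \eqref{om} together with the pivotal duality $*$ of \eqref{spiv}, it should suffice to establish \eqref{bs1}: the strand-then-bubble relation \eqref{bs2} is the image of \eqref{bs1} under rotating diagrams through $180^\circ$ (this interchanges left and right and reverses the bubble orientation, matching the arrow in \eqref{bs2}), while the $(-)$-bubble relations \eqref{bs3}--\eqref{bs4} follow either from $\Omega_k$ (which trades $k$ for $-k$ and the two orientations) or by substituting the defining relation \eqref{fake1}, which writes each $(-)$-bubble as an honest dotted bubble minus a $(+)$-bubble, and applying the $(+)$-case term by term. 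One should also dispose at the outset of the degenerate ranges of $a$ in which the relevant bubbles are forced to be scalar multiples of $1_\unit$ by \eqref{tanks}--\eqref{tanks2}: there the correction sum is empty or annihilated, and the relation is automatic because $\Bbbk$-scalar endomorphisms of $\unit$ genuinely slide freely past $\up$.

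For the main computation I would write the $(+)$-bubble on the left of \eqref{bs1} as a closed loop and drag the neighbouring $\up$-strand across it from right to left. Planar isotopy brings the strand up against the loop, after which it must cross each of the two arcs bounding the loop. Each crossing is opened using the upward and leftward skein relations \eqref{rr2} and \eqref{leftwardsskein}: the ``straight-through'' choice at \emph{both} crossings carries the strand all the way to the left and produces the leading term $1_\up$ times the bubble, whereas each of the remaining choices introduces a factor of $z$ and reconnects the strand with an arc of the loop. Since the strand meets the loop twice, the surviving correction terms carry the factor $z\cdot z = z^2$ seen in the statement and take the shape of \emph{curls} built from the strand together with the (dotted) arcs of the former bubble. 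Sliding the bubble's $a$ dots across the new crossings by Lemma~\ref{dotslide} and its leftward analogue redistributes those dots, which is the origin of the summation over $b+c$.

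The tool that finishes the argument is the curl-expansion lemma stated immediately above: each curl produced in the previous step expands, via \eqref{dog1}--\eqref{lastofthecurls}, as a sum of a $(+)$- or $(-)$-bubble times a dotted vertical strand---exactly the form of the right-hand sides of \eqref{bs1}--\eqref{bs4}. This is precisely where the ``surprising'' need to simplify curls (remarked on before that lemma) enters. Moreover the two separate sums in each curl expansion, one over $b\ge 0$ and one over $b<0$, are what force the $(+)$-bubble slides to run over $b\ge 0,\,c>0$ and the $(-)$-bubble slides over $b\le 0,\,c<0$. After these substitutions one is left with a bookkeeping problem: the accumulated double sum from the two smoothings, combined with the sums coming from the curl expansions, must be reorganised---using the $(\pm)$-bubble conversion \eqref{fake1} and the vanishing identities of Lemma~\ref{infgrass}---into the single clean sum with coefficient $-z^2$ and the stated index ranges. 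I expect this reorganisation, namely tracking the exact ranges and checking that all spurious contributions telescope or cancel, to be the genuinely laborious part; everything preceding it is a mechanical application of the skein, dot-slide and curl relations.

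As a guide and consistency check, I would package the whole family \eqref{bs1} into a single generating-function identity. Writing $B^+(w):=\sum_{a\in\Z}(\text{$(+)$-bubble}_a)\,w^{-a}$ and letting $x$ denote the dot on the strand, \eqref{bs1} is equivalent to
\begin{equation*}
B^+(w)\,1_\up=\frac{(w-q^2 x)(w-q^{-2}x)}{(w-x)^2}\;1_\up\,B^+(w)
=\frac{(w-x)^2-z^2 xw}{(w-x)^2}\;1_\up\,B^+(w),
\end{equation*}
where the second form makes clear that the identity lives over $\Bbbk$, while the factored form exhibits the eigenvalues $q^{\pm 2}$ of the crossing (recall $z=q-q^{-1}$, so $2+z^2=q^2+q^{-2}$). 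Expanding $(w-x)^{-2}=\sum_{d\ge 0}(d+1)(x/w)^{d}$ and matching coefficients of $w^{-a}$ recovers both the coefficient $-z^2$ and the multiplicity of $x^{b+c}$ in the correction sum, giving an independent verification that the diagrammatic bookkeeping has been carried out correctly.
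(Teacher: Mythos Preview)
The paper does not actually give a proof of this lemma: as announced in the introduction, proofs of purely diagrammatic lemmas are omitted, with the reader referred to the degenerate analogs in \cite{Bheis} and \cite{BSW1} and to the sequel \cite{BSW3}. So there is no paper proof to compare against; I can only assess your plan on its own merits.

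Your overall strategy---drag the strand through the honest closed loop, open the two crossings with the skein relations, and then expand the resulting curls using \eqref{dog1}--\eqref{lastofthecurls}---is the right one, and your generating-function repackaging
\[
\clockplus(w)\,1_\up \;=\; \frac{(w-x)^2 - z^2 xw}{(w-x)^2}\;1_\up\,\clockplus(w)
\]
is correct (expanding $(w-x)^{-2}$ at $w=\infty$ reproduces exactly the double sum in \eqref{bs1}). However, two of your reductions are not right as stated.

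First, the claim that \eqref{bs2} is the $*$-image of \eqref{bs1} is false. Rotation through $180^\circ$ is orientation-preserving, so a clockwise closed loop remains clockwise; moreover $*$ sends $1_\up$ to $1_\down$. Thus $*$ applied to \eqref{bs1} yields a $\down$-strand relation with a \emph{clockwise} bubble, not \eqref{bs2}. The correct passage from \eqref{bs1} to \eqref{bs2} is via the infinite Grassmannian relation \eqref{igproper}: inverting the displayed generating-function identity using $\anticlockplus(w)=\clockplus(w)^{-1}$ gives
\[
1_\up\,\anticlockplus(w)\;=\;\frac{(w-x)^2 - z^2 xw}{(w-x)^2}\;\anticlockplus(w)\,1_\up,
\]
which is precisely \eqref{bs2}. (If you want a symmetry argument instead, you need $*\circ\Omega_{-k}$ applied to \eqref{bs1} in $\HEIS_{-k}(z,t^{-1})$, not $*$ alone.)

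Second, ``disposing of degenerate $a$ via \eqref{tanks}'' does not cover all fake $(+)$-bubbles: \eqref{tanks} only forces the clockwise $(+)$-bubble to be scalar for $a\le k$, so when $k<0$ the range $k<a\le 0$ consists of genuine non-scalar fake bubbles on which the swallowing argument does not directly apply. The clean fix is to case-split on $\operatorname{sgn}(k)$: for $k\ge 0$ prove \eqref{bs1} by swallowing (then every $a\le 0$ really is scalar) and deduce \eqref{bs2} via \eqref{igproper}; for $k\le 0$ prove \eqref{bs2} first (now the counterclockwise fake bubbles are scalar for $a\le 0$) and deduce \eqref{bs1}. For the $(-)$-relations, note that $\Omega_k$ sends $(\pm)$-bubbles to $(\pm)$-bubbles, so it does not convert \eqref{bs1} into \eqref{bs3}; instead, run the same swallowing computation on the \emph{honest} dotted bubble (valid for every $a\in\Z$, since $x$ is invertible), expand curls, and subtract the already-established \eqref{bs1} using \eqref{fake1} to obtain \eqref{bs3}; then \eqref{bs4} follows from \eqref{bs3} via \eqref{igproper} as before. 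The same rational function governs both pairs because $(x-w)^2=(w-x)^2$, the only difference being that $\clockminus(w)$ is expanded at $w=0$.
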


\section{Action on representations of quantum $GL_n$}\label{qgln}

In this section, we construct an action of $\HEIS_0(z,t)$ on the category
of modules over $\Uq(\mathfrak{gl}_n)$
and use this action to produce a family of generators for the center of
$\Uq(\mathfrak{gl}_n)$.
These central
elements were introduced originally by Bracken, Gould and Zhang
\cite{GZB}. We also determine their images under the
Harish-Chandra homomorphism, giving a new approach to some
results of Li \cite{Li}.
Throughout the section, we work in the generic case, setting
\[
    \k := \Q(q), \qquad z := q-q^{-1}, \qquad t := q^n
\]
for an indeterminate $q$.
In fact, the formulae which we
derive are defined over $\Z[q,q^{-1}]$, hence, they make
sense over any ground ring for any invertible
$q$ (including roots of unity).

For the precise definition of $\Uq(\mathfrak{gl}_n)$,
we follow the conventions of
\cite[$\S$3]{Bskein}, denoting
its standard generators by
$\left\{e_i, f_i, d_j^{\pm 1}\:\big|\:i=1,\dots,n-1, j=1,\dots, n\right\}$.
The usual diagonal generator $k_i$ of the subalgebra
$\Uq(\mathfrak{sl}_n)$ is $d_i d_{i+1}^{-1}$.
The subalgebras of $\Uq(\mathfrak{gl}_n)$
generated by the $e_i, f_i$ and $d_j^{\pm}$
are $\Uq(\mathfrak{gl}_n)^+$,
$\Uq(\mathfrak{gl}_n)^-$ and $\Uq(\mathfrak{gl}_n)^0$, respectively.
We also have the Borel subalgebras $\Uq(\mathfrak{gl}_n)^\sharp :=
\Uq(\mathfrak{gl}_n)^0 \Uq(\mathfrak{gl}_n)^+$ and 
$\Uq(\mathfrak{gl}_n)^\flat :=
\Uq(\mathfrak{gl}_n)^0 \Uq(\mathfrak{gl}_n)^-$.
We will often cite 
 Lusztig's book \cite{Lubook}, noting that
our $q$ and $k_i$ are Lusztig's $v^{-1}$ and $K_i^{-1}$.

The natural module
$V^+$ and dual natural module 
$V^-$ are the left $\Uq(\mathfrak{gl}_n)$-modules with bases
\[
    \left\{v_i^+ \mid 1 \leq i \leq n\right\}
    \quad \text{and} \quad
    \left\{v_i^- \mid 1 \leq i\leq  n\right\},
\]
respectively, on which the generators act
by
\begin{align}\label{birds1}
f_i v^{+}_j &= \delta_{i,j} v^{+}_{i+1},
&e_i v^{+}_j &= \delta_{i+1,j} v^{+}_i,
&
d_i  v^+_j &= q^{\delta_{i,j}}  v^+_j,
\\
f_i v^{-}_j &= \delta_{i+1,j} v^{-}_i,
&
e_i v^{-}_j &= \delta_{i,j} v^{-}_{i+1},
&d_i  v^-_j &= q^{- \delta_{i,j}} v^-_j.\label{birds2}
\end{align}
We denote the weight of $v_i^+$ by $\eps_i$; then $v_i^-$ is of weight $-\eps_i$.
Let $\Lambda := \bigoplus_{i=1}^n \Z \eps_i$ be the {\em weight lattice} with inner product
$(\cdot,\cdot)$ defined so that $\eps_1,\dots,\eps_n$ are orthonormal.
The {\em positive roots} are $\{\eps_i-\eps_j\:|\:1 \leq i < j \leq n\}$.
By a {\em weight module} we mean a $\Uq(\mathfrak{gl}_n)$-module $V$
that is the sum of its weight spaces
$V_\lambda := \left\{v \in V\:\big|\:d_i v = q^{(\lambda,\eps_i)} v\right\}$
for all $\lambda \in \Lambda$.
The {\em Weyl group} is the symmetric group $\SG_n$. It acts in obvious
ways on
$\Lambda$ and on $\Uq(\mathfrak{gl}_n)^0 =
\k[d_1^{\pm 1},\dots,d_n^{\pm 1}]$,
permuting the generators.
Denote the longest element of $\SG_n$ by $w_0$.

We work with the Hopf algebra structure on $\Uq(\mathfrak{gl}_n)$
whose comultiplication $\Delta$
satisfies
\begin{align}
\Delta(e_i) &= d_i^{-1} d_{i+1} \otimes e_i + e_i \otimes 1,&
\Delta(f_i)&=1 \otimes f_i + f_i \otimes d_i d_{i+1}^{-1},&
\Delta(d_j) &= d_j \otimes d_j.
\end{align}
We also need various (anti)automorphisms.
First, we have the {\em bar involution}, which is the antilinear
automorphism $-:\Uq \rightarrow \Uq$
defined from $\overline{e_i} := e_i, \overline{f_i} := f_i$ and
$\overline{d_i} := d_i^{-1}$. Then there are linear
antiautomorphisms $T$ and $G$
defined from
\begin{align}
T(e_i) &:= f_i, &T(f_i) &:=
e_i,&T(d_i) &:= d_i,\\
G(e_i) &:= e_{n-i}, &G(f_i) &:= f_{n-i},
&G(d_i) &:= d_{n+1-i}.
\end{align}
The maps $-, T$ and $G$ commute with each other.
Finally, we have Lusztig's braid group action, under which the $i$th
generator of the braid group acts by the
automorphism $T_i:\Uq(\mathfrak{gl}_n)\rightarrow\Uq(\mathfrak{gl}_n)$ (which is $T_{i,-}''$ from
\cite[$\S$37.1.3]{Lubook}) 
defined for $|j-i| > 1$ and $k \neq i, i+1$ by
\begin{align*}
T_i(e_i) &= -f_i d_i d_{i+1}^{-1},
&T_i(e_{i\pm 1}) &= e_i e_{i\pm 1} -q^{-1} e_{i\pm 1} e_i,
&
T_i(e_j) &= e_j,\\
T_i(f_i) &= -d_i^{-1} d_{i+1} e_i,&
T_i(f_{i\pm 1}) &= f_{i\pm 1} f_i - q f_i f_{i \pm 1},&
T_i(f_j) &= f_j,\\
T_i(d_i) &= d_{i+1},&
T_i(d_{i+1}) &= d_i,&
T_i(d_k) &= d_k.
\end{align*}

A key role 
is played by the $R$-matrix. 
We recall its definition
following the approach from \cite[$\S$32.1]{Lubook}.
Let $\Theta$ be the {\em quasi-$R$-matrix} from \cite[$\S$4.1]{Lubook}. This is an infinite sum of 
components 
$\Theta_\alpha \in \Uq(\mathfrak{gl}_n)^-_{-\alpha} \otimes
\Uq(\mathfrak{gl}_n)^+_\alpha$
as $\alpha$ runs over the positive root lattice 
$\bigoplus_{i=1}^{n-1} \N (\eps_i - \eps_{i+1})$.
Let $P:V \otimes W \rightarrow W \otimes V$ be the tensor flip.
Assuming in addition that $V$ and $W$ are weight modules,
let $\Pi:V \otimes W \rightarrow V \otimes W$ be the diagonal map
defined from
$$
\Pi(v \otimes w) := q^{(\lambda,\mu)} v
\otimes w
$$ 
for $v$ of weight $\lambda$ and $w$ of weight $\mu$.
Then, for finite-dimensional weight modules $V$ and $W$,
the {\em $R$-matrix} 
\begin{equation}
R_{V,W}:V \otimes W \stackrel{\sim}{\rightarrow} W \otimes V
\end{equation}
is the $\Uq(\mathfrak{gl}_n)$-module isomorphism defined by the composition
$\Theta \circ P \circ \Pi$, 
which makes sense since all but finitely many of the components
$\Theta_\alpha$ act as zero.
The inverse $R_{V,W}^{-1}:W \otimes V \rightarrow V \otimes W$ is the
map
$\Pi^{-1} \circ P^{-1} \circ \overline{\Theta}$, where
$\overline{\Theta}$ is obtained from $\Theta$ by applying the bar involution to each
tensor factor.
For finite-dimensional weight modules $U, V$ and $W$, we have the {\em
  hexagon property}:
\begin{align}\label{hexagon}
R_{U,W} \otimes \id_V \circ \id_U \otimes R_{V,W} &= R_{U\otimes V, W},
&\id_V \otimes R_{U,W} \circ R_{U,V}\otimes \id_W &= R_{U,V\otimes W}.
\end{align}
This is proved in \cite[Proposition
32.2.2]{Lubook} (our $R_{V,W}$
is Lusztig's ${_f}\mathcal{R}_{W,V}$
taking the function $f$ from \cite[$\S$31.1.3]{Lubook} to be
$f(\lambda,\mu) := -(\lambda,\mu)$).

In fact, to define the isomorphism $R_{V,W}$, one only needs {\em one}
of the modules $V$ or $W$ to be a finite-dimensional weight module;
the other can be an arbitrary $\Uq(\mathfrak{gl}_n)$-module.
To see this, one just needs to observe that $\Pi$ 
extends to a linear map $V \otimes W \rightarrow V \otimes W$
when just one of $V$ or $W$ is a weight module
on setting
$$
\Pi (v \otimes w) := \left\{
\begin{array}{ll}
(d_\lambda \otimes 1) (v \otimes w)&\text{if $w$ is a weight vector of weight
                         $\lambda$,}\\
(1 \otimes d_\lambda) (v \otimes w)&\text{if $v$ is a weight vector of weight
                         $\lambda$,}
\end{array}
\right.
$$
where $d_\lambda := d_1^{(\lambda,\eps_1)} \cdots
d_n^{(\lambda,\eps_n)}$.
Then the same formula $R_{V,W} := \Theta \circ P\circ \Pi$ makes sense when
only one of $V$ or $W$ is a finite-dimensional weight module, and it still
gives an isomorphism of $\Uq(\mathfrak{gl}_n)$-modules. 
Moreover, the hexagon property (\ref{hexagon}) remains true if only
two of $U, V$ and $W$ are finite-dimensional weight modules.
These assertions follow from the known results in
the previous paragraph. For example, to prove that $R_{V,W}$ is an
isomorphism assuming that $W$ is a finite-dimensional weight
module, let $\rho_W \colon \Uq(\mathfrak{gl}_n) \to \End_\k(W)$ 
be the corresponding representation.  Then
\[
    (\rho_W \otimes 1)(\Theta) \in \End_\k(W) \otimes \Uq(\mathfrak{gl}_n)
    \quad \text{and} \quad
    (\rho_W \otimes 1)(\overline{\Theta}) \in \End_\k(W) \otimes \Uq(\mathfrak{gl}_n).
\]
It suffices to show that these are inverse to each other, since then $R_{V,W} = (\rho_W \otimes 1)(\Theta) \circ P \circ \Pi$ has inverse $\Pi^{-1} \circ P^{-1} \circ (\rho_W \otimes 1)(\overline{\Theta})$ for any module $V$.
We have that
\[
    (\rho_W \otimes 1)(\Theta) \circ (\rho_W \otimes 1)(\overline{\Theta}) \in \End_\k(W) \otimes \Uq(\mathfrak{gl_n})
\]
and, for any finite-dimensional weight module $V$ with corresponding representation $\rho_V$, we have
\[
    (1 \otimes \rho_V)((\rho_W \otimes 1)(\Theta) \circ (\rho_W \otimes 1)(\overline{\Theta})  ) = 1
\]
by the known result.  Since the intersection of the annihilators of all finite-dimensional weight modules is zero, this implies that 
$(\rho_W \otimes 1)(\Theta) \circ (\rho_W \otimes 1)(\overline{\Theta})   =
1$.  The proof that $(\rho_W \otimes 1)(\overline{\Theta}) \circ (\rho_W
\otimes 1)(\Theta) = 1$ is analogous, as is the proof of the hexagon
property when just two of the modules are finite-dimensional weight modules.

The goal now is to derive explicit formulae for $R_{V^{\pm}, M}$
and $R_{M, V^{\pm}}$ for any module $M$.
Similar formulae were established already in \cite[$\S$III]{GZB}
following the older conventions of Drinfeld and Jimbo.
They involve the {\em higher root elements} defined as follows.
Let 
\begin{align}
e_{i,i} = f_{i,i} &:= z^{-1},& e_{i,i+1} &:= e_i, 
&f_{i,i+1} &:=
f_i.
\end{align} 
Then when $j-i > 1$ we  recursively define
\begin{align}\label{birds}
e_{i,j} &:= e_{i,r} e_{r,j} - q^{-1} e_{r,j} e_{i,r},
&
f_{i,j} &:= f_{r,j} f_{i,r} - q^{-1} f_{i,r} f_{r,j},
\end{align}
where $r$ is any index chosen so that $i < r < j$. It is an
induction exercise to see that these elements are
well defined independent of the choice of $r$; see the proof of
the following lemma for a more conceptual explanation of this.
Alternatively, $e_{i,j}$ and $f_{i,j}$ can be defined using the braid
group action: we have that
\begin{align*}
e_{i,j} &= 
T_{j-1} \cdots T_{i+1}(e_i),
&
f_{i,j} &= \overline{T_{j-1}\cdots T_{i+1} (f_i)}.
\end{align*}
Note that
\begin{align}\label{brads}
T(e_{i,j}) &= f_{i,j},
&
T(f_{i,j}) &= e_{i,j},\\\label{brods}
G(e_{i,j}) &= e_{n+1-j,n+1-i},
&G(f_{i,j}) &= f_{n+1-j,n+1-i}.
\end{align}
However, the bar involution does not fix $e_{i,j}$ or
$f_{i,j}$ (except when $j=i+1$).

\begin{lemma}\label{socks}
For any $i < j$, the $(\eps_i-\eps_j)$-component
$\Theta_{i,j}$ of the quasi-$R$-matrix $\Theta$ satisfies
\begin{align*}
\Theta_{i,j} &= 
\!\!\!\!\sum_{\substack{r\geq 1\\i=i_0 < i_1 < \cdots < i_r = j}}
\!\!\!\!z^r f_{i_{r-1}, i_r} \cdots f_{i_0, i_1} \otimes
\overline{e_{i_{r-1},i_r} \cdots e_{i_0,i_1}}
= \!\!\!\!\sum_{\substack{r \geq 1 \\ i=i_0 < i_1 < \cdots < i_r = j}}
\!\!\!\!z^r \overline{f_{i_0,i_1} \cdots f_{i_{r-1},i_r}}
\otimes
e_{i_0,i_1} \cdots e_{i_{r-1},i_r}.
\end{align*}
\end{lemma}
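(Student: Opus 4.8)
The plan is to prove the first displayed formula by induction on $j-i$ and then to deduce the second from it by a symmetry argument. Throughout I write $\alpha_k := \eps_k-\eps_{k+1}$ and exploit that the weight $\eps_i-\eps_j = \alpha_i+\alpha_{i+1}+\cdots+\alpha_{j-1}$ is \emph{multiplicity-free}: every simple root occurs with coefficient one. Hence $\Theta_{i,j}$ lies in the small weight space $\Uq(\mathfrak{gl}_n)^-_{-(\eps_i-\eps_j)}\otimes \Uq(\mathfrak{gl}_n)^+_{\eps_i-\eps_j}$, and the natural index set for its expansion is the set of decompositions of $[i,j]$ into consecutive subintervals, that is, the sequences $i=i_0<i_1<\cdots<i_r=j$.

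First I would reduce the first formula to the single \emph{interval-peeling} recursion
\begin{equation*}
\Theta_{i,j} = \sum_{i\leq m<j} z\,\bigl(f_{m,j}\otimes \overline{e_{m,j}}\bigr)\,\Theta_{i,m},
\end{equation*}
with base case $\Theta_{i,i}=1\otimes 1$: unfolding this (peeling off the rightmost subinterval $[m,j]$ first) reproduces the first displayed sum, accumulating the scalar $z^r$ one factor of $z$ at a time, and the case $j=i+1$ records the known value $\Theta_{\alpha_i}=z\,f_i\otimes e_i=z\,f_{i,i+1}\otimes\overline{e_{i,i+1}}$ (using $\overline{e_i}=e_i$). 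To establish the recursion I would start from Lusztig's characterization of $\Theta$ by its commutation relations with the Chevalley generators \cite[\S4.1]{Lubook} and extract the $(\eps_i-\eps_j)$-component. The conceptual content is that for a multiplicity-free weight $\Theta_{i,j}$ is a dual-basis sum $\sum b^-\otimes\overline{b^+}$, where $\{b^+\}$ is the basis of $\Uq(\mathfrak{gl}_n)^+_{\eps_i-\eps_j}$ consisting of the ordered products $e_{i_{r-1},i_r}\cdots e_{i_0,i_1}$: the recursive definition (\ref{birds}) of the higher root elements is exactly what makes these products close up under left multiplication by a root vector $e_{m,j}$, and simultaneously forces $e_{i,j}$ to be independent of the auxiliary index $r$ (the promised conceptual proof of well-definedness). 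I would therefore run the induction so that its inductive step is compatible with (\ref{birds}), using the unambiguous expression $e_{i,j}=T_{j-1}\cdots T_{i+1}(e_i)$ to keep the root vectors well-defined.

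Granting the first formula, the second follows with no further computation from the symmetry of the quasi-$R$-matrix under the linear antiautomorphism $T$. Concretely, $\Theta$ satisfies $(T\otimes T)(\Theta)=\Theta_{21}$, the tensor flip of $\Theta$; this holds on the base component $\Theta_{\alpha_i}=z\,f_i\otimes e_i$ and propagates to all $\Theta_\nu$ by the uniqueness characterization of $\Theta$, since $T$ is an anti-involution preserving Lusztig's bilinear form. Applying $\mathrm{flip}\circ(T\otimes T)$ to the first formula, and using that $T$ reverses products, that $T(e_{a,b})=f_{a,b}$ and $T(f_{a,b})=e_{a,b}$ by (\ref{brads}), and that $T$ commutes with the bar involution, sends the general term
\begin{equation*}
z^r\,f_{i_{r-1},i_r}\cdots f_{i_0,i_1}\otimes \overline{e_{i_{r-1},i_r}\cdots e_{i_0,i_1}}
\end{equation*}
to
\begin{equation*}
z^r\,\overline{f_{i_0,i_1}\cdots f_{i_{r-1},i_r}}\otimes e_{i_0,i_1}\cdots e_{i_{r-1},i_r},
\end{equation*}
which is precisely the general term of the second formula. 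As $\mathrm{flip}\circ(T\otimes T)$ fixes $\Theta_{i,j}$, the two expressions agree.

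The main obstacle is the inductive step of the first formula: converting Lusztig's generator-level commutation relations — which peel off one \emph{simple} root at a time and drag bar involutions and $d_id_{i+1}^{-1}$-type weight factors through the coproduct — into the clean interval-peeling recursion above, and recognizing the resulting error terms as the higher root vectors $e_{m,j}, f_{m,j}$ of (\ref{birds}). The delicate bookkeeping is twofold: tracking the $q$-powers introduced by $\Pi$ and by the coproduct, and tracking the bar involution, where one must note that $\overline{z}=-z$, so that the surviving scalar is $z^r$ rather than $(-z)^r$ — the latter would instead produce the expansion of $\Theta^{-1}=\overline{\Theta}$. Once the recursion is in hand, both formulas drop out as described.
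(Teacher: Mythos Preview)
Your deduction of the second formula from the first via $(T\otimes T)(\Theta_\alpha)=P(\Theta_\alpha)$ and (\ref{brads}) is exactly the paper's argument.

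For the first formula the paper takes a different and more direct route. Rather than the commutation-relation characterization \cite[Theorem~4.1.2(a)]{Lubook} that you propose, it invokes the dual-basis formula \cite[Theorem~4.1.2(b)]{Lubook} for $\Theta_\nu$, then identifies the ordered PBW monomials $\theta_{i_{r-1},i_r}\cdots\theta_{i_0,i_1}$ (for the standard convex order on positive roots) as an orthogonal basis of $\mathbf f_{\eps_i-\eps_j}$ with known norms, citing results extracted from Lusztig in \cite{BKM}; the formula for $\Theta_{i,j}$ then drops out after matching $\theta_{i,j}^+=\overline{e_{i,j}}$ and $\theta_{i,j}^-=(-q)^{j-i-1}f_{i,j}$. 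You in fact spot this route yourself (``the conceptual content is \dots\ a dual-basis sum''), but then set it aside for the interval-peeling recursion. Your recursion is correct and equivalent to the target formula, and Lusztig's commutation relations should in principle determine it; however, the step you yourself flag as ``the main obstacle'' --- turning the simple-root-at-a-time recursion into the interval recursion and recognizing the resulting terms as the $e_{m,j}$, $f_{m,j}$ --- is the entire substance of the argument, and you have only described it rather than carried it out. Doing so would essentially reprove the PBW orthogonality in this multiplicity-free situation by hand. The trade-off: the paper's route is short but imports structural PBW results; yours would be self-contained but, as it stands, remains a sketch.
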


\begin{proof}
It suffices to derive the first expression. Then the second follows
using (\ref{brads}) and the identitiy
$(T \otimes T)(\Theta_\alpha)= P(\Theta_\alpha)$,
which may easily be deduced from the
characterization in \cite[Theorem 4.1.2(a)]{Lubook}.
To prove the first expression, we appeal to further results of
Lusztig from \cite{Lubook}. 
Let $\mathbf{f}$ be Lusztig's ``half'' quantum group with its
standard generators $\theta_1,\dots,\theta_{n-1}$; see also
\cite[$\S$2.1]{BKM} which follows the same conventions as here.
There are two isomorphisms
\begin{align*}
(-)^+:\mathbf{f} &\stackrel{\sim}{\rightarrow} \Uq(\mathfrak{gl}_n)^+,
\quad\theta_i^+:= e_i,&
(-)^-:\mathbf{f} &\stackrel{\sim}{\rightarrow} \Uq(\mathfrak{gl}_n)^-,
\quad\theta_i^- := f_i.
\end{align*}
Consider the convex ordering on the positive roots defined so that
$\eps_i - \eps_j < \eps_p -\eps_q$ if either $i < p$ or ($i = p$ and
$j < q$); this is the ``standard order'' as in \cite[Example
A.1]{BKM}.
Let $\theta_{i,j}$ be Lusztig's higher root element associated to this
ordering,
which was denoted $r_{\eps_i-\eps_j}$ in
\cite[$\S$2.4]{BKM}.
Noting that $(\eps_m - \eps_j, \eps_i - \eps_m)$ is a minimal pair for
$\eps_i-\eps_j$, \cite[Theorem 4.2]{BKM} implies that 
these satisfy the following recursion:
$\theta_{i,i+1} = \theta_i$ and
$\theta_{i,j} = \theta_{i,r} \theta_{r,j} - q \theta_{r,j} \theta_{i,r}$
for any $i < r < j$.
Comparing with (\ref{birds}), it follows that
$\theta_{i,j}^+=\overline{e_{i,j}}$ and 
$\theta_{i,j}^-=(-q)^{j-i-1} f_{i,j}$; in particular, these equalities justify the independence
of $r$ in (\ref{birds}).
Then we appeal to \cite[Theorem 2.7]{BKM} (which was extracted from
\cite{Lubook})
to see that 
$\left\{
\theta_{i_{r-1},i_r} \cdots \theta_{i_0,i_1}\:\big|\:r \geq 1, i = i_0 <
\cdots < i_r = j\right\}$
and $\left\{(1-q^2)^r\theta_{i_{r-1},i_r} \cdots \theta_{i_0,i_1}\:\big|\:r \geq 1, i = i_0 <
\cdots < i_r = j\right\}$
are a pair of dual bases for $\mathbf{f}_{\eps_i-\eps_j}$ with respect
to Lusztig's form.
Finally the formula from \cite[Theorem 4.1.2(b)]{Lubook} gives that
$$
\Theta_{i,j} = \sum_{\substack{r \geq 1\\i = i_0 < \cdots < i_r = j}}
(-q)^{i-j}
(1-q^2)^r 
\theta^-_{i_{r-1},i_r} \cdots \theta^-_{i_0,i_1}
\otimes
\theta^+_{i_{r-1},i_r} \cdots \theta^+_{i_0,i_1}.
$$
This simplifies to the desired formula.
\end{proof}

For $1 \leq i,j \leq n$ let $e^+_{i,j} \in \End_{\k}(V^+)$
(resp.\ $e^-_{i,j} \in \End_{\k}(V^-)$) be the $ij$-matrix unit with 
respect to the basis
$v_1^+,\dots,v_n^+$ (resp.\ $v_1^-,\dots,v_n^-$).
Then for $i < j$
and $v^{\pm} \in V^{\pm}$
 we have that
\begin{align}\label{Acter}
e_{i,j} v^+ &= e^+_{i,j} v^+,&
f_{i,j} v^+ &= e^+_{j,i} v^+,&
e_{i,j} v^- &= (-q)^{i-j+1}e^-_{j,i} v^-,&
f_{i,j} v^- &= (-q)^{i-j+1} e^-_{i,j} v^-,\\
\overline{e_{i,j}} v^+ &= e^+_{i,j} v^+,&
\overline{f_{i,j}} v^+ &= e^+_{j,i} v^+,&
\overline{e_{i,j}} v^- &= (-q)^{j-i-1}e^-_{j,i} v^-,&
\overline{f_{i,j}} v^- &= (-q)^{j-i-1} e^-_{i,j} v^-.
\end{align}
These follow easily by induction on $j-i$ using (\ref{birds1})--(\ref{birds2}) and (\ref{birds}).
Also let
\begin{align}\label{lower}
x_{i,j}&:=z^2\sum_{r=1}^{\min(i,j)}e_{r,i} d_r
  f_{r,j} d_j,
&
y_{i,j} &:=z^2\sum_{r=\max(i,j)}^n d_i f_{i,r}d_r e_{j,r}
\end{align}
for any $1 \leq i,j \leq n$.
Then for $m \geq 0$ we set
\begin{align}\label{higher}
x_{i,j}^{(m)} &:= \sum_{i=i_0,i_1,\dots,i_{m-1},i_m=j} x_{i_0,i_1}
  \cdots x_{i_{m-1},i_m},&
y_{i,j}^{(m)} &:= \sum_{i=i_0,i_1,\dots,i_{m-1},i_m=j} y_{i_0,i_1}
  \cdots y_{i_{m-1},i_m}.
\end{align}
In particular, $x_{i,j}^{(0)} = y_{i,j}^{(0)} = \delta_{i,j}$.
From (\ref{brods}), we get that
\begin{align}
\label{bruds}
G\left(x_{i,j}^{(m)}\right) &= y_{n+1-j,n+1-i}^{(m)},
&G\left(y_{i,j}^{(m)}\right) &= x_{n+1-j,n+1-i}^{(m)}.
\end{align}

\begin{lemma}\label{breakfast}
For any $\Uq(\mathfrak{gl}_n)$-module $M$, the
endomorphisms
$R_{V^{\pm}, M}$ and $R_{M, V^{\pm}}$ and their inverses are given explicitly by 
the
following operators:
\begin{align*}
R_{V^+,M}
&= 
zP \circ \sum_{i \leq j}
e^+_{i,j}\otimes f_{i,j} d_j,
&
R_{V^+,M}^{-1} &=
-z P \circ \sum_{i \leq j}                  
\overline{d_i f_{i,j}}\otimes e^+_{i,j}
,\\
R_{M,V^+} &=zP \circ \sum_{i \leq j} e_{i,j}
            d_i \otimes e^+_{j,i},&
R_{M,V^+}^{-1}&=-zP \circ \sum_{i \leq j}e^+_{j,i}\otimes
                \overline{d_j e_{i,j}},\\
R_{V^-,M} &= -zP \circ \sum_{i \leq j} (-q)^{i-j} e^-_{j,i}\otimes  \overline{d_i f_{i,j}},&
R_{V^-,M}^{-1}&=zP \circ \sum_{i \leq j} (-q)^{i-j}
                f_{i,j}d_j\otimes e^-_{j,i},
\\
R_{M, V^-} &= -zP \circ \sum_{i \leq j} (-q)^{i-j}
\overline{d_j e_{i,j}}  \otimes          e^-_{i,j},&
R_{M,V^-}^{-1}&=zP \circ \sum_{i \leq j} (-q)^{i-j}
e^-_{i,j} \otimes e_{i,j} d_i.
\end{align*}
\end{lemma}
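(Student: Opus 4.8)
The plan is to compute each of the eight operators directly from the definition $R_{V,W} = \Theta\circ P\circ\Pi$ together with $R_{V,W}^{-1} = \Pi^{-1}\circ P^{-1}\circ\overline\Theta$, substituting the explicit form of $\Theta$ from Lemma~\ref{socks} and evaluating the relevant tensor leg on $V^{\pm}$ by means of~(\ref{Acter}). Since one of $V^{\pm}$ is finite-dimensional, only finitely many components $\Theta_{i,j}$ contribute, so each composite is a finite sum of matrix units tensored with elements of $\Uq(\mathfrak{gl}_n)$, and the whole argument is a bookkeeping computation rather than anything structural.

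First I would treat $R_{V^+,M}$, applied to $v^+_j\otimes m$. The map $\Pi$ multiplies the $M$-factor by $d_{\eps_j}=d_j$ (using that $v^+_j$ has weight $\eps_j$), and $P$ flips the factors, so that in $\Theta$ the $\Uq(\mathfrak{gl}_n)^+$-leg acts on $V^+$ and the $\Uq(\mathfrak{gl}_n)^-$-leg acts on $d_j m$. The key observation is that, upon restricting to $V^+$, the higher terms of $\Theta_{i,j}$ collapse: writing $\Theta_{i,j}$ in the first form of Lemma~\ref{socks} and using~(\ref{Acter}) (noting that the bar involution is invisible on $V^\pm$, since $e_{a,b}$ and $\overline{e_{a,b}}$ act there by the same matrix unit), the product $\overline{e_{i_{r-1},i_r}\cdots e_{i_0,i_1}}$ acts as the composite $e^+_{i_{r-1},i_r}\cdots e^+_{i_0,i_1}$, which vanishes for $r\geq 2$ because $i_0<i_1<\cdots<i_r$ forces two non-composable matrix units to meet. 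Thus only the $r=1$ term survives, giving $z f_{i,j}\otimes e^+_{i,j}$, and folding the identity component of $\Theta$ into the diagonal via the convention $f_{j,j}=z^{-1}$ yields $(1\otimes\rho_{V^+})(\Theta)=\sum_{i\leq j} z\, f_{i,j}\otimes e^+_{i,j}$. Reinserting $P$ and the $d_j$ from $\Pi$ produces exactly the claimed formula for $R_{V^+,M}$.

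The remaining three forward operators follow the identical pattern: for $R_{M,V^+}$ the $\Uq(\mathfrak{gl}_n)^-$-leg acts on $V^+$ (so $f_{i,j}$ produces the transposed unit $e^+_{j,i}$ and the surviving leg on $M$ is $e_{i,j}$), while for $R_{V^-,M}$ and $R_{M,V^-}$ one uses the second and fourth lines of~(\ref{Acter}), which contribute the extra scalars $(-q)^{i-j+1}$; after absorbing the weight $-\eps_i$ of $v^-_i$ through $\Pi$ these reproduce the stated $(-q)^{i-j}$ coefficients. The inverses are computed the same way from $\overline\Theta$: barring each leg of $\Theta$ moves the bar onto the $M$-leg, and the overall normalizing sign $-z$ appears precisely because the diagonal contribution now carries $\overline{f_{j,j}}=\overline{z^{-1}}=-z^{-1}$ (since $\overline z=-z$), so that the prefactor must be $-z$ in order to recover the identity on the diagonal part.

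I expect the main obstacle to be purely organizational: keeping straight, in each of the eight cases, which leg of $\Theta$ lands on $V^\pm$, which of the two forms in Lemma~\ref{socks} makes the higher-$r$ terms vanish (this depends delicately on the order of the matrix units, hence on whether one reads the $e$'s or the $f$'s and on the sign of the root $\eps_i-\eps_j$), and how the weight data of $v^\pm_i$ feeds through $\Pi$ and $\Pi^{-1}$. As a consistency check one can verify $R_{V^\pm,M}\circ R_{V^\pm,M}^{-1}=\id$ directly from the collapsed expressions, or alternatively deduce the $V^-$ formulas from the $V^+$ formulas using the antiautomorphism $G$ and~(\ref{brods}); I would use the latter only as an independent confirmation rather than as the primary argument.
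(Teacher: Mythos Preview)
Your approach is correct and essentially identical to the paper's: both compute directly from $R_{V,W}=\Theta\circ P\circ\Pi$ (and its inverse), observe via Lemma~\ref{socks} and weight reasons that only the $r=1$ summand of $\Theta_{i,j}$ survives when one tensor leg acts on $V^{\pm}$, and then absorb the diagonal $i=j$ term using the convention $e_{i,i}=f_{i,i}=z^{-1}$. The paper works out $R_{M,V^-}$ explicitly and declares the other seven analogous, whereas you work out $R_{V^+,M}$; one small caution is that your parenthetical ``the bar involution is invisible on $V^{\pm}$'' is only literally true on $V^+$ (on $V^-$ the barred and unbarred root elements differ by powers of $-q$, as the second and fourth lines of~(\ref{Acter}) show), but you handle this correctly later when treating the $V^-$ cases.
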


\begin{proof}
These are all proved by similar calculations, so we just go through
the argument for $R_{M,V^-}$.
Take $v \otimes v_j^- \in M \otimes V^-$.
By definition, $R_{M,V^-}(v\otimes v_j^-) = \Theta(v_j^- \otimes
d_j^{-1} v)$.
To compute the action of $\Theta$, we observe by weight considerations
that only its weight components $\Theta_{\eps_i-\eps_j}$ for $i \leq j$ are
non-zero on $v_j^- \otimes d_j^{-1} v$.
Moreover, in the first expression for $\Theta_{i,j}$ from
Lemma~\ref{socks},
all of the monomials with $r > 1$ act on $v_j^-$ as zero. We deduce
that
$$
R_{M,V^-}(v \otimes v_j^-) = v_j^- \otimes d_j^{-1} v + z\sum_{i < j}
f_{i,j} v_j^-\otimes \overline{e_{i,j} d_j} v.
$$
Then we use (\ref{Acter}) to replace $f_{i,j}$ with
$(-q)^{i-j+1}e^-_{i,j}$, the relation $e_{i,j} d_j = q d_j e_{i,j}$, and the
definition
$\overline{e_{j,j}} = -z^{-1}$ to get
$$
R_{M,V^-}(v \otimes v_j^-) = -z e^-_{j,j} v_j^- \otimes
\overline{d_j e_{j,j} } v- z\sum_{i < j}
(-q)^{i-j} e^-_{i,j} v_j^-\otimes \overline{e_{i,j} d_j} v.
$$
Now observe that the expression on the right-hand side of the formula we are trying to
prove acts on $v \otimes v_j^-$ in the same way.
\end{proof}

\begin{corollary}\label{finger}
For any $\Uq(\mathfrak{gl}_n)$-module $M$ and $m \in\Z$, we have that
\begin{align*}
\left(R_{M,V^+} \circ R_{V^+,M}\right)^m
&=
\left\{
\begin{array}{ll}
\displaystyle \sum_{i, j=1}^n e^+_{i,j}\otimes x_{i,j}^{(m)}&\text{if $m \geq 0$,}\\
\displaystyle\sum_{i,j=1}^n
e^+_{i,j} \otimes \overline{y_{i,j}^{(-m)}}&\text{if $m \leq 0$.}
\end{array}\right.
\end{align*}
\end{corollary}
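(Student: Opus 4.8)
The plan is to recast both $R_{M,V^+}\circ R_{V^+,M}$ and its inverse as images of explicit $n\times n$ matrices under a fixed algebra homomorphism, after which passing to the $m$th power becomes matrix exponentiation. For a matrix $A=(a_{i,j})$ with entries $a_{i,j}\in\Uq$, set $\widehat A:=\sum_{i,j}e^+_{i,j}\otimes a_{i,j}\in\End_\k(V^+\otimes M)$, where the $\Uq$-factor acts through the representation on $M$. Because this action is an algebra homomorphism and $e^+_{i,j}e^+_{k,l}=\delta_{j,k}e^+_{i,l}$, one checks directly that $\widehat A\circ\widehat B=\widehat{AB}$, the $(i,l)$-entry of $AB$ being $\sum_j a_{i,j}b_{j,l}$ with the factors in this order; thus $A\mapsto\widehat A$ is a homomorphism. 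The crucial compatibility with (\ref{higher}) is that $x^{(m)}_{i,j}$ and $y^{(m)}_{i,j}$ are precisely the $(i,j)$-entries of $X^m$ and $Y^m$, where $X:=(x_{i,j})$ and $Y:=(y_{i,j})$ have entries as in (\ref{lower}).

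First I would dispatch the case $m\geq 0$. Feeding the formulas for $R_{V^+,M}$ and $R_{M,V^+}$ from Lemma~\ref{breakfast} into the composite and evaluating on a basis vector $v^+_k\otimes v$, the matrix units collapse the sums and yield $z^2\sum_i\sum_{r\leq\min(i,k)}v^+_i\otimes e_{r,i}d_r f_{r,k}d_k\,v$. Comparing with the definition of $x_{i,k}$ in (\ref{lower}), this is exactly $\widehat X$ applied to $v^+_k\otimes v$, so $R_{M,V^+}\circ R_{V^+,M}=\widehat X$. The homomorphism property then gives $(R_{M,V^+}\circ R_{V^+,M})^m=\widehat{X^m}=\sum_{i,j}e^+_{i,j}\otimes x^{(m)}_{i,j}$, with $m=0$ reducing to the identity since $x^{(0)}_{i,j}=\delta_{i,j}$.

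For $m\leq 0$ I would analyze the inverse $(R_{M,V^+}\circ R_{V^+,M})^{-1}=R_{V^+,M}^{-1}\circ R_{M,V^+}^{-1}$, again substituting the inverse formulas from Lemma~\ref{breakfast} and simplifying with the commutation relations among the $d_i$ and the higher root elements and with $\overline{d_i}=d_i^{-1}$. The goal of this computation is the identity $R_{V^+,M}^{-1}\circ R_{M,V^+}^{-1}=\widehat{\overline Y}$, where $\overline Y:=(\overline{y_{i,j}})$. Since the bar involution is an algebra automorphism of $\Uq$, applying it entrywise commutes with matrix multiplication, so $\overline{y^{(p)}_{i,j}}=\overline{(Y^p)_{i,j}}=(\overline Y^{\,p})_{i,j}$. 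Writing $m=-p$ with $p\geq 0$, the homomorphism property gives $(R_{M,V^+}\circ R_{V^+,M})^{-p}=(\widehat{\overline Y})^p=\widehat{\overline Y^{\,p}}=\sum_{i,j}e^+_{i,j}\otimes\overline{y^{(-m)}_{i,j}}$, which is the asserted formula; the two cases agree at $m=0$ because $y^{(0)}_{i,j}=\delta_{i,j}$.

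The main obstacle is the second base-case identity $R_{V^+,M}^{-1}\circ R_{M,V^+}^{-1}=\widehat{\overline Y}$: here one must correctly track the alternating bars, the signs $(-q)^{i-j}$, and the $\max$-ranges in the definition of $y_{i,j}$, using relations such as $e_{i,j}d_j=q\,d_j e_{i,j}$ to move the Cartan generators into position, exactly as in the sample calculation carried out for $R_{M,V^-}$ in the proof of Lemma~\ref{breakfast}. Once this identity and its counterpart $R_{M,V^+}\circ R_{V^+,M}=\widehat X$ are established, the extension to all integer powers is purely formal via the homomorphism $A\mapsto\widehat A$.
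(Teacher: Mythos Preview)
Your proposal is correct and is essentially the same approach as the paper's one-line proof, which simply cites Lemma~\ref{breakfast} and the definitions (\ref{lower})--(\ref{higher}); you have spelled out the computation that is implicit there, and your device of packaging everything through the matrix-to-operator homomorphism $A\mapsto\widehat A$ is a clean way to organize the passage to higher powers. One small remark: the base-case identity $R_{V^+,M}^{-1}\circ R_{M,V^+}^{-1}=\widehat{\overline Y}$ is actually no harder than the positive one---after conjugating by $P$ the matrix units collapse exactly as before and the $\max$-range in (\ref{lower}) appears directly, with no need for the commutation relations $e_{i,j}d_j=q\,d_je_{i,j}$ you anticipate.
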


\begin{proof}
This follows from Lemma~\ref{breakfast} and the definitions (\ref{lower})--(\ref{higher}).
\end{proof}

Now we 
return to the
Heisenberg category $\HEIS_0(z,t)$
taking $t := q^n$.
Let $\OS(z,t)$ be the {\em HOMFLY-PT skein category} as
defined in the introduction of \cite{Bskein}, which is 
Turaev's Hecke category from \cite{Turaev1}.
By \cite[Theorem 1.1]{Bskein}, $\OS(z,t)$ has a presentation by
generators and
relations which is very similar to the presentation of $\HEIS_0(z,t)$
from Definition~\ref{def1} but {\em without} the morphism $x$.
Consequently,
there is a strict $\k$-linear monoidal
functor
$\OS(z,t) \rightarrow \HEIS_0(z,t)$.
By \cite[Lemma 4.2]{Bskein}, 
this functor is faithful, so we may use it to {\em identify}
$\OS(z,t)$ with a subcategory of $\HEIS_0(z,t)$.
Thus, $\OS(z,t)$ is the monoidal subcategory of $\HEIS_0(z,t)$ consisting of all
objects and all morphisms which do not involve dots (i.e., $x$ or $y$).
In fact, as noted already after Definition~\ref{def1}, $\HEIS_0(z,t)$ is
the
{\em affine} HOMFLY-PT skein category from \cite[$\S$4]{Bskein}.

Let $\Uq(\mathfrak{gl}_n)\Mod$ be the category of {all} left 
$\Uq(\mathfrak{gl}_n)$-modules.
By \cite[Lemma 3.1]{Bskein} (although the result is much older, e.g., it
was exploited already in \cite{Turaev1}), there is a monoidal
functor
\begin{equation}\label{psi}
\Psi:\OS(z,t) \rightarrow \Uq(\mathfrak{gl}_n)\Mod
\end{equation}
to the category of left $\Uq(\mathfrak{gl}_n)$-modules.
The functor $\Psi$ sends the generating objects 
$\up$ and $\down$ to 
$V^+$ and $V^-$, respectively.
It maps the various generating morphisms to the following
$\Uq(\mathfrak{gl}_n)$-module homomorphisms:
\begin{align}
\label{R}
\begin{tikzpicture}[baseline = -.5mm]
	\draw[->,thin,darkblue] (0.2,-.2) to (-0.2,.3);
	\draw[line width=4pt,white,-] (-0.2,-.2) to (0.2,.3);
	\draw[thin,darkblue,->] (-0.2,-.2) to (0.2,.3);
\end{tikzpicture} :
v_i^+ \otimes v_j^+&\mapsto
\left\{
\begin{array}{l}
v_j^+ \otimes v_i^+\\
q v_j^+ \otimes v_i^+\\
v_j^+ \otimes v_i^+ + z v_i^+ \otimes v_j^+
\end{array}
\right.
&&
\begin{array}{l}
\text{if $i < j$},\\
\text{if $i = j$},\\
\text{if $i > j$};
\end{array}
\\
\begin{tikzpicture}[baseline = -.5mm]
	\draw[<-,thin,darkblue] (0.2,-.2) to (-0.2,.3);
	\draw[line width=4pt,white,-] (-0.2,-.2) to (0.2,.3);
	\draw[thin,darkblue,->] (-0.2,-.2) to (0.2,.3);
\end{tikzpicture} :
v^+_i \otimes v^-_j &\mapsto
\left\{
\begin{array}{ll}
v^-_j \otimes v^+_i\\
\displaystyle q^{-1} v^-_j \otimes v^+_i - z\sum_{r=1}^{i-1}
  (-q)^{-r} v^-_{j-r}\otimes v^+_{i-r}\\
\end{array}
\right.
&&
\begin{array}{l}
\text{if $i \neq j$},\\
\text{if $i = j$};
\end{array}
\\
\begin{tikzpicture}[baseline = -.5mm]
	\draw[<-,thin,darkblue] (0.2,-.2) to (-0.2,.3);
	\draw[line width=4pt,white,-] (-0.2,-.2) to (0.2,.3);
	\draw[thin,darkblue,<-] (-0.2,-.2) to (0.2,.3);
\end{tikzpicture} :
v^-_i \otimes v^-_j &=
\left\{
\begin{array}{ll}
 v^-_j \otimes v^-_i\\
 qv^-_j \otimes v^-_i&\\
v^-_j \otimes v^-_i+ z v^-_i \otimes v^-_j\hspace{28.4mm}\end{array}
\right.
&&
\begin{array}{l}
\text{if $i > j$},\\
\text{if $i = j$},\\
\text{if $i < j$};
\end{array}
\\
\begin{tikzpicture}[baseline = -.5mm]
	\draw[->,thin,darkblue] (0.2,-.2) to (-0.2,.3);
	\draw[line width=4pt,white,-] (-0.2,-.2) to (0.2,.3);
	\draw[thin,darkblue,<-] (-0.2,-.2) to (0.2,.3);
\end{tikzpicture} :
v^-_i \otimes v^+_j &\mapsto
\left\{
\begin{array}{ll}
v^+_j \otimes v^-_i\\
\displaystyle q^{-1} v^+_j \otimes v^-_i - z\sum_{r=1}^{n-i} (-q)^{-r} v^+_{j+r}\otimes v^-_{i+r}\\
\end{array}
\right.
&&
\begin{array}{l}
\text{if $i \neq j$},\\
\text{if $i = j$};
\end{array}\label{R4}
\end{align}
\begin{align}
\label{R1}
\begin{tikzpicture}[baseline = -.5mm]
	\draw[thin,darkblue,->] (-0.2,-.2) to (0.2,.3);
	\draw[-,line width=4pt,white] (0.2,-.2) to (-0.2,.3);
	\draw[->,thin,darkblue] (0.2,-.2) to (-0.2,.3);
\end{tikzpicture} :
v_i^+ \otimes v_j^+&\mapsto
\left\{
\begin{array}{l}
v_j^+ \otimes v_i^+\\
q^{-1} v_j^+ \otimes v_i^+\\
v_j^+ \otimes v_i^+ - z v_i^+ \otimes v_j^+
\end{array}
\right.
&&
\begin{array}{l}
\text{if $i > j$},\\
\text{if $i = j$},\\
\text{if $i < j$};
\end{array}
\\
\begin{tikzpicture}[baseline = -.5mm]
	\draw[thin,darkblue,->] (-0.2,-.2) to (0.2,.3);
	\draw[-,line width=4pt,white] (0.2,-.2) to (-0.2,.3);
	\draw[<-,thin,darkblue] (0.2,-.2) to (-0.2,.3);
\end{tikzpicture} :
v^+_i \otimes v^-_j &\mapsto
\left\{
\begin{array}{ll}
v^-_j \otimes v^+_i\\
\displaystyle q v^-_j \otimes v^+_i + z\sum_{r=1}^{n-i}
  (-q)^{r} v^-_{j+r}\otimes v^+_{i+r}\\
\end{array}
\right.
&&
\begin{array}{l}
\text{if $i \neq j$},\\
\text{if $i = j$};
\end{array}
\\
\begin{tikzpicture}[baseline = -.5mm]
	\draw[thin,darkblue,<-] (-0.2,-.2) to (0.2,.3);
	\draw[-,line width=4pt,white] (0.2,-.2) to (-0.2,.3);
	\draw[<-,thin,darkblue] (0.2,-.2) to (-0.2,.3);
\end{tikzpicture} :
v^-_i \otimes v^-_j &=
\left\{
\begin{array}{ll}
 v^-_j \otimes v^-_i\\
 q^{-1}v^-_j \otimes v^-_i&\\
v^-_j \otimes v^-_i- z v^-_i \otimes v^-_j\hspace{28.4mm}\end{array}
\right.
&&
\begin{array}{l}
\text{if $i < j$},\\
\text{if $i = j$},\\
\text{if $i > j$};
\end{array}
\\
\begin{tikzpicture}[baseline = -.5mm]
	\draw[thin,darkblue,<-] (-0.2,-.2) to (0.2,.3);
	\draw[-,line width=4pt,white] (0.2,-.2) to (-0.2,.3);
	\draw[->,thin,darkblue] (0.2,-.2) to (-0.2,.3);
\end{tikzpicture} :
v^-_i \otimes v^+_j &\mapsto
\left\{
\begin{array}{ll}
v^+_j \otimes v^-_i\\
\displaystyle q v^+_j \otimes v^-_i + z\sum_{r=1}^{i-1} (-q)^{r} v^+_{j-r}\otimes v^-_{i-r}\\
\end{array}
\right.
&&
\begin{array}{l}
\text{if $i \neq j$},\\
\text{if $i = j$};
\end{array}\label{R2}
\end{align}
\begin{align}
\mathord{
\begin{tikzpicture}[baseline = 1mm]
	\draw[<-,thin,darkblue] (0.3,0.3) to[out=-90, in=0] (0.1,0);
	\draw[-,thin,darkblue] (0.1,0) to[out = 180, in = -90] (-0.1,0.3);
\end{tikzpicture}
}\:&:1 \mapsto
\sum_{j=1}^n (-1)^j q^j
v_j^- \otimes v_j^+,
&\mathord{
\begin{tikzpicture}[baseline = 1mm]
	\draw[-,thin,darkblue] (0.3,0.3) to[out=-90, in=0] (0.1,0);
	\draw[->,thin,darkblue] (0.1,0) to[out = 180, in = -90] (-0.1,0.3);
\end{tikzpicture}
}\:&:1\mapsto \sum_{j=1}^n (-1)^j q^{n+1-j} v_j^+\otimes v_j^-,\label{easyjet1}
\\
\mathord{
\begin{tikzpicture}[baseline = 1mm]
	\draw[<-,thin,darkblue] (0.3,0) to[out=90, in=0] (0.1,0.3);
	\draw[-,thin,darkblue] (0.1,0.3) to[out = 180, in = 90] (-0.1,0);
\end{tikzpicture}
}\:&:v_i^+ \otimes v_j^- \mapsto
(-1)^i q^{-i} \delta_{i,j},&
\mathord{
\begin{tikzpicture}[baseline = 1mm]
	\draw[-,thin,darkblue] (0.3,0) to[out=90, in=0] (0.1,0.3);
	\draw[->,thin,darkblue] (0.1,0.3) to[out = 180, in = 90] (-0.1,0);
\end{tikzpicture}
}\:&:v_i^-\otimes v_j^+ \mapsto
(-1)^i q^{i-n-1}\delta_{i,j}.\label{easyjet2}
\end{align}
These formulae are recorded in many places
in the literature going back to the original work \cite{Turaev1}, but
one finds many different choices of normalization.
For our choices,
(\ref{R})--(\ref{R4})  and (\ref{R1})--(\ref{R2}) follow
from the formulae for the $R$-matrix and its inverse from
Lemma~\ref{breakfast}, while
the formulae (\ref{easyjet1})--(\ref{easyjet2}) are derived in \cite[$\S$3]{Bskein}.

\begin{theorem}\label{psihat}
Assuming $t = q^n$ and $z=q-q^{-1}$,
there is a strict $\k$-linear monoidal functor
\begin{equation*}
\widehat{\Psi} :\HEIS_0(z,t) \rightarrow
\mathcal{E}nd_\k\left(\Uq(\mathfrak{gl}_n)\Mod\right)
\end{equation*}
such that $\Psi = \operatorname{Ev}\circ\widehat{\Psi}\,\big|_{\OS(z,t)}$,
where $\operatorname{Ev}$ denotes evaluation on the
trivial module.
On objects, $\widehat{\Psi}$ takes $X$ to the endofunctor $\Psi(X)
\otimes -$, e.g., $\widehat{\Psi}(\up) = V^+\otimes-$
and $\widehat{\Psi}(\down) = V^-\otimes-$.
On morphisms, $\widehat{\Psi}$ sends $f \in \Hom_{\OS(z,t)}(X,Y)$
to the natural transformation $\Psi(f) \otimes
1:\Psi(X)\otimes-\rightarrow \Psi(Y)\otimes-$.
Finally, on the additional generating morphism
$x$,
it is defined by
\begin{align*}
\widehat{\Psi}(x)_M := 
R_{M, V^+}\circ R_{V^+, M}:
&V^+\otimes M \rightarrow V^+\otimes M,
&
v_j^+ \otimes m &\mapsto \sum_{i=1}^n v_i^+ \otimes x_{i,j} m.
\end{align*}
\end{theorem}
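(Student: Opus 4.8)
The plan is to exploit the identification $\HEIS_0(z,t) = \AOS(z,t)$ together with the presentation from Definition~\ref{def1}, extending the monoidal functor $\Psi$ of (\ref{psi}). First I would observe that the assignment $W \mapsto (W \otimes -)$, $g \mapsto (g \otimes 1)$ defines a strict $\k$-linear monoidal functor $\Uq(\mathfrak{gl}_n)\Mod \to \mathcal{E}nd_\k(\Uq(\mathfrak{gl}_n)\Mod)$; composing it with $\Psi$ yields a strict monoidal functor on the dotless subcategory $\OS(z,t)$ sending $X$ to $\Psi(X) \otimes -$ and $f$ to $\Psi(f) \otimes 1$. Being a composite of monoidal functors, it respects every relation of $\OS(z,t)$ automatically. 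To promote it to all of $\HEIS_0(z,t)$ I would set $\widehat\Psi(x)_M := R_{M,V^+} \circ R_{V^+,M}$ and verify the remaining defining relations of Definition~\ref{def1}.

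Second I would isolate the genuinely new relations. The generators adjoined when passing from $\AH(z)$ to $\HEIS_0(z,t)$ (the right dual and, for $k=0$, the inverse of the rightward crossing), together with the adjunction relations (\ref{rightadj}), the inverse relations (\ref{lunch}), and the loop normalization (\ref{impose}), involve no dots and hence live in $\OS(z,t)$; they are covered by the previous paragraph. In particular the scalar in (\ref{impose}) for $k=0$ is the quantum dimension of $V^+$, namely $tz^{-1}-t^{-1}z^{-1} = (q^n-q^{-n})/(q-q^{-1})$, which is already built into the well-definedness of $\Psi$. Among the affine Hecke relations (\ref{a})--(\ref{r2}) only (\ref{a}) involves $x$, the quadratic and braid relations (\ref{r1})--(\ref{r2}) being dotless. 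Thus the only points remaining are that $\widehat\Psi(x)$ is a natural transformation, that it is invertible (relation (\ref{dagger})), and that it obeys (\ref{a}); the diagrammatic forms (\ref{rr3}) then follow from (\ref{a}) and the already-verified invertibility of $\tau$.

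Third, naturality of $\widehat\Psi(x)_M = R_{M,V^+}\circ R_{V^+,M}$ in $M$ is immediate from naturality of the $R$-matrix, and invertibility holds since each $R$-matrix is an isomorphism, so $\widehat\Psi(x^{-1})_M := R_{V^+,M}^{-1}\circ R_{M,V^+}^{-1}$ is the inverse. The heart of the argument is relation (\ref{a}). Writing $\sigma := \Psi(\tau) = R_{V^+,V^+}$ and unwinding the horizontal (Godement) composition in $\mathcal{E}nd_\k(\Uq(\mathfrak{gl}_n)\Mod)$, the image of (\ref{a}) at a module $M$ is the identity
\[
(\sigma \otimes \id_M)\circ(\id_{V^+}\otimes(R_{M,V^+}\circ R_{V^+,M}))\circ(\sigma\otimes\id_M)
= R_{V^+\otimes M,\,V^+}\circ R_{V^+,\,V^+\otimes M}.
\]
This is precisely the composite of the two hexagon identities (\ref{hexagon}) applied to the triple $(V^+, V^+, M)$: the second hexagon expands $R_{V^+,\,V^+\otimes M}$ as $(\id_{V^+}\otimes R_{V^+,M})\circ(\sigma\otimes\id_M)$, the first expands $R_{V^+\otimes M,\,V^+}$ as $(\sigma\otimes\id_M)\circ(\id_{V^+}\otimes R_{M,V^+})$, and substituting then regrouping produces the left-hand side. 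I expect the main obstacle to be bookkeeping rather than substance: matching the right-to-left string numbering of $\AH(z)$ with the ordering of tensor factors, confirming $\Psi(\tau) = R_{V^+,V^+}$ from (\ref{R}) and Lemma~\ref{breakfast}, and computing the Godement products correctly so that the two copies of $\tau$ act on the $V^+\otimes V^+$ factor while the dot's double braiding wraps around the trailing $M$.

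Finally I would record that $\Psi = \operatorname{Ev}\circ\widehat\Psi\,\big|_{\OS(z,t)}$ holds by construction, since evaluating $W\otimes-$ at the trivial module returns $W$, and that the displayed formula for $\widehat\Psi(x)_M$ on $v_j^+\otimes m$ is exactly the $m=1$ case of Corollary~\ref{finger}, using $x_{i,j}^{(1)} = x_{i,j}$.
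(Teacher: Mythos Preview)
Your proposal is correct and follows essentially the same approach as the paper: the dotless relations are inherited from the known functor $\Psi$ on $\OS(z,t)$, invertibility of $\widehat\Psi(x)$ is immediate from invertibility of the $R$-matrices, and the single nontrivial check is relation (\ref{a}), which is exactly the identity $R_{V^+\otimes M,V^+}\circ R_{V^+,V^+\otimes M}=(R_{V^+,V^+}\otimes\id_M)\circ(\id_{V^+}\otimes R_{M,V^+})\circ(\id_{V^+}\otimes R_{V^+,M})\circ(R_{V^+,V^+}\otimes\id_M)$ obtained by composing the two hexagon identities (\ref{hexagon}). Your write-up is somewhat more explicit than the paper's in unpacking the horizontal composition and in isolating which relations need checking, but the substance is the same.
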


\begin{proof}
We just need to verify that the relations from Definition~\ref{def1}
are satisfied. All of the ones that do not involve $x$ follow
immediately since they are already satisfied by the morphisms in the
image of the 
monoidal functor $\Psi$.
Also 
$R_{V^+, M} \circ R_{M, V^+}$ is invertible since each of these
$R$-matrices is invertible. 
It just remains to check the relation (\ref{rr3}).
In fact,
this is a formal consequence of the hexagon property; see
e.g. \cite[Proposition 3.1.1]{V}. The argument goes as follows.
By (\ref{hexagon}),
we have for any $\Uq(\mathfrak{gl}_n)$-module $M$ that
$$
R_{V^+\otimes M, V^+}
\circ 
 R_{V^+, V^+\otimes M}
= R_{V^+,V^+}\otimes \id_M \circ \id_{V^+}\otimes R_{M,V^+}
\circ \id_{V^+}\otimes R_{V^+,M} 
\circ R_{V^+,V^+}\otimes \id_M.
$$
This establishes that the image under $\widehat{\Psi}$ of the relation
$$
\mathord{
\begin{tikzpicture}[baseline = -1mm]
	\draw[->,thin,darkblue] (0.18,-.6) to (0.18,.6);
 	\draw[->,thin,darkblue] (-0.18,-.6) to (-0.18,.6);
    \node at (-0.18,0) {$\dot$};
 \end{tikzpicture}
}
=\mathord{
\begin{tikzpicture}[baseline = -1mm]
	\draw[-,thin,darkblue] (0.28,-.6) to[out=90,in=-90] (-0.28,0);
	\draw[->,thin,darkblue] (0.28,0) to[out=90,in=-90] (-0.28,.6);
	\draw[-,line width=4pt,white] (-0.28,-.6) to[out=90,in=-90] (0.28,0);
	\draw[-,thin,darkblue] (-0.28,-.6) to[out=90,in=-90] (0.28,0);
	\draw[-,line width=4pt,white] (-0.28,0) to[out=90,in=-90] (0.28,.6);
	\draw[->,thin,darkblue] (-0.28,0) to[out=90,in=-90] (0.28,.6);
     \node at (0.28,0) {$\dot$};
\end{tikzpicture}
}
$$
is satisfied,
from which (\ref{rr3}) easily follows.
\end{proof}

Let $Z_q(\mathfrak{gl}_n)$ be the center of
$\Uq(\mathfrak{gl}_n)$.
It is identified with
the endomorphism algebra of the
identity functor
$\operatorname{Id}_{\Uq(\mathfrak{gl}_n)\Mod}$; indeed,
evaluation on the identity element of the regular representation
defines a canonical algebra isomorphism 
$\End\left(\operatorname{Id}_{\Uq(\mathfrak{gl}_n)\Mod}\right)
\stackrel{\sim}{\rightarrow} Z_q(\mathfrak{gl}_n).$
Dotted bubbles are endomorphisms of the unit object of $\HEIS_0(z,t)$.
Applying the monoidal functor $\widehat{\Psi}$ from
Theorem~\ref{psihat}, we 
obtain
natural transformations
\begin{equation}
\widehat{\Psi}\left(\:\mathord{
\begin{tikzpicture}[baseline = .8mm]
  \draw[-,thin,darkblue] (0.2,0.2) to[out=90,in=0] (0,.4);
  \draw[->,thin,darkblue] (0,0.4) to[out=180,in=90] (-.2,0.2);
\draw[-,thin,darkblue] (-.2,0.2) to[out=-90,in=180] (0,0);
  \draw[-,thin,darkblue] (0,0) to[out=0,in=-90] (0.2,0.2);
\node at (.2,0.2) {$\color{darkblue}\dot$};
\node at (.4,0.2) {$\color{darkblue}\scriptstyle m$};
\end{tikzpicture}}
\right): \operatorname{Id}_{\Uq(\mathfrak{gl}_n)\Mod}\rightarrow
\operatorname{Id}_{\Uq(\mathfrak{gl}_n)\Mod},
\end{equation}
hence, central elements $z_m \in Z(\Uq(\mathfrak{gl}_n))$
for each $m \in \Z$.
A calculation using (\ref{easyjet1})--(\ref{easyjet2}) and
Corollary~\ref{finger} shows that
\begin{equation}\label{chickensandwich}
z_m
=
\left\{
\begin{array}{ll}
\displaystyle\sum_{i=1}^n
q^{2i-n-1} x_{i,i}^{(m)}&\text{if $m \geq 0$,}\\
\displaystyle\sum_{i=1}^n
q^{2i-n-1} \overline{y_{i,i}^{(-m)}}&\text{if $m \leq 0$.}
\end{array}\right.
\end{equation}
We have trivially that $z_0 = [n]_q$.
The goal in the remainder of the section is to compute explicit
formulae for the 
images of all the others under the
Harish-Chandra homomorphism. 

Our argument uses
the Harish-Chandra homomorphism in two different forms
adapted to the positive and negative Borel subalgebras, respectively.
To review the definitions,
let $\rho_+ := -\eps_2-2\eps_3-\cdots-(n-1)\eps_n$ and $\rho_- :=
-(n-1)\eps_1-\cdots - 2 \eps_{n-2} - \eps_{n-1}$, i.e., $\rho_- =
w_0(\rho_+)$.
For any $\lambda \in \Lambda$, we have the {\em shift automorphism} 
\begin{equation}
S_{\!\lambda}:\Uq(\mathfrak{gl}_n)^0
\rightarrow \Uq(\mathfrak{gl}_n)^0,
\qquad
d_i 
\mapsto q^{(\lambda,\eps_i)} d_i.
\end{equation}
For example, $S_{\!-\rho_+}(d_i) = q^{i-1} d_i$ and
$S_{\!-\rho_-}(d_i) = q^{n-i} d_i$.
Let 
$\Uq(\mathfrak{gl}_n)_0$ be the zero weight space of
$\Uq(\mathfrak{gl}_n)$, which is a subalgebra containing $\Uq(\mathfrak{gl}_n)^0$.
Let $I_+$ (resp.\ $I_-$) 
be the intersection of $\Uq(\mathfrak{gl}_n)_0$ with the
left ideal of $\Uq(\mathfrak{gl}_n)$
generated by $e_1,\dots,e_{n-1}$
(resp.\ $f_1,\dots,f_{n-1}$).
Equivalently, $I_+$ (resp.\ $I_-$) is the intersection of
$\Uq(\mathfrak{gl}_n)_0$ with the right ideal generated by
$f_1,\dots,f_{n-1}$
(resp.\ $e_1,\dots, e_{n-1}$).
It follows that $I_{\pm}$ is a two-sided ideal of $\Uq(\mathfrak{gl}_n)_0$.
Let $\pr_{\pm}:\Uq(\mathfrak{gl}_n)_0 \rightarrow
\Uq(\mathfrak{gl}_n)^0$ be the algebra homomorphism defined by 
projection
along the direct sum decomposition
$\Uq(\mathfrak{gl}_n)_0 = \Uq(\mathfrak{gl}_n)^0 \oplus I_{\pm}.$
The two versions of the {\em Harish-Chandra homomorphism} are 
\begin{align}
HC_{\pm} := S_{\!-\rho_{\pm}} \circ \pr_{\pm} :\Uq(\mathfrak{gl}_n)_0
&\rightarrow \Uq(\mathfrak{gl}_n)^0.
\end{align}
The following is an extension of the well-known description of
$Z_q(\mathfrak{sl}_n)$ from e.g. \cite[6.25]{Jantzen}.

\begin{lemma}[{\cite[Lemma 2.1]{Li}}]\label{restr}
The restriction 
$HC := HC_+\big|_{Z_q(\mathfrak{gl}_n)}$
defines an algebra
isomorphism
between 
$Z_q(\mathfrak{gl}_n)$ and the algebra
$\k\big[(d_1\cdots d_n)^{-1}, d_1^2,\dots,d_n^2\big]^{\SG_n}$.
\end{lemma}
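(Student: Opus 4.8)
The plan is to prove Lemma~\ref{restr} by the standard Harish--Chandra argument adapted to $\Uq(\gl_n)$, using the central-character interpretation of $HC_+$ for injectivity and the linkage principle to cut out the image. First I would record that the restriction is well defined and multiplicative: any $z \in Z_q(\gl_n)$ commutes with each $d_i$, so it has weight zero, i.e. $z \in \Uq(\gl_n)_0$; since $I_+$ is a two-sided ideal of $\Uq(\gl_n)_0$ the projection $\pr_+$ is an algebra homomorphism, and $S_{-\rho_+}$ is an algebra automorphism, so $HC = S_{-\rho_+}\circ\pr_+\big|_{Z_q(\gl_n)}$ is an algebra homomorphism into $\Uq(\gl_n)^0 = \k[d_1^{\pm 1},\dots,d_n^{\pm 1}]$.

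The engine is the identification of $HC(z)$ with the central character. For $\lambda \in \Lambda$ let $\mathrm{ev}_\lambda:\Uq(\gl_n)^0 \to \k$ send $d_i \mapsto q^{(\lambda,\eps_i)}$, and let $M(\lambda)$ be the Verma module with highest weight vector $v_\lambda$. Writing $z = \pr_+(z) + w$ with $w \in I_+$ and noting that $w$ lies in the left ideal generated by the $e_i$ (which annihilate $v_\lambda$), one sees $z v_\lambda = \mathrm{ev}_\lambda(\pr_+(z))\,v_\lambda$; as $z$ is central and $v_\lambda$ generates $M(\lambda)$, $z$ acts on all of $M(\lambda)$ by this scalar, and unwinding the shift $\mathrm{ev}_\lambda(HC(z))$ equals the scalar by which $z$ acts on $M(\lambda-\rho_+)$. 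Injectivity is then immediate: if $HC(z)=0$ then $z$ kills every Verma module, hence annihilates every simple highest weight module $L(\mu)$, and since the intersection of the annihilators of all finite-dimensional weight modules is zero (the fact already exploited in Section~\ref{qgln} to pin down the quasi-$R$-matrix), we conclude $z=0$.

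Next I would show $HC(z)$ lies in $A := \k[(d_1\cdots d_n)^{-1},d_1^2,\dots,d_n^2]^{\SG_n}$. The $\SG_n$-invariance comes from linkage: for a simple reflection $s_i$ and a weight $\lambda$ with $\langle \lambda+\rho,\alpha_i^\vee\rangle \in \Z_{>0}$ there is a nonzero homomorphism $M(s_i\cdot\lambda)\to M(\lambda)$ carrying the highest weight vector to a singular vector, so $z$ acts by the same scalar on both; hence $\mathrm{ev}_\lambda(HC(z))$ is invariant under the dot action $\lambda\mapsto s_i\cdot\lambda$ on a Zariski-dense set of $\lambda$, and the $\rho_+$-shift built into $HC$ converts this into honest $s_i$-invariance of the Laurent polynomial $HC(z)$; the $s_i$ generate $\SG_n$. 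The remaining, genuinely quantum, constraint is that only monomials $d^a = d_1^{a_1}\cdots d_n^{a_n}$ with all exponents $a_i$ of equal parity can occur, equivalently that $HC(z)$ lies in the subalgebra generated by the $d_i^2$ and $(d_1\cdots d_n)^{-1}$. This reflects that the central character of $M(\lambda)$ depends on $\lambda$ only through the quantities $q^{2(\lambda+\rho,\eps_i)}$: two highest weights differing by the lattice that trivializes all these doublings yield the same action of every central element, which forces precisely the stated parity of exponents.

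Finally, for surjectivity I would note that $A$ is generated as an algebra by the elementary symmetric functions $\e_1(d_1^2,\dots,d_n^2),\dots,\e_{n-1}(d_1^2,\dots,d_n^2)$ together with $(d_1\cdots d_n)^{\pm 1}$, and exhibit central preimages. The grouplike $d_1\cdots d_n$ is central in $\Uq(\gl_n)$ since its weight $\eps_1+\cdots+\eps_n$ is orthogonal to every root, and $HC$ sends $(d_1\cdots d_n)^{-1}$ to $q^{-\binom{n}{2}}(d_1\cdots d_n)^{-1}$; this accounts for the determinant direction and reduces matters to the ``$\mathfrak{sl}_n$ part.'' For the symmetric functions in the $d_i^2$ I would invoke the classical description of $Z_q(\mathfrak{sl}_n)$ from \cite[6.25]{Jantzen}, translate it through the inclusion $\Uq(\mathfrak{sl}_n)\hookrightarrow\Uq(\gl_n)$, and adjoin $(d_1\cdots d_n)^{\pm 1}$ to recover all of $A$. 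I expect the main obstacle to be the parity constraint of the third paragraph: showing rigorously that central elements cannot detect the odd part of the weight lattice -- so that the image is $\k[d_i^2,(d_1\cdots d_n)^{-1}]^{\SG_n}$ and not all of $\k[d_i^{\pm 1}]^{\SG_n}$ -- is exactly the step where the quantum linkage principle must be applied with care, and it is what distinguishes the $\gl_n$ statement here from a naive transcription of the characteristic-zero Harish--Chandra isomorphism.
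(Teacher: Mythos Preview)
The paper does not give its own proof of this lemma: it is stated with the citation \cite[Lemma 2.1]{Li} and immediately used, so there is nothing to compare against in the paper itself. Your sketch follows the standard Harish--Chandra argument (injectivity via central characters on Verma modules, $\SG_n$-invariance via linkage, surjectivity by exhibiting generators and invoking the known $\mathfrak{sl}_n$ result from \cite[6.25]{Jantzen}), which is the expected route and essentially what one finds in the cited reference.

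Your own diagnosis of the weak point is accurate. The parity constraint---that only monomials $d_1^{a_1}\cdots d_n^{a_n}$ with all $a_i$ of the same parity survive---is the one step that is genuinely quantum and not a transcription of the classical proof. Your justification (``the central character depends on $\lambda$ only through $q^{2(\lambda+\rho,\eps_i)}$'') is asserted rather than proved; for generic $q$ the evaluation map $\lambda\mapsto(q^{(\lambda,\eps_i)})_i$ is injective on $\Lambda$, so this is not automatic from the central-character picture alone. One clean way to pin it down is to use the algebra automorphism of $\Uq(\gl_n)$ that fixes $e_i,f_i$ and sends each $d_j\mapsto -d_j$: it fixes the center (check on generators, or via Verma modules again) but acts by $(-1)^{\sum a_j}$ on $d^a$, forcing $\sum a_j$ even; combined with the $\SG_n$-invariance this yields the equal-parity condition. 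With that filled in, your outline is complete.
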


The following facts are also well known, but we could not find a suitable reference.

\begin{lemma}\label{bfix}
Each braid group generator
$T_i:\Uq(\mathfrak{gl}_n)\rightarrow \Uq(\mathfrak{gl}_n)$ fixes $Z_q(\mathfrak{gl}_n)$
pointwise.
\end{lemma}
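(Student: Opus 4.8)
The plan is to test $T_i(z)$ against $z$ on every finite-dimensional weight module, leaning on the faithfulness already recorded in this section: the intersection of the annihilators of all finite-dimensional weight modules is zero. Since $T_i$ is an algebra automorphism it carries $Z_q(\mathfrak{gl}_n)$ into itself, so for central $z$ both $z$ and $T_i(z)$ are central; it therefore suffices to show that $z$ and $T_i(z)$ induce the same operator on each finite-dimensional weight module. Moreover it is enough to treat the irreducible modules $L(\lambda)$ with $\lambda$ dominant, because over $\k=\Q(q)$ the finite-dimensional weight modules are semisimple and the module action respects direct sum decompositions, so an element annihilating every irreducible weight module annihilates all of them.

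The key input is Lusztig's compatibility between the braid automorphism of $\Uq(\mathfrak{gl}_n)$ and the braid operator on an integrable module. Writing $\rho_M:\Uq(\mathfrak{gl}_n)\to\End_\k(M)$ for the representation and $\mathcal{T}_i\in\End_\k(M)$ for Lusztig's operator $T''_{i,-}$ acting on $M$ (a well-defined invertible operator on any finite-dimensional weight module, carrying the $\mu$-weight space to the $s_i\mu$-weight space), the relevant identity from \cite[Chapter~37]{Lubook} reads
\[
\rho_M\big(T_i(u)\big)=\mathcal{T}_i\circ\rho_M(u)\circ\mathcal{T}_i^{-1}\qquad\text{for all }u\in\Uq(\mathfrak{gl}_n),
\]
where $T_i=T''_{i,-}$ is precisely the automorphism specified in the excerpt. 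Thus the action of $T_i(u)$ is obtained by conjugating the action of $u$ by $\mathcal{T}_i$.

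Now fix an irreducible finite-dimensional weight module $L(\lambda)$. As $L(\lambda)$ is a highest weight module with one-dimensional highest weight space, Schur's lemma over $\Q(q)$ gives $\End_{\Uq(\mathfrak{gl}_n)}(L(\lambda))=\k$, so the central element $z$ acts by a scalar $c_\lambda\in\k$. Applying the conjugation identity with $u=z$ yields $\rho_{L(\lambda)}\big(T_i(z)\big)=\mathcal{T}_i\,(c_\lambda\,\mathrm{id})\,\mathcal{T}_i^{-1}=c_\lambda\,\mathrm{id}=\rho_{L(\lambda)}(z)$, the scalar being untouched by the conjugation. Hence $T_i(z)-z$ annihilates every finite-dimensional irreducible weight module, and therefore every finite-dimensional weight module; by the faithfulness noted above, $T_i(z)=z$.

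I expect the main obstacle to be purely bookkeeping: pinning down the precise module-level braid operator $\mathcal{T}_i$ and the conjugation identity in Lusztig's conventions so that they match the automorphism $T''_{i,-}$ used here, rather than one of the three other variants $T'_{i,\pm},T''_{i,+}$, and confirming that $\mathcal{T}_i$ is defined and invertible on the modules in question. Once this compatibility is cited correctly, the remaining steps are immediate.
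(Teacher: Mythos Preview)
Your proof is correct and follows essentially the same approach as the paper's: both arguments use Lusztig's compatibility $T_i(uv)=T_i(u)\,T_i(v)$ on an integrable module (equivalently, your conjugation identity), observe that a central element acts as a scalar on each irreducible integrable module so conjugation leaves its action unchanged, and then conclude via the faithfulness on the family of such modules. Your version is slightly more explicit about the reduction to irreducibles via semisimplicity and about the bookkeeping of Lusztig's conventions, but the underlying idea is the same.
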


\begin{proof}
Take $c \in Z_q(\mathfrak{gl}_n)$.
Let $V$ be an
integrable highest 
weight module. Since $V$ 
is irreducible, both $c$ and $T_i(c)$ act on $V$ as scalars.
These scalars are equal because there is an automorphism $T_i:V \rightarrow V$ 
such that $T_i(cv) =
T_i(c) T_i(v)$; see \cite[$\S$37.1.1]{Lubook}.
This shows that $c-T_i(c)$ acts as zero on every integrable highest
weight module. 
The intersection of the annihilators of all integrable highest weight modules is zero,
so this proves that $c=T_i(c)$.
\end{proof}

\begin{lemma}\label{both}
The restriction
$HC = HC_+\big|_{Z_q(\mathfrak{gl}_n)}$
is equal also to the restriction
$HC_- \big|_{Z_q(\mathfrak{gl}_n)}$.
\end{lemma}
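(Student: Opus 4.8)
The plan is to identify both $HC_+(c)$ and $HC_-(c)$, for $c \in Z_q(\mathfrak{gl}_n)$, with the scalar by which $c$ acts on a finite-dimensional irreducible module, computed first through a highest weight vector and then through a lowest weight vector. Since a central element acts by a single scalar on an irreducible module, these two computations must agree, and this agreement will pin down the relationship between $\pr_+(c)$ and $\pr_-(c)$. Everything then reduces to a short bookkeeping computation with the shift automorphisms together with the $\SG_n$-invariance coming from Lemma~\ref{restr}.

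Concretely, I would first note that $Z_q(\mathfrak{gl}_n)\subseteq\Uq(\mathfrak{gl}_n)_0$, so that $HC_\pm$ make sense on the center. Fix a dominant integral weight $\lambda$ and let $L(\lambda)$ be the corresponding finite-dimensional irreducible module, with highest weight vector $v^+$ (of weight $\lambda$, annihilated by $e_1,\dots,e_{n-1}$) and lowest weight vector $v^-$ (of weight $w_0\lambda$, annihilated by $f_1,\dots,f_{n-1}$). Writing $c = \pr_+(c)+n_+$ with $n_+\in I_+$, the element $n_+$ lies in the left ideal generated by the $e_i$, so $n_+ v^+ = 0$; similarly $n_- v^- = 0$ for the corresponding $n_-\in I_-$. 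Hence $c\,v^+ = \mathrm{ev}_\lambda(\pr_+(c))\,v^+$ and $c\,v^- = \mathrm{ev}_{w_0\lambda}(\pr_-(c))\,v^-$, where $\mathrm{ev}_\mu$ denotes the substitution $d_i\mapsto q^{(\mu,\eps_i)}$. As $c$ is central and $L(\lambda)$ is irreducible, $c$ acts as a single scalar, forcing $\mathrm{ev}_\lambda(\pr_+(c)) = \mathrm{ev}_{w_0\lambda}(\pr_-(c))$.

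Next I would rewrite the right-hand side using $(w_0\lambda,\eps_i)=(\lambda,\eps_{n+1-i})$, which gives $\mathrm{ev}_{w_0\lambda}(\pr_-(c))=\mathrm{ev}_\lambda(w_0(\pr_-(c)))$, where $w_0$ acts on $\Uq(\mathfrak{gl}_n)^0$ by $d_i\mapsto d_{n+1-i}$. Thus $\mathrm{ev}_\lambda(\pr_+(c))=\mathrm{ev}_\lambda(w_0(\pr_-(c)))$ for every dominant integral $\lambda$. Since an element of $\Uq(\mathfrak{gl}_n)^0=\k[d_1^{\pm1},\dots,d_n^{\pm1}]$ is determined by its values under the substitutions $d_i\mapsto q^{(\lambda,\eps_i)}$ as $\lambda$ runs over dominant integral weights (the exponents take infinitely many values and the monomials $d^\alpha$ separate points), I conclude $\pr_+(c)=w_0(\pr_-(c))$, equivalently $\pr_-(c)=w_0(\pr_+(c))$.

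Finally I would combine this with the elementary identity $S_{-\rho_-}\circ w_0 = w_0\circ S_{-\rho_+}$ on $\Uq(\mathfrak{gl}_n)^0$ — both send $d_i\mapsto q^{i-1}d_{n+1-i}$, as one checks from $S_{-\rho_+}(d_i)=q^{i-1}d_i$ and $S_{-\rho_-}(d_i)=q^{n-i}d_i$ — to obtain
\[
HC_-(c)=S_{-\rho_-}(\pr_-(c))=S_{-\rho_-}(w_0(\pr_+(c)))=w_0(S_{-\rho_+}(\pr_+(c)))=w_0(HC_+(c)).
\]
By Lemma~\ref{restr}, $HC_+(c)=HC(c)$ lies in the $\SG_n$-invariants, so $w_0$ fixes it and $HC_-(c)=HC_+(c)$, as desired. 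I expect the main obstacle to be the step $\pr_-(c)=w_0(\pr_+(c))$: it is where the representation theory genuinely enters, requiring both the highest/lowest weight vector computation (via the fact that a central $c$ acts as a scalar on $L(\lambda)$) and the separation argument upgrading equality of scalars at all dominant weights to an identity of Laurent polynomials. The remaining manipulations with the shift automorphisms and the invariance from Lemma~\ref{restr} are purely formal.
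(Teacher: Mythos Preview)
Your proof is correct, but it takes a different route from the paper. The paper's argument is shorter and more structural: it uses the braid group automorphism $T_{\!w_0}$ of $\Uq(\mathfrak{gl}_n)$, which interchanges the two Borel subalgebras and acts on $\Uq(\mathfrak{gl}_n)^0$ as $w_0$, to obtain the intertwining relation $HC_{\mp}\circ T_{\!w_0}=T_{\!w_0}\circ HC_{\pm}$. Then Lemma~\ref{bfix} (that each $T_i$ fixes the center pointwise) together with the $\SG_n$-invariance from Lemma~\ref{restr} finishes the argument in one line. By contrast, you bypass the braid group entirely and instead compare the scalars by which a central element acts on a highest versus a lowest weight vector of $L(\lambda)$, deducing $\pr_-(c)=w_0(\pr_+(c))$ via a Zariski-density argument, and then checking the shift identity $S_{-\rho_-}\circ w_0=w_0\circ S_{-\rho_+}$ by hand. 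Both proofs land on the same final step, namely applying $w_0$-invariance of $HC_+(c)$ from Lemma~\ref{restr}. Your approach has the advantage of being self-contained (it does not invoke Lusztig's braid group action on the algebra or Lemma~\ref{bfix}); the paper's approach is more conceptual and avoids the explicit separation-of-points argument over $\Q(q)$.
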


\begin{proof}
Let $T_{\!w_0}$ be the product of simple braid group generators $T_i$
taken in some order
corresponding to a reduced expression of $w_0$.
This is an automorphism of $\Uq(\mathfrak{gl}_n)$ which switches
$\Uq(\mathfrak{gl}_n)^\sharp$ and
$\Uq(\mathfrak{gl}_n)^\flat$,
and it 
sends $d_i \mapsto d_{n+1-i}$.
It follows that
\begin{equation}
\label{hundy}
HC_{\mp} \circ T_{\!w_0} = T_{\!w_0} \circ HC_{\pm}.
\end{equation}
Clearly, $T_{\!w_0}$ fixes
$\k[(d_1\cdots d_n)^{-1}, d_1^2,\dots,d_n^2]^{\SG_n}$ pointwise.
It also fixes
$Z_q(\mathfrak{gl}_n)$ pointwise by Lemma~\ref{bfix}.
Hence,
$HC_-\big|_{Z_q(\mathfrak{gl}_n)} = 
HC_-\circ T_{\!w_0} 
\big|_{Z_q(\mathfrak{gl}_n)} = 
T_{\!w_0} \circ HC_+\big|_{Z_q(\mathfrak{gl}_n)} = HC_+\big|_{Z_q(\mathfrak{gl}_n)}.
$
\end{proof}

\begin{lemma}\label{fi}
The antiautomorphism $G$ fixes $Z_q(\mathfrak{gl}_n)$ pointwise.
\end{lemma}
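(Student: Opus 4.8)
The plan is to exploit that $HC = HC_+\big|_{Z_q(\mathfrak{gl}_n)}$ is \emph{injective} (indeed an isomorphism) by Lemma~\ref{restr}. Thus to prove $G(c)=c$ for $c \in Z_q(\mathfrak{gl}_n)$ it suffices to show two things: that $G$ maps the center into itself, and that $HC(G(c)) = HC(c)$. The first is formal: $G$ is an involution ($G^2$ is the identity on $e_i,f_i,d_i$), hence bijective, and applying $G$ to the centrality relation $cu=uc$ gives $G(u)G(c)=G(c)G(u)$ for all $u$; since $G$ is surjective, $G(c)$ commutes with everything, so $G(c)\in Z_q(\mathfrak{gl}_n)$.

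The key step is to track how $G$ interacts with the Harish-Chandra data. Using $G(d_i)=d_{n+1-i}$, a short weight computation shows that $G$ sends an element of weight $\mu$ to one of weight $-w_0(\mu)$; in particular $G$ preserves the zero weight space $\Uq(\mathfrak{gl}_n)_0$, and it restricts on $\Uq(\mathfrak{gl}_n)^0$ to the $w_0$-permutation $d_i \mapsto d_{n+1-i}$. The crucial observation is that $G$ interchanges the ideals $I_+$ and $I_-$. Indeed $G$ carries left ideals to right ideals, and since $G(e_i)=e_{n-i}$ merely permutes the $e_j$, it sends the left ideal generated by $e_1,\dots,e_{n-1}$ onto the \emph{right} ideal generated by $e_1,\dots,e_{n-1}$; intersecting with $\Uq(\mathfrak{gl}_n)_0$ and invoking the equivalent right-ideal descriptions of $I_\pm$ recorded before the statement, this gives $G(I_+)=I_-$. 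Consequently $G$ carries the decomposition $\Uq(\mathfrak{gl}_n)_0 = \Uq(\mathfrak{gl}_n)^0 \oplus I_+$ to $\Uq(\mathfrak{gl}_n)^0 \oplus I_-$, whence $\pr_- \circ G = G \circ \pr_+$ on $\Uq(\mathfrak{gl}_n)_0$.

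Finally I would combine these. Writing $p := \pr_+(c) \in \Uq(\mathfrak{gl}_n)^0$, a direct check on the generators $d_i$ shows that $S_{-\rho_-} \circ G = w_0 \circ S_{-\rho_+}$ on $\Uq(\mathfrak{gl}_n)^0$, since both send $d_i$ to $q^{i-1} d_{n+1-i}$. Using $\pr_- \circ G = G \circ \pr_+$, and then Lemma~\ref{both} (which identifies $HC_+$ and $HC_-$ on the center, applicable because $G(c)$ is central), I get $HC(G(c)) = HC_-(G(c)) = S_{-\rho_-}(G(p)) = w_0\bigl(S_{-\rho_+}(p)\bigr) = w_0\bigl(HC(c)\bigr)$. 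But $HC(c)$ lies in $\k\bigl[(d_1\cdots d_n)^{-1}, d_1^2,\dots,d_n^2\bigr]^{\SG_n}$ and is therefore $w_0$-invariant, so $HC(G(c)) = HC(c)$. Injectivity of $HC$ on $Z_q(\mathfrak{gl}_n)$ then forces $G(c)=c$.

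The main obstacle is getting the identity $G(I_+)=I_-$ right: one must carefully match the two equivalent (left-ideal versus right-ideal) descriptions of $I_\pm$ so that the antiautomorphism $G$ genuinely interchanges them rather than preserving each. Everything after that is routine bookkeeping on the torus generators together with the $\SG_n$-invariance already built into the image of $HC$.
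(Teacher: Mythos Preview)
Your proof is correct and follows essentially the same route as the paper's. The paper simply asserts the intertwining identity $HC_{\mp}\circ G = G\circ HC_{\pm}$ and then, exactly as you do, combines it with Lemma~\ref{both} and the injectivity of $HC$; your contribution is to unpack that identity explicitly into $\pr_-\circ G = G\circ\pr_+$ (via $G(I_+)=I_-$) together with $S_{-\rho_-}\circ G = w_0\circ S_{-\rho_+}$ on the torus, which is equivalent since $G$ restricted to $\Uq(\mathfrak{gl}_n)^0$ \emph{is} the $w_0$-action.
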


\begin{proof}
We have that
\begin{equation}\label{handy}
HC_{\mp} \circ G = G \circ HC_{\pm}.
\end{equation}
Combined with Lemma~\ref{both},
it follows that
$HC_+ \circ G \big|_{Z_q(\mathfrak{gl}_n)} = G \circ HC_+ \big|_{Z_q(\mathfrak{gl}_n)}$.
Also $G$ clearly fixes 
$\k[(d_1\cdots d_n)^{-1}, d_1^2,\dots,d_n^2]^{\SG_n}$ pointwise.
Hence, $HC_+ \circ G\big|_{Z_q(\mathfrak{gl}_n)} = HC_+\big|_{Z_q(\mathfrak{gl}_n)}$, which
implies the result since $HC_+$ is injective on
$Z_q(\mathfrak{gl}_n)$.
\end{proof}

In particular, this shows that $G\left(z_m\right) = z_m$,
hence, on 
applying $G$ to the right-hand side of (\ref{chickensandwich}) using
(\ref{bruds}), we obtain another formula for $z_m$:
\begin{equation}\label{chickensandwich2}
z_m =
\left\{
\begin{array}{ll}
\displaystyle\sum_{i=1}^n
q^{n+1-2i} y_{i,i}^{(m)}&\text{if $m \geq 0$,}\\
\displaystyle\sum_{i=1}^n
q^{n+1-2i} \overline{x_{i,i}^{(-m)}}&\text{if $m \leq 0$.}
\end{array}\right.
\end{equation}
Comparing with (\ref{chickensandwich}), it follows that
\begin{equation}
z_{-m}
=
\overline{z_m}
\end{equation}
for every $m \in \Z$.
From now on, we only consider $z_m$ for $m \geq 1$.

Finally, consider the {\em modified complete symmetric
  polynomials}
\begin{equation}
\widetilde{\h}_m(x_1,\dots,x_n) := \sum_{1 \leq i_1 \leq \cdots \leq
  i_m\leq n} \left(q^{-1} z\right)^{\#\{i_1,\dots,i_m\}-1} x_{i_1}\cdots x_{i_m}.
\end{equation}
We will use these for all values of $n \geq 0$ (not just the $n$ fixed
above for $\mathfrak{gl}_n$).
We have that
\[
    \widetilde{\h}_m(x_1,\dots,x_n) = qz^{-1}
    \text{ if } m=0
    \quad \text{and} \quad
    \widetilde{\h}_m(x_1,\dots,x_n) = 0
    \text{ if } m > 0 \text{ but } n=0.
\]
These elements obviously 
satisfy the recurrence relation
\begin{equation}
\widetilde{\h}_m(x_1,\dots,x_n) = \widetilde{\h}_m(x_1,\dots,x_{n-1}) +
q^{-1} z\sum_{r=1}^m \widetilde{\h}_{m-r}(x_1,\dots,x_{n-1})
x_n^r\label{rec}
\end{equation}
for $n > 0$.

\begin{lemma}\label{dorking}
$\widetilde{\h}_m(x_1,\dots,x_n) = \widetilde{\h}_m(x_1,\dots,x_{n-1})+ \widetilde{\h}_{m-1}(x_1,\dots,x_n)x_n - q^{-2}
\widetilde{\h}_{m-1}(x_1,\dots,x_{n-1}) x_n$.
\end{lemma}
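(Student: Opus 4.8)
The plan is to derive the stated three-term recurrence directly from the known recurrence (\ref{rec}), using only the value $z = q - q^{-1}$; there is no need to return to the combinatorial definition of $\widetilde{\h}_m$. Throughout I abbreviate $h_m^{(n)} := \widetilde{\h}_m(x_1,\dots,x_n)$.

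First I would record two instances of (\ref{rec}): the relation itself,
\[
h_m^{(n)} = h_m^{(n-1)} + q^{-1} z\sum_{r=1}^m h_{m-r}^{(n-1)} x_n^r,
\]
and the same relation with $m$ replaced by $m-1$,
\[
h_{m-1}^{(n)} = h_{m-1}^{(n-1)} + q^{-1} z\sum_{r=1}^{m-1} h_{m-1-r}^{(n-1)} x_n^r.
\]
Next I would multiply the second relation on the right by $x_n$ and reindex its sum by $s = r+1$, turning $\sum_{r=1}^{m-1} h_{m-1-r}^{(n-1)} x_n^{r+1}$ into $\sum_{s=2}^{m} h_{m-s}^{(n-1)} x_n^{s}$. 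Substituting the resulting expression for $h_{m-1}^{(n)} x_n$ into the right-hand side $h_m^{(n-1)} + h_{m-1}^{(n)} x_n - q^{-2} h_{m-1}^{(n-1)} x_n$ of the claimed identity, I would then collect the two terms proportional to $h_{m-1}^{(n-1)} x_n$, obtaining the coefficient $(1-q^{-2})$ from the bare $x_n$-term together with the $-q^{-2}$ correction term.

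The one genuinely non-formal step, and the point at which the specific parameter $z$ enters, is the elementary identity $q^{-1} z = 1 - q^{-2}$, immediate from $z = q - q^{-1}$. This shows that the collected coefficient $(1-q^{-2}) h_{m-1}^{(n-1)} x_n$ equals $q^{-1} z\, h_{m-1}^{(n-1)} x_n$, which is precisely the \emph{missing} $s=1$ term of the reindexed sum. Restoring it extends the sum to $\sum_{s=1}^{m} h_{m-s}^{(n-1)} x_n^s$, so the right-hand side collapses to $h_m^{(n-1)} + q^{-1} z\sum_{s=1}^m h_{m-s}^{(n-1)} x_n^s$, which is exactly $h_m^{(n)}$ by (\ref{rec}).

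The only obstacle here is the routine index bookkeeping; the crux is recognizing that the $-q^{-2}$ correction term is exactly what is needed, via $q^{-1}z = 1-q^{-2}$, to recover the $s=1$ contribution and thereby reassemble the original recurrence (\ref{rec}). I expect the whole argument to be a short, purely algebraic manipulation once this observation is made.
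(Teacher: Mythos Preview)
Your proposal is correct and follows essentially the same approach as the paper: both proofs multiply the recurrence (\ref{rec}) at level $m-1$ by $x_n$, reindex the sum, and use the identity $q^{-1}z = 1-q^{-2}$ to absorb the $(1-q^{-2})h_{m-1}^{(n-1)}x_n$ term as the missing $s=1$ summand, thereby reconstituting (\ref{rec}) at level $m$. The only difference is cosmetic ordering---the paper first rewrites $h_{m-1}^{(n)}x_n$ in closed form and then substitutes, whereas you substitute first and then collect.
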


\begin{proof}
By (\ref{rec}) with $m$ replaced by $m-1$, we have that 
\begin{align*}
\widetilde{\h}_{m-1}(x_1,\dots,x_n) x_n &= 
\widetilde{\h}_{m-1}(x_1,\dots,x_{n-1})x_n+
q^{-1}z \sum_{r=1}^{m-1} \widetilde{\h}_{m-r-1}(x_1,\dots,x_{n-1}) x_n^{r+1}\\
&= 
\widetilde{\h}_{m-1}(x_1,\dots,x_{n-1})x_n+
q^{-1}z \sum_{r=2}^{m} \widetilde{\h}_{m-r}(x_1,\dots,x_{n-1}) x_n^{r}\\
&= 
q^{-2}\widetilde{\h}_{m-1}(x_1,\dots,x_{n-1})x_n+
q^{-1}z \sum_{r=1}^{m}
  \widetilde{\h}_{m-r}(x_1,\dots,x_{n-1}) x_n^{r}.
\end{align*}
Given this, it is easy to see that the right-hand side of the identity
we are trying to prove is equal to the right-hand side of (\ref{rec}).
\end{proof}

\begin{theorem}\label{images}
For any $m \geq 1$ we have that
$HC\left(z_m\right)= q^{n-1}
\widetilde{\h}_m\left(d_1^2,\dots,d_n^2\right)$.
\end{theorem}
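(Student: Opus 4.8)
The plan is to compute $HC(z_m)$ by exploiting the injectivity of $HC$ guaranteed by Lemma~\ref{restr}: it therefore suffices to determine the scalars $\chi_\la(z_m)$ by which the central element $z_m$ acts on the Verma modules $M(\la)$ of highest weight $\la$, for generic $\la$. Since $HC = S_{-\rho_+}\circ\pr_+$ and $z_m$ is central, acting on a highest weight vector gives $\chi_\la(z_m) = HC(z_m)\big|_{d_i\mapsto q^{(\la,\eps_i)-i+1}}$, so knowing these scalars for a Zariski-dense set of $\la$ pins down the symmetric Laurent polynomial $HC(z_m)$. Writing $c_i(\la) := q^{2(\la,\eps_i)-2i+2}$, so that $d_i^2\mapsto c_i(\la)$ under the above substitution, the target identity is equivalent to the clean statement
\begin{equation*}
\chi_\la(z_m) = q^{n-1}\,\widetilde{\h}_m\big(c_1(\la),\dots,c_n(\la)\big)\qquad(m\geq 1).
\end{equation*}

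To compute the left-hand side I would use Theorem~\ref{psihat} together with \eqref{chickensandwich}: the element $z_m$ acts on any module $M$ as the $(q^{2i-n-1})$-weighted partial trace over $V^+$ of $X^m$, where $X := R_{M,V^+}\circ R_{V^+,M}$ is the double braiding and $X^m=\sum_{i,j}e^+_{i,j}\otimes x_{i,j}^{(m)}$ by Corollary~\ref{finger}. For generic $\la$ the tensor product decomposes as $V^+\otimes M(\la)\cong\bigoplus_{i=1}^n M(\la+\eps_i)$, and since $X$ is a $\Uq(\gl_n)$-module endomorphism it acts on each Verma summand by a scalar. The value $c_1(\la)=q^{2(\la,\eps_1)}$ is obtained by a one-line computation of $X(v_1^+\otimes v_\la)=v_1^+\otimes x_{1,1}v_\la$, using $f_{1,1}=z^{-1}$ and that positive root vectors kill $v_\la$; the general eigenvalue $c_i(\la)=q^{2(\la,\eps_i)-2i+2}$ then follows from the hexagon property \eqref{hexagon}, which forces $X|_{M(\la+\eps_i)}$ to be the corresponding difference of Casimir eigenvalues $q^{(\mu,\mu+2\rho_+)}$ for $\mu=\la+\eps_i,\la,\eps_1$ (the uniform part of $\rho_+$ cancels in this difference). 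Consequently
\begin{equation*}
\chi_\la(z_m) = \sum_{i=1}^n W_i(\la)\,c_i(\la)^m,
\end{equation*}
where $W_i(\la)$ is the weighted quantum trace over $V^+$ of the projector onto the summand $M(\la+\eps_i)$.

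The identification with $\widetilde{\h}_m$ is best organised through the generating function in $m$. On one side $\sum_{m\geq 0}\chi_\la(z_m)t^m = \sum_i W_i(\la)\big(1-tc_i(\la)\big)^{-1}$, a rational function with simple poles at $t=c_i(\la)^{-1}$. On the other side, summing the defining formula for the modified complete symmetric polynomials gives the product expansion
\begin{equation*}
\sum_{m\geq 0}\widetilde{\h}_m(x_1,\dots,x_n)\,t^m = qz^{-1}\prod_{j=1}^n\frac{1-q^{-2}x_jt}{1-x_jt},
\end{equation*}
whose partial-fraction residue at $t=x_i^{-1}$ equals $\prod_{j\neq i}\frac{x_i-q^{-2}x_j}{x_i-x_j}$, using $qz^{-1}(1-q^{-2})=1$. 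After setting $x_i=c_i(\la)$ the two power series agree for all $m\geq 1$ (the $m=0$ terms differ, which is why the theorem excludes $m=0$, where $z_0=[n]_q$) precisely when
\begin{equation*}
W_i(\la) = q^{n-1}\prod_{j\neq i}\frac{c_i(\la)-q^{-2}c_j(\la)}{c_i(\la)-c_j(\la)}.
\end{equation*}

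I expect the main obstacle to be exactly this last identity: computing the weighted quantum trace $W_i(\la)$ of the spectral projector and recognising it as the stated residue, since this is where the nontrivial $(q^{-1}z)$-weights built into $\widetilde{\h}_m$ must emerge. I would attack it by writing the projector as $p_i=\prod_{j\neq i}(X-c_j(\la))/(c_i(\la)-c_j(\la))$ and evaluating its partial trace from the explicit $R$-matrix entries in Lemma~\ref{breakfast} and the cup/cap normalisations \eqref{easyjet1}--\eqref{easyjet2}; the prefactor $q^{n-1}$ and the ratios $(c_i-q^{-2}c_j)/(c_i-c_j)$ are precisely the relative quantum dimensions of the Verma summands. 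As an independent check, and an alternative to the residue computation, one can instead verify that the proposed answer $q^{n-1}\widetilde{\h}_m(d_1^2,\dots,d_n^2)$ satisfies the recursion of Lemma~\ref{dorking} and \eqref{rec}, which corresponds to restriction from $\gl_n$ to $\gl_{n-1}$ combined with the passage from $X^m$ to $X^{m-1}$, yielding a purely algebraic route to the same conclusion.
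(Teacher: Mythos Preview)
Your strategy---compute $\chi_\la(z_m)$ by diagonalising the double braiding $X$ on $V^+\otimes M(\la)$ for generic $\la$---is sound and genuinely different from the paper's argument. Your eigenvalue computation $c_i(\la)=q^{2(\la,\eps_i)-2i+2}$ is correct, and the generating-function identity $\sum_{m\geq 0}\widetilde{\h}_m(x_1,\dots,x_n)t^m=qz^{-1}\prod_j(1-q^{-2}x_jt)/(1-x_jt)$ follows cleanly from the recursion~\eqref{rec}, so the whole problem does reduce to the single identity
\[
W_i(\la)=q^{n-1}\prod_{j\neq i}\frac{c_i(\la)-q^{-2}c_j(\la)}{c_i(\la)-c_j(\la)}.
\]
The paper avoids this projector computation entirely. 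Instead it proves a refinement, evaluating each $HC_+\big(x_{i,i}^{(m)}\big)$ separately as $\widetilde{\h}_m(d_1^2,\dots,d_i^2)-q^{-2}\widetilde{\h}_m(d_1^2,\dots,d_{i-1}^2)$, by induction on $m+n$. The cases $i<n$ come from the embedding $\Uq(\gl_{n-1})\hookrightarrow\Uq(\gl_n)$; the case $i=n$ is obtained by rewriting $z_m$ via the alternative expression~\eqref{chickensandwich2} in terms of the $y_{i,i}^{(m)}$ (to which $HC_-$ is adapted), using Lemma~\ref{both} and Lemma~\ref{fi} to pass back to $HC_+$, and recognising the result as the recurrence of Lemma~\ref{dorking}. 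The trick of playing $HC_+$ against $HC_-$ via the antiautomorphism $G$ is what drives the induction, and it has no counterpart in your plan.

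Your acknowledged obstacle is a real gap. Approach (a), expressing $p_i$ as a polynomial in $X$ and taking the weighted partial trace, requires as input the weighted traces of $X^0,\dots,X^{n-1}$ on $v_\la$; but those are exactly $\chi_\la(z_0),\dots,\chi_\la(z_{n-1})$, so without an independent computation (e.g.\ of the explicit singular vectors in $V^+\otimes M(\la)$ from Lemma~\ref{breakfast}) this is not yet a proof. Approach (b) is close in spirit to what the paper does, but you have not said which operation on the representation-theoretic side produces the recursion of Lemma~\ref{dorking}; the paper's answer---switch to the $y$-description and use $HC_-$---is invisible from your Verma-module setup. Either route can be completed, but neither is as short as you suggest, and both end up requiring roughly the same amount of work as the paper's inductive argument.
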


\begin{proof}
Noting that $q^{1-n} z_m =
\sum_{i=1}^n q^{2i-2n} x_{i,i}^{(m)}$ according to
(\ref{chickensandwich}),
this follows from the following
claim:
{\em for any $m \geq 1$ and $i=1,\dots,n$, we have that}
\begin{align}\label{i2}
HC_+\left(x_{i,i}^{(m)}\right) &= 
\widetilde{\h}_m\left(d_1^2,\dots,d_i^2\right) -
q^{-2} \widetilde{\h}_m\left(d_1^2,\dots, d_{i-1}^2\right).
\end{align}
To prove (\ref{i2}), we 
proceed by induction on $m+n$. 
The result is easy to check  when
$n=1$. Now assume that $n > 1$.
The Harish-Chandra homomorphism $HC_+$ is compatible with the usual ``top left corner'' embedding of
$\Uq(\mathfrak{gl}_{n-1})$
into $\Uq(\mathfrak{gl}_n)$. 
This follows because 
the restriction of $\rho_+$ for $\mathfrak{gl}_n$ is the weight
$\rho_+$ for $\mathfrak{gl}_{n-1}$. 
Also the elements $x_{1,1}^{(m)}, \dots, x_{n-1,n-1}^{(m)}$
of $\Uq(\mathfrak{gl}_{n-1})$ are the same as these elements in
$\Uq(\mathfrak{gl}_n)$.
Thus we get (\ref{i2})
for each $i < n$ from the induction hypothesis. It remains
to prove (\ref{i2}) when $i=n$.
We have that
$$
q^{1-n} HC_-\left(z_m\right)
= \sum_{i=1}^n q^{2i-2n} \sum_{j_1,\dots,j_{m}} 
HC_-\left(
z^{2m} e_{j_1,i} d_{j_1}  f_{j_1,j_2} d_{j_2}
\cdots
e_{j_{m},j_{m-1}} d_{j_{m}} f_{j_{m},i} d_i
\right).
$$
By the definition of $HC_-$, the terms in this expansion are zero if
either
$j_1 < i$ or $j_m < i$.
Thus, the
sum simplifies to give
$$
q^{1-n} HC_-\left(z_m\right) =
\sum_{i=1}^n q^{2i-2n} HC_-\left(y_{i,i}^{(m-1)} d_i^2\right)
=
\sum_{i=1}^n HC_-\left(y_{i,i}^{(m-1)}\right) d_i^2.
$$
Now we apply $G$, using Lemma~\ref{fi}, (\ref{handy}) and
(\ref{brods}), to see that
$$
q^{1-n} HC_+\left(z_m\right) 
= \sum_{i=1}^n HC_+\left(x_{i,i}^{(m-1)}\right) d_i^2.
$$
Remembering (\ref{chickensandwich}), we have now proved that
\begin{equation}\label{i3}
\sum_{i=1}^n q^{2i-2n} HC_+\left(x_{i,i}^{(m)}\right)
= 
\sum_{i=1}^n HC_+\left(x_{i,i}^{(m-1)}\right) d_i^2.
\end{equation}
The same identity with $n$ replaced by $(n-1)$ gives
\begin{equation}\label{i4}
\sum_{i=1}^{n-1} q^{2i-2(n-1)} HC_+\left(x_{i,i}^{(m)}\right)
= \sum_{i=1}^{n-1} HC_+\left(x_{i,i}^{(m-1)}\right) d_i^2.
\end{equation}
By the induction hypothesis, the left-hand side of (\ref{i4}) is equal
to $\widetilde{\h}_m\left(d_1^2,\dots,d_{n-1}^2\right)$.
Hence, (\ref{i3}) can be rewritten to obtain
\begin{multline*}
HC_+\left(x_{n,n}^{(m)}\right)
+q^{-2}
\widetilde{\h}_m\left(d_1^2,\dots,d_{n-1}^2\right)= 
HC_+\left(x_{n,n}^{(m-1)}\right) d_n^2
+\widetilde{\h}_m\left(d_1^2,\dots,d_{n-1}^2\right)\\
=\widetilde{\h}_m\left(d_1^2,\dots, d_{n-1}^2\right)+
\widetilde{\h}_{m-1}\left(d_1^2,\dots,d_n^2\right)d_n^2 - q^{-2}\widetilde{\h}_{m-1}\left(d_1^2,\dots,d_{n-1}^2\right) d_n^2,
\end{multline*}
where we have used the induction hypothesis again to establish
the second equality.
This is equal to $\widetilde{\h}_m\left(d_1^2,\dots,d_n^2\right)$ thanks to
Lemma~\ref{dorking}.
The conclusion follows.
\end{proof}

\begin{corollary}[{\cite[Theorem 4.1]{Li}}]
$Z_q(\mathfrak{gl}_n)$ is generated by $z_1,\dots,z_n$
and $(d_1\cdots d_n)^{-1}$.
\end{corollary}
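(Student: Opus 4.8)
The plan is to transport the problem through the Harish-Chandra isomorphism and reduce it to a purely combinatorial statement about symmetric polynomials. By Lemma~\ref{restr}, $HC$ is an algebra isomorphism from $Z_q(\mathfrak{gl}_n)$ onto $\k\big[(d_1\cdots d_n)^{-1},d_1^2,\dots,d_n^2\big]^{\SG_n}$, so it suffices to show that the $HC$-images of $z_1,\dots,z_n$ and $(d_1\cdots d_n)^{-1}$ generate the target ring. For the $z_m$, Theorem~\ref{images} gives $HC(z_m)=q^{n-1}\widetilde{\h}_m(d_1^2,\dots,d_n^2)$, and $q^{n-1}$ is a unit of $\k=\Q(q)$; thus the subalgebra generated by $HC(z_1),\dots,HC(z_n)$ is exactly the evaluation of $\k[\widetilde{\h}_1,\dots,\widetilde{\h}_n]$ at $(d_1^2,\dots,d_n^2)$. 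The element $d_1\cdots d_n$ is central (its weight $\eps_1+\cdots+\eps_n$ is orthogonal to every root, so it commutes with all $e_i,f_i$), whence $(d_1\cdots d_n)^{-1}\in Z_q(\mathfrak{gl}_n)$; since it lies in $\Uq(\mathfrak{gl}_n)^0$ we have $HC\big((d_1\cdots d_n)^{-1}\big)=q^{-\binom{n}{2}}(d_1\cdots d_n)^{-1}$, again a unit multiple of $(d_1\cdots d_n)^{-1}$. The corollary therefore reduces to two assertions: (i) the modified complete symmetric polynomials $\widetilde{\h}_1,\dots,\widetilde{\h}_n$ generate the ordinary symmetric algebra $\k[d_1^2,\dots,d_n^2]^{\SG_n}$; and (ii) this algebra together with $(d_1\cdots d_n)^{-1}$ generates the full localization.

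The heart of the argument is (i), which I would prove by a generating-function computation followed by a triangularity induction. Writing $\alpha:=q^{-1}z=1-q^{-2}$ and grouping the monomials of $\widetilde{\h}_m$ according to the set of distinct variables occurring, a direct computation yields the closed form
\begin{equation*}
\sum_{m\geq 0}\widetilde{\h}_m(x_1,\dots,x_n)\,u^m
=\alpha^{-1}\prod_{i=1}^n\frac{1-q^{-2}x_iu}{1-x_iu}
=\alpha^{-1}\Big(\sum_{r\geq0}\h_r u^r\Big)\Big(\sum_{s\geq0}(-1)^sq^{-2s}\e_s u^s\Big).
\end{equation*}
Extracting the coefficient of $u^m$ gives $\alpha\,\widetilde{\h}_m=\sum_{r+s=m}(-1)^sq^{-2s}\e_s\h_r$. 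Separating the extreme terms $s=0$ and $s=m$, and using \eqref{symid} to write $\e_m=(-1)^{m+1}\h_m$ modulo products of $\e_1,\dots,\e_{m-1},\h_1,\dots,\h_{m-1}$, one finds
\begin{equation*}
\alpha\,\widetilde{\h}_m=(1-q^{-2m})\,\h_m+(\text{a polynomial in }\h_1,\dots,\h_{m-1}),
\end{equation*}
since each $\e_j$ is itself a polynomial in $\h_1,\dots,\h_j$. As $1-q^{-2m}\neq0$ in $\Q(q)$, this solves recursively for $\h_m$, proving by induction on $m$ that $\h_m\in\k[\widetilde{\h}_1,\dots,\widetilde{\h}_m]$; hence $\k[\widetilde{\h}_1,\dots,\widetilde{\h}_n]=\k[\h_1,\dots,\h_n]=\k[d_1^2,\dots,d_n^2]^{\SG_n}$.

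For (ii), set $w:=(d_1\cdots d_n)^{-1}$ and note $w^2=(d_1^2\cdots d_n^2)^{-1}=\e_n(d_1^2,\dots,d_n^2)^{-1}$. Adjoining $w$ to $\k[d_1^2,\dots,d_n^2]^{\SG_n}$ thus produces both $\e_n^{-1}$ and $w$; decomposing the localization into its $w^0$- and $w^1$-parts over $\k[d_1^2,\dots,d_n^2]^{\SG_n}[\e_n^{-1}]$ shows that $\k[d_1^2,\dots,d_n^2]^{\SG_n}[w]$ is all of $\k[(d_1\cdots d_n)^{-1},d_1^2,\dots,d_n^2]^{\SG_n}$. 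Combining (i) and (ii) with the first paragraph, the images under $HC$ of $z_1,\dots,z_n$ and $(d_1\cdots d_n)^{-1}$ generate the target ring, and since $HC$ is an isomorphism their preimages $z_1,\dots,z_n,(d_1\cdots d_n)^{-1}$ generate $Z_q(\mathfrak{gl}_n)$. I expect the only genuine obstacle to be step (i): pinning down the exact generating-function identity for the $\widetilde{\h}_m$ and confirming that the leading coefficient $1-q^{-2m}$ is nonzero, which is precisely where genericity of $q$ (i.e.\ $\k=\Q(q)$) is used; step (ii) and the centrality/Harish-Chandra bookkeeping are routine once the normalizations of Theorem~\ref{images} and Lemma~\ref{restr} are in hand.
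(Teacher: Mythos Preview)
Your proposal is correct and follows essentially the same approach as the paper: reduce via the Harish-Chandra isomorphism (Lemma~\ref{restr}) and Theorem~\ref{images} to the claim that $\widetilde{\h}_1,\dots,\widetilde{\h}_n$ generate $\k[x_1,\dots,x_n]^{\SG_n}$. The paper simply asserts this last fact, whereas you supply a full proof via the generating-function identity $\sum_{m\geq 0}\widetilde{\h}_m u^m=\alpha^{-1}\prod_i(1-q^{-2}x_iu)/(1-x_iu)$ and the resulting unitriangular change of basis to the ordinary $\h_m$; you also make explicit the routine localization argument for $(d_1\cdots d_n)^{-1}$, which the paper leaves implicit.
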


\begin{proof}
This follows from Lemma~\ref{restr} and Theorem~\ref{images} since
$\k[x_1,\dots,x_n]^{\SG_n}$ is generated
by the modified complete symmetric functions
$\widetilde{\h}_1(x_1,\dots,x_n),\dots,\widetilde{\h}_n(x_1,\dots,x_n)$.
\end{proof}

\section{Action on modules over cyclotomic Hecke algebras}\label{qcyclo}

Throughout the section, we assume that we are given a polynomial
\begin{align}\label{fwone}
f(w) &= f_0 w^l + f_1 w^{l-1}+\cdots+f_l \in \k[w]
\end{align}
of degree $l \geq 0$ such that $f_0 = 1$ and $f_l = t^2$.
Recall from the introduction that the affine Hecke algebra
$AH_n$ with its standard generators
$x_1,\dots,x_n, \tau_{1},\dots,\tau_{n-1}$
is identified with the endomorphism algebra
$\End_{\AH(z)}(\up^{\otimes n})$ so that 
and $x_{i}$ 
is the dot on the $i$th
string
and
$\tau_{j}$ 
is the positive crossing of the $j$th
and $(j+1)$th strings 
(numbering strings $1,\dots,n$ from right to left). 
The {\em cyclotomic Hecke algebra} $H_n^f$ of level $l$ 
associated to the polynomial $f(w)$
is the quotient of
$AH_n$ by the two-sided ideal generated by 
$f(x_1)$.
We also include the possibility $n=0$ with the convention that $H_0^f = \k$.

The basis theorem proved in \cite[Theorem 3.10]{AK} shows that
the following gives a basis for $H_n^f$ as a free $\k$-module:
\begin{equation}\label{akbasis}
\left\{x_1^{r_1} \cdots x_n^{r_n} \tau_{g} \:\big|\:0 \leq r_1,\dots,r_n < l, g
\in \SG_n\right\},
\end{equation}
where 
$\tau_{g}$ denotes the element of the finite Hecke algebra defined from a
reduced expression
for the permutation $g$.
By the basis theorem, the obvious homomorphism $H_n^f \rightarrow
H_{n+1}^f$
sending the generators 
$x_i$ and $\tau_{j}$ to the elements of $H_{n+1}^f$ with the same names
is {\em injective}. So we may identify $H_n^f$ with a subalgebra of
$H_{n+1}^f$.
We denote the
induction and restriction
functors by
\begin{align}
\ind_n^{n+1} := H_{n+1}^f \otimes_{H_n^f} -&:H_n^f\mod \rightarrow H_{n+1}^f\mod,\\
\res_n^{n+1} &: H_{n+1}^f\mod\rightarrow H_n^f\mod.
\end{align}
We are going to make the Abelian category 
$\bigoplus_{n \geq 0} H_n^f\mod$
into a left $\HEIS_{-l}(z,t)$-module
category,
with $\up$ and $\down$ acting as induction and restriction, respectively.
In order to do this, we need the {\em Mackey theorem} for $H_n^f$:
there is an isomorphism of functors
\begin{equation}\label{ind}
\ind_{n-1}^{n} \circ \res_{n-1}^{n} \oplus \operatorname{Id}^{\oplus l}
\stackrel{\sim}{\rightarrow} \res_n^{n+1} \circ \ind_n^{n+1}.
\end{equation}
The standard proof shows that
the map
\begin{align}\label{stdpf}
H_{n}^f \otimes_{H_{n-1}^f} H_n^f \oplus \bigoplus_{r=0}^{l-1} H_n^f
&\rightarrow H_{n+1}^f,&
(u\otimes v, w_0,\dots,w_{l-1}) \mapsto 
u \tau_{n} v + \sum_{r=0}^{l-1} w_r x_{n+1}^{r} 
\end{align}
is an isomorphism of $(H_n^f, H_n^f)$-bimodules.
This implies that there is a unique $(H_n^f, H_n^f)$-bimodule
homomorphism
\begin{equation}
\tr^f_n:H_{n+1}^f \rightarrow H_n^f
\end{equation}
such that $\tr^f_n(\tau_n) = 0$ and $\tr^f_n(x_{n+1}^r) = \delta_{r,0}$ for $0 \leq
r < l$. 

\begin{lemma}\label{lastrel}
For any $n \geq 0$, we have that $\tr^f_n\left(f(x_{n+1})\right) = 0$.
\end{lemma}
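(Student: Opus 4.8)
The plan is to argue by induction on $n$, reducing the level-$(n{+}1)$ identity to the level-$n$ one. For $n=0$ there is nothing to do: $H_1^f = \k[x_1^{\pm 1}]/(f(x_1))$, so $f(x_1)=0$ already holds in $H_1^f$ and $\tr^f_0(f(x_1)) = \tr^f_0(0)=0$. For the inductive step ($n\ge 1$) the key is to peel off the top power of $x_{n+1}$ using $x_{n+1} = \tau_n x_n \tau_n$, which is (\ref{a}) read inside $\End_{\AH(z)}(\up^{\otimes (n+1)})$. Combining this with the quadratic relation (\ref{r1}) in the form $\tau_n^{-1} = \tau_n - z$ yields the dot-slide identities $\tau_n x_n = x_{n+1}\tau_n - z x_{n+1}$ and $x_{n+1}\tau_n = \tau_n x_n + z x_{n+1}$, and an easy induction on $r$ gives $\tau_n x_n^r \tau_n = x_{n+1}^r - z\sum_{j=1}^{r-1} x_{n+1}^j x_n^{r-j}\tau_n$. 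Summing this against the coefficients of $f$ produces the decomposition
\[
f(x_{n+1}) = \tau_n\, f(x_n)\, \tau_n \;+\; z\sum_{j=1}^{l-1} x_{n+1}^j\, g_j(x_n)\, \tau_n,
\]
where each $g_j(w) := \sum_{r=j+1}^{l} f_{l-r}\, w^{r-j}$ has zero constant term.

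I would then establish two bimodule facts and apply $\tr^f_n$ term by term. First, a \emph{partial-trace compatibility} $\tr^f_n(\tau_n a \tau_n) = \tr^f_{n-1}(a)$ for all $a \in H_n^f$: both sides are $(H_{n-1}^f,H_{n-1}^f)$-bimodule maps $H_n^f \to H_{n-1}^f$ (using that $H_{n-1}^f$ commutes with $\tau_n$, together with the bimodule property of $\tr^f_n$), so by the level-$n$ Mackey isomorphism (\ref{stdpf}) it suffices to check them on the bimodule generators $\tau_{n-1}$ and $x_n^r$ ($0\le r<l$); the former uses the braid relation (\ref{r2}) and $\tr^f_n(\tau_n)=0$, the latter uses the formula for $\tau_n x_n^r \tau_n$ above. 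Applied to $a=f(x_n)$ this gives $\tr^f_n(\tau_n f(x_n)\tau_n) = \tr^f_{n-1}(f(x_n)) = 0$ by the inductive hypothesis. Second, a \emph{correction-vanishing} statement: for any polynomial $g$ and any $1 \le j \le l-1$, one has $\tr^f_n(x_{n+1}^j g(x_n)\tau_n)=0$. This follows by using $x_{n+1}\tau_n = \tau_n x_n + z x_{n+1}$ repeatedly to rewrite $x_{n+1}^j\tau_n = \tau_n x_n^j + z\sum_{i=1}^{j}x_{n+1}^i x_n^{j-i}$, discarding the term carrying a bare $\tau_n$ (it dies under $\tr^f_n$ by the bimodule property), and landing on $z\sum_{i=1}^{j} g(x_n)\,\tr^f_n(x_{n+1}^i)\,x_n^{j-i}$.

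The decisive point—and the only place a circularity might be feared—is the index bookkeeping in this last step. The traces $\tr^f_n(x_{n+1}^i)$ that appear all have $1 \le i \le j \le l-1 < l$, so each equals $\delta_{i,0}=0$ by the very definition of $\tr^f_n$; no trace of a power $\ge l$ is ever required. In other words, the top power $x_{n+1}^l$ is absorbed entirely into $\tau_n f(x_n)\tau_n$ (and dispatched by induction), while every leftover correction stays within the range of exponents on which $\tr^f_n$ is declared to vanish. Hence all correction terms vanish, $\tr^f_n(f(x_{n+1})) = \tr^f_{n-1}(f(x_n)) = 0$, and the induction closes. I expect the only genuine labor to be the routine verification of the dot-slide formula for $\tau_n x_n^r \tau_n$ and the generator checks for the partial-trace compatibility; conceptually everything hinges on keeping every surviving exponent strictly below $l$.
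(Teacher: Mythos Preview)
Your approach is correct but genuinely different from the paper's. One minor slip: your decomposition of $f(x_{n+1})$ is off by a $-f_l z\,\tau_n$ term, since your formula $\tau_n x_n^r\tau_n = x_{n+1}^r - z\sum_{j=1}^{r-1}x_{n+1}^jx_n^{r-j}\tau_n$ is only valid for $r\ge 1$; for $r=0$ one has $\tau_n^2 = z\tau_n + 1$, not $1$. Fortunately $\tr^f_n(-f_l z\,\tau_n)=0$, so this does not affect the conclusion. Everything else checks out, including the key observation that every surviving power of $x_{n+1}$ appearing under $\tr^f_n$ is strictly between $0$ and $l$.

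For comparison, the paper does not induct on $n$ via your partial-trace compatibility. Instead it conjugates $x_1^a$ all the way up by $\tau_n\cdots\tau_1(-)\tau_1\cdots\tau_n$ and proves an explicit congruence for this element modulo $H_n^f\tau_n H_n^f$; summing against the coefficients of $f$ then uses $f(x_1)=0$ directly. The paper's route yields a more explicit combinatorial formula (the analog of your dot-slide identity iterated $n$ times) but is more computational. Your route isolates the reusable fact $\tr^f_n(\tau_n a\tau_n)=\tr^f_{n-1}(a)$ and needs only the single-step conjugation, making the induction cleaner and more modular; the bookkeeping is also lighter since you never have to track products like $x_{n+1}^b x_n^{c_n}\cdots x_1^{c_1}$.
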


\begin{proof}
For $u, v \in H_{n+1}^f$, write $u \equiv_n v$ as shorthand for 
$u=v$ in case $n=0$, or $u-v \in
H_n^f \tau_n H_n^f$ in case $n > 0$.
We first show by induction on $n=0,1,\dots$ that
\begin{equation}
\tau_{n} \cdots \tau_{1} x_1^a \tau_{1} \cdots \tau_{n} \equiv_n
\left\{
\begin{array}{ll}
\displaystyle
\sum_{\substack{b+c_1+\cdots+c_n = a\\b > 0, c_1,\dots,c_n \geq 0}}
\left(\prod_{i\text{ with }c_i \neq 0} (-z^2 c_i)\right)
x_{n+1}^b x_n^{c_n}\cdots x_1^{c_1}&\text{if $a > 0$,}\\
\displaystyle
\sum_{\substack{b+c_1+\cdots+c_n = a\\b \leq 0, c_1,\dots,c_n \leq 0}}
\left(\prod_{i\text{ with }c_i \neq 0} (z^2 c_i)\right)
x_{n+1}^b x_n^{c_n}\cdots x_1^{c_1}&\text{if $a \leq 0$.}
\end{array}
\right.\label{enchiladas}
\end{equation}
We explain this in detail in the case $a > 0$, since the
case $a \leq 0$ is similar.
The base case is trivial. For the induction step, using the relations depicted in (\ref{teaminus})--(\ref{teaplus}),
we have that
\begin{align*}
\tau_{n} x_n^a \tau_{n} & = \tau_{n} \tau_{n}^{-1} x_{n+1}^a - z
                    \sum_{\substack{b+c=a\\b,c > 0}} \tau_{n} x_{n+1}^b
  x_n^c\\
&= 
x_{n+1}^a
- z
\sum_{\substack{b+c=a\\b,c > 0}} \tau_{n}^{-1} x_{n+1}^b
x_n^c 
-z^2 
\sum_{\substack{b+c=a\\b,c > 0}} x_{n+1}^b x_n^c\\
&\equiv_n
x_{n+1}^a
- z^2 
\sum_{\substack{b+c+d=a\\b,c,d > 0}} x_{n+1}^b
x_n^{c+d} 
-z^2 
\sum_{\substack{b+c=a\\b,c > 0}} x_{n+1}^b x_n^c
=x_{n+1}^a - z^2 \sum_{\substack{b+c=a\\b,c > 0}} c x_{n+1}^b x_n^c.
\end{align*}
Now take the expression
for
$\tau_{n-1}\cdots \tau_{1} x_1^a \tau_{1} \cdots \tau_{n-1}$
given by the induction hypothesis, multiply on left
and right by $\tau_{n}$, and use
the above identity plus the observation
$$
\tau_{n} \left(H_{n-1}^f \tau_{n-1} H_{n-1}^f\right) \tau_{n} 
= H_{n-1}^f \tau_{n} \tau_{n-1} \tau_{n} H_{n-1}^f 
= H_{n-1}^f \tau_{n-1} \tau_{n} \tau_{n-1} H_{n-1}^f
\subseteq H_n^f \tau_{n} H_n^f.
$$

Finally, to deduce the lemma, 
we multiply (\ref{enchiladas}) by $f_{l-a}$ and
sum over $a = 0,1,\dots,l$ to show
$$
\tau_{n} \cdots \tau_{1} f(x_1) \tau_{1} \cdots \tau_{n} \equiv_n
f_l + 
\displaystyle
\sum_{a = 1}^l
f_{l-a}
\sum_{\substack{b+c_1+\cdots+c_n = a\\b > 0, c_1,\dots,c_n \geq 0}}
\left(\prod_{i\text{ with }c_i \neq 0} (-z^2 c_i)\right)
x_{n+1}^b x_n^{c_n}\cdots x_1^{c_1}.
$$
The left-hand side is zero by the cyclotomic relation in $H_{n+1}^f$.
The right-hand side is equal to $f(x_{n+1})$ plus terms in the kernel
of $\tr_n^f$.
\end{proof}

\begin{theorem}\label{catact}
There is a unique strict $\k$-linear monoidal functor
$$
\Psi_f:\HEIS_{-l}(z,t) \rightarrow \mathcal{E}nd_\k\left(\bigoplus_{n \geq 0}
H_n^f\mod\right)
$$
sending the generating object $\up$ (resp.\ $\down$) to the additive endofunctor
that takes an $H_n^f$-module $M$ to $\ind_{n}^{n+1} M$
(resp.\ $\res^n_{n-1} M$), and the generating morphisms $x,
\tau, c$ and $d$ to the natural transformations
defined on the $H_n^f$-module $M$ as follows:
\begin{itemize}
\item 
$\Psi_f(x)_M:H_{n+1}^f \otimes_{H_n^f} M \rightarrow H_{n+1}^f
\otimes_{H_n^f} M$, $u\otimes v\mapsto  u x_{n+1} \otimes v$;
\item 
$\Psi_f(\tau)_M:H_{n+2}^f \otimes_{H_n^f} M \rightarrow H_{n+2}^f
\otimes_{H_n^f} M$, $u\otimes v \mapsto  u \tau_{n+1} \otimes v$ (where we
have identified $\ind_{n+1}^{n+2} \circ \ind_{n}^{n+1}$ with
$\ind_{n}^{n+2}$ in the obvious way);
\item 
$\Psi_f(c)_M:M \rightarrow \res^{n+1}_n\left(H_{n+1}^f \otimes_{H_n^f} M\right)$, $v
\mapsto 1 \otimes v$, i.e., it is the unit of the
canonical adjunction making $(\ind_{n}^{n+1}, \res_n^{n+1})$ into an
adjoint pair of functors;
\item
$\Psi_f(d)_M:H_n^f \otimes_{H_{n-1}^f} (\res^n_{n-1} M ) \rightarrow M$, $u
\otimes v
\mapsto uv$, i.e., it is the counit of the
canonical adjunction making $(\ind_{n-1}^{n}, \res_{n-1}^{n})$ into an
adjoint pair of functors.
\end{itemize}
\end{theorem}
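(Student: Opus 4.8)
The plan is to work with the presentation of $\HEIS_{-l}(z,t)$ from Definition~\ref{def1}, which is available here since the central charge $k=-l\le 0$, and to check that the prescribed assignments extend to a well-defined strict $\k$-linear monoidal functor by verifying each defining relation on induced modules. Uniqueness is essentially automatic: the rightward crossings are built from $\tau$, $c$ and $d$ via \eqref{rotate}, and every remaining generator of Definition~\ref{def1} is a matrix entry of the two-sided inverse \eqref{invrel2} of the morphism \eqref{invrel}. Since a two-sided inverse is unique when it exists, fixing $\Psi_f$ on the objects $\up,\down$ and on $x,\tau,c,d$ determines it on all generators. So the real task is existence, i.e.\ well-definedness.

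First I would dispose of the relations coming from $\AH(z)$ and the right duality. Right multiplication by $x_{n+1}$ and by $\tau_{n+1}$ on the induced modules $H_{n+1}^f\otimes_{H_n^f}-$ and $H_{n+2}^f\otimes_{H_n^f}-$ satisfies \eqref{a}--\eqref{r2}, because these are exactly the defining relations among $x_{n+1},\tau_{n+1}$ in the algebras $H^f_\bullet$; this is routine. The invertibility of $x$ required by \eqref{dagger} holds because each $\Psi_f(x)$ is right multiplication by the unit $x_{n+1}\in H_{n+1}^f$: the cyclotomic relation $f(x_1)=0$ with $f_l=t^2\in\k^\times$ shows $x_1$ is invertible, and then $x_{i+1}=\tau_i x_i\tau_i$ (from \eqref{a}, with $\tau_i$ invertible) gives inductively that every $x_i$ is a unit. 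The right adjunction relations \eqref{rightadj} are the triangle identities for the adjoint pair $(\ind,\res)$ with unit $\Psi_f(c)$ and counit $\Psi_f(d)$, and are immediate.

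The crux is the invertibility of \eqref{invrel} together with the essential relation \eqref{impose}. Here I would observe that, under the assignments above, the image under $\Psi_f$ of the morphism \eqref{invrel} is precisely the Mackey isomorphism \eqref{ind}: evaluated on $M\in H_n^f\mod$ it becomes a map $\ind_{n-1}^{n}\res_{n-1}^{n}M\oplus M^{\oplus l}\to\res_n^{n+1}\ind_n^{n+1}M$ whose crossing-component is $u\otimes v\mapsto u\tau_n v$ and whose $l$ cup-components (carrying $r=0,\dots,l-1$ dots) are $v\mapsto x_{n+1}^{r}\otimes v$, i.e.\ exactly the bimodule isomorphism \eqref{stdpf}. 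Thus $\Psi_f$ applied to \eqref{invrel} is invertible, and I define $\Psi_f$ on the entries of \eqref{invrel2} to be the entries of the inverse Mackey map. With this, the only defining relation still to be checked is \eqref{impose}, and this is the main obstacle: it is the single place where the cyclotomic datum enters. Tracing the relevant $(+)$-bubble (with $-k=l$ dots) through the inverse of \eqref{stdpf}, its value $tz^{-1}1_\unit$ amounts to the identity $\tr^f_n\!\left(x_{n+1}^{l}\right)=-t^2$, with the scalar produced by $f_l=t^2$ and the normalisation $\tr^f_n(x_{n+1}^r)=\delta_{r,0}$ for $0\le r<l$; this is exactly the content of Lemma~\ref{lastrel}, namely $\tr^f_n\!\left(f(x_{n+1})\right)=0$. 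Once \eqref{impose} is secured in this way, all relations of Definition~\ref{def1} hold, so $\Psi_f$ exists and is the unique functor with the stated values.
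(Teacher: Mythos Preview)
Your proposal is correct and follows essentially the same route as the paper: both use the presentation from Definition~\ref{def1}, verify the $\AH(z)$ and right-adjunction relations directly, identify the image of \eqref{invrel} with the Mackey isomorphism \eqref{stdpf} to obtain invertibility, and then reduce \eqref{impose} to Lemma~\ref{lastrel} via the trace $\tr^f_n$. Your treatment of uniqueness and of the invertibility of $x$ is slightly more explicit than the paper's, but the logic is the same.

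One small point: your verification of \eqref{impose} tacitly assumes $l>0$, since for $k=-l<0$ the relation reads $\anticlockleft$ with $l$ dots $=tz^{-1}1_\unit$, whereas for $k=0$ it is instead $\clockright=(tz^{-1}-t^{-1}z^{-1})1_\unit$. The paper handles $l=0$ separately (there $H_n^f=0$ for $n\geq 1$, so all functors vanish and the bubble relation holds because $t^2=f_0=1$ forces the scalar to be zero). You should add a sentence covering this degenerate case.
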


\begin{proof}
We use the presentation for $\HEIS_{-l}(z,t)$ from
Definition~\ref{def1}. Let us first treat the case $l=0$.
In this case, the polynomial $f(w)$ from (\ref{fwone}) is 1 and $t^2=1$.
The category $\bigoplus_{n \geq 0}
H_n^f\mod$ is simply the category of left $\k$-modules, and all of the
induction and restriction functors are zero.
Consequently, almost of the relations are
trivially true. 
The only one that requires any thought is
the relation
$\clockright
= (t z^{-1}-t^{-1} z^{-1}) 1_\unit$ from (\ref{impose}).
This holds because 
the scalar on 
the right-hand side is zero as
$t^2 = 1$.

Henceforth, we assume that $l > 0$.
Then $\HEIS_{-l}(z,t)$ is
generated by the objects $\up$ and
$\down$ and morphisms
$x, \tau, c$ and $d$ 
subject to the relations
(\ref{rr3})--(\ref{rightadj}), plus two more relations:
\begin{itemize}
\item[(1)] 
$\left[\:
\mathord{
\begin{tikzpicture}[baseline = 0]
	\draw[->,thin,darkblue] (-0.28,-.3) to (0.28,.4);
	\draw[-,line width=4pt,white] (0.28,-.3) to (-0.28,.4);
	\draw[<-,thin,darkblue] (0.28,-.3) to (-0.28,.4);
\end{tikzpicture}
}\:\:\:
\mathord{
\begin{tikzpicture}[baseline = -0.9mm]
	\draw[<-,thin,darkblue] (0.4,0.2) to[out=-90, in=0] (0.1,-.2);
	\draw[-,thin,darkblue] (0.1,-.2) to[out = 180, in = -90] (-0.2,0.2);
\end{tikzpicture}
}
\:\:\:
\mathord{
\begin{tikzpicture}[baseline = -0.9mm]
	\draw[<-,thin,darkblue] (0.4,0.2) to[out=-90, in=0] (0.1,-.2);
	\draw[-,thin,darkblue] (0.1,-.2) to[out = 180, in = -90] (-0.2,0.2);
      \node at (0.38,0) {$\dot$};
\end{tikzpicture}
}
\:\:\:\cdots
\:\:\:
\mathord{
\begin{tikzpicture}[baseline = -0.9mm]
	\draw[<-,thin,darkblue] (0.4,0.2) to[out=-90, in=0] (0.1,-.2);
	\draw[-,thin,darkblue] (0.1,-.2) to[out = 180, in = -90] (-0.2,0.2);
     \node at (0.73,0) {$\color{darkblue}\scriptstyle{l-1}$};
      \node at (0.38,0) {$\dot$};
\end{tikzpicture}
}
\right]$ is invertible where $\sigma := \mathord{
\begin{tikzpicture}[baseline = 0]
	\draw[->,thin,darkblue] (-0.28,-.3) to (0.28,.4);
	\draw[-,line width=4pt,white] (0.28,-.3) to (-0.28,.4);
	\draw[<-,thin,darkblue] (0.28,-.3) to (-0.28,.4);
\end{tikzpicture}
}$ is defined by (\ref{rotate});
\item[(2)] 
$\mathord{\begin{tikzpicture}[baseline = .8mm]
  \draw[-,thin,darkblue] (0.2,0.2) to[out=90,in=0] (0,.4);
  \draw[->,thin,darkblue] (0,0.4) to[out=180,in=90] (-.2,0.2);
\draw[-,thin,darkblue] (-.2,0.2) to[out=-90,in=180] (0,0);
  \draw[-,thin,darkblue] (0,0) to[out=0,in=-90] (0.2,0.2);
      \node at (0.2,0.2) {$\dot$};
      \node at (0.33,0.2) {$\color{darkblue}\scriptstyle{l}$};
 \end{tikzpicture}
}
= t z^{-1} 1_\unit$ where $\gamma := 
\mathord{
\begin{tikzpicture}[baseline = 1mm]
	\draw[-,thin,darkblue] (0.4,0) to[out=90, in=0] (0.1,0.4);
	\draw[->,thin,darkblue] (0.1,0.4) to[out = 180, in = 90] (-0.2,0);
\end{tikzpicture}
}$ is
defined by (\ref{leftwards}), i.e., it is $-t^{-1} z^{-1}$ times the
$(2,1)$-entry of the inverse of the matrix in (1).
\end{itemize}
The relations (\ref{rr3})--(\ref{rightadj}) are straightforward to check.
On $H_n^f$-modules,
$\Psi_f(\sigma)$ comes from the $(H_n^f, H_n^f)$-bimodule homomorphism
$H_n^f\otimes_{H_{n-1}^f} H_n^f \rightarrow
H_{n+1}^f, u \otimes v \mapsto u \tau_{n} v$.
So we get the relation (1)
since (\ref{stdpf}) is invertible by the proof of the Mackey
theorem.
Moreover, we see from (\ref{stdpf}) and the definition that
$\Psi_f(\gamma)$ comes from the $(H_n^f, H_n^f)$-bimodule homomorphisms
$-t^{-1} z^{-1} \tr^f_n:H_{n+1}^f \rightarrow H_n^f$ for all $n  \geq
0$.
So for (2) we must show that
$-t^{-1} z^{-1} \tr^f_n\left(x_{n+1}^{l}\right) = t z^{-1}$.
This follows from Lemma~\ref{lastrel} and the definition of
$\tr_n^f$,
remembering that $t^2=f_l$.
\end{proof}

If we switch the roles of induction and restriction, we can
reformulate Theorem~\ref{catact} in terms of Heisenberg categories of positive
central charge.
We prefer for this to
replace the induction functor $\ind_n^{n+1}$ from before (which is
the canonical left adjoint to restriction) with the {\em coinduction functor}
\begin{equation}\label{coind}
\coind_n^{n+1} := \Hom_{H_n^f}(H_{n+1}^f, -):H_n^f\mod \rightarrow
H_{n+1}^f\mod\end{equation}
which is its canonical right adjoint. 

\begin{theorem}\label{catact2}
There is a unique strict $\k$-linear monoidal functor
$$
\Psi^\vee_f:\HEIS_{l}(z,t^{-1}) \rightarrow \mathcal{E}nd_\k\left(\bigoplus_{n \geq 0}
H_n^f\mod\right)
$$
sending the generating object $\up$ (resp.\ $\down$) to the additive endofunctor
that takes an $H_n^f$-module $M$ to $\res_{n-1}^{n} M$
(resp.\ $\coind^{n+1}_{n} M$), and the generating morphisms $x,
\tau, c$ and $d$ to the natural transformations
defined on the $H_n^f$-module $M$ as follows:
\begin{itemize}
\item 
$\Psi^\vee_f(x)_M:\res^n_{n-1} M \rightarrow \res^n_{n-1} M$,
$v\mapsto  x_n v$;
\item 
$\Psi^\vee_f(\tau)_M:\res^n_{n-2} M \rightarrow \res^n_{n-2} M$, 
 $v \mapsto - \tau_{n-1}^{-1} v$;
\item 
$\Psi^\vee_f(c)_M:M \rightarrow \Hom_{H_{n-1}^f}(H_n^f, \res^n_{n-1} M)$,
$v
\mapsto (u \mapsto uv)$, i.e., it is the unit of the
canonical adjunction making $(\res_{n-1}^{n}, \coind_{n-1}^{n})$ into an
adjoint pair of functors;
\item
$\Psi^\vee_f(d)_M:\res^{n+1}_n \left(\Hom_{H_n^f}(H_{n+1}^f,
  M)\right)\rightarrow M$,
$\theta 
\mapsto \theta(1)$, i.e., it is the counit of the
canonical adjunction making $(\res_{n}^{n+1}, \coind_{n}^{n+1})$ into an
adjoint pair of functors.
\end{itemize}
\end{theorem}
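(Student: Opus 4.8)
The plan is to deduce Theorem~\ref{catact2} from Theorem~\ref{catact} by transporting the action $\Psi_f$ along the symmetry isomorphism $\Omega$ of (\ref{om}) and then passing to right adjoints, rather than re-checking relations from scratch. Taking $k=-l$ in (\ref{om}) gives a strict $\k$-linear monoidal isomorphism $\Omega_{-l}\colon\HEIS_{-l}(z,t)\stackrel{\sim}{\to}\HEIS_{l}(z,t^{-1})^{\operatorname{op}}$, and hence an isomorphism $\HEIS_{l}(z,t^{-1})\cong\HEIS_{-l}(z,t)^{\operatorname{op}}$. So it is enough to produce a strict monoidal functor $\HEIS_{-l}(z,t)^{\operatorname{op}}\to\mathcal{E}nd_\k\big(\bigoplus_{n\geq 0}H_n^f\mod\big)$ sending $\up\mapsto\res$ and $\down\mapsto\coind$, and then precompose with $\Omega_{-l}^{-1}$.

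To build this functor I would use the calculus of mates. Every endofunctor in the image of $\Psi_f$ admits a right adjoint, since $\ind_n^{n+1}\dashv\res_n^{n+1}\dashv\coind_n^{n+1}$; in particular the right adjoint of $\Psi_f(\up)=\ind$ is $\res$ and the right adjoint of $\Psi_f(\down)=\res$ is $\coind$, which is exactly why coinduction (and not induction) is forced to appear. Taking right adjoints on objects and mates on natural transformations is contravariant on $2$-morphisms and reverses composition, so it defines a strict monoidal functor from the subcategory of $\mathcal{E}nd_\k(\bigoplus_n H_n^f\mod)$ with right adjoints into $\big(\mathcal{E}nd_\k(\bigoplus_n H_n^f\mod)^{\operatorname{op}}\big)^{\operatorname{rev}}$. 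Composing $\Psi_f$ with this mate operation and with the self-duality $*$ of (\ref{spiv}) that identifies $\HEIS_{l}(z,t^{-1})$ with its own reverse-opposite yields the desired covariant $\Psi_f^\vee$. Under this procedure the unit and counit $c,d$ of the pair $(\ind,\res)$ used in Theorem~\ref{catact} become precisely the unit and counit of $(\res,\coind)$, matching the stated formulas for $\Psi_f^\vee(c)$ and $\Psi_f^\vee(d)$; the dot $x$, acting by $x_{n+1}$ on induction, becomes its mate $x_n$ on restriction; and the positive crossing $\tau$, acting by $\tau_{n+1}$, acquires a sign from $\Omega$ and an inverse from being replaced by the negative crossing, producing the operator $-\tau_{n-1}^{-1}$.

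Should a self-contained verification be preferred over the transport argument, I would instead check the relations directly against the presentation of Definition~\ref{def2}, which is the natural one here because $\Psi_f^\vee(\tau)=-\tau_{n-1}^{-1}$ is built from the negative crossing and Definition~\ref{def2} inverts the morphism (\ref{invrel1}) rather than (\ref{invrel}). Paralleling the proof of Theorem~\ref{catact}, the relations not involving the bubble are formal consequences of the adjunction data, and the two substantive points are the invertibility of the image of the matrix (\ref{invrel1}) and the normalization of the relevant curl. The former follows from the coinduction form of the Mackey isomorphism (\ref{ind}), i.e.\ from the bimodule decomposition (\ref{stdpf}) read through the right adjoint $\coind_n^{n+1}$ via the inverse of (\ref{invrel3}); the latter reduces, through the dual trace map, to Lemma~\ref{lastrel} with $t$ replaced by $t^{-1}$, using $f_0=1$ and $f_l=t^2$.

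The hard part will be purely bookkeeping: one must track the $\operatorname{op}$/$\operatorname{rev}$ conventions together with the signs produced by $\Omega$ and by the mate construction carefully enough to confirm that the transported data agree \emph{on the nose} with the explicit operators $v\mapsto x_n v$ and $v\mapsto -\tau_{n-1}^{-1}v$ and with the canonical unit and counit of $(\res,\coind)$. Uniqueness is immediate, since $\up,\down$ and the morphisms $x,\tau,c,d$ generate $\HEIS_{l}(z,t^{-1})$ as a monoidal category.
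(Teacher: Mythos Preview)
Your alternative approach (direct verification using Definition~\ref{def2}, the Mackey isomorphism (\ref{stdpf}), and Lemma~\ref{lastrel}) is exactly the paper's own proof, which simply says ``This may be proved directly in a similar way to the proof of Theorem~\ref{catact}\dots We leave the details to the reader.''

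Your primary approach via $\Omega$ and mates is more conceptual, but the op/rev bookkeeping does not close up as smoothly as you suggest. Passing to right adjoints is indeed contravariant on $2$-cells \emph{and} reverses horizontal composition (since $(FG)^R = G^R F^R$), so the mate construction lands in $\mathcal{E}nd_\k^{\operatorname{op,rev}}$, not $\mathcal{E}nd_\k^{\operatorname{op}}$. The isomorphism $\Omega_{-l}$ handles the ``op'' and swaps $\up\leftrightarrow\down$; the duality $*$ handles ``op,rev'' together and also swaps $\up\leftrightarrow\down$. Neither combination of these yields a pure ``rev'' identification $\HEIS_l(z,t^{-1})\cong\HEIS_{-l}(z,t)^{\operatorname{op,rev}}$ that fixes $\up$ and $\down$, which is what you would need to land on $\up\mapsto\res$, $\down\mapsto\coind$. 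So the phrase ``purely bookkeeping'' is too optimistic: there is a genuine mismatch to resolve, and doing so cleanly (while also ensuring the mate functor is strict, which requires compatible choices of adjoints) is not obviously easier than the direct check. The virtue of the direct approach is precisely that it sidesteps all of this.
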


\begin{proof}
This may be proved directly in a similar way to the proof of
Theorem~\ref{catact}. One uses the presentation for $\HEIS_l(z,t^{-1})$ from
Definition~\ref{def2} instead of the one from Definition~\ref{def1},
plus the Mackey isomorphism (\ref{stdpf}) and Lemma~\ref{lastrel} as before.
We leave the details to the reader.
\end{proof}

In fact, we have that $\ind_n^{n+1} \cong
\coind_n^{n+1}$.
This follows by the uniqueness of adjoints,
since 
Lemma~\ref{rigiditylemma} and
Theorem~\ref{catact} (resp.\ Theorem~\ref{catact2})
implies that $\ind_n^{n+1}$ is right adjoint to restriction
as well as being left adjoint (resp.\ $\coind_n^{n+1}$ is left adjoint
to restriction as well as being right adjoint).
It follows that all three functors (induction, coinduction and
restriction) 
send finitely generated projective
modules to finitely generated projective modules. 
Hence:

\begin{lemma}\label{advising1}
The restrictions of the functors $\Psi_f$ and $\Psi^\vee_f$ 
constructed in Theorems~\ref{catact}--\ref{catact2} give
strict $\k$-linear monoidal functors
\begin{align*}
\Psi_f:\HEIS_{-l}(z,t) &\rightarrow \mathcal{E}nd_\k\left(\bigoplus_{n \geq 0} H_n^f\proj\right),&
\Psi^\vee_f:\HEIS_{l}(z,t^{-1}) &\rightarrow
                               \mathcal{E}nd\left(\bigoplus_{n\geq 0}
                               H_n^f\proj\right),
\end{align*}
where $H_n^f\proj$ denotes the category of finitely generated
projective left $H_n^f$-modules.
\end{lemma}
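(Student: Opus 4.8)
The plan is to deduce this as a formal consequence of the constructions in Theorems~\ref{catact} and \ref{catact2} together with the observation recorded just before the lemma. The key reduction is that both $\Psi_f$ and $\Psi^\vee_f$ are strict monoidal, so the endofunctor attached to an arbitrary object of $\HEIS_{-l}(z,t)$ (resp.\ $\HEIS_l(z,t^{-1})$) is a composite of the endofunctors attached to the generating objects $\up$ and $\down$, with the identity functor attached to $\unit$. Since a composite of additive functors each of which preserves finitely generated projectives again preserves finitely generated projectives, it suffices to check that the two generating objects act through functors carrying $H_n^f\proj$ into $H_{n\pm 1}^f\proj$. Nothing further needs to be verified on morphisms: a $\k$-linear natural transformation between two endofunctors of $\bigoplus_{n\geq 0}H_n^f\mod$ automatically restricts to a natural transformation between their restrictions to the full subcategory $\bigoplus_{n\geq 0}H_n^f\proj$, so the restricted data again assemble into a strict $\k$-linear monoidal functor.

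For the generating objects, recall that under $\Psi_f$ the object $\up$ acts by induction $\ind_n^{n+1}$ and $\down$ by restriction $\res_n^{n+1}$, whereas under $\Psi^\vee_f$ the object $\up$ acts by restriction and $\down$ by coinduction $\coind_n^{n+1}$. The basis theorem recorded in (\ref{akbasis}) shows that $H_{n+1}^f$ is free of finite rank both as a left and as a right $H_n^f$-module; in particular induction, restriction and coinduction are all exact. Induction preserves finitely generated projectives because it is left adjoint to the exact functor restriction, equivalently because $H_{n+1}^f$ is a finitely generated projective right $H_n^f$-module. Using the isomorphism $\ind_n^{n+1}\cong\coind_n^{n+1}$ stated before the lemma, coinduction preserves finitely generated projectives as well. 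Finally restriction preserves finitely generated projectives since its right adjoint is $\coind_n^{n+1}\cong\ind_n^{n+1}$, which is exact.

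Assembling these facts, every generating object of each of the two categories acts through an endofunctor of $\bigoplus_{n\geq 0}H_n^f\mod$ that preserves the full subcategory $\bigoplus_{n\geq 0}H_n^f\proj$, and hence so does the endofunctor attached to any object, by the reduction above. Restricting the target of each functor to $\mathcal{E}nd_\k\!\left(\bigoplus_{n\geq 0}H_n^f\proj\right)$ therefore yields the two strict $\k$-linear monoidal functors in the statement. The only genuine content here is the preservation of projectivity, and this was already secured by the freeness of $H_{n+1}^f$ over $H_n^f$ coming from the basis theorem together with the self-adjointness of induction and restriction established before the lemma; once that is in hand, the remainder of the argument is purely formal, and I would present it as such.
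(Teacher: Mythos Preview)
Your proposal is correct and follows essentially the same approach as the paper: both arguments rest on the isomorphism $\ind_n^{n+1}\cong\coind_n^{n+1}$ established just before the lemma, from which it follows that induction, coinduction and restriction all preserve finitely generated projectives. You have simply spelled out more of the formal details (the reduction to generating objects, and the explicit appeal to the basis theorem for freeness of $H_{n+1}^f$ over $H_n^f$) that the paper leaves implicit.
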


\section{Action on category $\mathcal O$ for rational Cherednik algebras}\label{scherednik}

The Heisenberg action
on $\bigoplus_{n \geq 0} H_n^f \mod$ from Theorem~\ref{catact}
can also be extended to an action on
the category $\mathcal O$ for rational Cherednik algebras,
following an argument of Shan.
To explain this in more detail, assume that $\k = \mathbb{C}$, and consider the complex reflection group $G(l,1,n)\cong \SG_n\wr \Z/l\Z$ for $l \geq 1$, with reflection representation $\k^n$ defined as in \cite[$\S$3.1]{Shan}. Defining a rational Cherednik algebra requires a choice of parameters, for which there are a bewildering number of different parameterizations. We have:\begin{itemize}
    \item a single parameter $\kappa\in\k$, which is the parameter $k_{H,1}$ in \cite[Remark 3.2]{GGOR} for a reflecting hyperplane $H$ on which the difference of two coordinates vanish; 
    \item  an $l$-tuple $(\kappa_1,\dots,\kappa_l) \in \k^l$ of parameters, which corresponds to the family $\{k_{H,i}\}_{0 \leq i \leq l}$ of parameters in \cite[Remark 3.2]{GGOR} associated to a reflecting hyperplane $H$ on which a single coordinate vanishes so that $\kappa_i = k_{H,i}$. In {\em loc.\ cit.}, it is assumed that $k_{H,0}=k_{H,l}=0$, but adding a constant to all $k_{H,i}$ leaves the algebra unchanged. It is useful for us to incorporate an additional degree of freedom, so we drop the vanishing condition here: our parameter $\kappa_l$ may be non-zero.
\end{itemize}
Let $\Cherednik_n$ be the rational Cherednik algebra
attached to these parameters as in \cite[\S 3]{GGOR}.

Let $q:=\exp(\sqrt{-1}\pi\kappa)$ and $q_i:=\exp(\sqrt{-1}\pi(\kappa_{i}-i/\ell))$ for $i=1,\dots,l$. 
One can relate these to the parameters in \cite{Shan} by choosing integers $e \geq 2$ and $(s_1,\dots, s_l)$ then letting $\kappa := \frac{1}{e}$ and $\kappa_{i}:=\kappa s_i +i/\ell$, so $q_i=q^{s_i}$, for $i=1,\dots,l$; note that the parameter $q$ in {\em loc.\ cit.} is our $q^2$.
Let $\mathcal O = \mathcal O_{\kappa;\kappa_1,\dots,\kappa_l} := \bigoplus_{n\geq 0} O_n$ where $\mathcal O_n$ is the category of $\Cherednik_n$-modules introduced in \cite[$\S$3]{GGOR}.
Also define
$$
f(w) := \prod_{i=1}^{l} \left(w + q_i^2\right),
\qquad
t := q_1\dots q_{l}.
$$
By \cite[Theorem 5.16]{GGOR}, there is an exact functor
\begin{equation}
\KZ:\mathcal O \rightarrow
\bigoplus_{n \geq 0} H_n^f\mod.
\end{equation}
Note that this functor depends on a choice for each $n$ of a basepoint in the subset of $\mathbb{C}^n$ where all entries are distinct and non-zero. Different basepoints give  isomorphic functors, but the isomorphism depends on the homotopy class of a path between the basepoints.  For simplicity, we assume these basepoints are chosen to lie 
in the set $\left\{(b_1,\dots, b_n)\in \mathbb{R}^n\:\big|\:0< b_1 <\dots <b_n\right\}$. Since this is a contractible space, the resulting $\KZ$ functors are all canonically isomorphic, and there is no need for us to be more specific.  

Matching with the formulae in \cite{GGOR, Shan} requires using the isomorphism from the cyclotomic Hecke algebra in \cite[$\S$3.1]{Shan} to ours that sends the
generators $T_0, T_1,\dots,T_{n-1}$
to  $-x_1, q \tau_1,\dots,q \tau_{n-1}$. The Hecke algebra generators $T_i\:(i=1,\dots,n-1)$ in \cite{Shan} are of the form $-T$ for Hecke algebra generators $T$ from \cite[$\S$5.2.5]{GGOR} associated to reflections in the first type of hyperplane above.
Also, $T_0$ is a scalar multiple (depending on the choice of $\kappa_l$) of the Hecke algebra generator $T$ 
in \cite[$\S$5.2.5]{GGOR} associated to a
reflection of the second type. The key point in all of this is that the minimal polynomials for $x_1$ and $\tau_i\:(i =1,\dots,n-1)$ 
arising from the key formula in \cite[\S 5.2.5]{GGOR} are $f(w)$ and $(w-q)(w+q^{-1})$ (up to scalars), i.e., we do indeed get defining relations of $H_n^f$.

The functor $\KZ$ is fully faithful on projectives \cite[Theorem 5.16]{GGOR}. Moreover, it intertwines the Bezrukavnikov-Etingof induction and restriction functors denoted $\ind_{b_{n+1}}$ and $\res_{b_{n+1}}$ in \cite[$\S$3.2]{Shan}
with the functors
$\ind_n^{n+1}$ and $\res_{n}^{n+1}$ thanks to \cite[Theorem 2.1]{Shan}.  These induction and restriction functors also depend on a choice of basepoint with a particular stabilizer, which following Shan we fix to be $(0,0,\dots, 0,1)$. (It would be more philosophically consistent with our previous conventions to say that whenever we choose a basepoint for restriction, we choose one of the form  $(b_1,\dots, b_n)\in \mathbb{R}^n$ such that $0\leq b_1\leq b_2\leq \cdots \leq b_n$; whether we have equality or strict inequality depends on which stabilizer we wish to have under the action of  $G(l,1,n)$.  As before, all such choices give canonically isomorphic functors.)
\begin{theorem}
There is a strict $\k$-linear monoidal functor
\begin{equation}
\widehat{\Psi}_f:\HEIS_{-l}(z,t) \rightarrow \mathcal{E}nd_\k\left(\mathcal{O}\right).
\end{equation}
that makes $\mathcal O$ into a module category over $\HEIS_{-l}(z,t)$,
with $\up$ and $\down$ acting as Bezrukavnikov-Etingof induction and restriction functors, respectively.
This can be done
in such a way that 
$\KZ$ is a morphism of $\HEIS_{-l}(z,t)$-module categories, viewing $\bigoplus_{n \geq 0} H_n^f\mod$ as a module category via the functor $\Psi_f$ from Theorem~\ref{catact}. 
\end{theorem}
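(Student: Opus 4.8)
The plan is to transport the action $\Psi_f$ of Theorem~\ref{catact} along $\KZ$, exploiting that $\KZ$ is a cover: it is exact and fully faithful on projectives by \cite[Theorem 5.16]{GGOR}, and it intertwines the Bezrukavnikov--Etingof functors $\ind_{b_{n+1}},\res_{b_{n+1}}$ with $\ind_n^{n+1},\res_n^{n+1}$ by \cite[Theorem 2.1]{Shan}. First I would record the structural input from \cite{Shan}: the BE functors are exact, preserve projectives, are biadjoint in the appropriate sense, and BE induction carries natural transformations (a ``dot'' $X$ and, on its square, a ``crossing'' $T$) satisfying the affine Hecke relations, all compatible with the Hecke-algebra data under $\KZ$. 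Using this I set $\widehat{\Psi}_f(\up),\widehat{\Psi}_f(\down)$ to be BE induction and restriction, send $x,\tau$ to $X,T$ and $c,d$ to the unit and counit of the $(\ind_{BE},\res_{BE})$ adjunction, arranged so that $\KZ$ carries each of these natural transformations to the corresponding $\Psi_f$-image.

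The technical heart is a faithfulness principle for the cover. Since every $M\in\mathcal O$ has a presentation $P_1\to P_0\to M\to 0$ by projectives and the BE functors are exact, any natural transformation $\eta$ between compositions $F,G$ of BE functors is determined by its values on $P_0,P_1$; as $FP_i,GP_i$ are projective and $\KZ$ is fully faithful on projectives, $\KZ\eta_{P_i}=0$ forces $\eta_{P_i}=0$, whence $\eta=0$. Thus $\KZ$ is \emph{faithful} on natural transformations between such compositions. The same presentation together with the five lemma shows that $\eta$ is \emph{invertible} as soon as $\KZ\eta$ is: full faithfulness on projectives makes $\eta_{P_i}$ invertible, and right-exactness propagates invertibility to $\eta_M$ for all $M$.

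With these tools I complete the construction using the presentation in Definition~\ref{def1} for $k=-l\le 0$. The relations (\ref{rr3})--(\ref{rightadj}) and the imposed relation (\ref{impose}) are equalities of natural transformations between compositions of BE functors; applying $\KZ$ turns each into the corresponding identity of $\Psi_f$-images, valid by Theorem~\ref{catact}, so the faithfulness principle delivers the relation already in $\mathcal{E}nd_\k(\mathcal O)$. The remaining ingredient of Definition~\ref{def1} is the invertibility of the morphism (\ref{invrel}); its $\KZ$-image is $\Psi_f$ of that morphism, invertible by Theorem~\ref{catact}, so the invertibility-detection principle renders (\ref{invrel}) invertible in $\Add$ of the $\mathcal O$-action, and the matrix entries of its inverse define the images of the adjoined generators. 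This produces the strict $\k$-linear monoidal functor $\widehat{\Psi}_f$ with $\up,\down$ acting by BE induction and restriction.

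Finally, that $\KZ$ is a morphism of $\HEIS_{-l}(z,t)$-module categories is built into the construction: the intertwiners $\KZ\circ\ind_{BE}\cong\ind_n^{n+1}\circ\KZ$ and $\KZ\circ\res_{BE}\cong\res_n^{n+1}\circ\KZ$ from \cite[Theorem 2.1]{Shan} supply the coherence isomorphisms on objects, and the identities $\KZ(\widehat{\Psi}_f(g))=\Psi_f(g)$ on generators, extended by functoriality, give compatibility with the action morphisms. The step I expect to be the main obstacle is not the formal transport but the careful importation and normalization of the affine-Hecke natural transformations $X,T$ on BE induction from \cite{Shan}: one must match Shan's conventions (in particular the appearance of $-\tau^{-1}$ already seen in Theorem~\ref{catact2}, and the scalars $t=q_1\cdots q_l$, $z=q-q^{-1}$) so that $\KZ$ genuinely identifies them with the $\Psi_f$-images, since only then does the faithfulness principle deliver the relations.
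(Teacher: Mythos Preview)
Your proposal is correct and follows essentially the same line as the paper. The paper's proof is terse: it invokes the mechanism of \cite[Lemma 2.4]{Shan} (precisely your faithfulness/lifting principle via full faithfulness of $\KZ$ on projectives together with projective presentations in $\mathcal O$) to transport the natural transformations and relations from Theorem~\ref{catact} across the cover.

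One small difference of emphasis: you propose to import the ``dot'' $X$ and ``crossing'' $T$ on BE induction from \cite{Shan} and then only use the faithfulness principle to check relations, whereas the paper uses the cover to \emph{construct} all of the natural transformations from scratch by lifting their $\Psi_f$-images. Your own argument already contains this lifting: since $\KZ$ is fully faithful on projectives and the BE functors preserve projectives, a natural transformation between compositions of BE functors on the full subcategory of projectives is the \emph{same data} as one on the Hecke side, and then it extends to all of $\mathcal O$ via presentations. So you need not rely on extracting correctly-normalized $X,T$ from \cite{Shan}; you may simply lift $\Psi_f(x),\Psi_f(\tau),\Psi_f(c),\Psi_f(d)$ directly, which sidesteps the normalization obstacle you flag at the end.
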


\begin{proof}
Our argument is exactly as in the proof of \cite[Theorem 5.1]{Shan} using \cite[Lemma 2.4]{Shan}.   We need to show that there are certain natural transformations of functors satisfying specific relations.  Theorem~\ref{catact} allows us to define these on the image of the functor $\KZ$ via the action of
$\HEIS_{-l}(z,t)$.   The full-faithfulness of $\KZ$ allows us to transfer this to an action on the full subcategory of projectives in $\mathcal{O}$.  Since $\mathcal{O}$ has enough projectives by \cite[Corollary 2.8]{GGOR}, this action can be extended to an arbitrary object $X$ by presenting $X$ as the cokernel of a map between projectives.  The resulting action is well-defined due to the fact that endomorphisms of an object lift to any projective resolution uniquely up to homotopy.
\end{proof}

\begin{remark}
This quantum Heisenberg action is in many ways more convenient for working with category $\cO$ over Cherednik algebras than a Kac-Moody 2-category action, since the Heisenberg action requires no special assumptions on parameters.  In fact, this action is still well defined if $\k$ is replaced by a complete local ring, so one can extend the Heisenberg action to deformed category $\cO$.
\end{remark}

\section{Categorical comultiplication}\label{scc}

In this section, we construct the quantum analog of the categorical
comultiplication from \cite[Theorem 5.4]{BSW1}.  As discussed in \cite[Theorem 1.3]{BSW1}, the name ``categorical comultiplication'' derives from the relationship of this map to the usual comultiplication on the universal enveloping algebra of the Heisenberg Lie algebra.  Since in the quantum case an explicit description of 
$K_0(\Kar(\HEIS_k(z,t)))$ analogous to that of \cite[Theorem 1.1]{BSW1} is not available, 
we will not make a precise statement along these lines here, but we fully expect an analogue of \cite[Theorem 1.3]{BSW1} to hold in all situations where the Grothendieck ring has the expected form.
As well as the quantum Heisenberg
category $\HEIS_k(z,t)$, we will work with 
$\blue{\HEIS_{l}(z,u)}$ and $\red{\HEIS_m(z,v)}$
for $\blue{l}, \red{m} \in \Z$ and $\blue{u}, \red{v} \in \k^\times$
chosen so that
\begin{align}
k &= \blue{l}+\red{m},
&
t &=\blue{u}\red{v}.
\end{align}
To avoid confusion between these 
different categories, the reader will want to view the material in
this section in color.

Let $\blue{\HEIS_l(z,u)}\odot \red{\HEIS_m(z,v)}$ be the symmetric product
of $\blue{\HEIS_l(z,u)}$ and $\red{\HEIS_m(z,v)}$ as defined \cite[$\S$3]{BSW1}.  This is the strict $\k$-linear monoidal category defined by first taking the free product of
$\blue{\HEIS_l(z,u)}$ and $\red{\HEIS_m(z,v)}$, i.e., 
the strict $\k$-linear monoidal category defined by the
disjoint union of the given generators and relations of $\blue{\HEIS_l(z,u)}$ and of $\red{\HEIS_m(z,v)}$, then adjoining isomorphisms $\sigma_{X,Y}:X \otimes Y \stackrel{\sim}{\rightarrow} Y \otimes
X$ for each pair of objects $X \in \blue{\HEIS_l(z,u)}$ and $Y \in \red{\HEIS_m(z,v)}$
subject to the relations
\begin{align*}
    \sigma_{X_1 \otimes X_2, Y} &= (\sigma_{X_1,Y} \otimes 1_{X_2}) \circ
      (1_{X_1} \otimes \sigma_{X_2,Y}),&
    \sigma_{X_2,Y} \circ (f \otimes 1_Y)  &= (1_Y \otimes f) \circ \sigma_{X_1,Y},\\
    \sigma_{X, Y_1 \otimes Y_2} &= (1_{Y_1} \otimes \sigma_{X,Y_2}) \circ
    (\sigma_{X, Y_1} \otimes 1_{Y_2}),&
    \sigma_{X,Y_2} \circ (1_X \otimes g) &= (g \otimes 1_X)\circ \sigma_{X,Y_1}
\end{align*}
for all $X, X_1,X_2 \in\blue{\HEIS_l(z,u)}$, $Y, Y_1,Y_2 \in
\red{\HEIS_m(z,v)}$ and $f:X_1\rightarrow X_2$, $g:Y_1\rightarrow Y_2$.
Morphisms in 
$\blue{\HEIS_l(z,u)} \odot \red{\HEIS_m(z,v)}$ are linear combinations of diagrams
colored both blue and red. In these diagrams,
as well as the generating morphisms
of
$\blue{\HEIS_l(z,u)}$ and $\red{\HEIS_m(z,v)}$, 
we have the additional two-color crossings
$$
\mathord{
\begin{tikzpicture}[baseline = 0]
	\draw[->,thin,red] (0.28,-.3) to (-0.28,.4);
	\draw[thin,->,blue] (-0.28,-.3) to (0.28,.4);
\end{tikzpicture}
}\:,\qquad
\mathord{
\begin{tikzpicture}[baseline = 0]
	\draw[<-,thin,red] (0.28,-.3) to (-0.28,.4);
	\draw[thin,->,blue] (-0.28,-.3) to (0.28,.4);
\end{tikzpicture}
}\:,\qquad
\mathord{
\begin{tikzpicture}[baseline = 0]
	\draw[->,thin,red] (0.28,-.3) to (-0.28,.4);
	\draw[thin,<-,blue] (-0.28,-.3) to (0.28,.4);
\end{tikzpicture}
}\:,\qquad
\mathord{
\begin{tikzpicture}[baseline = 0]
	\draw[<-,thin,red] (0.28,-.3) to (-0.28,.4);
	\draw[thin,<-,blue] (-0.28,-.3) to (0.28,.4);
\end{tikzpicture}\:,
}\
$$
which represent the isomorphisms $\sigma_{X,Y}$ for $X \in
\{\blue{\up}, \blue{\down}\}$ and $Y \in \{\red{\up}, \red{\down}\}$,
and their inverses
$$
\mathord{
\begin{tikzpicture}[baseline = 0]
	\draw[->,thin,blue] (0.28,-.3) to (-0.28,.4);
	\draw[thin,->,red] (-0.28,-.3) to (0.28,.4);
\end{tikzpicture}
}\:,\qquad
\mathord{
\begin{tikzpicture}[baseline = 0]
	\draw[->,thin,blue] (0.28,-.3) to (-0.28,.4);
	\draw[thin,<-,red] (-0.28,-.3) to (0.28,.4);
\end{tikzpicture}
}\:,\qquad
\mathord{
\begin{tikzpicture}[baseline = 0]
	\draw[<-,thin,blue] (0.28,-.3) to (-0.28,.4);
	\draw[thin,->,red] (-0.28,-.3) to (0.28,.4);
\end{tikzpicture}
}\:,\qquad
\mathord{
\begin{tikzpicture}[baseline = 0]
	\draw[<-,thin,blue] (0.28,-.3) to (-0.28,.4);
	\draw[thin,<-,red] (-0.28,-.3) to (0.28,.4);
\end{tikzpicture}
}\:.
$$

\begin{definition}\label{loca}
Given a diagram $D$ representing a morphism in
$\blue{\HEIS_l(z,u)}\odot\red{\HEIS_m(z,v)}$
and two generic 
points in this diagram, one on a red string and the other on a blue
string, we will denote the morphism represented by
$$\text{($D$ with an extra 
dot at the red point)
$-$ ($D$ with an extra dot at the blue point)}
$$
by labelling the points with dots joined by
a dotted line. 
For example:
\begin{equation}
\mathord{
\begin{tikzpicture}[baseline = -1mm]
 	\draw[->,thin,red] (0.18,-.4) to (0.18,.4);
	\draw[->,thin,blue] (-0.38,-.4) to (-0.38,.4);
	\draw[-,dotted] (-0.38,0.01) to (0.18,0.01);
     \node at (0.18,0) {$\dot$};
     \node at (-0.38,0) {$\dot$};
\end{tikzpicture}
}:= \:\;
\mathord{
\begin{tikzpicture}[baseline = -1mm]
 	\draw[->,thin,red] (0.18,-.4) to (0.18,.4);
	\draw[->,thin,blue] (-0.38,-.4) to (-0.38,.4);
     \node at (0.18,0) {$\red{\dot}$};
\end{tikzpicture}
}-
\mathord{
\begin{tikzpicture}[baseline = -1mm]
 	\draw[->,thin,red] (0.18,-.4) to (0.18,.4);
	\draw[->,thin,blue] (-0.38,-.4) to (-0.38,.4);
     \node at (-0.38,0) {$\blue{\dot}$};
\end{tikzpicture}}
\:.
\end{equation}
Let 
$\blue{\HEIS_l(z,u)} \;\overline{\odot}\; \red{\HEIS_m(z,v)}$ be the strict $\k$-linear
monoidal category obtained by localizing at 
$\mathord{
\begin{tikzpicture}[baseline = -1mm]
 	\draw[->,thin,red] (0.18,-.2) to (0.18,.25);
	\draw[->,thin,blue] (-0.38,-.2) to (-0.38,.25);
	\draw[-,dotted] (-0.38,0.01) to (0.18,0.01);
     \node at (0.18,0) {$\dot$};
     \node at (-0.38,0.01) {$\dot$};
\end{tikzpicture}}
$.
This means that we adjoin a two-sided inverse to this morphism, which
we denote as a dumbbell
\begin{equation}
\mathord{
\begin{tikzpicture}[baseline = -1mm]
 	\draw[->,thin,red] (0.18,-.4) to (0.18,.4);
	\draw[->,thin,blue] (-0.38,-.4) to (-0.38,.4);
	\draw[-] (-0.38,0.01) to (0.18,0.01);
     \node at (0.18,0) {$\dot$};
     \node at (-0.38,0) {$\dot$};
\end{tikzpicture}
}:=
\left(\mathord{
\begin{tikzpicture}[baseline = -1mm]
 	\draw[->,thin,red] (0.18,-.4) to (0.18,.4);
	\draw[->,thin,blue] (-0.38,-.4) to (-0.38,.4);
	\draw[-,dotted] (-0.38,0.01) to (0.18,0.01);
     \node at (0.18,0) {$\dot$};
     \node at (-0.38,0) {$\dot$};
\end{tikzpicture}
}\right)^{-1}.
\end{equation}
Just 
as explained in the degenerate case in \cite[$\S\S$4--5]{BSW1}, all morphisms
whose string diagram is that of an
identity morphism with a horizontal dotted line joining two points of
different colors are also automatically invertible in the localized
category. We also denote the
inverses of such morphisms by using a solid dumbbell in place of the
dotted  one.
For instance:
$$
\mathord{
\begin{tikzpicture}[baseline = -1mm]
 	\draw[->,thin,red] (0.86,-.4) to (0.86,.4);
 	\draw[<-,thin,blue] (0.38,-.4) to (0.38,.4);
\draw[<-,thin,red] (-.1,-.4) to (-.1,.4);
	\draw[->,thin,blue] (-0.58,-.4) to (-0.58,.4);
	\draw[->,thin,blue] (-1.06,-.4) to (-1.06,.4);
	\draw[-] (-1.06,0.01) to (0.86,0.01);
     \node at (0.86,0) {$\dot$};
     \node at (-1.06,0) {$\dot$};
\end{tikzpicture}
}
=
\mathord{
\begin{tikzpicture}[baseline = -1mm]
 	\draw[-,thin,red] (0.86,-.8) to[out=90,in=-90] (-0.1,0);
 	\draw[->,thin,red] (-0.1,-0) to[out=90,in=-90] (0.86,.8);
 	\draw[<-,thin,blue] (0.38,-.8) to[out=90,in=-90] (0.86,0);
 	\draw[-,thin,blue] (0.86,0) to[out=90,in=-90] (0.38,.8);
\draw[<-,thin,red] (-.1,-.8) to [out=90,in=-90] (-1.06,0);
\draw[-,thin,red] (-1.06,0) to[out=90,in=-90] (-.1,.8);
	\draw[->,thin,blue] (0.38,0) to[out=90,in=-90] (-0.58,.8);
	\draw[-,thin,blue] (-0.58,-.8) to[out=90,in=-90] (0.38,0);
	\draw[->,thin,blue] (-.58,0) to [out=90,in=-90] (-1.06,.8);
	\draw[-,thin,blue] (-1.06,-.8) to [in=-90,out=90] (-.58,0);
	\draw[-] (-.58,0.01) to (-0.1,0.01);
     \node at (-0.58,0) {$\dot$};
     \node at (-.1,0) {$\dot$};
\end{tikzpicture}
}=
\left(\:
\mathord{
\begin{tikzpicture}[baseline = -1mm]
 	\draw[-,thin,red] (0.86,-.8) to[out=90,in=-90] (-0.1,0);
 	\draw[->,thin,red] (-0.1,-0) to[out=90,in=-90] (0.86,.8);
 	\draw[<-,thin,blue] (0.38,-.8) to[out=90,in=-90] (0.86,0);
 	\draw[-,thin,blue] (0.86,0) to[out=90,in=-90] (0.38,.8);
\draw[<-,thin,red] (-.1,-.8) to [out=90,in=-90] (-1.06,0);
\draw[-,thin,red] (-1.06,0) to[out=90,in=-90] (-.1,.8);
	\draw[->,thin,blue] (0.38,0) to[out=90,in=-90] (-0.58,.8);
	\draw[-,thin,blue] (-0.58,-.8) to[out=90,in=-90] (0.38,0);
	\draw[->,thin,blue] (-.58,0) to [out=90,in=-90] (-1.06,.8);
	\draw[-,thin,blue] (-1.06,-.8) to [in=-90,out=90] (-.58,0);
	\draw[-,dotted] (-.58,0.01) to (-0.1,0.01);
     \node at (-0.58,0) {$\dot$};
     \node at (-.1,0) {$\dot$};
\end{tikzpicture}
}\:
\right)^{-1} =
\left(\:
\mathord{
\begin{tikzpicture}[baseline = -1mm]
 	\draw[->,thin,red] (0.86,-.4) to (0.86,.4);
 	\draw[<-,thin,blue] (0.38,-.4) to (0.38,.4);
\draw[<-,thin,red] (-.1,-.4) to (-.1,.4);
	\draw[->,thin,blue] (-0.58,-.4) to (-0.58,.4);
	\draw[->,thin,blue] (-1.06,-.4) to (-1.06,.4);
	\draw[-,dotted] (-1.06,0.01) to (0.86,0.01);
     \node at (0.86,0) {$\dot$};
     \node at (-1.06,0) {$\dot$};
\end{tikzpicture}
}
\:\right)^{-1}\:.
$$
We also need the following morphisms, which we refer to as {\em internal bubbles}:
\begin{align}
\mathord{
\begin{tikzpicture}[baseline=2mm]
	\draw[->,thin,red] (0,-.2) to[out=90,in=-90] (0,.8);
     \node at (0,.3) {$\color{blue}\anticlockleft$};
\end{tikzpicture}
}&:=
z\sum_{a \geq 0}
\mathord{
\begin{tikzpicture}[baseline=2mm]
	\draw[->,thin,red] (0,-.2) to[out=90,in=-90] (0,.8);
     \node at (-.85,.3) {$\color{blue}\anticlockplus$};
     \node at (0,0.3) {$\color{red}\dot$};
     \node at (0.2,0.3) {$\color{red}\scriptstyle a$};
     \node at (-0.46,0.3) {$\color{blue}\scriptstyle -a$};
\end{tikzpicture}
}+
z\mathord{
\begin{tikzpicture}[baseline=2mm]
	\draw[->,thin,red] (0,-.2) to[out=90,in=-90] (0,.8);
     \node at (-.6,.3) {$\color{blue}\anticlockleft$};
	\draw[-] (0,0.31) to (-0.4,0.31);
     \node at (0,0.3) {$\dot$};
     \node at (-0.4,0.3) {$\dot$};
     \node at (-0.48,0.45) {$\color{blue}\dot$};
\end{tikzpicture}
}
\:,
&
\mathord{
\begin{tikzpicture}[baseline=2mm]
	\draw[->,thin,red] (0,-.2) to[out=90,in=-90] (0,.8);
     \node at (0,.3) {$\color{blue}\clockright$};
\end{tikzpicture}
}
&:=
z\sum_{a \geq 0}
\mathord{
\begin{tikzpicture}[baseline=2mm]
	\draw[->,thin,red] (0,-.2) to[out=90,in=-90] (0,.8);
     \node at (.85,.3) {$\blue{\clockplus}$};
     \node at (0,0.3) {$\color{red}\dot$};
     \node at (-0.2,0.3) {$\color{red}\scriptstyle a$};
     \node at (0.46,0.3) {$\color{blue}\scriptstyle -a$};
\end{tikzpicture}
}+
z\mathord{
\begin{tikzpicture}[baseline=2mm]
	\draw[->,thin,red] (0,-.2) to[out=90,in=-90] (0,.8);
     \node at (.6,.3) {$\color{blue}\clockright$};
	\draw[-] (0,0.31) to (0.4,0.31);
     \node at (0,0.3) {$\dot$};
     \node at (0.4,0.3) {$\dot$};
     \node at (0.53,0.45) {$\color{blue}\dot$};
\end{tikzpicture}
}\:,\label{odd1}\\\mathord{
\begin{tikzpicture}[baseline=2mm]
	\draw[->,thin,blue] (0,-.2) to[out=90,in=-90] (0,.8);
     \node at (0,.3) {$\color{red}\anticlockleft$};
\end{tikzpicture}
}&:=
z\sum_{a \geq 0}
\mathord{
\begin{tikzpicture}[baseline=2mm]
	\draw[->,thin,blue] (0,-.2) to[out=90,in=-90] (0,.8);
     \node at (-.85,.3) {$\color{red}\anticlockplus$};
     \node at (0,0.3) {$\color{blue}\dot$};
     \node at (0.2,0.3) {$\color{blue}\scriptstyle a$};
     \node at (-0.46,0.3) {$\color{red}\scriptstyle -a$};
\end{tikzpicture}
}-
z\mathord{
\begin{tikzpicture}[baseline=2mm]
	\draw[->,thin,blue] (0,-.2) to[out=90,in=-90] (0,.8);
     \node at (-.6,.3) {$\color{red}\anticlockleft$};
	\draw[-] (0,0.31) to (-0.4,0.31);
     \node at (0,0.3) {$\dot$};
     \node at (-0.4,0.3) {$\dot$};
     \node at (-0.48,0.45) {$\color{red}\dot$};
\end{tikzpicture}
}
\:,
&
\mathord{
\begin{tikzpicture}[baseline=2mm]
	\draw[->,thin,blue] (0,-.2) to[out=90,in=-90] (0,.8);
     \node at (0,.3) {$\color{red}\clockright$};
\end{tikzpicture}
}
&:=
z\sum_{a \geq 0}
\mathord{
\begin{tikzpicture}[baseline=2mm]
	\draw[->,thin,blue] (0,-.2) to[out=90,in=-90] (0,.8);
     \node at (.85,.3) {$\red{\clockplus}$};
     \node at (0,0.3) {$\color{blue}\dot$};
     \node at (-0.2,0.3) {$\color{blue}\scriptstyle a$};
     \node at (0.46,0.3) {$\color{red}\scriptstyle -a$};
\end{tikzpicture}
}-
z\mathord{
\begin{tikzpicture}[baseline=2mm]
	\draw[->,thin,blue] (0,-.2) to[out=90,in=-90] (0,.8);
     \node at (.6,.3) {$\color{red}\clockright$};
	\draw[-] (0,0.31) to (0.4,0.31);
     \node at (0,0.3) {$\dot$};
     \node at (0.4,0.3) {$\dot$};
     \node at (0.53,0.45) {$\color{red}\dot$};
\end{tikzpicture}
}\:.\label{odd2}\end{align}
\end{definition}

The category
$\blue{\HEIS_l(z,u)} \;\overline{\odot}\; \red{\HEIS_m(z,v)}$
possesses various symmetries which are often useful.
Derived from (\ref{om}), we have the strict $\k$-linear monoidal isomorphism
\begin{equation}\label{OMEGA}
\Omega_{\blue{l}|\red{m}}:
\blue{\HEIS_l(z,u)}\;\overline{\odot}\;\red{\HEIS_m(z,v)}\stackrel{\sim}{\rightarrow}
\left(\blue{\HEIS_{-l}(z,u^{-1})}\;\overline{\odot}\;\red{\HEIS_{-m}(z,v^{-1})}\right)^{\operatorname{op}},
\end{equation}
which takes a diagram to its mirror image in a horizontal plane
multiplied by
$(-1)^{x+y}$ where $x$ is the number of one-colored crossings and $y$
is the number of leftward cups and caps (including ones in $(+)$-, $(-)$- and
internal bubbles).
Also, 
we have
\begin{equation}
\eta
:
\blue{\HEIS_l(z,u)}\;\overline{\odot}\;\red{\HEIS_m(z,v)}
\stackrel{\sim}{\rightarrow}
\blue{\HEIS_m(z,v)}\;\overline{\odot}\;\red{\HEIS_l(z,u)}\label{ETA}
\end{equation}
defined on diagrams by switching the colors blue and red
then multiplying by $(-1)^z$ where $z$ is the total 
number of dumbbells (both solid and dotted) in the picture.
Finally, the category
$\blue{\HEIS_l(z,u)}\;\overline{\odot}\;\red{\HEIS_m(z,v)}$
is strictly pivotal, with duality functor
\begin{equation}\label{spiv2}
*: \blue{\HEIS_l(z,u)}\;\overline{\odot}\;\red{\HEIS_m(z,v)}\stackrel{\sim}{\rightarrow}
\left(\left(\blue{\HEIS_l(z,u)}\;\overline{\odot}\;\red{\HEIS_m(z,v)}\right)^{\operatorname{op}}\right)^{\operatorname{rev}}
\end{equation}
defined by rotating
diagrams through $180^\circ$ just like in (\ref{spiv}).

We denote
the duals of the internal bubbles
(\ref{odd1})--(\ref{odd2}) by
\begin{equation*}
\mathord{
% [inline block 2: 142 envs, 47527 chars -> data_tex | \begin{tikzpicture}[baseline=2mm] 	\draw[<-,thin,red] (0,-.2) to[out=90,in=-90] (0,.8);...]

}\:.\\\intertext{Equivalently, in terms of the generating functions (\ref{fourofem})--(\ref{igm})
and their analogs in $\blue{\HEIS_l(z,u)}$ and $\red{\HEIS_m(z,v)}$:}
\label{dt1}
\Delta_{\blue{l}|\red{m}}\left(\anticlockplus\,(w)\right) &= 
                                         \blue{\anticlockplus\,}(w)\;\red{\anticlockplus\,}(w),
&\Delta_{\blue{l}|\red{m}}\left(\clockplus\,(w)\right) &= 
\blue{\clockplus\,}(w)\red{\clockplus\,}(w),
\\\label{dt2}
\Delta_{\blue{l}|\red{m}}\left(\anticlockminus\,(w)\right) &= 
                                                             \blue{\anticlockminus\,}(w)\;\red{\anticlockminus\,}(w),
&
\Delta_{\blue{l}|\red{m}}\left(\clockminus\,(w)\right) &= 
                                                 \blue{\clockminus\,}(w)\red{\clockminus\,}(w).
\end{align}
\end{theorem}

\begin{remark}
For the proof, it is helpful to notice that
$\eta \circ \Delta_{\blue{l}|\red{m}} =
\Delta_{\blue{m}|\red{l}}$
(on extending $\eta$ to the additive
envelopes
in the obvious way).
However, $\Delta_{\blue{l}|\red{m}}$ 
does not commute with either of the other symmetries
$\Omega$ or $*$.
In fact, the map $\Omega_{\blue{-l}|\red{-m}} 
\circ \Delta_{\blue{-l}|\red{-m}} \circ  \Omega_k$ 
would be an equally good 
alternative
choice for the categorical comultiplication map.
The only change to the above
formulae if one uses this alternative is that one needs to replace $q$
with $-q^{-1}$ in
(\ref{com2})--(\ref{com2b}); this is the ``Galois symmetry'' 
in the choice of the root $q$ of the equation $x^2-zx-1=0$.
\end{remark}

\begin{proof}
In view of the uniqueness from Lemma~\ref{lateaddition}, we may take (\ref{com1})--(\ref{com4}) as the definition of $\Delta_{\blue{l}|\red{m}}$ on generating morphisms, and must check that the images of the relations (\ref{rr3})--(\ref{rightadj}) and 
(\ref{pos})--(\ref{morecurls}) from Definition~\ref{def3} are all satisfied in
$\Add\left(\blue{\HEIS_l(z,u)} \;\overline{\odot}\; \red{\HEIS_m(z,v)}\right)$;
 we must also check (\ref{com5})--(\ref{com6}).
The details are
sufficiently similar to the degenerate case from the proof of
\cite[Theorem 5.4]{BSW1} that we only sketch the steps needed below.

First one checks (\ref{rr3})--(\ref{rr0}).
For example, to check the skein relation, the image under $\Delta_{\blue{l}|\red{m}}$ of
$\mathord{
\begin{tikzpicture}[baseline = -.5mm]
	\draw[->,thin] (0.2,-.2) to (-0.2,.3);
	\draw[line width=4pt,-,white] (-0.2,-.2) to (0.2,.3);
	\draw[thin,->] (-0.2,-.2) to (0.2,.3);
\end{tikzpicture}
}
-\mathord{
\begin{tikzpicture}[baseline = -.5mm]
	\draw[thin,->] (-0.2,-.2) to (0.2,.3);
	\draw[line width=4pt,-,white] (0.2,-.2) to (-0.2,.3);
	\draw[->,thin] (0.2,-.2) to (-0.2,.3);
\end{tikzpicture}
}\:$
is $A+\eta(A)$ where
$$
A := \left(\mathord{
\begin{tikzpicture}[baseline = 0]
	\draw[->,thin,blue] (0.28,-.3) to (-0.28,.4);
	\draw[line width=4pt,-,white] (-0.28,-.3) to (0.28,.4);
	\draw[thin,->,blue] (-0.28,-.3) to (0.28,.4);
\end{tikzpicture}
}-
\mathord{
\begin{tikzpicture}[baseline = 0]
	\draw[thin,->,blue] (-0.28,-.3) to (0.28,.4);
	\draw[line width=4pt,-,white] (0.28,-.3) to (-0.28,.4);
	\draw[->,thin,blue] (0.28,-.3) to (-0.28,.4);
\end{tikzpicture}
}\right)
+z\left(\mathord{
\begin{tikzpicture}[baseline = -1mm]
 	\draw[->,thin,red] (0.2,-.4) to (0.2,.4);
	\draw[->,thin,blue] (-0.2,-.4) to (-0.2,.4);
	\draw[-] (-0.2,-0.15) to (0.2,-0.15);
     \node at (0.2,-0.16) {$\dot$};
     \node at (-0.2,-0.16) {$\dot$};
     \node at (0.2,0.1) {$\red{\dot}$};
\end{tikzpicture}
}-\mathord{
\begin{tikzpicture}[baseline = -1mm]
 	\draw[->,thin,red] (0.2,-.4) to (0.2,.4);
	\draw[->,thin,blue] (-0.2,-.4) to (-0.2,.4);
	\draw[-] (-0.2,-0.15) to (0.2,-0.15);
     \node at (0.2,-0.16) {$\dot$};
     \node at (-0.2,-0.16) {$\dot$};
     \node at (-0.2,0.1) {$\blue{\dot}$};
\end{tikzpicture}
}\right)
+z\left(\mathord{
\begin{tikzpicture}[baseline = 0]
	\draw[->,thin,red] (0.28,-.3) to (-0.28,.4);
	\draw[thin,->,blue] (-0.28,-.3) to (0.28,.4);
\end{tikzpicture}
}
+
\mathord{
\begin{tikzpicture}[baseline = 0]
	\draw[->,thin,red] (0.28,-.3) to (-0.28,.4);
	\draw[thin,->,blue] (-0.28,-.3) to (0.28,.4);
	\draw[-] (-0.15,-.14) to (0.15,-.14);
     \node at (0.15,-.15) {$\dot$};
     \node at (-0.15,-.15) {$\dot$};
     \node at (0.15,0.22) {$\blue{\dot}$};
\end{tikzpicture}
}-
\mathord{
\begin{tikzpicture}[baseline = 0]
	\draw[->,thin,red] (0.28,-.3) to (-0.28,.4);
	\draw[thin,->,blue] (-0.28,-.3) to (0.28,.4);
	\draw[-] (-0.15,-.14) to (0.15,-.14);
     \node at (0.15,-.15) {$\dot$};
     \node at (-0.15,-.15) {$\dot$};
     \node at (-0.15,0.22) {$\red{\dot}$};
\end{tikzpicture}
}\right)\:.
$$
Using the skein relation in $\blue{\HEIS_l(z,u)}$
plus (\ref{teleporting}), $A$ simplifies to 
$B := z\:\mathord{
\begin{tikzpicture}[baseline = -.5mm]
	\draw[->,thin,blue] (-0.15,-.2) to (-0.15,.3);
	\draw[thin,->,blue] (0.15,-.2) to (0.15,.3);
\end{tikzpicture}
}
\:+z\:
\mathord{
\begin{tikzpicture}[baseline = -.5mm]
	\draw[->,thin,blue] (-0.15,-.2) to (-0.15,.3);
	\draw[thin,->,red] (0.15,-.2) to (0.15,.3);
\end{tikzpicture}
}\:$. 
This is what is required since
the image under $\Delta_{\blue{l}|\red{m}}$ of $z \:\mathord{
\begin{tikzpicture}[baseline = -.5mm]
	\draw[->,thin] (-0.15,-.2) to (-0.15,.3);
	\draw[thin,->] (0.15,-.2) to (0.15,.3);
\end{tikzpicture}
}\:$ is
$B + \eta(B)$.
The other relations here are checked by similarly explicit
calculations. The one for the braid relation is rather long.

The relation (\ref{rightadj}) is easy.

To check (\ref{com5})--(\ref{com6}), 
we assume to start with that $k \geq 0$. Consider the clockwise $(+)$-bubble
$\mathord{
\begin{tikzpicture}[baseline = 1.25mm]
   \node at (0,0.2) {$\clockplus$};
   \node at (-0.3,0.2) {$\scriptstyle{a}$};
\end{tikzpicture}
}$.
When $a \leq 0$, this is just a scalar (usually zero) due to
(\ref{tanks}) and the
assumption $k \geq 0$,
and the relation
to be checked is trivial. So assume that $a > 0$. Then 
$\mathord{
\begin{tikzpicture}[baseline = 1.25mm]
   \node at (0,0.2) {$\clockplus$};
   \node at (-0.3,0.2) {$\scriptstyle{a}$};
\end{tikzpicture}
}
=
\mathord{
\begin{tikzpicture}[baseline = 1.25mm]
   \node at (0,0.2) {$\clockright$};
   \node at (-0.35,0.2) {$\scriptstyle{a}$};
   \node at (-0.2,0.2) {$\dot$};
\end{tikzpicture}
}
$, hence, its image under $\Delta_{\blue{l}|\red{m}}$
is 
$
-\displaystyle\mathord{
\begin{tikzpicture}[baseline=-1mm]
\draw[-,thin,blue] (0,-0.2) to[out=180,in=-90] (-.2,0);
\draw[-,thin,blue] (-0.2,0) to[out=90,in=180] (0,0.2);
\draw[->,thin,blue] (0,0.2) to[out=0,in=90] (0.2,0);
\draw[-,thin,blue] (0.2,0) to[out=-90,in=0] (0,-0.2);
     \node at (-.17,-.13) {$\color{red}\smallclock$};
     \node at (-.17,.12) {$\color{blue}\dot$};
     \node at (-.34,.13) {$\color{blue}\scriptstyle{a}$};
\end{tikzpicture}}
\:-\displaystyle\mathord{
\begin{tikzpicture}[baseline=-1mm]
\draw[-,thin,red] (0,-0.2) to[out=180,in=-90] (-.2,0);
\draw[-,thin,red] (-0.2,0) to[out=90,in=180] (0,0.2);
\draw[->,thin,red] (0,0.2) to[out=0,in=90] (0.2,0);
\draw[-,thin,red] (0.2,0) to[out=-90,in=0] (0,-0.2);
     \node at (-.17,-.13) {$\color{blue}\smallclock$};
     \node at (-.17,.12) {$\color{red}\dot$};
     \node at (-.34,.13) {$\color{red}\scriptstyle{a}$};
\end{tikzpicture}}
\:$,
which is indeed
equal to 
$-z\sum_{b \in \Z}
\mathord{
\begin{tikzpicture}[baseline = 1.25mm]
   \node at (0,0.2) {$\color{blue}\clockplus$};
   \node at (-.3,0.2) {$\color{blue}\scriptstyle{b}$};
\end{tikzpicture}
}
\mathord{
\begin{tikzpicture}[baseline = 1.25mm]
   \node at (0,0.2) {$\color{red}\clockplus$};
   \node at (-.45,0.2) {$\color{red}\scriptstyle{a-b}$};
\end{tikzpicture}
}
$ by Lemma~\ref{l2}.
This establishes the right-hand identity in (\ref{com5}), hence, the
right-hand identity in  (\ref{dt1}). The left-hand identity in
(\ref{dt1}) then follows using (\ref{igproper}),
thereby establishing the left-hand identity in (\ref{com5}) as well.
Next, consider the 
clockwise $(-)$-bubble
$\mathord{
\begin{tikzpicture}[baseline = 1.25mm]
   \node at (0,0.2) {$\clockminus$};
   \node at (-0.3,0.2) {$\scriptstyle{a}$};
\end{tikzpicture}
}$.
This time the relation to be checked is trivial when $a \geq 0$, so
assume that $a < 0$.
Then, using the assumption $k \geq 0$ again, we have that
$\mathord{
\begin{tikzpicture}[baseline = 1.25mm]
   \node at (0,0.2) {$\clockminus$};
   \node at (-0.3,0.2) {$\scriptstyle{a}$};
\end{tikzpicture}
}
=
\mathord{
\begin{tikzpicture}[baseline = 1.25mm]
   \node at (0,0.2) {$\clockright$};
   \node at (-0.35,0.2) {$\scriptstyle{a}$};
   \node at (-0.2,0.2) {$\dot$};
\end{tikzpicture}
}
$, hence, its image under $\Delta_{\blue{l}|\red{m}}$
is 
$
-\displaystyle\mathord{
\begin{tikzpicture}[baseline=-1mm]
\draw[-,thin,blue] (0,-0.2) to[out=180,in=-90] (-.2,0);
\draw[-,thin,blue] (-0.2,0) to[out=90,in=180] (0,0.2);
\draw[->,thin,blue] (0,0.2) to[out=0,in=90] (0.2,0);
\draw[-,thin,blue] (0.2,0) to[out=-90,in=0] (0,-0.2);
     \node at (-.17,-.13) {$\color{red}\smallclock$};
     \node at (-.17,.12) {$\color{blue}\dot$};
     \node at (-.34,.13) {$\color{blue}\scriptstyle{a}$};
\end{tikzpicture}}
\:-\displaystyle\mathord{
\begin{tikzpicture}[baseline=-1mm]
\draw[-,thin,red] (0,-0.2) to[out=180,in=-90] (-.2,0);
\draw[-,thin,red] (-0.2,0) to[out=90,in=180] (0,0.2);
\draw[->,thin,red] (0,0.2) to[out=0,in=90] (0.2,0);
\draw[-,thin,red] (0.2,0) to[out=-90,in=0] (0,-0.2);
     \node at (-.17,-.13) {$\color{blue}\smallclock$};
     \node at (-.17,.12) {$\color{red}\dot$};
     \node at (-.34,.13) {$\color{red}\scriptstyle{a}$};
\end{tikzpicture}}
\:$,
which is 
equal to 
$z\sum_{b \in \Z}
\mathord{
\begin{tikzpicture}[baseline = 1.25mm]
   \node at (0,0.2) {$\color{blue}\clockminus$};
   \node at (-.3,0.2) {$\color{blue}\scriptstyle{b}$};
\end{tikzpicture}
}
\mathord{
\begin{tikzpicture}[baseline = 1.25mm]
   \node at (0,0.2) {$\color{red}\clockminus$};
   \node at (-.45,0.2) {$\color{red}\scriptstyle{a-b}$};
\end{tikzpicture}
}
$ by Lemma~\ref{l2} (noting when $a < 0 \leq k$ that the term
involving $(+)$-bubbles is zero). Then we complete the proof of
(\ref{com6}) using the equivalent form (\ref{dt2}) and
(\ref{igproper}) once again.
It remains to treat $k \leq 0$. This follows by similar arguments; one starts by considering the counterclockwise
$(+)$- and $(-)$-bubbles using the identities obtained by applying $\Omega_{\blue{l}|\red{m}}$ to
Lemma~\ref{l2}, then gets the clockwise ones using
(\ref{igproper}).

Consider (\ref{curls})--(\ref{morecurls}).
The relations involving bubbles
follow easily from (\ref{com5})--(\ref{com6}).
Next consider the right curl relation in (\ref{curls}), so $k
\geq 0$. Applying $\Delta_{\blue{l}|\red{m}}$ to the relation reveals that
we must show that $A+\eta(A) = B+\eta(B)$ where
\begin{align*}
A &:= 
z\mathord{\begin{tikzpicture}[baseline=-1mm]
\draw[->,thin,blue] (-0.6,-0.4) to (-.6,0.4);
\draw[-,thin,red] (0,-0.2) to[out=180,in=-90] (-.2,0);
\draw[->,thin,red] (-0.2,0) to[out=90,in=180] (0,0.2);
\draw[-,thin,red] (0,0.2) to[out=0,in=90] (0.2,0);
\draw[-,thin,red] (0.2,0) to[out=-90,in=0] (0,-0.2);
     \node at (.2,0) {$\color{blue}\smallclock$};
	\draw[-] (-.2,0.01) to (-0.6,0.01);
     \node at (-0.2,0) {$\dot$};
     \node at (-0.6,0) {$\dot$};
     \node at (0,-.2) {$\red{\dot}$};
\end{tikzpicture}}
-
\mathord{
\begin{tikzpicture}[baseline = -1mm]
  \draw[-,thin,blue] (-0.45,-.4) to (-0.45,-.3)
        to[in=180,out=90] (-.2,.2) to[out=0,in=90] (0,0.05);
\draw[-,line width=4pt,white] (0,0.05)  to[out=-90,in=0] (-.2,-.2) to [out=180,in=-90] (-0.45,.3) to (-0.45,.4);
\draw[->,thin,blue] (0,0.05)  to[out=-90,in=0] (-.2,-.2) to [out=180,in=-90] (-0.45,.3) to (-0.45,.4);
   \node at (0,0) {$\color{red}\smallclock$};
\end{tikzpicture}
}\:,
&
B &:= \delta_{k,0} t^{-1}\:
\mathord{\begin{tikzpicture}[baseline=-1mm]
\draw[->,thin,blue] (0,-0.4) to (0,0.4);
\end{tikzpicture}}
\:.
\end{align*}
This follows from Lemma~\ref{l4}, 
noting that the only non-zero term in the
summation on the right-hand side of that
identity is the one with $a=b=0$ due to the assumption that $k \geq
0$.
The argument for the left curl in (\ref{morecurls}) is entirely similiar;
it uses the identity obtained by applying $* \circ \Omega_{\blue{l}|\red{m}}$ to Lemma~\ref{l4}.

Finally, one must check (\ref{pos})--(\ref{neg}).
This is a calculation just like in the final paragraph of the proof of
\cite[Theorem 5.4]{BSW1};
ultimately one uses Lemmas~\ref{l6}--\ref{l3b}.
\end{proof}

\section{Generalized cyclotomic quotients}\label{sgcq}

In this section, we define some $\k$-linear categories, namely,
the
{generalized cyclotomic quotients} of $\HEIS_k(z,t)$. 
Recall that
$x = 
\mathord{
\begin{tikzpicture}[baseline = -.5mm]
	\draw[->,thin,darkblue] (0.08,-.2) to (0.08,.3);
      \node at (0.08,0.02) {$\dot$};
\end{tikzpicture}
}$ and 
$y = 
\mathord{
\begin{tikzpicture}[baseline = -.5mm]
	\draw[<-,thin,darkblue] (0.08,-.2) to (0.08,.3);
      \node at (0.08,0.08) {$\dot$};
\end{tikzpicture}
}$.

\begin{definition}\label{gcq}
Suppose we are given monic polynomials
\begin{align}
f(w) &= f_0 w^l + f_1 w^{l-1}+\cdots+f_l \in \k[w],\label{fw}\\
g(w) &= g_0 w^m + g_1 w^{m-1}+\cdots+g_m \in \k[w]\label{gw}
\end{align}
such that $k=m-l$ and $t^2=f_l / g_m$.
Define
\begin{align}\label{am}
\A^+(w) &=t^{-1}z\sum_{n \in \Z} \A_n^+ w^{-n}:= g(w)/f(w) \in w^k + w^{k-1} \k\llbracket w^{-1}\rrbracket,\\
\B^+(w) &=- t z\sum_{n \in \Z} \B_n^+ w^{-n}:= f(w)/g(w) \in w^{-k} + w^{-k-1} \k\llbracket w^{-1}\rrbracket,\\
\A^-(w) &=- tz\sum_{n \in \Z} \A_n^- w^{-n}:= t^2 g(w)/f(w) \in 1 + w \k\llbracket w\rrbracket,\label{amm}\\
\B^-(w) &= t^{-1} z\sum_{n \in \Z} \B_n^- w^{-n}:= t^{-2} f(w)/g(w) \in 1 + w \k\llbracket w\rrbracket;\label{pm}
\end{align}
cf. (\ref{fourofem})--(\ref{igm}).
Let $\mathcal I(f|g)$ be the
left tensor ideal generated by the morphisms
\begin{equation}\label{gen1}
\Big\{
f(x),\:
\anticlockplus{\scriptstyle n} - \A_n^+ \unit
\:\Big|\:-k < n < l\Big\}.
\end{equation}
The {\em generalized cyclotomic quotient} associated
to the polynomials $f(w)$ and $g(w)$ is the quotient category
\begin{equation}
\H(f|g) := \HEIS_k(z,t) \big / \mathcal I(f|g).
\end{equation} 
It is a module category over $\HEIS_k(z,t)$.
\end{definition}

The following is the quantum analog of
\cite[Lemma 1.8]{Bheis}; see also \cite[Lemma 4.14]{BD} for the analog
in the setting of Kac-Moody 2-categories.

\begin{lemma}\label{clever}
In the setup of Definition~\ref{gcq}, 
$\mathcal I(f|g)$ may be defined equivalently as the left tensor
ideal generated by 
\begin{equation}\label{gen2}
\Big\{
g(y),\:
{\scriptstyle n}\clockplus - \B_n^+ \unit
\:\Big|\:k < n <m\Big\}.
\end{equation}
Moreover, it
contains 
$\: \anticlockplus{\scriptstyle{n}}-\A_n^+\unit,
\:\anticlockminus{\scriptstyle{n}}-\A_n^-\unit,
\:{\scriptstyle{n}}\clockplus-\B_n^+\unit$ and
$\:{\scriptstyle{n}}\clockminus-\B_n^-\unit$
for all $n \in \Z$.
\end{lemma}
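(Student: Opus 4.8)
The plan is to prove Lemma~\ref{clever} in three stages: first establish the equivalence of the two generating sets for $\mathcal I(f|g)$, then deduce the bubble membership statements for all $n \in \Z$. The essential tool throughout is the generating-function identity \eqref{igproper} from the Corollary, which states that $\anticlockplus(w)\,\clockplus(w) = \anticlockminus(w)\,\clockminus(w) = 1_\unit$, together with the defining relations \eqref{am}--\eqref{pm} relating $\A^{\pm}(w)$ and $\B^{\pm}(w)$ as ratios $g/f$ and $f/g$.

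\emph{Equivalence of the two generating sets.} I would first show that the ideal generated by \eqref{gen1} contains $g(y)$ and the elements ${\scriptstyle n}\clockplus - \B_n^+\unit$ for $k < n < m$, and symmetrically that the ideal generated by \eqref{gen2} contains $f(x)$ and the elements $\anticlockplus{\scriptstyle n} - \A_n^+\unit$ for $-k < n < l$. The key structural fact is that the quotient by the relations $\{\anticlockplus{\scriptstyle n} - \A_n^+\unit\}$ forces the generating function $\anticlockplus(w)$ to become (the appropriate truncation of) $t^{-1}z\,g(w)/f(w)$; since by \eqref{igp} and \eqref{igproper} we have $\clockplus(w) = \anticlockplus(w)^{-1}$, inverting the ratio $g/f$ to get $f/g$ shows that $\clockplus(w)$ becomes $-tz\,f(w)/g(w)$, which is exactly the statement that ${\scriptstyle n}\clockplus - \B_n^+\unit \in \mathcal I(f|g)$ for the relevant range of $n$. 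The passage between $f(x)$ and $g(y)$ is handled similarly using the relation $\eta$ or the symmetry $\Omega_k$ from \eqref{om}, which swaps $x \leftrightarrow y$, $t \leftrightarrow t^{-1}$ and interchanges the roles of $f$ and $g$; applying $\Omega_k$ converts the presentation \eqref{gen1} into \eqref{gen2} with the two polynomials swapped, and the hypothesis $k = m-l$, $t^2 = f_l/g_m$ is preserved.

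\emph{Membership for all $n \in \Z$.} Once the two generating sets are known to agree, I would establish that $\mathcal I(f|g)$ contains all four families $\anticlockplus{\scriptstyle n} - \A_n^+\unit$, $\anticlockminus{\scriptstyle n} - \A_n^-\unit$, ${\scriptstyle n}\clockplus - \B_n^+\unit$, ${\scriptstyle n}\clockminus - \B_n^-\unit$ for every $n$, not just in the truncated ranges. For the $(+)$-bubbles outside the range $-k < n < l$, I would use the bubble vanishing and normalization relations \eqref{tanks}--\eqref{tanks2} (equivalently \eqref{fake0}--\eqref{fake1}), which pin down $\anticlockplus{\scriptstyle n}$ and ${\scriptstyle n}\clockplus$ as explicit scalars once $n$ is small enough, matching the leading behavior of $\A^+(w)$ and $\B^+(w)$ prescribed in \eqref{am}--\eqref{pm}. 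The $(-)$-bubbles are then obtained from the $(+)$-bubbles: the relation \eqref{fake1} expresses each $(-)$-bubble as a difference of a dotted bubble and a $(+)$-bubble, and the defining series $\A^-(w) = t^2 g(w)/f(w)$, $\B^-(w) = t^{-2}f(w)/g(w)$ are just rescalings of $\A^+, \B^+$; equivalently one applies \eqref{igproper} to pass between clockwise and anticlockwise and between $(+)$ and $(-)$ decorations via the generating-function inverses $\anticlockminus(w)\clockminus(w) = 1_\unit$.

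\emph{The main obstacle} will be the bookkeeping in the generating-function manipulation: one must be careful that the relations \eqref{gen1} only constrain the \emph{middle} coefficients $\anticlockplus{\scriptstyle n}$ for $-k < n < l$, whereas the tail coefficients are separately determined by the vanishing/normalization relations \eqref{tanks}--\eqref{tanks2}. The delicate point is to verify that these two sources of information (the finitely many imposed relations plus the infinitely many automatic bubble relations) \emph{together} force the full identity of power series $\anticlockplus(w) = t^{-1}z\,g(w)/f(w)$, and hence that inverting this series to obtain $\clockplus(w)$ lands precisely on the coefficients $\B_n^+$ across all $n$. This requires checking that $f(x) \in \mathcal I(f|g)$ interacts correctly with the bubble-slide relations \eqref{bs1}--\eqref{bs4} so that the cyclotomic-type relation $f(x) = 0$ propagates to the statement that $f(w)$ divides into $g(w)$ with the stated remainder behavior at the level of bubbles; this is the exact quantum analog of \cite[Lemma 1.8]{Bheis} and I would model the argument closely on that reference.
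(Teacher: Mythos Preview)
Your high-level strategy—constrain the generating series $\anticlockplus(w)$ via the imposed relations, then invert using \eqref{igproper} to get $\clockplus(w)$—matches the paper's, but two concrete steps are missing or misidentified.

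\textbf{Extending to $n \geq l$.} You correctly note that \eqref{tanks}--\eqref{tanks2} pin down $\anticlockplus{\scriptstyle n}$ for $n \leq -k$, and \eqref{gen1} handles $-k < n < l$; but for $n \geq l$ these bubbles are genuine morphisms, not scalars, and nothing in \eqref{tanks}--\eqref{tanks2} touches them. The tool is \emph{not} the bubble slides \eqref{bs1}--\eqref{bs4}. Instead one closes the relation $f(x) \equiv 0$ into a counterclockwise bubble with $n$ extra dots, obtaining $\sum_{a=0}^l f_a\,\anticlockright\!{\scriptstyle\, n-a} \equiv 0$; splitting each dotted bubble as $(+)$-part plus $(-)$-part via \eqref{fake1}--\eqref{fake2} and using that the $(-)$-parts with nonnegative label vanish by \eqref{tanks2} yields a linear recursion for $\anticlockplus{\scriptstyle n}$ in terms of lower ones. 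One checks this is precisely the recursion for the $w^{-n}$-coefficient of $g(w)/f(w)$, completing an ascending induction. A parallel descending induction handles the $(-)$-bubbles.

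\textbf{Deriving $g(y) \in \mathcal I(f|g)$.} Your appeal to $\eta$ or $\Omega_k$ does not work. The flip $\eta$ from \eqref{ETA} lives on the symmetric product $\blue{\HEIS_l}\,\overline{\odot}\,\red{\HEIS_m}$, not on $\HEIS_k$ itself. And $\Omega_k$ maps $\HEIS_k(z,t)$ to the \emph{different} category $\HEIS_{-k}(z,t^{-1})^{\operatorname{op}}$; it relates $\mathcal I(f|g)$ here to $\mathcal I(g|f)$ there, which says nothing about whether $g(y)$ lies in $\mathcal I(f|g)$ inside the original category. The paper's argument is direct: once all $\anticlockplus{\scriptstyle n} \equiv \A_n^+$ are known, substitute $g_{m-b} = t^{-1}z\sum_a f_{l-a}\A^+_{a-b}$ (from $g(w)=f(w)\A^+(w)$) into $g(y)=\sum_b g_{m-b} y^b$ and recognize the resulting double sum, via the curl relation \eqref{dog1} (rotated to downward strings), as $\sum_a t^{-1}f_{l-a}$ times a dotted curl that visibly lies in the ideal. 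The reverse containment—that the ideal generated by \eqref{gen2} contains \eqref{gen1}—is then proved by rerunning the same computations with the roles of $(f,x,\anticlockplus)$ and $(g,y,\clockplus)$ exchanged, not by invoking a categorical symmetry.
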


\begin{proof}
For morphisms $\theta,\phi :X \rightarrow Y$,
we will write $\theta \equiv \phi$ as shorthand for
$\theta-\phi \in \mathcal I(f|g)$.
By (\ref{tanks})--(\ref{tanks2}), 
we have automatically that
$\anticlockplus{\scriptstyle{n}}\equiv \A_n^+1_\unit$ when $n \leq
-k$,
${\scriptstyle{n}}\clockplus\equiv \B_n^+1_\unit$ when $n \leq k$,
$\anticlockminus{\scriptstyle{n}}\equiv \A_n^-1_\unit$ when $n \geq 0$,
and ${\scriptstyle{n}}\clockminus\equiv \B_n^-1_\unit$
when $n \geq 0$.

In this paragraph, we use ascending induction on $n$ to show that 
$\anticlockplus{\scriptstyle{n}}\equiv \A_n^+1_\unit$
for all $n \in \Z$. 
This is immediate from (\ref{gen1}) if $n < l$, so assume that $n
\geq l$.
The fact that $f(x) \equiv 0$ implies that
\[
\sum_{a=0}^l f_{a} \:
\anticlockplus
{\scriptstyle n-a}
+
\sum_{a=0}^l f_{a} \:
\anticlockminus
{\scriptstyle n-a}
 =
\sum_{a=0}^l f_{a} \:
\begin{tikzpicture}[baseline=-.9mm]
\filldraw[white] (0,0) circle (1.72mm);
\draw[-,thin] (0,-0.18) to[out=180,in=-102] (-.178,0.02);
\draw[<-,thin] (-0.18,0) to[out=90,in=180] (0,0.18);
\draw[-,thin] (0.18,0) to[out=-90,in=0] (0,-0.18);
\draw[-,thin] (0,0.18) to[out=0,in=90] (0.18,0);
\node at (0.18,0) {$\dot$};
\end{tikzpicture}
\!{\scriptstyle n-a}\equiv 0.
\]
On the left-hand side of this,
the only non-zero 
$(-)$-bubble arises when $n=a=l$, so
it shows that
$\sum_{a=0}^l f_{a} \:\anticlockplus
{\scriptstyle n-a} \equiv\delta_{l,n} f_l t^{-1} z^{-1}1_\unit$.
Using the induction hypothesis and $f_l = g_m t^2$, we deduce that
$\anticlockplus
{\scriptstyle n}
+\sum_{a=1}^{l} f_{a}
\A^+_{n-a} 1_\unit
\equiv\delta_{l,n} g_m t z^{-1}1_\unit.
$
Equating $w^{l-n}$-coefficients in 
$f(w) \A^+(w) =g(w)$, we get that
$\sum_{a=0}^{l} f_a \A_{n-a}^+ = \delta_{l,n} g_m t z^{-1}$.
Hence,
$\anticlockplus
{\scriptstyle n} \equiv \A_n^+ 1_\unit$ as claimed.

Next, we show by descending induction on $n$ that
$\:\anticlockminus{\scriptstyle n}\,\equiv \A_n^-1_\unit$
for all $n \in \Z$.
We may assume that $n < 0$.
Equating $w^{-n}$-coefficients in 
$f(w) \A^+(w) = t^{-2} f(w) \A^-(w)$
gives that
\[
    \sum_{a=0}^l f_{l-a} \A^+_{a+n}=-\sum_{a=0}^l f_{l-a} \A_{a+n}^-.
\]
Using the induction hypothesis plus the previous paragraph, we deduce 
that
\[
    \sum_{a=0}^l f_{l-a} \anticlockplus\,{\scriptstyle a+n}  + f_l \A_n^- + \sum_{a=1}^l f_{l-a} \anticlockminus\,{\scriptstyle  a+n}\equiv 0.
\]
But also from $f(x) \equiv 0$ we get that
\[
\sum_{a=0}^l f_{l-a} \:
\anticlockplus
{\scriptstyle a+n}
+
\sum_{a=0}^l f_{l-a} \:
\anticlockminus
{\scriptstyle a+n}
=
\sum_{a=0}^l f_{l-a} \:
\begin{tikzpicture}[baseline=-.5mm]
\filldraw[white] (0,0) circle (1.72mm);
\draw[-,thin] (0,-0.18) to[out=180,in=-102] (-.178,0.02);
\draw[<-,thin] (-0.18,0) to[out=90,in=180] (0,0.18);
\draw[-,thin] (0.18,0) to[out=-90,in=0] (0,-0.18);
\draw[-,thin] (0,0.18) to[out=0,in=90] (0.18,0);
\node at (0.18,0) {$\dot$};
\end{tikzpicture}
\!{\scriptstyle a+n}\equiv 0.
\]
Taking the difference of these two identities establishes the induction step.

Using the notation of (\ref{fourofem})--(\ref{igm}),
we have now shown that $\anticlockplusminus\,(w)\equiv \A^{\pm}(w) 1_\unit$.
Taking inverses using (\ref{igproper}), we deduce that
$\clockplusminus\,(w) \equiv \B^{\pm}(w) 1_\unit$. Hence,
${\scriptstyle n}\clockplusminus
\equiv \B_n^\pm 1_\unit$ 
for all $n \in \Z$.
So we have established the last assertion from the lemma.

Equating $w^b$-coefficients in 
$g(w) = f(w) \A^+(w)$ shows that $g_{m-b} =
t^{-1}z\sum_{a=0}^{l} f_{l-a} \A^+_{a-b}$. Hence:
\begin{align*}
g(y) = 
%\sum_{b=0}^m g_{m-b} y^{b} \equiv
%z\sum_{b=0}^m \sum_{a=0}^l f_{l-a}  
%y^b\anticlockplus{\scriptstyle a-b} 
%=
\sum_{a=0}^l t^{-1}f_{l-a}\left( z\sum_{b\geq 0} 
\begin{tikzpicture}[baseline = 0mm]
	\draw[<-,thin,darkblue] (0,-0.35) to (0,0.4);
   \node at (-0.2,0.05) {$\color{darkblue}\scriptstyle{b}$};
      \node at (0,.05) {$\dot$};
\end{tikzpicture}
\anticlockplus{\scriptstyle a-b} \right)
\stackrel{(\ref{dog1})}{=} \sum_{a=0}^l t^{-1}f_{l-a} 
\mathord{
\begin{tikzpicture}[baseline = -0.5mm]
	\draw[-,thin,darkblue] (0,0.6) to (0,0.3);
	\draw[-,thin,darkblue] (0,0.3) to [out=-90,in=180] (.3,-0.2);
	\draw[-,thin,darkblue] (0.3,-0.2) to [out=0,in=-90](.5,0);
	\draw[-,thin,darkblue] (0.5,0) to [out=90,in=0](.3,0.2);
	\draw[->,thin,darkblue] (0,-0.3) to (0,-0.6);
	\draw[-,line width=4pt,white] (0.3,.2) to [out=180,in=90](0,-0.3);
	\draw[-,thin,darkblue] (0.3,.2) to [out=180,in=90](0,-0.3);
   \node at (0.7,0.0) {$\color{darkblue}\scriptstyle{a}$};
      \node at (0.5,.0) {$\dot$};
\end{tikzpicture}
} \equiv 0.
\end{align*}
We have now shown that $\mathcal I(f|g)$, the left tensor ideal
generated by (\ref{gen1}), contains (\ref{gen2}). 
Similarly, the left tensor ideal generated by (\ref{gen2})
contains (\ref{gen1}). This completes the proof.
\end{proof}

We assume for the the rest of the section that $\k$ is a field,
and that we are given a factorization
$t=uv^{-1}$ for $u, v \in \k^\times$ such that $u^2=f_l$ and $v^2 = g_m$.
Let $\mathcal{V}(f)$ and $\mathcal{V}(g)^\vee$
denote $\bigoplus_{n \geq 0} H_n^f\proj$ 
and $\bigoplus_{n \geq 0} H_n^g\proj$ viewed as module categories over $\blue{\HEIS_{-l}(z,u)}$ 
and $\red{\HEIS_m(z,v^{-1})}$ via the monoidal functors $\Psi_f$ and $\Psi_g^\vee$ from Lemma~\ref{advising1}.
Let 
\begin{equation} \label{piglet}
\mathcal{V}(f|g) := \mathcal{V}(f)\boxtimes \mathcal{V}(g)^\vee
\end{equation}
be their linearized 
Cartesian
product, i.e., the $\k$-linear category with objects that are pairs
$(X,Y)$ for $X \in \mathcal V(f), Y \in \mathcal V(g)^\vee$, and morphisms
$$
\Hom_{\mathcal V(f|g)}((X,Y),(U,V)) := \Hom_{\mathcal
  V(f)}(X,U) \otimes \Hom_{\mathcal V(g)^\vee}(Y,V)
$$ 
with the obvious 
composition law.
There is an equivalence of categories
$$
\mathcal{V}(f|g)    \rightarrow
\bigoplus_{r,s \geq 0} \left(H_r^f \otimes H_s^g\right)\proj,
$$
hence, $\mathcal{V}(f|g)$ is additive Karoubian.
Moreover, $\mathcal{V}(f|g)$
is a module
category over the symmetric product
$\blue{\HEIS_{-l}(z,u)} \odot \red{\HEIS_m(z,v^{-1})}$.

\begin{lemma}\label{stupid}
Let $V$ be a finite-dimensional $AH_2$-module.
All eigenvalues of $x_2$ on $V$ are of the form 
$\lambda, q^2 \lambda$ or 
$q^{-2} \lambda$ for eigenvalues $\lambda$ of $x_1$ on $V$.
\end{lemma}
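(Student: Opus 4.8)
\emph{Plan.} The plan is to work over the algebraic closure $\bar\k$ (scalar extension changes neither the characteristic polynomial of $x_1$ nor that of $x_2$, and $q\in\bar\k$), to fix an eigenvalue $\mu$ of $x_2$, and to produce from it an eigenvalue $\lambda$ of $x_1$ with $\mu\in\{\lambda,q^2\lambda,q^{-2}\lambda\}$. First I would exploit that $x_1$ and $x_2$ commute in $AH_2$: the honest eigenspace $W:=\ker(x_2-\mu\,\mathrm{id})$ is nonzero and $x_1$-invariant, so it contains a joint eigenvector $v$ with $x_1v=\lambda v$ and $x_2v=\mu v$. Here $\lambda,\mu\in\bar\k^\times$ since $x_1,x_2$ are invertible.

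Next I would record the relevant Bernstein-type commutation relations. From \eqref{a} we have $\tau x_1\tau=x_2$, and \eqref{r1} gives $\tau^2=z\tau+1$, hence $\tau^{-1}=\tau-z$. Multiplying $\tau x_1\tau=x_2$ on the right (resp.\ on the left) by $\tau^{-1}$ and substituting yields $x_2\tau=\tau x_1+zx_2$ and $x_1\tau=\tau x_2-zx_2$. Applying these to $v$ gives $x_2(\tau v)=\lambda\,\tau v+z\mu\,v$ and $x_1(\tau v)=\mu\,\tau v-z\mu\,v$, while $\tau(\tau v)=z\,\tau v+v$. Together with $x_1v=\lambda v$ and $x_2v=\mu v$, these show that $U:=\langle v,\tau v\rangle$ is stable under $x_1$, $x_2$ and $\tau$ (and under their inverses, since the restricted operators are invertible), so $U$ is an $AH_2$-submodule with $\dim U\le 2$, and I may read off the spectra of $x_1|_U$ and $x_2|_U$ from $2\times2$ matrices.

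I would then split into two cases according to $\dim U$. If $\dim U=2$, the matrices of $x_1$ and $x_2$ in the basis $(v,\tau v)$ are both upper triangular with the same pair of diagonal entries $\{\lambda,\mu\}$, so each has characteristic polynomial $(w-\lambda)(w-\mu)$; in particular $\mu$ is itself an eigenvalue of $x_1$ on $U\subseteq V$, which is the case $\mu=\lambda'$ with $\lambda'=\mu$. If $\dim U=1$, then $\tau v=cv$ for a scalar $c$ satisfying $c^2=zc+1$; since $z=q-q^{-1}$ the roots of $w^2-zw-1$ are $q$ and $-q^{-1}$, so $c\in\{q,-q^{-1}\}$ and $c^2\in\{q^2,q^{-2}\}$. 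Then $\mu v=x_2v=\tau x_1\tau v=c^2\lambda\,v$, so $\mu=c^2\lambda\in\{q^2\lambda,q^{-2}\lambda\}$. In both cases $\mu$ has the required form relative to an eigenvalue of $x_1$.

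There is no deep obstacle here; the one point needing care is the reduction to a joint \emph{honest} eigenvector of the commuting pair $(x_1,x_2)$ for the prescribed eigenvalue $\mu$ (not merely a generalized one), which is exactly what makes the two-dimensional invariant subspace $U$ available and keeps the $2\times2$ spectral computation clean. The potentially degenerate subcase $\lambda=\mu$ is harmless, as $\mu$ remains a root of the characteristic polynomial of $x_1|_U$, and invertibility of $x_1,x_2$ guarantees no eigenvalue vanishes.
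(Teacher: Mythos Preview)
Your proof is correct and follows essentially the same strategy as the paper's: pass to the algebraic closure, pick a simultaneous eigenvector $v$ for the commuting pair $(x_1,x_2)$, and split into the cases where $\tau v$ is or is not proportional to $v$, reading off the required relation between eigenvalues from the $2\times 2$ matrix of $x_1$ on $\langle v,\tau v\rangle$ in the latter case and from $c^2\in\{q^2,q^{-2}\}$ in the former. Your write-up is slightly more explicit than the paper's in that you begin by fixing an arbitrary eigenvalue $\mu$ of $x_2$ and producing a joint eigenvector with that particular $x_2$-eigenvalue, making it clear that every eigenvalue of $x_2$ is covered.
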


\begin{proof}
We may assume for the proof that $\k$ is algebraically closed. Suppose that
$v \in V$ is a simultaneous eigenvector for the commuting operators 
$x_1$ and $x_2$ of
eigenvalues $\lambda_1$ and $\lambda_2$, respectively.
If $\tau_1 v = q v$ (resp.\ $\tau_1 v = -q^{-1} v$)
then $\lambda_2 = q^2 \lambda_1$ (resp.\ $\lambda_2 = q^{-2}
\lambda_1$), as 
follows easily from the relation $x_2 (\tau_1-z) v = \tau_1 x_1 v$.
Otherwise, $v$ and $\tau_1 v$ are linearly independent, in which case
the matrix describing the action of
$x_1$ on the subspace with basis $\{v, \tau_1 v\}$ is
$\left(\begin{array}{rr}\lambda_1&-z\lambda_2\\0&\lambda_2\end{array}\right).$
So $\lambda_2$ is another eigenvalue of $x_1$ on $V$. 
\end{proof}

\begin{lemma}\label{starbucks}
Assume that $f(w)$ and $g(w)$ split
as products of linear factors in $\k[w]$, and moreover assume that $\lambda
\mu^{-1} \notin \left\{q^{2i}\:\big|\:i \in \Z\right\}$ for all roots $\lambda$ of $f(w)$
and $\mu$ of $g(w)$. Then the categorical action of 
$\blue{\HEIS_{-l}(z,u)} \odot \red{\HEIS_m(z,v^{-1})}$ on $\mathcal V(f|g)$ defined above extends to an action of the localization
$\blue{\HEIS_{-l}(z,u)} \;\overline{\odot}\; \red{\HEIS_m(z,v^{-1})}$ from Definition~\ref{loca}.
\end{lemma}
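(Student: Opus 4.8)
The plan is to invoke the universal property of the localization from Definition~\ref{loca}. By construction, a strict $\k$-linear monoidal functor out of $\blue{\HEIS_{-l}(z,u)} \;\overline{\odot}\; \red{\HEIS_m(z,v^{-1})}$ is the same datum as a strict $\k$-linear monoidal functor out of $\blue{\HEIS_{-l}(z,u)} \odot \red{\HEIS_m(z,v^{-1})}$ that sends the distinguished dotted dumbbell on $\blue{\up}\otimes\red{\up}$ to an invertible morphism; the inverses of all the other ``generalized'' dumbbells are then automatic, as noted just after Definition~\ref{loca}. So, writing $\Xi$ for the given categorical action of $\blue{\HEIS_{-l}(z,u)} \odot \red{\HEIS_m(z,v^{-1})}$ on $\mathcal V(f|g)$, I must show that the natural transformation $\Xi(\text{dumbbell})$ is invertible in $\mathcal{E}nd_\k(\mathcal V(f|g))$; it then extends $\Xi$ uniquely to the localization.

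First I would unwind the image of the dumbbell. By Definition~\ref{loca}, on the strand configuration $\blue{\up}\otimes\red{\up}$ the dotted dumbbell equals $(1_{\blue{\up}}\otimes x_{\red{}}) - (x_{\blue{}}\otimes 1_{\red{\up}})$, so under $\Xi$ it becomes the difference $X_{\red{}} - X_{\blue{}}$ of two natural transformations: $X_{\blue{}}$ is the image of a blue dot, acting through the functor $\Psi_f$ of Lemma~\ref{advising1}, and $X_{\red{}}$ the image of a red dot, acting through $\Psi_g^\vee$. By Theorems~\ref{catact}--\ref{catact2}, on any object in the image these are realized by left multiplication by the affine Hecke generators $x_i$ of $H_\bullet^f$ (for $X_{\blue{}}$) and of $H_\bullet^g$ (for $X_{\red{}}$). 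Since $X_{\blue{}}$ and $X_{\red{}}$ come from the two different tensor factors of the equivalence $\mathcal V(f|g)\simeq\bigoplus_{r,s\geq0}(H_r^f\otimes H_s^g)\proj$, they commute, and on every object they act on a finite-dimensional $\k$-space (as $\k$ is a field and each $H_n^f$, $H_n^g$ is finite-dimensional with basis (\ref{akbasis})).

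The heart of the argument is the spectral calculation. Because $f(x_1)=0$ in $H_n^f$, the eigenvalues of $x_1$ on any finite-dimensional $H_n^f$-module are roots of $f$; iterating Lemma~\ref{stupid} along the chain $x_1,\dots,x_n$, applied inside each rank-two subalgebra generated by $x_i,x_{i+1},\tau_i$ (a quotient of $AH_2$), shows that every eigenvalue of every $x_i$ lies in $\{q^{2j}\lambda : j\in\Z,\ f(\lambda)=0\}$. Symmetrically, every eigenvalue of every $x_i$ on an $H_n^g$-module lies in $\{q^{2j}\mu : j\in\Z,\ g(\mu)=0\}$. Hence on each object the commuting operators $X_{\blue{}}$ and $X_{\red{}}$ have spectra contained in $\{q^{2j}\lambda\}$ and $\{q^{2j}\mu\}$ respectively, and the genericity hypothesis $\lambda\mu^{-1}\notin\{q^{2i}:i\in\Z\}$ for all roots $\lambda$ of $f$ and $\mu$ of $g$ says exactly that these two sets are disjoint.

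Finally I would conclude by linear algebra. Decomposing the finite-dimensional space underlying a fixed object into simultaneous generalized eigenspaces for the commuting pair $(X_{\blue{}},X_{\red{}})$, on each block $X_{\blue{}}=\alpha+N_{\blue{}}$ and $X_{\red{}}=\beta+N_{\red{}}$ with $\alpha\neq\beta$ by disjointness of spectra and $N_{\blue{}},N_{\red{}}$ nilpotent, so $X_{\red{}}-X_{\blue{}}=(\beta-\alpha)\,\mathrm{id}+(N_{\red{}}-N_{\blue{}})$ is invertible. Thus $\Xi(\text{dumbbell})$ is invertible on every object, hence an invertible natural transformation (its objectwise inverses assemble into a natural transformation automatically), and the universal property delivers the desired action of $\blue{\HEIS_{-l}(z,u)} \;\overline{\odot}\; \red{\HEIS_m(z,v^{-1})}$. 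I expect the spectral step to be the main obstacle: one must iterate Lemma~\ref{stupid} carefully to control the spectrum of \emph{every} $x_i$, not just $x_1$ and $x_2$, and verify that the blue and red operators genuinely commute and act finite-dimensionally, so that disjointness of spectra really forces invertibility.
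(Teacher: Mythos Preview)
Your proposal is correct and follows essentially the same route as the paper: both arguments use Lemma~\ref{stupid} (iterated along $x_1,\dots,x_n$) to confine the spectra of the blue and red dot operators to the disjoint sets $\{q^{2i}\lambda\}$ and $\{q^{2j}\mu\}$, and then conclude that the difference $X_{\red{}}-X_{\blue{}}$ is invertible on each finite-dimensional object. Your write-up is more explicit about the universal property of the localization and the simultaneous generalized eigenspace decomposition, but the paper's terse version is the same argument.
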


\begin{proof}
Lemma~\ref{stupid} implies that
the eigenvalues of $x_1,\dots,x_n$ on any
finite-dimensional $H_n^f$-module are of the form $q^{2i} \lambda$ for
$i \in \Z$ and a root $\lambda$ of $f(w)$.
Consequently, the commuting endomorphisms 
defined by evaluating 
$\:\mathord{
\begin{tikzpicture}[baseline = -1mm]
 	\draw[->,thin,red] (0.18,-.2) to (0.18,.25);
	\draw[->,thin,blue] (-0.18,-.2) to (-0.18,.25);
     \node at (0.18,0) {$\red{\dot}$};
\end{tikzpicture}}
$
and
$\mathord{
\begin{tikzpicture}[baseline = -1mm]
 	\draw[->,thin,red] (0.18,-.2) to (0.18,.25);
	\draw[->,thin,blue] (-0.18,-.2) to (-0.18,.25);
     \node at (-0.18,0) {$\blue{\dot}$};
\end{tikzpicture}}\:
$
on an object of
$\mathcal{V}(f|g)$ have eigenvalues contained in the sets
$\left\{q^{2i} \lambda\:\big|\:i \in \Z, \lambda\text{ a root of }f(w)\right\}$
and
$\left\{q^{2j} \mu\:\big|\:j \in \Z, \mu\text{ a root of
  }g(w)\right\}$,
respectively.
By the genericity assumption, these sets are 
disjoint, hence, all eigenvalues of the endomorphism 
defined by
$\mathord{
\begin{tikzpicture}[baseline = -1mm]
 	\draw[->,thin,red] (0.18,-.2) to (0.18,.25);
	\draw[->,thin,blue] (-0.38,-.2) to (-0.38,.25);
	\draw[-,dotted] (-0.38,0.01) to (0.18,0.01);
     \node at (0.18,0) {$\dot$};
     \node at (-0.38,0.01) {$\dot$};
\end{tikzpicture}}=
\: \mathord{
\begin{tikzpicture}[baseline = -1mm]
 	\draw[->,thin,red] (0.18,-.2) to (0.18,.25);
	\draw[->,thin,blue] (-0.18,-.2) to (-0.18,.25);
     \node at (0.18,0) {$\red{\dot}$};
\end{tikzpicture}}
-\mathord{
\begin{tikzpicture}[baseline = -1mm]
 	\draw[->,thin,red] (0.18,-.2) to (0.18,.25);
	\draw[->,thin,blue] (-0.18,-.2) to (-0.18,.25);
     \node at (-0.18,0.01) {$\blue{\dot}$};
\end{tikzpicture}}
\:$ 
lie in $\k^\times$.
Consequently, this endomorphism is invertible.
\end{proof}

Lemma~\ref{starbucks} shows for suitably generic $f(w), g(w)$
that there is a strict $\k$-linear
monoidal functor
$\Psi_f \;\overline{\odot}\; \Psi^\vee_g:
\blue{\HEIS_{-l}(z,u)} \;\overline{\odot}\; \red{\HEIS_m(z,v^{-1})} \rightarrow
\mathcal{E}nd_\k(\mathcal{V}(f|g))$.
Composing this functor with the functor $\Delta_{\blue{-l}|\red{m}}$ from
Theorem~\ref{comult}, we obtain a strict $\k$-linear monoidal
functor 
\begin{equation}\label{psifg}
\Psi_{f|g}:=
\Psi_f \;\overline{\odot}\; \Psi^\vee_g\circ \Delta_{\blue{-l}|\red{m}}:
\HEIS_k(z,t) \rightarrow
\mathcal{E}nd_\k\left(\mathcal{V}(f|g)\right).
\end{equation}
Thus, 
we have made $\mathcal V(f|g)$
into a module category over $\HEIS_k(z,t)$.

\begin{theorem}\label{chemistry}
Assume that $f(w), g(w)$ satisfy the genericity
assumption from Lemma~\ref{starbucks} so that (\ref{psifg}) is defined.
Let $\operatorname{Ev}:
\mathcal{E}nd_\k\left(\mathcal{V}(f|g)\right)
\rightarrow \mathcal{V}(f|g)$ be the $\k$-linear
functor defined by evaluation on 
$S := (H_0^f, H_0^g) \in \mathcal{V}(f|g)$.
The composition 
$\operatorname{Ev}\circ \Psi_{f|g}$ factors through the generalized
cyclotomic quotient
$\H(f|g)$ to induce an equivalence of $\HEIS_k(z,t)$-module categories
$$
\psi_{f|g}:\Kar\left(\H(f|g)\right) \rightarrow \mathcal{V}(f|g).
$$
\end{theorem}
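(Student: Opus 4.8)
The plan is to prove the statement in three movements: first that $F:=\operatorname{Ev}\circ\Psi_{f|g}$ annihilates the left tensor ideal $\mathcal I(f|g)$ and so descends to $\psi_{f|g}$, then that $\psi_{f|g}$ is essentially surjective, and finally that it is fully faithful, this last being the real content. Since $F$ is the composite of the monoidal functor $\Psi_{f|g}$ with evaluation on $S$, it is functorial and is an $\HEIS_k(z,t)$-module functor by monoidality. For any generator $\theta$ of $\mathcal I(f|g)$ one computes, using left whiskering, $F(1_W\otimes\theta)=\Psi_{f|g}(W)\bigl(\Psi_{f|g}(\theta)_S\bigr)$, so it suffices to check $\Psi_{f|g}(\theta)_S=0$ on the generators of Definition~\ref{gcq}. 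For $f(x)$ this is immediate: by (\ref{com1}) we have $\Delta_{\blue{-l}|\red{m}}(x)=\blue x+\red x$, and on $S$ the red summand restricts out of level $0$ and vanishes, leaving $\Psi_f(x)$ acting on $\ind_0^1\k=H_1^f$ by right multiplication by $x_1$; hence $f(x)$ acts as $f(x_1)=0$, the cyclotomic relation. For the bubble generators I would use the generating-function identity (\ref{dt1}): on $S$, $\Delta_{\blue{-l}|\red{m}}(\anticlockplus\,(w))$ becomes $\blue{\anticlockplus}(w)\,\red{\anticlockplus}(w)$, and a direct computation via Theorems~\ref{catact}--\ref{catact2} (together with the normalizations $u^2=f_l$, $v^2=g_m$) shows its value on $(\k,\k)$ is $g(w)/f(w)=\A^+(w)$, matching the definitions (\ref{am})--(\ref{pm}). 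Thus $F$ factors through $\H(f|g)$, and since $\mathcal V(f|g)$ is additive Karoubian this extends to the module functor $\psi_{f|g}:\Kar(\H(f|g))\to\mathcal V(f|g)$.

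For essential surjectivity, evaluating the generators on $S$ gives $\Psi_{f|g}(\up)(S)=(H_1^f,\k)$ and $\Psi_{f|g}(\down)(S)=(\k,H_1^g)$, the wrong-colour summands vanishing because restriction and $\blue{\down}$-restriction out of level $0$ are zero. Iterating, the pair $(H_r^f,H_s^g)$ occurs as the top direct summand of $\Psi_{f|g}(\up^{\otimes r}\otimes\down^{\otimes s})(S)$, so every regular bimodule, and hence every object of $\mathcal V(f|g)\simeq\bigoplus_{r,s}(H_r^f\otimes H_s^g)\proj$, lies in the essential image of $\psi_{f|g}$ after passing to the Karoubi envelope.

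Full faithfulness I would establish by matching dimensions of morphism spaces. First, using the defining relations of Definition~\ref{def3} together with the curl expansions, the bubble slides of Lemma~\ref{bubbleslides}, the dot slides of Lemma~\ref{dotslide}, and the alternating braid relation, reduce an arbitrary morphism of $\H(f|g)$ to a normal form in which all bubbles have been evaluated to scalars by Lemma~\ref{clever} and all dots on $\up$-strands carry exponents in $\{0,\dots,l-1\}$ (using $f(x)=0$ and the invertibility of $x$); this yields an explicit spanning family and an upper bound for $\dim\Hom_{\H(f|g)}$. Second, compute $\dim\Hom_{\mathcal V(f|g)}$ directly from the basis (\ref{akbasis}) of the cyclotomic Hecke algebras and the Mackey isomorphism (\ref{stdpf}). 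Finally, show that $\psi_{f|g}$ sends the spanning family to a linearly independent one; combined with the two counts, this forces the family to be a basis and $\psi_{f|g}$ to be bijective on each $\Hom$-space. The module-functor property is automatic, as both module structures are defined through $\HEIS_k(z,t)$ and $F$ intertwines them by monoidality of $\Psi_{f|g}$.

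The main obstacle is the final linear-independence step. It rests on the genericity hypothesis of Lemma~\ref{starbucks}: by Lemma~\ref{stupid} the dot eigenvalues on $H^f$- and $H^g$-modules lie in disjoint $q^{2\Z}$-cosets, which makes the internal dumbbells invertible and endows the morphism spaces of $\blue{\HEIS_{-l}(z,u)}\,\overline{\odot}\,\red{\HEIS_m(z,v^{-1})}$ — and, through $\Delta_{\blue{-l}|\red{m}}$, the images of the normal-form diagrams — with a filtered, triangular structure whose leading terms are manifestly independent. This is the quantum analogue of the degenerate computation in \cite[Theorem~6.4]{BSW1} and the unfurling argument of \cite{Wunfurling}; the genericity is precisely what prevents the blue and red contributions from colliding and lets the count be carried out cleanly.
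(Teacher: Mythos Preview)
Your factoring step matches the paper's. Your essential-surjectivity computation is also correct in spirit, but note that to realise $(H_r^f,H_s^g)$ as an object in the essential image of $\psi_{f|g}$ on $\Kar(\H(f|g))$ you must lift the projection idempotent from $\mathcal V(f|g)$ back to $\End_{\H(f|g)}(\up^{\otimes r}\otimes\down^{\otimes s})$, and that already uses fullness. The paper accordingly proves full faithfulness first and only then deduces density by idempotent-lifting.

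The substantive divergence is in full faithfulness. Your dimension-counting strategy is workable in principle, but the crucial step---linear independence of the images of the normal-form diagrams in $\mathcal V(f|g)$---is left as a sketch; making the ``filtered, triangular structure'' precise would require explicitly tracking diagrams through $\Delta_{\blue{-l}|\red{m}}$ and then through $\Psi_f,\Psi_g^\vee$, which is substantial, and your citation of \cite[Theorem~6.4]{BSW1} points to the basis theorem for the Heisenberg category itself rather than any cyclotomic quotient, so the analogy is not direct. The paper sidesteps all of this. By rigidity $\Hom_{\H(f|g)}(X,Y)\cong\Hom_{\H(f|g)}(\unit,X^*\otimes Y)$, and one then inducts on the word $X^*\otimes Y$ using the isomorphism $\up\otimes\down\cong\down\otimes\up\oplus\unit^{\oplus k}$ of (\ref{invrel}) (for $k\geq 0$; the case $k\leq 0$ is symmetric) to reduce to the single case $X=Y=\up^{\otimes n}$. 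There the argument is almost tautological: the straightening algorithm together with Lemma~\ref{clever} (bubbles become scalars in the quotient) shows the natural map $H_n^f\to\End_{\H(f|g)}(\up^{\otimes n})$ is surjective, while on the other side $\up^{\otimes n}\otimes S=(H_n^f,\k)$ so that $\End_{\mathcal V(f|g)}(\up^{\otimes n}\otimes S)=\End_{H_n^f}(H_n^f)=H_n^f$; thus $\psi_{f|g}$ is forced to be a bijection on this Hom-space. No explicit normal-form count, no separate linear-independence argument, and no use of the genericity hypothesis beyond what is already needed to define $\Psi_{f|g}$.
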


\begin{proof}
We first show that $\Psi_{f|g}
\left(\,\anticlockplus\,(w)\,\right)_S\in w^k \End(S)\llbracket
w^{-1}\rrbracket$ equals $\A^+(w) 1_S$.
Recalling that $\A^+(w)$ is the expansion at $w=\infty$ of the rational
function $g(w) / f(w)$,
this follows because
$$
\Psi_{f|g}
\left(\,\anticlockplus\,(w)\,\right)_S= \Psi_f\left(\,{\color{blue}\anticlockplus}\,(w)\,\right)_{H_0^f}
\otimes
\Psi_g^\vee\left(\,{\color{red}\anticlockplus}\,(w)\,\right)_{H_0^g}
$$
thanks to (\ref{dt1}),
and also 
$\Psi_f\left(\,{\color{blue}\anticlockplus}\,(w)\,\right)_{H_0^f}=1 / f(w)$
and
$\Psi_g^\vee\left(\,{\color{red}\anticlockplus}\,(w)\,\right)_{H_0^g}= g(w)$.
To see the last two assertions, we first apply Lemma~\ref{clever} 
to see that $\mathcal I(f|1)$, the left tensor ideal of $\HEIS_{-l}(z,u)$ generated
by $f(x)$,
contains all coefficients of the series
${\color{blue}\anticlockplus}\,(w) - 1 / f(w) 1_\unit$; all elements of this ideal act as
zero on $H_0^f$ since its generator $f(x)$ acts as zero.
Then we apply Lemma~\ref{clever} again to see that $\mathcal I(1|g)$,
the left tensor ideal of $\HEIS_m(z,v^{-1})$ generated by $g(y)$,
contains all coefficients of 
${\color{red}\anticlockplus}\,(w) - g(w) 1_\unit$; all
elements of this act as zero on $H_0^g$.

The previous paragraph shows that $\anticlockplus{\scriptstyle n} - \A_n^+
1_\unit$ acts as zero on $S$ for all $n \in \Z$. Also it is obvious that $f(x)$ acts as
zero on $S$. So the left tensor ideal $\mathcal{I}(f|g)$
acts as zero on $S$, which proves that
$\operatorname{Ev}\circ \Psi_{f|g}$ factors through the quotient
$\H(f|g) = \HEIS_k(z,t)\big / \mathcal I(f|g)$ to induce a $\k$-linear functor
$\H(f|g) \rightarrow \mathcal{V}(f)\boxtimes \mathcal{V}(g)^\vee$.
Since $\mathcal{V}(f|g)$ is additive Karoubian, this extends to the Karoubi envelope to induce 
the functor $\psi_{f|g}$ from the statement of the
theorem.
Moreover, it is automatic from the definition that $\psi_{f|g}$ is a
morphism of $\HEIS_k(z,t)$-module categories.
It just remains to show that $\psi_{f|g}$ is an equivalence, which we
do by showing that it is full, faithful and dense.

First we show that $\psi_{f|g}$ is full and faithful.
It suffices to check this on objects
$X = X_r\otimes\cdots\otimes X_1$
and $Y = Y_s\otimes\cdots\otimes Y_1$ that are words in $\up$ and $\down$.
We assume moreover that $k \geq 0$; a similar
argument with the roles of $\up$ and $\down$ interchanged does the job
when $k \leq 0$ too.
Let $X^* = X_1^* \otimes\cdots\otimes X_r^*$ be the dual object
(here, $\up^* = \down,
\down^* = \up$).
By rigidity, we have a canonical isomorphism
$\Hom_{\mathcal H(f|g)}(X, Y) \cong \Hom_{\mathcal H(f|g)}(\unit, X^* \otimes Y)$,
  from which we get a commuting diagram
\begin{equation*}
\begin{CD}
\Hom_{\mathcal H(f|g)}\left(X,Y\right) &@>\sim>>&\Hom_{\mathcal H(f|g)}\left(\unit, X^* \otimes Y\right)\\
@V\psi_{f|g} VV&&@VV\psi_{f|g} V\\
\Hom_{\mathcal{V}(f|g)}\left(X \otimes S, Y \otimes S\right)
&@>\sim>>&\Hom_{\mathcal{V}(f|g)}\left(S, X^* \otimes Y
\otimes S\right).
\end{CD}
\end{equation*}
The left-hand vertical map in this diagram is an isomorphism if and
only if the right-hand vertical map is one.
We claim that the left-hand vertical map is an isomorphism 
when $X = Y = \up^{\otimes n}$. To prove this, the usual straightening algorithm (see the beginning of the proof of Theorem~\ref{basisthm} for details) shows that $\End_{\HEIS_k(z,t)}\left(\up^{\otimes n}\right)$ is spanned by diagrams in the image of the canonical homomorphism
$AH_n \rightarrow 
\End_{\HEIS_k(z,t)}\left(\up^{\otimes n}\right)$, with some number of bubbles added to the right-hand edge.  Thus we have an induced homomorphism $H_n^f
\rightarrow \End_{\mathcal H(f|g)}\left(\up^{\otimes n}\right)$
which is surjective since bubbles on the right-hand edge are
scalars in the generalized cyclotomic quotient.
On the other hand, $\End_{\mathcal{V}(f|g)}\left(\up^{\otimes
  n} \otimes S\right)
= \End_{H_n^f}\left(H_n^f\right) = H_n^f$. The claim follows.
Hence, the right-hand vertical map is an isomorphism
when $X^* \otimes Y = \down^{\otimes n} \otimes \up^{\otimes n}$.
Using this, we can show that the right hand vertical map is an
isomorphism in general. All of the morphism spaces are zero unless $X^*
\otimes Y$ has the same number of $\up$'s as $\down$'s. If all
$\down$'s are to the left of all $\up$'s, we are done already, so we
may assume that $X^* \otimes Y$ involves
$\up\otimes \down$ as a subword. Let $U$ be $X^* \otimes Y$ with the
two letters in this subword interchanged and $V$ be $X^* \otimes Y$
with these two letters deleted.
Using the isomorphism $\up\otimes \down \:\cong\: \down\otimes \up \oplus
\,\unit^{\oplus k}$ from (\ref{invrel}), we get a commuting diagram
\begin{equation*}
\begin{CD}
\Hom_{\mathcal H(f|g)}\left(\unit, X^* \otimes Y\right)&@>\sim>>&\Hom_{\mathcal H(f|g)}\left(\unit, U \oplus
V^{\oplus k}\right)\\
@V\psi_{f|g}VV&&@VV\psi_{f|g} V\\
\Hom_{\mathcal{V}(f|g)}\left(S, X^*\otimes Y \otimes
S\right)&@>\sim>>&\Hom_{\mathcal{V}(f|g)}\left(S, U \otimes S
\oplus V \otimes S^{\oplus k}\right).
\end{CD}
\end{equation*}
By induction, the right-hand vertical map is an isomorphism, hence, so
too is the left-hand one.

Finally, we explain why $\psi_{f|g}$ is dense.
Let $Q$ be an indecomposable object in $\mathcal{V}(f|g)$.
We have that $\down^{\otimes m} \otimes 
\up^{\otimes n} \otimes\, S= \down^{\otimes m} \otimes\, (H_n^f, H_0^g)
= (H_n^f, H_m^g) \oplus M$
where $M$ is a direct sum of summands of
$(H_{n'}^f, H_{m'}^g)$ with $n' < n$ and $m' < m$.
It follows that $Q$
is isomorphic to the image of some
idempotent in
$\End_{\mathcal{V}(f|g)}\left(\down^{\otimes m} \otimes 
\up^{\otimes n} \otimes S\right)$ for some $m,n\geq 0$.
Since we have shown already that $\psi_{f|g}$ is full and faithful,
there is a corresponding idempotent in
$\End_{\mathcal H(f|g)}\left(\down^{\otimes m} \otimes \up^{\otimes n}\right)$. The latter
idempotent defines an object $P$ of $\Kar\left(\mathcal H(f|g)\right)$ such that
$\psi_{f|g}(P) \cong Q$.
\end{proof}

\begin{remark}
If $g(w) = 1$ the genericity assumption is vacuous, so Theorem~\ref{chemistry}
gives us an equivalence of categories
$\psi_{f|1}: \Kar\left(\H(f|1) \right) \rightarrow \mathcal{V}(f)$.
%Similarly, $\psi_{1|g}: \Kar\left(\H_{1|g}\right) \rightarrow
%\mathcal V(g)^\vee$ is an equivalence.
In other words, the generalized cyclotomic quotient
$\H(f|1)$
%and 
%$\H_{1|g}$
is Morita
equivalent to the ``usual'' cyclotomic quotient defined by the
cyclotomic Hecke algebras $H_n^f$ for all $n \geq 0$.
This statement is the quantum analog of \cite[Theorem 1.7]{Bheis}; see
also \cite[Theorem 4.25]{Rou2} for the analogous result in the setting
of Kac-Moody 2-categories.
\end{remark}

\begin{remark}
More generally, suppose that there are factorizations $f(w)=f_1(w)f_2(w)$ and $g(w)=g_1(w)g_2(w)$ such that the genericity assumption 
$\lambda
\mu^{-1} \notin \left\{q^{2i}\:\big|\:i \in \Z\right\}$ 
holds for $\lambda$ a root of $f_1(w)$ or $g_1(w)$, and $\mu$ a root of $f_2(w)$ or $g_2(w)$.  Then a similar argument to the proof of Theorem~\ref{chemistry} can be used to show that the categories $\Kar\left(\H(f|g)\right)$ and $\Kar\left(\H(f_1|g_1)\boxtimes \H(f_2|g_2)\right)$ are equivalent.
In particular, applying this to $\Kar\left(\H(f|1)\right)$ 
and using the previous remark, it follows that the cyclotomic Hecke algebra $H^{f}_n$ is Morita equivalent to
$\bigoplus_{n_1+n_2=n} H^{f_1}_{n_1}\otimes H_{n_2}^{f_2}$, thereby recovering
 a result of Dipper and Mathas \cite{DM}.
\end{remark}

\section{Basis theorem}\label{sbasis}

Finally, we 
prove a basis theorem for the morphism spaces in $\HEIS_k(z,t)$.
Our proof of this is very similar to the argument in the degenerate case from \cite[Theorem 6.4]{BSW1}.
Let $X = X_r\otimes \cdots\otimes X_1$ and $Y = Y_s \otimes\cdots
\otimes Y_1$ be objects of
$\HEIS_k(z,t)$ for $X_i, Y_j \in \{\up,\down\}$.
An {\em $(X,Y)$-matching} is a bijection between 
$\{i\:|\:X_i = \up\}\sqcup\{j\:|\:Y_j = \down\}$
and $\{i\:|\:X_i = \down\}\sqcup\{j\:|\:Y_j = \up\}$.
A {\em reduced lift} of an $(X,Y)$-matching means a diagram
representing a morphism $X \rightarrow Y$ such that
\begin{itemize}
\item
the endpoints of each string are points which
correspond under the given matching;
\item
there are no floating bubbles and no dots on any string;
\item there are no self-intersections of strings and
no two strings cross each other more than once.
\end{itemize}
Fix a set $B(X,Y)$ consisting of a choice of reduced lift for each of the
$(X,Y)$-matchings.
Let $B_{\circ}(X,Y)$ be the set of all morphisms that can be obtained 
from the elements of $B(X,Y)$ by adding dots labelled with integer multiplicities 
near to the terminus of each string.
Also recall the homomorphism $\beta:\Sym\otimes \Sym \rightarrow
\End_{\HEIS_k(z,t)}(\unit)$ from (\ref{beta}). Using it, we can make the
morphism
space $\Hom_{\HEIS_k(z,t)}(X,Y)$ into a right $\Sym\otimes
\Sym$-module: 
%for $f:X \rightarrow Y$ and $u \in \Sym\otimes \Sym$
%we let
$\phi \theta := \phi \otimes \beta(\theta)$.

\begin{theorem} \label{basisthm}
For any ground ring $\k$, 
parameters $z, t \in \k^\times$, and objects $X, Y \in \HEIS_k(z,t)$,
the morphism space $\Hom_{\HEIS_k(z,t)}(X,Y)$ is a free right
$\Sym\otimes \Sym$-module with basis $B_{\circ}(X,Y)$.
\end{theorem}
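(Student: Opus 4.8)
The plan is to prove the two assertions separately: first that $B_{\circ}(X,Y)$ spans $\Hom_{\HEIS_k(z,t)}(X,Y)$ as a right $\Sym\otimes\Sym$-module, and then that it is linearly independent. The spanning half is a ``straightening'' argument driven entirely by the relations collected in Sections~\ref{first}--\ref{third}. Starting from an arbitrary diagram representing a morphism $X\to Y$, I would first use the braid and skein relations \eqref{rr0}, \eqref{rr2}, \eqref{leftwardsskein} together with \eqref{spit1} to eliminate all self-intersections and arrange that any two strings cross at most once, at the cost of creating curls and floating bubbles. Next I would push all dots to the termini of the strings via the dot-slide relations of Lemma~\ref{dotslide}, \eqref{piv1} and \eqref{rr4}; convert each curl into dotted bubbles using \eqref{dog1}--\eqref{lastofthecurls} and \eqref{curls}--\eqref{morecurls}; and finally float all bubbles to the right-hand edge using the bubble slides of Lemma~\ref{bubbleslides}. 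At this point the diagram is a linear combination of reduced lifts carrying dots at their termini, each multiplied by a product of bubbles in $\End_{\HEIS_k(z,t)}(\unit)$. By Lemma~\ref{infgrass} and the defining relations of the $(\pm)$-bubbles, every such product lies in the image of $\beta$, so the morphism is a $\Sym\otimes\Sym$-combination of elements of $B_{\circ}(X,Y)$, establishing spanning.

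For linear independence I would exploit the family of faithful actions on ``large'' module categories built from generalized cyclotomic quotients, following \cite[Theorem 6.4]{BSW1}. A putative relation $\sum_{b} b\,\beta(\theta_b)=0$ involves only finitely many $b\in B_{\circ}(X,Y)$ and symmetric functions $\theta_b$ of bounded degree, so it suffices to detect these on a single sufficiently large action. Fix $l\gg 0$, set $m=k+l$, and choose monic polynomials $f(w)$ of degree $l$ and $g(w)$ of degree $m$ with $f_l=t^2$ and $g_m=1$ satisfying the genericity hypothesis of Lemma~\ref{starbucks}. This yields the functor $\Psi_{f|g}$ from \eqref{psifg} and, by Theorem~\ref{chemistry}, the equivalence of $\HEIS_k(z,t)$-module categories $\psi_{f|g}:\Kar(\H(f|g))\stackrel{\sim}{\rightarrow}\mathcal V(f|g)$. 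Since the relation holds in $\HEIS_k(z,t)$, its image in the quotient $\H(f|g)=\HEIS_k(z,t)/\mathcal I(f|g)$ is zero; there the bubble factors $\beta(\theta_b)$ act on $S=(H_0^f,H_0^g)$ as scalars $\chi_{f,g}(\theta_b)\in\k$ computable from $\A^{\pm}(w)$ in \eqref{am}--\eqref{pm} via \eqref{dt1}--\eqref{dt2} and \eqref{new1}--\eqref{new2}, so the image reads $\sum_b \chi_{f,g}(\theta_b)\,\psi_{f|g}([b])=0$ in $\mathcal V(f|g)$, where $[b]$ denotes the image of the reduced lift with its dots.

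Two inputs then finish the argument. First, for $l\gg 0$ the images $\psi_{f|g}([b])$ of reduced lifts with distinct matchings and distinct dot-patterns are $\k$-linearly independent: identifying $\mathcal V(f|g)$ with $\bigoplus_{r,s}(H_r^f\otimes H_s^g)\proj$, one reads off $\Hom_{\mathcal V(f|g)}(X\otimes S,Y\otimes S)$ from the Ariki--Koike basis \eqref{akbasis}, noting that distinct dot labels correspond to distinct powers of the invertible generators $x_i$, which stay independent once $l$ exceeds all degrees occurring. This forces $\chi_{f,g}(\theta_b)=0$ for every admissible $(f,g)$. Second, the scalars $\chi_{f,g}(\theta)$ separate symmetric functions of bounded degree: the map $\theta\mapsto\chi_{f,g}(\theta)$ is an algebra homomorphism sending the generators of the two tensor factors to coefficients of the expansions $\A^{+}(w)$ and $\A^{-}(w)$ of $g(w)/f(w)$ at $w=\infty$ and $w=0$, and for $l$ large these coefficients can be varied independently by varying $f$ and $g$. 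Hence $\theta_b=0$ for all $b$, as required; taking $X=Y=\unit$ recovers the injectivity of $\beta$ promised in Corollary~\ref{cor:beta-iso}.

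The main obstacle will be the linear independence half, and within it the simultaneous verification of the two inputs above: one must show that the comultiplication functor $\Delta_{\blue{-l}|\red{m}}$ of Theorem~\ref{comult}, composed with the cyclotomic actions, sends distinct reduced lifts with dots to linearly independent endomorphisms of the tensor-product module category, and that the central bubble data is faithfully recorded by the cyclotomic parameters. The spanning argument, by contrast, is mechanical once the diagrammatic relations are in hand.
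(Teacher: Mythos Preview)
Your spanning argument and the high-level strategy for linear independence—test a putative relation on the module categories $\mathcal V(f|g)$ for $l\gg 0$—match the paper's. But three reductions you omit are essential, and without them the argument has genuine gaps.

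First, dots in $B_\circ(X,Y)$ can carry \emph{negative} multiplicities, yet the Ariki--Koike basis \eqref{akbasis} only records exponents $0\le r_i<l$; in $H_n^f$ the $x_i$ are algebraic, so negative powers collapse onto non-negative ones and independence is lost. The paper first clears denominators by multiplying at each terminus by a large positive dot power, reducing to the positive case. Second, your plan to ``vary $f$ and $g$'' to separate the $\theta_b$ does not work over an arbitrary ground ring $\k$ (over a finite field, for instance, there are not enough choices of $(f,g)$ satisfying the genericity hypothesis of Lemma~\ref{starbucks}). The paper instead reduces to $\k=\Z[z^{\pm 1},t^{\pm 1}]$, passes to the algebraic closure $\K$ of $\Q(z,t,u_1,\dots,u_b,v_1,\dots,v_c)$, and takes $g(w)=w^m+u_1w^{m-1}+\cdots+u_bw^{m-b}+v_cw^c+\cdots+v_1w+1$ and $f(w)=w^l+t^2$; then the images of $\beta(\e_i\otimes 1)$ and $\beta(1\otimes\e_j)$ under evaluation on $S$ are literally the indeterminates $u_i$ and $v_j$, giving algebraic independence in one stroke. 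Third, your ``first input''—that distinct reduced lifts with dots map to independent morphisms in $\mathcal V(f|g)$—is only a direct read-off from \eqref{akbasis} when $X=Y=\up^{\otimes n}$, since then $\up^{\otimes n}\otimes S=(H_n^f,H_0^g)$ and each positive $\phi_i$ acts as an honest Ariki--Koike basis element of $H_n^f$. For general $(X,Y)$ one would have to track cups, caps and sideways crossings through the formulae of Theorem~\ref{comult} and the internal bubbles \eqref{odd1}--\eqref{odd2}, which is far from immediate. The paper avoids this entirely: after establishing the case $X=Y=\up^{\otimes n}$, it uses rigidity to reduce to $X=\unit$, then inducts on $Y$ by swapping an adjacent $\up\otimes\down$ to $\down\otimes\up$ via the isomorphism \eqref{invrel1} (here using $k\le 0$, the case $k\ge 0$ following from $\Omega_k$), together with a triangularity argument taken verbatim from the last paragraph of the proof of \cite[Theorem~6.4]{BSW1}.
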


\begin{proof}
We just prove this when $k \leq 0$; the result for $k \geq 0$ then follows
by applying $\Omega_k$.
Let $X = X_r\otimes \cdots \otimes X_1$ and $Y = Y_s\otimes
\cdots\otimes Y_1$ be two objects.

We first observe that $B_{\circ}(X,Y)$ spans
$\Hom_{\HEIS_k(z,t)}(X,Y)$ as a right $\Sym\otimes \Sym$-module.
The defining relations and the
additional relations derived in sections \ref{first}, \ref{second} and
\ref{third}
give Reidemeister-type relations
modulo terms with fewer crossings, plus a skein relation
and bubble and dot sliding relations. These relations allow
diagrams for morphisms in $\HEIS_k(z,t)$ to be transformed in a similar way to
the way oriented tangles are simplified in skein categories, 
modulo diagrams with fewer crossings.
Hence, there is a straightening algorithm to rewrite
any diagram representing a morphism $X \rightarrow Y$ as a linear
combination of the ones in $B_{\circ}(X,Y)$.

It remains to prove the linear independence. 
We say $\phi \in B_{\circ}(X,Y)$ is {\em positive} if it only involves
non-negative powers of dots.
It suffices to show just
that the positive morphisms in $B_{\circ}(X,Y)$ are linearly independent.
Indeed, given any linear relation of the form $\sum_{i=1}^N \phi_i \otimes
\beta(\theta_i) = 0$
for morphisms $\phi_i \in B_{\circ}(X,Y)$ and
coefficients
$\theta_i \in \Sym\otimes \Sym$, we can ``clear denominators'' by
multiplying the termini of the strings 
by sufficiently large
positive powers of dots to reduce to the positive case.

The main step now is to prove the linear independence 
in the special case that $X = Y = \up^{\otimes n}$.
To do this, we need to allow the ground ring $\k$ to change, so we
will add a subscript to our notation, denoting $\HEIS_k(z,t), \mathcal{V}(f|g), \Sym
\otimes \Sym, \dots$ by
${_\k}\HEIS_k(z,t), {_\k\!}\mathcal{V}(f|g), {_\k\!}\Sym \otimes_\k\, {_\k\!}\Sym, \dots$ to avoid any confusion.
It suffices to prove the linear independence
of positive elements of $B_\circ(X,Y)$ in the special case that $\k = \Z[z^{\pm 1}, t^{\pm
  1}]$; one can then use
the canonical $\k$-linear monoidal
functor ${_\k}\HEIS_k(z,t) \rightarrow \k \otimes_{\Z[z^{\pm 1}, t^{\pm 1}]}  {_{\Z[z^{\pm
    1}, t^{\pm 1}]}}\HEIS_k(z,t)$ to deduce the linear
independence over an arbitrary ground ring $\k$ and 
for arbitrary parameters.

So assume now that $\k = \Z[z^{\pm 1}, t^{\pm 1}]$
and take a linear relation $\sum_{i=1}^N \phi_i \otimes \beta(\theta_i)=0$
for positive $\phi_i \in B_\circ(X,Y)$.
Choose $a$ so that the multiplicities of dots in all $\phi_i$ arising in
this linear relation
are $\leq a$.
Also choose 
$b,c \geq 0$ so that all of the symmetric functions $\theta_i \in
{_\k\!}\Sym\otimes_\k\,{_\k\!}\Sym$ 
are polynomials in 
the elementary symmetric functions 
$\e_1 \otimes 1,\dots,\e_b\otimes 1$ and $1 \otimes
\e_1,\dots,1\otimes \e_c$.
Then choose $l, m$ so that 
$a < l$, $b+c < m$ and $k = m-l$. Note that $l \geq m$ due to our
standing assumption that $k \leq 0$.
Let $u_1,\dots,u_b$ and $v_1,\dots,v_c$ be indeterminates
and $\K$ be the algebraic closure of the field
$\Q(z,t,u_1,\dots,u_b,v_1,\dots,v_c)$. 
Pick $q \in \K^\times$ so that $z = q-q^{-1}$ and consider the
cyclotomic Hecke algebras ${_\K}H_n^f$ and ${_\K}H_n^g$ over $\K$
associated to the polynomials
\begin{align*}
f(w) &:= w^l + t^2,
&
g(w) = w^m+u_1 w^{m-1}+\cdots+u_b w^{m-b} + v_c w^c + \cdots + v_1 w +
  1.
\end{align*}
Note the formula for $g(w)$ makes sense because $b+c<m$.
Consider the
${_\K}\HEIS_k(z,t)$-module category
${_\K}\mathcal{V}(f|g)$ from (\ref{psifg}) (taking $u := t$ and $v := 1$).
Since $\k\hookrightarrow \K$, there is a canonical $\k$-linear
monoidal
functor ${_\k}\HEIS_k(z,t) \rightarrow {_\K}\HEIS_k(z,t)$, allowing us to
view
${_\K}\mathcal{V}(f|g)$ also as a module
category over ${_\k}\HEIS_k(z,t)$.
Then we evaluate the relation 
$\sum \phi_i \otimes \beta(\theta_i)=0$ on
${_\K}S:=({_\K}H^f_0,{_\K}H^g_0)$ 
to obtain a relation in 
${_\K}H_n^f$. By the basis theorem for ${_\K}H_n^f$ from
(\ref{akbasis}) and the assumption that $a < l$,  
the images of
 $\phi_1,\dots,\phi_N$ in ${_\K}H_n^f$ are linearly independent over $\K$, so we deduce
that
the image of $\beta(\theta_i)$ in $\K$ is zero for each $i$.
To deduce from this that $\theta_i = 0$, recall
that $\theta_i$ is a polynomial in
$\e_1\otimes 1,\dots,\e_b \otimes 1, 1 \otimes \e_1,\dots,1\otimes
\e_c$. So we need to show that the images of $\beta(\e_1\otimes
1),
\dots, \beta(\e_b \otimes 1), \beta(1 \otimes
\e_1),\dots,\beta(1\otimes \e_c)$ in $\K$
are algebraically independent.
In fact, we claim that these images are the
indeterminates
$u_1,\dots,u_b,v_1,\dots,v_c$, respectively.
To prove this, note that the low degree terms of $\A^{\pm}(w)$ are
\begin{align*}
\A^+(w) &= g(w)/ f(w) = w^k+u_1 w^{k-1}+\cdots+u_b
w^{k-b}+\cdots \in w^k \K\llbracket w^{-1}\rrbracket,\\
\A^-(w) &= t^2 g(w) / f(w) = 1+v_1 w+\cdots+v_c
w^{c}+\cdots \in \K\llbracket w \rrbracket.
\end{align*}
By (\ref{new2}), (\ref{am})--(\ref{amm})
and Lemma~\ref{clever}, the images of
$\beta(\e_n \otimes 1)$ and $\beta(1 \otimes \e_n)$ are
the $w^{k-n}$- and $w^n$-coefficients of $\A^+(u)$ and
 $\A^-(u)$, respectively, and the claim follows.
 
We have now proved the linear independence when $X = Y = \up^{\otimes
  n}$.
Returning to the general case, we can use the canonical isomorphism
$\Hom_{\HEIS_k(z,t)}(X,Y) \cong \Hom_{\HEIS_k(z,t)}(\unit, X^* \otimes Y)$
arising from the rigidity to see that the $\Sym\otimes\Sym$-linear independence of the
positive
morphisms in $B_\circ(X,Y)$ is equivalent to the
$\Sym\otimes\Sym$-linear independence of the positive morphisms in
$B_\circ(\unit, X^*\otimes Y)$. 
Thus, we are reduced to the case that $X = \unit$. Assume this from
now on.
The set $B_\circ(\unit, Y)$ is empty
unless $Y$ has the same number $n$ of $\up$'s as $\down$'s. Also we have already proved the linear
independence in the case $Y = \down^{\otimes n} \otimes
\up^{\otimes n}$. So we may assume that $Y$ has
a subword $\up \otimes \down$.
Let $Z$ be $Y$ with the two letters in the subword
interchanged.
By induction, we may assume the linear independence has already been
established for $B_\circ(\unit,Z)$.
Consider a linear relation $\sum_{i=1}^N \phi_i \otimes \beta(\theta_i)$ for positive $\phi_i \in
B_\circ(\unit, Y)$.
Recalling the isomorphism
$\up\otimes\down\oplus \:\unit^{\oplus (-k)}
\stackrel{\sim}{\rightarrow} \:
\down\otimes \up$ from
(\ref{invrel1}), multiplying the subword $\up\otimes\down$ on top by
the sideways crossing $\mathord{
\begin{tikzpicture}[baseline = -.5mm]
	\draw[<-,thin,darkblue] (0.2,-.2) to (-0.2,.3);
	\draw[line width=4pt,white,-] (-0.2,-.2) to (0.2,.3);
	\draw[thin,darkblue,->] (-0.2,-.2) to (0.2,.3);
\end{tikzpicture}
}$
defines a $\Sym\otimes\Sym$-linear map
$s:\Hom_{\HEIS_k(z,t)}(\unit, Y) \hookrightarrow \Hom_{\HEIS_k(z,t)}(\unit, Z).$
Unfortunately, $s$ does not send $B_\circ(\unit, Y)$ into
$B_\circ(\unit, Z)$. However, the image of $B_\circ(\unit,Y)$ is related to
$B_\circ(\unit,Z)$ in a triangular way, which is good enough to complete
the argument. The full explanation of this 
is almost exactly the same as in the degenerate case, so we refer the reader to the last paragraph of the proof of \cite[Theorem 6.4]{BSW1} for the details.
\end{proof}

\begin{corollary}\label{cor:beta-iso}
$\End_{\HEIS_k(z,t)}(\unit) \cong \Sym\otimes \Sym$.
\end{corollary}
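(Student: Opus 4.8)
The plan is to deduce this immediately from the basis theorem (Theorem~\ref{basisthm}) specialized to $X = Y = \unit$, together with the construction of $\beta$ in (\ref{beta}). First I would identify the basis set $B_\circ(\unit,\unit)$. An $(\unit,\unit)$-matching is a bijection between the empty set and the empty set, so there is exactly one such matching, the empty one; its unique reduced lift is the empty diagram, which represents $1_\unit$. Since the empty diagram has no strings, there are no termini at which to place dots, so $B_\circ(\unit,\unit) = \{1_\unit\}$. By Theorem~\ref{basisthm}, this means $\End_{\HEIS_k(z,t)}(\unit) = \Hom_{\HEIS_k(z,t)}(\unit,\unit)$ is a \emph{free} right $\Sym\otimes\Sym$-module of rank one with basis $\{1_\unit\}$.

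Next I would unravel what this freeness says in terms of $\beta$. By definition of the right $\Sym\otimes\Sym$-action on morphism spaces, the action of $\theta \in \Sym\otimes\Sym$ on the basis element $1_\unit$ is $1_\unit \cdot \theta = 1_\unit \otimes \beta(\theta)$. Since $\beta(\theta)$ is an endomorphism of the monoidal unit, the interchange law (Eckmann--Hilton) identifies horizontal and vertical composition on $\End_{\HEIS_k(z,t)}(\unit)$ and gives $1_\unit \otimes \beta(\theta) = \beta(\theta)$. Thus the free-module isomorphism $\Sym\otimes\Sym \xrightarrow{\sim} \End_{\HEIS_k(z,t)}(\unit)$, $\theta \mapsto 1_\unit\cdot\theta$, is precisely the map $\theta\mapsto\beta(\theta)$. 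In other words, freeness of rank one with basis $\{1_\unit\}$ is exactly the statement that $\beta$ is a $\k$-module isomorphism: the spanning half of Theorem~\ref{basisthm} gives surjectivity of $\beta$, and the linear-independence half gives injectivity.

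Finally, since $\beta$ was already established to be a $\k$-algebra homomorphism in (\ref{beta}), a bijective such homomorphism is an algebra isomorphism, yielding $\End_{\HEIS_k(z,t)}(\unit) \cong \Sym\otimes\Sym$. I do not anticipate a genuine obstacle here, as the result is a direct corollary; the only points requiring a moment of care are the verification that $B_\circ(\unit,\unit)$ is the singleton $\{1_\unit\}$ and the identification of the rank-one module action with $\beta$ via the interchange law, both of which are routine.
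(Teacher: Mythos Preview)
Your proposal is correct and is exactly the intended argument: the paper states the corollary immediately after Theorem~\ref{basisthm} with no proof, precisely because specializing to $X=Y=\unit$ gives $B_\circ(\unit,\unit)=\{1_\unit\}$, so $\End_{\HEIS_k(z,t)}(\unit)$ is free of rank one over $\Sym\otimes\Sym$, and the module action $1_\unit\cdot\theta = 1_\unit\otimes\beta(\theta)=\beta(\theta)$ identifies this with the assertion that $\beta$ is bijective. Your elaboration of the Eckmann--Hilton step and the algebra-homomorphism property of $\beta$ from Lemma~\ref{infgrass} are exactly the routine details one would fill in.
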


\end{document}